\definecolor{DarkBlue}{rgb}{0,0,0.9}
\providecommand{\tabularnewline}{\\}
\let\mypdfximage\pdfximage
\def\pdfximage{\immediate\mypdfximage}
\newcolumntype{L}[1]{>{\raggedright\let\newline\\\arraybackslash\hspace{0pt}}m{#1}}
\newcolumntype{C}[1]{>{\centering\let\newline\\\arraybackslash\hspace{0pt}}m{#1}}
\newcolumntype{R}[1]{>{\raggedleft\let\newline\\\arraybackslash\hspace{0pt}}m{#1}}
\newtheorem{proposition}{Proposition}
\newtheorem{assumption}{Assumption}
\newtheorem{lemma}{Lemma}
\newtheorem{remark}{Remark}
\begin{document}

\title{ Omitted variable bias of Lasso-based inference methods: A finite sample analysis\thanks{ This version: \today. First version: March 20, 2019. Alphabetical ordering; both authors contributed equally to this work. This paper was previously circulated as ``Behavior of Lasso and Lasso-based inference under limited variability'' and ``Omitted variable bias of Lasso-based inference methods under limited variability: A finite sample analysis''.  We would like to thank the editor (Xiaoxia Shi), anonymous referees, St\'ephane Bonhomme, Gordon Dahl, Graham Elliott, Michael Jansson, Michal Kolesar, Ulrich M\"uller, Andres Santos, Azeem Shaikh, Aman Ullah and seminar participants for their comments. We are especially grateful to Yixiao Sun and Jeffrey Wooldridge for providing extensive feedback. W\"uthrich is also affiliated with CESifo and the ifo Institute. Zhu acknowledges a start-up fund from the Department of Economics at UCSD and the Department of Statistics and the Department of Computer Science at Purdue University, West Lafayette.
}} 

\author{  Kaspar W\"uthrich\thanks{  Department of Economics, University of California, San Diego, 9500 Gilman Dr. La Jolla, CA 92093. Emails: \url{kwuthrich@ucsd.edu} and \url{yiz012@ucsd.edu}.} \quad{}\quad{}Ying Zhu\footnotemark[2]}

\date{Final author version, accepted at \emph{The Review of Economics and Statistics}\vspace{-1cm} }

\maketitle

\begin{abstract}
\normalsize
We study the finite sample behavior of Lasso-based inference methods such as post double Lasso and debiased Lasso. We show that these methods can exhibit substantial omitted variable biases (OVBs) due to Lasso not selecting relevant controls. This phenomenon can occur even when the coefficients are sparse and the sample size is large and larger than the number of controls. Therefore, relying on the existing asymptotic inference theory can be problematic in empirical applications. We compare the Lasso-based inference methods to modern high-dimensional OLS-based methods and provide practical guidance. 

\noindent\textbf{Keywords:} Lasso, post double Lasso, debiased Lasso, OLS, omitted variable bias, size distortions, finite sample analysis

\noindent\textbf{JEL codes:} C21, C52, C55
\end{abstract}

\section{Introduction}

Since their introduction, post double Lasso \citep{belloni2014inference} and debiased Lasso \citep{javanmard2014confidence,vandergeer2014asymptotically,Zhang_Zhang} have quickly become the most popular inference methods for problems with many control variables. Given the rapidly growing  theoretical\footnote{\begin{doublespace} \normalsize See, for example, \citet{farrell2015robust}, \citet{belloni2017program}, \citet{zhang2017simultaneous}, \citet{chernozhukov2018double}, \citet{caner2018asymptotically} among others. \end{doublespace} \singlespacing} and applied\footnote{\begin{doublespace}\normalsize  See, for example, \citet{chen2015can}, \citet{decker2016health}, \citet{schmitz2017informal}, \citet{breza2019social}, \citet{jones2019what}, \citet{cole2020mobilizing}, and \citet{enke2020moral} among others.\end{doublespace}} literature on these methods, it is crucial to take a step back and examine the performance of these procedures in empirically relevant settings and to better understand their merits and limitations relative to other alternatives.

In this paper, we study the performance of post double Lasso and debiased Lasso from two different angles. First, we develop a theory of the finite sample behavior of post double Lasso and debiased Lasso. We show that under-selection of the Lasso can lead to substantial omitted variable biases (OVBs) in finite samples. Our theoretical results on the OVBs are non-asymptotic; they complement but do not contradict the existing asymptotic theory. Second, we conduct extensive simulations and two empirical studies to investigate the performance of Lasso-based inference methods in practically relevant settings. We find that the OVBs can render the existing asymptotic approximations inaccurate and lead to size distortions.

Following the literature, we consider the following standard linear model 
\begin{eqnarray}
Y_{i} & = & D_{i}\alpha^{*}+X_{i}\beta^{*}+\eta_{i},\quad i=1,\dots,n,\label{eq:main-y}\\
D_{i} & = & X_{i}\gamma^{*}+v_{i},\quad i=1,\dots,n.\label{eq:main-d}
\end{eqnarray}
Here $Y_{i}$ is the outcome, $D_{i}$ is the scalar treatment variable
of interest, and $X_{i}$ is a $(1\times p)$-dimensional vector of control variables. 
In the main text, we focus on the performance of post double Lasso for estimating and making inferences (e.g., constructing confidence intervals) on the treatment effect $\alpha^{\ast}$ in settings where $p$ can be larger than or comparable to $n$. We present results for debiased Lasso in Appendix \ref{app: debiased lasso}.

Post double Lasso consists of two Lasso selection steps: a Lasso regression of $Y_{i}$ on $X_{i}$ and a Lasso regression of $D_{i}$ on $X_{i}$. In the
third step, the estimator of $\alpha^{\ast}$, $\tilde{\alpha}$,
is the OLS regression of $Y_{i}$ on $D_{i}$ and the
union of controls selected in the two Lasso steps. 
OVB arises in post double Lasso whenever the relevant controls (i.e., the controls with non-zero coefficients) are selected in neither Lasso steps, a situation we refer to as \emph{double under-selection}. Results that can explain when and why double under-selection occurs are scarce in the existing literature. This paper shows theoretically that this phenomenon can even occur in simple examples with classical assumptions (e.g., normal homoscedastic errors, orthogonal designs for the relevant controls), which are often viewed as favorable to the performance of the Lasso. 
We prove that if the products of the absolute values of the non-zero coefficients and the variances of the controls are no greater than half the regularization parameters derived based on standard Lasso theory\footnote{\begin{doublespace}\normalsize Note that the existing Lasso theory requires the regularization parameter to exceed a certain threshold, which depends on the standard deviations of the noise and the covariates.\end{doublespace}\singlespacing}, Lasso fails to select these controls in both steps with high probability.\footnote{\begin{doublespace}\normalsize The ``half the regularization parameters" type of condition on the magnitude of non-zero coefficients was independently discovered in \citet{lahiri2021necessary}. We are grateful to an anonymous referee for making us aware of this paper. Our proof strategies differ from the asymptotic ones in \citet{lahiri2021necessary} and allow us to derive an explicit lower bound with meaningful constants for the probability of under-selection for fixed $n$, which is needed for deriving an explicit formula for the OVB lower bound. While some of the arguments in \citet{lahiri2021necessary} can be made non-asymptotic, one of their core arguments for showing necessary conditions for variable selection consistency of the Lasso relies on $n$ tending to infinity. It is not clear that such an argument can lead to an explicit lower bound with meaningful constants for the probability of under-selection. On the other hand, our non-asympototic argument can easily lead to asymptotic conclusions.\end{doublespace}\singlespacing} This result allows us to derive the first non-asymptotic lower bound formula in the literature for the OVB of the post double Lasso estimator $\tilde{\alpha}$. Our lower bound provides explicit universal constants, which are essential for understanding the finite sample behavior of post double Lasso and its limitations.

The OVB lower bound is characterized by the interplay between the probability of double under-selection and the magnitude of the coefficients corresponding to the relevant controls in \eqref{eq:main-y}--\eqref{eq:main-d}. In particular, there are three regimes: (i) the omitted controls have coefficients whose magnitudes are large enough to cause substantial bias; (ii) the omitted controls have coefficients whose magnitudes are too small to cause substantial bias; (iii) the magnitude of the relevant coefficients is large enough such that the corresponding controls are selected with high-probability. We illustrate these three regimes in Figure \ref{fig:illu_intro}, which plots the bias of post double Lasso and the number of selected control variables as a function of the magnitude of the non-zero coefficients. \textbf{[FIGURE \ref{fig:illu_intro} HERE.]}

Our theoretical analysis of the OVB has important implications for inference procedures based on the post double Lasso. \citet{belloni2014inference} show that $\sqrt{n}(\tilde\alpha-\alpha^\ast)$ is asymptotically normal with zero mean. We show that in finite samples, the OVB lower bound can be more than twice as large as the standard deviation obtained from the asymptotic distribution in \citet{belloni2014inference}. This is true even when $n$ is much larger than $p$ and $\beta^\ast$ and $\gamma^\ast$ are sparse. To illustrate, assume that \eqref{eq:main-y} and \eqref{eq:main-d} share the same set of $k$ non-zero coefficients and set $(n,\,p)=(14238,\,384)$ as in \citet[][Section 3]{angrist2019machine}, who use post double Lasso to estimate the effect of elite colleges. The ratio of our OVB lower bound to the standard deviation in \citet{belloni2014inference} is $0.27$ if $k=1$ and $2.4$ if $k=10$. This example shows that the requirement on the sparsity parameter $k$ for the OVBs to be negligible is quite stringent. We emphasize that our findings do not contradict the existing results on the asymptotic distribution of post double Lasso in \citet{belloni2014inference}. Rather, our results suggest that the OVBs can make the asymptotic zero-mean approximation of $\sqrt{n}(\tilde\alpha-\alpha^\ast)$ inaccurate in finite samples.

To better understand the practical implications of the OVB of post double Lasso, we perform extensive simulations. Our simulation results can be summarized as follows. (i) Large OVBs are persistent across a range of empirically relevant settings and can occur even when $n$ is large and larger than $p$, and the sparsity parameter $k$ is small. (ii) The OVBs can lead to invalid inferences and under-coverage of confidence intervals. (iii) The performance of post double Lasso varies substantially across different popular choices of regularization parameters, and no single choice outperforms the others across all designs. While it may be tempting to choose a smaller regularization parameter than the standard recommendation in the literature to mitigate under-selection, we find that this idea does not work in general and can lead to rather poor performance.

In addition to the simulations, we consider two empirical applications: the analysis of the effect of 401(k)
plans on savings by \citet{belloni2017program} and \citet{chernozhukov2018double} and the study of the racial test score gap by \citet{fryerlevitt2013}. We draw samples of different sizes from the large original data and compare the subsample estimates to the estimates based on the original data.\footnote{\normalsize \begin{doublespace}For example, \citet{kolesar2018inference} use a similar of exercise to illustrate the issues with discrete running variables in regression discontinuity designs.\end{doublespace}} In both applications, we find substantial biases even when $n$ is considerably larger than $p$, and we document that the magnitude of the biases varies substantially depending on the regularization choice.

Given our theoretical results, simulations, and empirical evidence, a natural question is how to make statistical inferences in a reliable manner if one is concerned about OVBs. In many economic applications, $p$ is comparable to but still smaller than $n$. This motivates the recent development of high-dimensional OLS-based inference procedures \citep[e.g.,][]{cattaneo2018inference,dadamo2018cluster,jochmans2020heteroscedasticity,kline2020leave}. These methods are based on OLS regressions with all controls and rely on novel variance estimators that are robust to the inclusion of many controls (unlike conventional variance estimators). Based on extensive simulations, we find that OLS with standard errors proposed by \citet{cattaneo2018inference} demonstrates excellent coverage accuracy across all our simulation designs. Another advantage of OLS-based methods over Lasso-based inference methods is that the former do not rely on any sparsity assumptions. This is important because sparsity assumptions may not be satisfied in applications and, as this paper shows, the OVBs of Lasso-based inference procedures can be substantial even when $k$ is small and $n$ is large and larger than $p$. 
However, OLS yields somewhat wider confidence intervals than the Lasso-based inference methods, suggesting a trade-off between coverage accuracy and the length of the confidence intervals.

Our analyses suggest two main recommendations concerning the use of post double Lasso in empirical studies. First, if the estimates of $\alpha^{*}$ are robust to increasing the recommended regularization parameters in both Lasso steps, this suggests that either the OVBs are negligible (Regime (ii)) or under-selection is unlikely (Regime (iii)). In either case, post double Lasso is a reliable and efficient method. 
Otherwise, modern high-dimensional OLS-based inference methods constitute a possible alternative when $p$ is smaller than $n$. 
Second, our findings highlight the importance of augmenting the final OLS regression in post double Lasso with control variables motivated by economic theory and prior knowledge, as suggested by \citet{belloni2014inference}.

\section{Lasso and post double Lasso}
\label{sec:lasso_post_double_lassp}

\subsection{The Lasso}

\label{sec:lasso}

Consider the following linear regression model 
\begin{equation}
Y_{i}=X_{i}\theta^{*}+\varepsilon_{i},\qquad i=1,\dots,n,\label{eq:1}
\end{equation}
where $\left\{ Y_{i}\right\} _{i=1}^{n}=Y$ is an $n$-dimensional
response vector, $\left\{ X_{i}\right\} _{i=1}^{n}=X$ is an $n\times p$
matrix of covariates with $X_{i}$ denoting the $i$th row of $X$,
$\left\{ \varepsilon_{i}\right\} _{i=1}^{n}=\varepsilon$ is a zero-mean
error vector, and $\theta^{*}$ is a $p$-dimensional vector of unknown
coefficients.

The Lasso estimator of $\theta^{*}$, which was first proposed by \citet{tibsharini1996regression}, is given by 
\begin{equation}
\hat{\theta}\in\arg\min_{\theta\in\mathbb{R}^{p}}\frac{1}{2n}\sum_{i=1}^{n}\left(Y_{i}-X_{i}\theta\right)^{2}+\lambda\sum_{j=1}^{p}\left|\theta_{j}\right|,\label{eq:las}
\end{equation}
where $\lambda$ is the regularization parameter. Let
$\varepsilon\sim\mathcal{N}\left(0_{n},\,\sigma^{2}I_{n}\right)$
and $X$ be a fixed design matrix with normalized columns (i.e., $n^{-1}\sum_{i=1}^{n}X_{ij}^{2}=1$ for all $j=1,\dots,p$). In this example, \citet{bickel2009simultaneous} set $\lambda=2\sigma\sqrt{2n^{-1}\left(1+\tau\right)\log p}$
(where $\tau>0$) to establish upper bounds on $\sqrt{\sum_{j=1}^{p}(\hat{\theta}_{j}-\theta_{j}^{*})^{2}}$
with a high probability guarantee. To establish
perfect selection, \citet{wainwright2009sharp} sets
$\lambda$ proportional to $\sigma\phi^{-1}\sqrt{\left(\log p\right)/n}$,
where $\phi\in(0,\,1]$ is a measure of correlation between the covariates
with nonzero coefficients and those with zero coefficients. 

Besides the classical choices in \citet{bickel2009simultaneous} and
\citet{wainwright2009sharp}, other choices of $\lambda$
are available in the literature. For instance, \citet{belloni2012sparse} and
\citet{belloni2016cluster} propose choices that accommodate heteroscedastic and clustered errors. The regularization choice of \citet{belloni2012sparse}, which is recommended by \citet{belloni2014inference} for post double Lasso, is based on the following Lasso program: 
\begin{equation}
\hat{\theta}\in\arg\min_{\theta\in\mathbb{R}^{p}}\frac{1}{n}\sum_{i=1}^n\left(Y_i-X_i\theta \right)^2+\frac{\lambda}{n}\sum_{j=1}^p|\hat{l}_j\theta_j |,
\label{eq:general_lasso}
\end{equation}
where $(\hat{l}_1,\dots,\hat{l}_p)$ are penalty loadings obtained using the iterative algorithm developed in \citet{belloni2012sparse}.

Finally, a very popular practical
approach for choosing $\lambda$ is cross-validation; see, for example, \citet{homrighausen2013thelasso,homrighausen2014leaveoneout}
and \citet{chetverikov2020cross} for theoretical results on cross-validated Lasso. 

\subsection{Post double Lasso}
\label{sec:post_double_Lasso}
The model \eqref{eq:main-y}\textendash \eqref{eq:main-d} implies
the following reduced form model for $Y_{i}$: 
\begin{eqnarray}
Y_{i} & = & X_{i}\pi^{*}+u_{i},\label{eq:reduce-1}
\end{eqnarray}
where $\pi^{*}=\gamma^{*}\alpha^{*}+\beta^{*}$ and $u_{i}=\eta_{i}+\alpha^{*}v_{i}$.

The post double Lasso, introduced by \citet{belloni2014inference},
essentially exploits the Frisch-Waugh theorem, where the regressions
of $Y$ on $X$ and $D$ on $X$ are implemented with the Lasso: 
\begin{eqnarray}
\hat{\pi} & \in & \textrm{arg}\min_{\pi\in\mathbb{R}^{p}}\frac{1}{2n}\sum_{i=1}^{n}\left(Y_{i}-X_{i}\pi\right)^{2}+\lambda_{1}\sum_{j=1}^{p}\left|\pi_{j}\right|,\label{eq:las-1}\\
\hat{\gamma} & \in & \textrm{arg}\min_{\gamma\in\mathbb{R}^{p}}\frac{1}{2n}\sum_{i=1}^{n}\left(D_{i}-X_{i}\gamma\right)^{2}+\lambda_{2}\sum_{j=1}^{p}\left|\gamma_{j}\right|.\label{eq:las-2}
\end{eqnarray}
The final estimator $\tilde{\alpha}$ of $\alpha^{*}$ is then obtained
from an OLS regression of $Y$ on $D$ and the union of selected controls
\begin{equation}
\left(\tilde{\alpha},\,\tilde{\beta}\right)\in\textrm{arg}\min_{\alpha\in\mathbb{R},\beta\in\mathbb{R}^{p}}\frac{1}{2n}\sum_{i=1}^{n}\left(Y_{i}-D_{i}\alpha-X_{i}\beta\right)^{2}\quad\textrm{s.t. }\beta_{j}=0\;\forall j\notin\left\{ \hat{I}_{1}\cup\hat{I}_{2}\right\} ,\label{eq:double}
\end{equation}
where $\hat{I}_{1}=\textrm{supp}\left(\hat{\pi}\right)=\left\{ j:\,\hat{\pi}_{j}\neq0\right\} $
and $\hat{I}_{2}=\textrm{supp}\left(\hat{\gamma}\right)=\left\{ j:\,\hat{\gamma}_{j}\neq0\right\} $.

\section{Numerical example}
\label{sec:numerical_evidence}

This section presents a simple numerical example illustrating the OVB of post double Lasso. All computations were performed in \texttt{Matlab} \citep{MATLAB2020}. The Lasso is implemented using the built-in function \texttt{lasso}.  We consider a simple but classical setting that is often considered favorable to the performance of the Lasso. The data are simulated according to the structural model \eqref{eq:main-y}--\eqref{eq:main-d}, where $X_{i}\overset{iid}\sim \mathcal{N}\left(0_p,I_{p}\right)$,
$\eta_{i}\overset{iid}\sim\mathcal{N}(0,1)$, and $v_{i}\overset{iid}\sim \mathcal{N}(0,1)$
are independent of each other. Our object of interest is $\alpha^{*}$. We set $n=500$, $p=200$, $\alpha^{*}=0$,
and consider a sparse setting where $\beta_j^{*}=\gamma_j^{*}=c \cdot 1\left\{j\le k\right\}$ for $j=1,\dots,p$ and $k=5$. Following the simulation exercise in \citet{belloni2014inference}, we vary the population $R^2$s in \eqref{eq:main-d} and \eqref{eq:reduce-1} by varying the magnitude of the non-zero coefficients $c$. We employ the regularization parameter choice by \citet{bickel2009simultaneous}.

Figure \ref{fig:fsd_dml} displays the finite sample distribution of post double Lasso for different values of $R^2$. For comparison, we plot the distribution of the ``oracle estimator'' of $\alpha^\ast$, a regression of $Y_i-X_i\pi^\ast$ on $D_i-X_i\gamma^\ast$. The finite sample behavior of post double Lasso depends on how many of the $k=5$ relevant controls get selected in both Lasso steps. Figure \ref{fig:hist_sel_dml} shows histograms of the number of selected relevant controls. \textbf{[FIGURES \ref{fig:fsd_dml} AND \ref{fig:hist_sel_dml} HERE.]}

When $R^2=0.5$, post double Lasso exhibits an excellent performance. The finite sample distribution is well-approximated by the normal distribution of the oracle estimator and centered at $\alpha^\ast=0$. The reason for the excellent performance is that all $k=5$ controls are selected with high probability such that post double Lasso essentially coincides with an OLS regression of $Y_i$ on $D_i$ and the five relevant controls.

Let us now consider what happens if we decrease the magnitude of the coefficients and the implied $R^2$. For $R^2=0.3$, post double Lasso exhibits a large finite sample bias. Moreover, the distribution of post double Lasso differs substantially from the distribution of the oracle estimator: it has a larger standard deviation and is slightly skewed. This distribution is a mixture of the distributions of OLS conditional on the two Lasso steps selecting different combinations of relevant controls (Panel (b) in Figure \ref{fig:hist_sel_dml}). For $R^2=0.1$, post double Lasso again exhibits a significant bias. At the same time, the shape of the distribution is similar to that of the oracle estimator. This is because, with high probability, none of the controls gets selected, while the coefficients are large enough to cause an OVB. Decreasing the $R^2$ to $0.01$ reduces the bias. However, it does not change the shape of the finite sample distribution because the selection performance remains unchanged.

The simple numerical example in this section shows that post double Lasso can suffer from OVBs when the two Lasso steps do not select all relevant controls. The magnitude of the coefficients corresponding to the omitted controls can be large enough such that the OVB shifts the location of the finite sample distribution far away from the true value $\alpha^\ast=0$. The issue documented here is not a ``small sample'' phenomenon but persists even in large sample settings; see Appendix \ref{app: large sample}.

\section{Theoretical analysis}
\label{sec:inference}

This section provides a theoretical analysis of the OVB of post double Lasso. Our goal here is to demonstrate that,
even in simple examples with classical assumptions (e.g.,
normal homoscedastic errors, orthogonal designs for the relevant controls),
which are often viewed favorable to the performance of Lasso,
the finite sample OVBs of post double Lasso can be substantial relative
to the standard deviation provided in the existing literature. We
first establish a new necessary result for the Lasso's inclusion and
then derive lower and upper bounds on the OVBs of post double Lasso.
These results are derived for fixed $\left(n,\,p,\,k\right)$ and
are also valid when $\left(k\log p\right)/n\rightarrow0$ or $\left(k\log p\right)/n\rightarrow\infty$.
As it will become clear in the following, $p$ needs to be large enough
for our results to be informative. Without loss of generality, we
normalize the matrix $X$ such that $\left(X_{j}^{T}X_{j}\right)/n=1$
for all $j=1,\dots,p$. We focus on fixed designs (of $X$) to highlight
the essence of the problem; see Appendix \ref{sec:Random-design}
for an extension to random designs. 

For the convenience of the reader, here we collect the notation to
be used in the theoretical analysis. Let $1_{m}$ denote the $m-$dimensional
(column) vector of ``1''s and $0_{m}$ is defined similarly. The
$\ell_{1}-$norm of a vector $v\in\mathbb{R}^{m}$ is denoted by $\left|v\right|_{1}:=\sum_{i=1}^{m}\left|v_{i}\right|$
and the $\ell_{\infty}-$norm of a vector $v\in\mathbb{R}^{m}$ is
denoted by $\left|v\right|_{\infty}:=\max_{i=1,\dots,m}\left|v_{i}\right|$.
The $\ell_{\infty}$ matrix norm (maximum absolute row sum) of a matrix
$A$ is denoted by $\left\Vert A\right\Vert _{\infty}:=\max_{i}\sum_{j}\left|a_{ij}\right|$.
For a vector $v\in\mathbb{R}^{m}$ and a set of indices $T\subseteq\left\{ 1,\dots,m\right\} $,
let $v_{T}$ denote the sub-vector (with indices in $T$) of $v$.
For a matrix $A\in\mathbb{R}^{n\times m}$, let $A_{T}$ denote the
submatrix consisting of the columns with indices in $T$. For a vector
$v\in\mathbb{R}^{m}$, let $\textrm{sgn}(v):=\left\{ \textrm{sgn}(v_{j})\right\} _{j=1,\dots,m}$
denote the sign vector such that $\textrm{sgn}(v_{j})=1$ if $v_{j}>0$,
$\textrm{sgn}(v_{j})=-1$ if $v_{j}<0$, and $\textrm{sgn}(v_{j})=0$
if $v_{j}=0$. Given a set $K$, let $\textrm{card}\left(K\right)$
denote the cardinality of $K$. We denote $\max\left\{ a,\,b\right\} $
by $a\vee b$ and $\min\left\{ a,\,b\right\} $ by $a\wedge b$.

\subsection{Stronger necessary results on the Lasso's inclusion}
\label{sec:stronger_necessary}
Post double Lasso
exhibits OVBs whenever the relevant controls are selected in neither
\eqref{eq:las-1} nor \eqref{eq:las-2}. To the best of our knowledge, there are no formal results strong enough to show that, \textit{with high probability},
Lasso can fail to select the relevant controls in both steps. Therefore, we first establish a new necessary
result for the single Lasso's inclusion in Lemma \ref{prop:fixed_design}.
To derive this result, we consider the following classical assumptions, which are often viewed favorable to the performance of the Lasso.
\begin{assumption}\label{ass: incoherence} In terms of model \eqref{eq:1},
suppose: (i) $K=\left\{ j:\,\theta_{j}^{*}\neq0\right\} \neq\emptyset$
and $\textrm{card}\left(K\right)= k\leq\left(n\wedge p\right)$;
(ii) $X_{K}^{T}X_{K}$ is a diagonal matrix; (iii) $\left\Vert \left(X_{K^{c}}^{T}X_{K}\right)\left(X_{K}^{T}X_{K}\right)^{-1}\right\Vert _{\infty}=1-\phi$
for some $\phi\in(0,\,1]$, where $K^{c}$ is the complement of $K$.\end{assumption}

Known as the
incoherence condition due to \citet{wainwright2009sharp}, part (iii)
in Assumption \ref{ass: incoherence} is needed for the exclusion
of the irrelevant controls. Note that if the columns in $X_{K^{c}}$
are orthogonal to the columns in $X_{K}$ (but within $X_{K^{c}}$,
the columns need not be orthogonal to each other), then $\phi=1$.
Obviously a special case of this is when the entire $X$ consists
of mutually orthogonal columns (which is possible if $n\geq p$).
To provide some intuition for Assumption \ref{ass: incoherence}(iii),
let us consider the simple case where $k=1$ and $K=\left\{ 1\right\} $,
$X$ is centered (such that $\left\{n^{-1} \sum_{i=1}^{n}X_{ij}\right\} _{j=1}^{p}=0_{p}$),
and the columns in $X$ are normalized such that the standard deviations
of $X_{1}$ and $X_{j}$ (for any $j\in\left\{ 2,3,\dots,p\right\} $)
are identical. Then, $1-\phi$ is simply the maximum of the absolute
(sample) correlations between $X_{1}$ and each of the $X_{j}$s with
$j\in\left\{ 2,3,\dots,p\right\} $. 

\begin{lemma}[Necessary result on the Lasso's inclusion]\label{prop:fixed_design}
In model \eqref{eq:1}, suppose the $\varepsilon_{i}$s are independent
over $i=1,\dots,n$ and $\varepsilon_{i}\sim\mathcal{N}\left(0,\sigma^{2}\right)$,
where $\sigma\in\left(0,\,\infty\right)$. Let Assumption \ref{ass: incoherence}
hold. We solve the Lasso (\ref{eq:las}) with \begin{equation}
    \lambda\geq\frac{2\sigma}{\phi}\sqrt{\frac{2\left(1+\tau\right)\log p}{n}} \label{eq:Lam}
\end{equation}
where $\tau>0$. Let $E_{1}$ denote the event that $\textrm{sgn}\left(\hat{\theta}_{j}\right)=-\textrm{sgn}\left(\theta_{j}^{*}\right)$
for at least one $j\in K$, and $E_{2}$ denote the event that $\textrm{sgn}\left(\hat{\theta}_{l}\right)=\textrm{sgn}\left(\theta_{l}^{*}\right)$
for at least one $l\in K$ with 
\begin{equation}
\left|\theta_{l}^{*}\right|\leq\frac{\lambda}{2}.\label{eq:min}
\end{equation}
Then, we have 
\begin{equation}
\mathbb{P}\left(E_{1}\cap\mathcal{\mathcal{E}}\right)=\mathbb{P}\left(E_{2}\cap\mathcal{\mathcal{E}}\right)=0\label{eq:zero}
\end{equation}
where $\mathcal{E}=\left\{ \left|n^{-1}X^{T}\varepsilon\right|_{\infty}\leq\sigma\phi^{-1}\sqrt{2n^{-1}\left(1+\tau\right)\log p}\right\} $
and $\mathbb{P}\left(\mathcal{E}\right)\geq1-1/p^{\tau}$.

If (\ref{eq:min}) holds for all $l\in K$, we have 
\begin{equation}
\mathbb{P}\left(\hat{\theta}=0_{p}\right)\geq1-\frac{1}{p^{\tau}}.\label{eq:nec}
\end{equation}

\end{lemma}

Lemma \ref{prop:fixed_design} shows that for large enough $p$, Lasso
fails to select any of the relevant covariates with high probability
if (\ref{eq:min}) holds for all $l\in K$. If such conditions hold
with respect to both \eqref{eq:main-y} and \eqref{eq:main-d}, then
Lemma \ref{prop:fixed_design} implies that the relevant controls
are selected in neither \eqref{eq:las-1} nor \eqref{eq:las-2} with
probability at least $1-2/p^{\tau}$; see Panels
(c) and (d) of Figure \ref{fig:hist_sel_dml} for an illustration. 

Let us rewrite (\ref{eq:min}) as $\left|\theta_{l}^{*}\right|=2^{-1}\left|a\right|\lambda$
with $\left|a\right|\in(0,\,1]$. Assume that $\left|a\right|\phi^{-1}\sigma$
is bounded from above and away from zero; moreover, $\lambda$ satisfies \eqref{eq:Lam} and scales as $\sqrt{\left(\log p\right)/n}$. These conditions imply that
$\left|\theta_{l}^{*}\right|\asymp\sqrt{1/n}$ in the classical
asymptotic framework where $n\rightarrow \infty$ and $p$ is fixed. This regime of $\theta_{l}^{*}$
is exactly where classical model selection procedures struggle to distinguish
a coefficient from zero in low-dimensional settings. 

In the introduction, we have discussed the relationship of our result to that in \citet{lahiri2021necessary}. It is also interesting to compare Lemma \ref{prop:fixed_design} with the results in \citet{wainwright2009sharp}. Note that (\ref{eq:zero}) implies $\mathbb{P}\left(\hat{\theta}_{l}\neq0\right)\leq1/p^{\tau}$
for any $l\in K$ subject to (\ref{eq:min}). In comparison, \citet{wainwright2009sharp}
shows that whenever $\theta_{l}^{*}\in\left(\lambda\textrm{sgn}\left(\theta_{l}^{*}\right),\,0\right)$
or $\theta_{l}^{*}\in\left(0,\,\lambda\textrm{sgn}\left(\theta_{l}^{*}\right)\right)$
for some $l\in K$, 
\begin{equation}
\mathbb{P}\left[\textrm{sgn}\left(\hat{\theta}_{K}\right)=\textrm{sgn}\left(\theta_{K}^{*}\right)\right]\leq\frac{1}{2}.\label{eq:wainwright}
\end{equation}
Constant bounds in the form of (\ref{eq:wainwright}) cannot explain
that, with high probability, Lasso fails to select the relevant covariates
in both \eqref{eq:las-1} and \eqref{eq:las-2} when $p$ is sufficiently
large.

\begin{remark}\label{more choices}As the choices of regularization
parameters used in the vast majority of literature \citep[e.g.,][]{bickel2009simultaneous,wainwright2009sharp,belloni2012sparse,belloni2013least,belloni2014inference},
the choice of $\lambda$ in Lemma \ref{prop:fixed_design} is derived
from the principle that $\lambda$ should be no smaller than $2\max_{j=1,\dots,p}\left|n^{-1}X_{j}^{T}\varepsilon\right|$
with high probability. In particular, our choice for $\lambda$ takes
the form of that in \citet{wainwright2009sharp}, but ours involves
a sharper universal constant. Choosing regularization parameters in
this form ensures the exclusion of irrelevant controls. In addition,
our choice for $\lambda$ has a scaling that can be achieved by the regularization parameters in \citet{belloni2012sparse}, \citet{belloni2013least}, \citet{belloni2014inference},
and coincides with that in \citet{bickel2009simultaneous} when the
columns in $X_{K^{c}}$ are orthogonal to the columns in $X_{K}$
(but within $X_{K^{c}}$, the columns need not be orthogonal to each
other). \end{remark}

\subsection{Lower bounds on the OVBs\label{sec:bias_post_double_lasso}}
In this section, we apply Lemma \ref{prop:fixed_design} to derive lower bounds on the OVB of post double Lasso.
We consider the structural model \eqref{eq:main-y}--\eqref{eq:main-d},
which can be written in matrix notation as 
\begin{eqnarray}
Y & = & D\alpha^{*}+X\beta^{*}+\eta,\label{eq:20}\\
D & = & X\gamma^{*}+v.\label{eq:21-1}
\end{eqnarray}
In matrix notation, the reduced form \eqref{eq:reduce-1} becomes
\begin{eqnarray}
Y & = & X\pi^{*}+u,\label{eq:reduce}
\end{eqnarray}
where $\pi^{*}=\gamma^{*}\alpha^{*}+\beta^{*}$ and $u=\eta+\alpha^{*}v$.
We make the following assumptions about model (\ref{eq:20})--(\ref{eq:21-1}).

\begin{assumption}\label{ass:inference_normality} (i) The error
terms $\eta$ and $v$ consist of independent entries drawn from $\mathcal{N}\left(0,\,\sigma_{\eta}^{2}\right)$
and $\mathcal{N}\left(0,\,\sigma_{v}^{2}\right)$, respectively, where
$\eta$ and $v$ are independent of each other; (ii) the data are centered: $\bar{D}=n^{-1}\sum_{i=1}^{n}D_{i}=0$,
$\bar{X}=\left\{n^{-1}\sum_{i=1}^{n}X_{ij}\right\} _{j=1}^{p}=0_{p}$,
and $\bar{Y}=n^{-1}\sum_{i=1}^{n}Y_{i}=0$; (iii) $K=\left\{ j:\,\beta_{j}^{*}\neq0\right\} =\left\{ j:\,\gamma_{j}^{*}\neq0\right\} \neq\emptyset$
and $\textrm{card}\left(K\right)= k\leq\left(n\wedge p\right)$.

\end{assumption}

Proposition \ref{prop:bias_post_double_formula} derives a lower bound
formula for the OVB of post double Lasso concerning the case where
$\alpha^{*}=0$.

\begin{proposition}[OVB lower bound]\label{prop:bias_post_double_formula}
Suppose $\alpha^{*}=0$. Let Assumption \ref{ass: incoherence}(ii)-(iii)
and Assumption \ref{ass:inference_normality} hold; $\lambda_{1}=2\phi^{-1}\sigma_{\eta}\sqrt{2n^{-1}\left(1+\tau\right)\log p}$
and $\lambda_{2}=2\phi^{-1}\sigma_{v}\sqrt{2n^{-1}\left(1+\tau\right)\log p}$;
for all $j\in K$ and $\left|a\right|,\left|b\right|\in(0,\,1]$,
\begin{equation}
\text{both}\quad\beta_{j}^{*}=a\phi^{-1}\sigma_{\eta}\sqrt{\frac{2\left(1+\tau\right)\log p}{n}}\quad\text{ and}\quad\gamma_{j}^{*}=b\phi^{-1}\sigma_{v}\sqrt{\frac{2\left(1+\tau\right)\log p}{n}}.\label{eq:16}
\end{equation}
In terms of $\tilde{\alpha}$ obtained from (\ref{eq:double}), we
have 
\[
\left|\mathbb{E}\left(\tilde{\alpha}-\alpha^{*}\vert\mathcal{M}\right)\right|\geq\underset{:=\underline{\text{OVB}}}{\underbrace{\max_{r\in(0,1]}T_{1}\left(r\right)T_{2}\left(r\right)}}
\]
where 
\begin{eqnarray*}
T_{1}\left(r\right) & = & \frac{\left(1+\tau\right)\left|ab\right|\phi^{-2}\sigma_{\eta}\frac{k\log p}{n}}{4\left(1+\tau\right)\phi^{-2}b^{2}\sigma_{v}\frac{k\log p}{n}+\left(1+r\right)\sigma_{v}},\\
T_{2}\left(r\right) & = & 1-k\exp\left(\frac{-b^{2}\left(1+\tau\right)\log p}{4\phi^{2}}\right)-\frac{1}{p^{\tau}}-\exp\left(\frac{-nr^{2}}{8}\right),
\end{eqnarray*}
for any $r\in(0,\,1]$, and $\mathcal{M}$ is an event with $\mathbb{P}\left(\mathcal{M}\right)\geq1-k\exp\left(-\left(4\phi^{2}\right)^{-1}b^{2}\left(1+\tau\right)\log p\right)-2/p^{\tau}$.

\end{proposition}

\begin{remark}In our theoretical results, we implicitly assume $p$
is sufficiently large such that $1-k\exp\left(-\left(4\phi^{2}\right)^{-1}b^{2}\left(1+\tau\right)\log p\right)-2/p^{\tau}>0$.
Indeed, probabilities in such a form are often referred to as the
``high-probability'' guarantees in the literature of (non-asymptotic)
high-dimensional statistics concerning large $p$ and small enough
$k$. Recalling the definitions of $\hat{I}_{1}$ and $\hat{I}_{2}$ 
in Section \ref{sec:post_double_Lasso}, the event $\mathcal{M}$ is the intersection
of $\left\{ \hat{I}_{1}=\hat{I}_{2}=\emptyset\right\} $ and an additional
event $\mathcal{E}_{t^{*}}=\left\{ \left|n^{-1}X_{K}^{T}v\right|_{\infty}\leq t^{*},\,t^{*}=4^{-1}\left|b\right|\lambda_{2}\right\} $. The event
$\left\{ \hat{I}_{1}=\hat{I}_{2}=\emptyset\right\} $ occurs with
probability at least $1-2/p^{\tau}$, and the event $\mathcal{E}_{t^{*}}$
occurs with probability at least $1-k\exp\left(-\left(4\phi^{2}\right)^{-1}b^{2}\left(1+\tau\right)\log p\right)$.
 The additional event $\mathcal{E}_{t^{*}}$ is needed for us to derive
a non-trivial lower bound. In particular, with probability at most
$k\exp\left(-\left(4\phi^{2}\right)^{-1}b^{2}\left(1+\tau\right)\log p\right)$,
we have $\left|n^{-1}X_{K}^{T}v\right|_{\infty}\geq t^{*}$, and
on this event, the lower bound in Proposition \ref{prop:bias_post_double_formula}
can be negative, which is uninformative for the absolute value of
OVBs. \end{remark}

Let us compare the non-asymptotic lower bound in Proposition \ref{prop:bias_post_double_formula} to the implications of the existing asymptotic results for the bias of post double Lasso. If $\sigma_{v}$ is bounded away from zero and $\sigma_{\eta}$ is
bounded from above, the existing theory would imply that the biases
of post double Lasso are bounded from above by $\texttt{constant}\cdot\left(k\log p\right)/n$,
irrespective of whether Lasso fails to select the relevant controls
or not, and how small $\left|a\right|$ and $\left|b\right|$ are.
The (positive) $\texttt{constant}$ does not depend on $\left(n,\,p,\,k,\,\beta_{K}^{*},\,\gamma_{K}^{*},\,\alpha^{*}\right)$,
and bears little meaning in the asymptotic framework which simply
assumes $\left(k\log p\right)/\sqrt{n}\rightarrow0$ among other sufficient
conditions. [The existing theoretical framework makes it difficult
to derive an informative $\texttt{constant}$, and to our knowledge,
the literature provides no such derivation.] The asymptotic upper
bound $\texttt{constant}\cdot\left(k\log p\right)/n$ does not distinguish cases that vary in
$\left(\beta_{K}^{*},\,\gamma_{K}^{*},\,\alpha^{*}\right)$. By contrast,
our lower bound analyses are informative about whether the upper bound
can be attained by the magnitude of the OVBs and provide explicit
constants. These features of our analysis are crucial for understanding
the finite sample limitations of post double Lasso. In view of Proposition
\ref{prop:bias_post_double_formula}, $\underline{OVB}$ is not a
simple linear function of $\left(k\log p\right)/n$ in general, but roughly
linear in $\left(k\log p\right)/n$ when $\phi^{-2}n^{-1}k\log p=o\left(1\right)$
and $k\exp\left(-\left(4\phi^{2}\right)^{-1}b^{2}\left(1+\tau\right)\log p\right)+2/p^{\tau}=o\left(1\right)$.

\subsection{Key takeaways of our theoretical results}
\label{sec:takeaways_theory}
The finite sample behavior of post double Lasso can be characterized by three regimes: (i) \emph{non-negligible} OVBs, (ii) negligible OVBs, and (iii) absence of OVBs. 

\noindent \textbf{Regime (i) (non-negligible OVB)\textbf{.}} When double under-selection occurs with high probability and $\left(k\log p\right)/\sqrt{n}$
is not small enough, according to Proposition \ref{prop:bias_post_double_formula},
the OVB lower bound can be substantial compared to the standard deviation
obtained from the asymptotic distribution in \citet{belloni2014inference}.
To gauge the magnitude of the OVB and explain why the confidence intervals
proposed in the literature can exhibit under-coverage, it is instructive
to compare $\underline{OVB}$ with $\ensuremath{\sigma_{\tilde{\alpha}}=n^{-1/2}\left(\sigma_{\eta}/\sigma_{v}\right)}$,
the standard deviation (of $\tilde{\alpha}$) obtained from the asymptotic
distribution in \citet{belloni2014inference}.\footnote{\normalsize \begin{doublespace}We thank Ulrich M\"uller for suggesting this comparison.\end{doublespace}}
Let us consider an example with $n=14238$, $p=384$ \citep[as in][]{angrist2019machine}, $a=b=1$, $\sigma_{\eta}=\sigma_{v}=1$, $\tau=0.5$,
and $\phi=0.5$. If $\left|\beta_{j}^{*}\right|=0.07$ and $\left|\gamma_{j}^{*}\right|=0.07$
for all $j\in K$, then $\underline{OVB}/\sigma_{\tilde{\alpha}}\approx0.27$
when $k=1$ and $\underline{OVB}/\sigma_{\tilde{\alpha}}\approx2.4$
when $k=10$; see Panel (c) of Figures \ref{fig:fsd_dml} and \ref{fig:hist_sel_dml} for an illustration of Regime (i). The OVBs can also be non-negligible when some but not all relevant controls are selected; see Panel (b) of Figures \ref{fig:fsd_dml} and \ref{fig:hist_sel_dml}.
These results suggest that, non-asymptotically, post
double Lasso cannot avoid the ``post-selection inference issues''
raised in a series of papers by Leeb and P\"otscher \citep[e.g.,][]{leeb2005model,leeb2008can,leeb2017testing}.

\noindent \textbf{Regime (ii) (negligible OVB).} 
By
Lemma \ref{prop:fixed_design} and (\ref{eq:16}), $\left|\hat{\beta}-\beta^{*}\right|_{1}=2^{-1}\left|a\right|k\lambda_{1}$
and $\left|\hat{\gamma}-\gamma^{*}\right|_{1}=2^{-1}\left|b\right|k\lambda_{2}$
with probability at least $1-2/p^{\tau}$. If $\sigma_{v}$
is bounded away from zero, $\sigma_{\eta}$ is bounded from above,
and $\left|a\right|=\left|b\right|=o\left(1\right)$, by a similar argument
as in \citet{belloni2014inference}, we can show that $\sqrt{n}\left(\tilde{\alpha}-\alpha^{*}\right)$
is approximately normal and centered at zero, even if $\left(k\log p\right)/\sqrt{n}$
is bounded away from zero and scales as a constant. Holding other factors constant, the magnitude
of OVBs decreases as $\left|\beta_{K}^{*}\right|$ and $\left|\gamma_{K}^{*}\right|$
decrease (i.e., as $\left|a\right|$ and $\left|b\right|$ decrease).
As $\left|a\right|$ and $\left|b\right|$ become very small, the
relevant controls become essentially irrelevant. Panel (d) of Figures \ref{fig:fsd_dml} and \ref{fig:hist_sel_dml} provides an illustration of Regime (ii).

\noindent \textbf{Regime (iii) (absence of OVB).} All relevant controls will be selected when the magnitude of their coefficients is large enough. Specifically, for all $j\in K,\;a>3,\,b>3$, if 
\begin{equation}
\text{either}\quad\left|\beta_{j}^{*}\right|=a\phi^{-1}\sigma_{\eta}\sqrt{\frac{2\left(1+\tau\right)\log p}{n}}\quad\text{or}\quad\left|\gamma_{j}^{*}\right|=b\phi^{-1}\sigma_{v}\sqrt{\frac{2\left(1+\tau\right)\log p}{n}},\label{eq:15}
\end{equation}
then $\mathbb{P}\left[\textrm{supp}\left(\hat{\pi}\right)=\textrm{supp}\left(\pi^{*}\right)\right]\geq1-1/p^{\tau}$
or $\mathbb{P}\left[\textrm{supp}\left(\hat{\gamma}\right)=\textrm{supp}\left(\gamma^{*}\right)\right]\geq1-1/p^{\tau}$
by standard arguments \citep[e.g.,][]{wainwright_2019}. As a result,
$\mathbb{P}\left(\left\{ \hat{I}_{1}\cup\hat{I}_{2}\right\} =K\right)\geq1-1/p^{\tau}$
(where $\hat{I}_{1}$ and $\hat{I}_{2}$ are defined in Section \ref{sec:post_double_Lasso});
i.e., the final OLS step (\ref{eq:double}) includes all the relevant
controls with high probability. By similar argument as in Appendix
\ref{sec:appendix_post_double_lasso}, on the high probability event
$\left\{ \left\{ \hat{I}_{1}\cup\hat{I}_{2}\right\} =K\right\} $,
the OVB of $\tilde{\alpha}$ is zero. Panel (a) of Figures \ref{fig:fsd_dml} and \ref{fig:hist_sel_dml} provides an illustration of Regime (iii).

\medskip

This ``triple-regime'' characterization suggests that one may assess the robustness of post double Lasso by increasing the penalty level. If increasing $\lambda_{1}$ and $\lambda_{2}$  yields similar estimates
$\tilde{\alpha}$, then the underlying model could be in the regime
where either the OVBs are negligible (Regime (ii)) or under-selection in both Lasso steps is unlikely (Regime (iii)).
By contrast, under Regime (i), the performance of post double
Lasso can be quite sensitive to an increase of $\lambda_{1}$ and
$\lambda_{2}$. The rationale behind this heuristic lies in that the
final step of post double Lasso, (\ref{eq:double}), is simply an
OLS regression of $Y$ on $D$ and the union of selected controls
from (\ref{eq:las-1})--(\ref{eq:las-2}). A natural question is by how much
$\lambda_{1}$ and $\lambda_{2}$ should be increased for the robustness
checks. For the regularization choice proposed in \citet{belloni2014inference}, we will show in the simulations of Section \ref{sec:simulation_evidence} that an increase by $50\%$ works well in practice.  

Finally, our theoretical results have interesting implications for the comparison between post double Lasso and post single Lasso, where the latter only relies on one Lasso step to select the relevant controls. Note that the magnitude of the OVB of the post single Lasso estimator of
$\alpha^{*}$ also falls into three regimes: (i) non-negligible
OVB, (ii) negligible OVB, and (iii) absence of OVB. Thus, qualitatively, the OVBs of post
single Lasso and post double Lasso have a similar behavior in finite samples. However, quantitatively,
the magnitude of the OVBs can be much larger for the
post single Lasso than for the post double Lasso, as illustrated by
the following example. If $\beta_{j}^{*}=a\phi^{-1}\sigma_{\eta}\sqrt{2n^{-1}\left(1+\tau\right)\log p}$
with $\left|a\right|\in(0,\,1]$, and $\left|\gamma_{j}^{*}\right|\geq1$,
then the argument for showing Proposition \ref{prop:bias_post_double_formula}
in Appendix \ref{sec:appendix_post_double_lasso} implies that the
OVB lower bound scales roughly as $\left|a\right|k\sqrt{\left(\log p\right)/n}$
for the post single Lasso estimator of $\alpha^{*}$ (and note that
$\left(\left|a\right|k\sqrt{\left(\log p\right)/n}\right)/\left(1/\sqrt{n}\right)=\left|a\right|k\sqrt{\log p}$). 

\subsection{Additional theoretical results}
In this section, we briefly summarize the additional theoretical results that are provided in the appendix.
First, we also consider cases where $\alpha^{*}\neq0$. The conditions required
to derive the explicit formula are difficult to interpret when $\alpha^{*}\ne0$.
However, it is possible to provide easy-to-interpret scaling results
(without explicit constants) for cases where $\alpha^{*}\ne0$. Roughly, the scaling of our OVB lower bound can be as large as 
\begin{equation*}
\left(\frac{\sigma_{\eta}}{\sigma_{v}}\vee\left|\alpha^{*}\right|\right)\frac{k\log p}{n}
\quad \text{when} \quad \frac{k\log p}{n}\rightarrow0 
\end{equation*}
and 
\begin{equation}
\frac{\sigma_{\eta}}{\sigma_{v}}\vee\left|\alpha^{*}\right| \quad \text{when} \quad  \frac{k\log p}{n}\rightarrow\infty.\label{eq:upper_bound2}
\end{equation}
These results reveal an interesting feature of the post
double Lasso. The scaling of the OVB lower bound depends
on $\left|\alpha^{*}\right|$ when the relevant controls are not selected. This is because the error $u$ in the reduced form equation \eqref{eq:reduce} involves $\alpha^{*}$ such that the
choice of $\lambda_{1}$ in \eqref{eq:las-1} depends on $\left|\alpha^{*}\right|$ via the variance of $u$. By contrast, it is well-known that the
OVB of OLS does not depend on $\left|\alpha^{*}\right|$ when relevant
controls are omitted. Interested readers are referred to Propositions \ref{prop:bias_post_double_selection}
and \ref{prop:bias_post_double_selection-1} in Appendix \ref{subsec:Lower-bounds-alpha not zero}
for details.

Second, we also provide upper bounds on the OVB. We have seen in equation \eqref{eq:upper_bound2} that the lower bound on the OVBs scales as
$
\left(\sigma_{\eta}/\sigma_{v}\right)\vee\left|\alpha^{*}\right|$ when 
$\left(k\log p\right)/n\rightarrow\infty$.
Interestingly enough, we can
also show that the upper bound on the OVB scales as in equation \eqref{eq:upper_bound2}
despite $\left(k\log p\right)/n\rightarrow\infty$
and the Lasso being inconsistent in the sense $\sqrt{n^{-1}\sum_{i=1}^{n}\left(X_{i}\hat{\pi}-X_{i}\pi^{*}\right)^{2}}\rightarrow\infty$,
$\sqrt{n^{-1}\sum_{i=1}^{n}\left(X_{i}\hat{\gamma}-X_{i}\gamma^{*}\right)^{2}}\rightarrow\infty$
with high probability. Interested readers are referred to Propositions
\ref{prop:bias_post_double_selection-upper} and \ref{prop:bias_post_double_selection-upper-1}
in Appendix \ref{subsec:Upper-bounds-OVB} for details.

\section{Simulations and empirical evidence}
\label{sec:simulation_evidence}

To better understand the practical implications of the OVB of post double Lasso, in this section, we present the results from simulations and two empirical applications with widely-used regularization choices available in standard software packages. The analyses were carried out using \texttt{Matlab} \citep{MATLAB2020}, \texttt{R} \citep{R2021}, and \texttt{Stata} \citep{stata2021}.

\subsection{Simulations}
\label{sec:mc_simulations}
In this section, we present simulation evidence on the performance of post double Lasso with three choices of the regularization parameter: (i) the heteroscedasticity-robust proposal of \citet{belloni2012sparse,belloni2014inference} ($\lambda_{\text{BCCH}}$) implemented using the \texttt{R}-package \texttt{hdm} with the \texttt{double selection} option \citep{hdm2016}, (ii) the regularization parameter with the minimum cross-validated error ($\lambda_{\text{min}}$) implemented using the \texttt{R}-package \texttt{glmnet} \citep{glmnet},
and (iii) the regularization parameter corresponding to the minimum plus one standard deviation cross-validated error ($\lambda_{\text{1se}}$) also implemented using \texttt{glmnet}. We use the same type of regularization parameter choice in both Lasso steps.

We simulate data according to the DGP of Section \ref{sec:numerical_evidence}. To illustrate the role of the sample size $n$ and the sparsity parameter $k$, we consider (i) $(n,p,k)=(500,200,5)$, (ii) $(n,p,k)=(1000,200,5)$, and (iii) $(n,p,k)=(500,200,10)$. We show results for $R^2\in \{0.01,0.05,0.1,0.2,0.3,0.4,0.5\}$ based on 1,000 simulation repetitions. Appendix \ref{app:additional_simulations} presents additional simulation evidence, where we vary $n$, the distribution of $(X_i,\eta_i,v_i)$, the true value $\alpha^\ast$, and also consider a heteroscedastic DGP. 

We start by investigating the selection performance of the two Lasso steps of post double Lasso. Panel (a) of Figure \ref{fig:sel} displays the average number of selected controls (i.e., the cardinality of $\hat{I}_{1}\cup\hat{I}_{2}$ in \eqref{eq:double}) as a function of $R^2$. Lasso with $\lambda_{\text{BCCH}}$ selects the lowest number of controls. Choosing $\lambda_{\text{1se}}$ leads to a somewhat higher number of selected controls and results in moderate over-selection for larger values of $R^2$. Lasso with $\lambda_{\text{min}}$  selects the highest number of controls and exhibits substantial over-selection. Panel (b) shows the corresponding average numbers of selected relevant controls. \textbf{[FIGURE \ref{fig:sel} HERE.]}

Figure \ref{fig:dml_bias_std} presents evidence on the bias of post double Lasso. To make the results easier to interpret, we report the ratio of the bias to the empirical standard deviation. Post double Lasso with $\lambda_{\text{BCCH}}$ can exhibit biases that are more than two times larger than the standard deviation when $(n,p,k)=(500,200,10)$. The bias can still be comparable to the standard deviation when $(n,p,k)=(1000,200,5)$. Consistent with our theoretical discussions in Sections \ref{sec:bias_post_double_lasso} and \ref{sec:takeaways_theory}, the relationship between $R^2$ and the ratio of bias to standard deviation is non-monotonic: it is increasing for small $R^2$ and decreasing for larger $R^2$. The bias is somewhat smaller for $\lambda=\lambda_{\text{1se}}$. Setting $\lambda=\lambda_{\text{min}}$ yields the smallest ratio of bias to standard deviation. Finally, we note that when $R^2$ is large enough such that there is no under-selection, post double Lasso performs well and is approximately unbiased for all regularization parameters. \textbf{[FIGURE \ref{fig:dml_bias_std} HERE.]}

The additional simulation evidence reported in Appendix \ref{app:additional_simulations} confirms these results but further shows that $\alpha^\ast$ is an important determinant of the performance of post double Lasso because of its direct effect on the magnitude of the coefficients and the error variance in the reduced form equation \eqref{eq:reduce-1}. Moreover, we show that, while choosing $\lambda=\lambda_{\text{min}}$ works well when $\alpha^\ast=0$, this choice can yield poor performances when $\alpha^\ast\ne 0$ (see Figure \ref{fig:dml_bias_std_app}). A similar phenomenon arises when using $0.5\lambda_{\text{BCCH}}$ instead of $\lambda_{\text{BCCH}}$: this choice works well when $\alpha^\ast=0$ (see Figure \ref{fig:dml_bias_std_sens} below), but yields biases when $\alpha^\ast\ne 0$. We found that, under our DGPs, this is related to the fact that when $\alpha^\ast\ne 0$, \eqref{eq:main-d} and \eqref{eq:reduce-1} differ with respect to the underlying coefficients and noise variances, which leads to differences in the (over-)selection behavior of the Lasso. Thus, there is no simple recommendation for how to choose the regularization parameters in practice.

The substantive performance differences between the three regularization choices suggest that post double Lasso is sensitive to the penalty levels in the intermediate case where $R^2$ is small enough so that under-selection occurs but large enough to cause substantial OVBs. To further investigate this issue, we compare the results for $\lambda_{\text{BCCH}}$, $0.5 \lambda_{\text{BCCH}}$, and $1.5 \lambda_{\text{BCCH}}$. Figure \ref{fig:sel_sensitivity} displays the average numbers of all selected controls (relevant or not) and selected relevant controls in both Lasso steps. The differences in the selection performance are substantial. Lasso with $0.5 \lambda_{\text{BCCH}}$ over-selects for all $n$ and $R^2$, while Lasso with $1.5 \lambda_{\text{BCCH}}$ under-selects unless $R^2$ and $n$ are large and $k=5$. The differences get smaller as $n$ increases and larger as $k$ increases. \textbf{[FIGURE \ref{fig:sel_sensitivity} HERE.]}

Figure \ref{fig:dml_bias_std_sens} displays the ratio of bias to standard deviation. Choosing $0.5 \lambda_{\text{BCCH}}$ yields small biases relative to the standard deviations for all $R^2$. By contrast, choosing $1.5 \lambda_{\text{BCCH}}$ yields biases that can be more than six times larger than the standard deviations when $(n,p,k)=(500,200,10)$ and still substantial when $(n,p,k)=(1000,200,5)$. For very small and large values of $R^2$, post double Lasso is less sensitive to the penalty level. In Section \ref{sec:recommendation}, we discuss how to interpret and use robustness checks with respect to the regularization parameters in empirical applications. \textbf{[FIGURE \ref{fig:dml_bias_std_sens} HERE.]}

In sum, our simulation evidence shows that (i) under-selection can lead to large biases relative to the standard deviations, (ii) the performance of post double Lasso can be very sensitive to the choice of regularization parameters, and (iii) there is no simple recommendation for how to choose the regularization parameters in practice.

\subsection{Empirical evidence}
\label{sec:empirical_evidence}
\subsubsection{The effect of 401k plans on total wealth}
\label{sec:401k}
We revisit the analysis of the causal effect of eligibility for 401(k) plans ($D$) on total wealth ($Y$).\footnote{\normalsize \begin{doublespace} The effect of 401(k) plans is well-studied. We estimate intention to treat effect of 401(k) eligibility on assets as, e.g., in \citet{Poterbaetal1994,Poterbaetal1995,Poterbaetal1998} and \citet{Benjamin2003}. Other studies have used 401(k) eligibility to instrument for the actual 401(k) participation status \citep[e.g.,][]{abadie2003semiparametric,CH2004,belloni2017program,wuthrich2019closed}.\end{doublespace}} We use the data on $n=9915$ households from the 1991 SIPP \citep{belloni2017data} analyzed by \citet{belloni2017program} and \citet{chernozhukov2018double} with high-dimensional methods. We consider two different specifications of the control variables ($X$). 

\begin{enumerate}[itemsep=0pt]
\item \textbf{Two-way interactions (TWI) specification.} We use the same set of low-dimensional control variables as in \citet{Benjamin2003} and \citet{CH2004}: seven income dummies, five age dummies, family size, four education dummies, and dummies for marital status, two-earner status, defined benefit pension status, individual retirement account (IRA) participation status, and homeownership. Following common empirical practice, we augment this baseline specification with all two-way interactions. After removing collinear columns there are $p=167$ control variables. 
\item \textbf{Quadratic spline \& interactions (QSI) specification.} This is the ``Quadratic Spline Plus Interactions specification'' of \citet[][p.265]{belloni2017program}. It contains dummies for marital status, two-earner status, defined benefit pension status, IRA participation status, and homeownership, second-order polynomials in family size and education, a third-order polynomial in age, a quadratic spline in income with six breakpoints, as well as interactions of all the non-income variables with each term in the income spline. After removing collinear columns there are $p=272$ control variables. 
\end{enumerate}

Table \ref{tab:results_401k} presents post double Lasso estimates based on the whole sample with $\lambda_{\text{BCCH}}$, $0.5\lambda_{\text{BCCH}}$, and $1.5\lambda_{\text{BCCH}}$. For comparison, we also report OLS estimates with and without controls. For both specifications, the results are qualitatively similar across the different regularization choices and similar to OLS with all controls. This is possible as $n$ is much larger than $p$. Nevertheless, there are some non-negligible quantitative differences between the point estimates. A comparison to OLS without control variables shows that omitting controls can yield substantial OVBs in this application.\textbf{[TABLE \ref{tab:results_401k} HERE.]}

To investigate the impact of under-selection, we perform the following exercise. We draw random subsamples of size $n_s\in \{200,400,800,1600\}$ with replacement from the original dataset. Based on each subsample, we estimate $\alpha^\ast$ using post double Lasso with $\lambda_{\text{BCCH}}$, $0.5\lambda_{\text{BCCH}}$, and $1.5\lambda_{\text{BCCH}}$ and compute the bias as the difference between the average subsample estimate and the point estimate based on the original data with the same type of regularization choice in Table \ref{tab:results_401k}. The results are based on 1,000 simulation repetitions.

Figures \ref{fig:app_401k} displays the bias and the ratio of bias to standard deviation for both specifications. We find that post double Lasso can exhibit large finite sample biases. The biases under the QSI specification tend to be smaller (in absolute value) than the biases under the TWI specification. Interestingly, the ratio of bias to standard deviation may not be monotonically decreasing in $n_s$ (in absolute value) due to the standard deviation decaying faster than the bias. Finally, we find that post double Lasso can be very sensitive to the penalty level. \textbf{[FIGURE \ref{fig:app_401k} HERE.]}

\subsubsection{Racial differences in the mental ability of children}

We revisit \citet{fryerlevitt2013}'s analysis of the racial differences in the mental ability of young children based on data from the US Collaborative Perinatal Project \citep{fryer2013data}. As in the reanalysis of \citet{chernozhukov2020generic}, we restrict the sample to Black and White children so that our final sample includes $n=30002$ observations. We focus on the standardized test score in the Wechsler Intelligence Test at the age of seven as our outcome variable ($Y$). 
The variable of interest ($D$) is an indicator for Black children. We use the same specification as in \citet{fryerlevitt2013}, excluding interviewer fixed effects. The control variables ($X$) include extensive information on socio-demographic characteristics, the home environment, and the prenatal environment; see their Table 1B for descriptive statistics. After removing collinear terms there are $p=78$ controls. 
 
Table \ref{tab:results_testscores} shows the results for post double Lasso with $\lambda_{\text{BCCH}}$, $0.5\lambda_{\text{BCCH}}$, and $1.5\lambda_{\text{BCCH}}$, as well as OLS with and without controls based on the whole sample. Since $n=30002$ is much larger than $p=78$, all methods except for OLS without controls yield similar results. \textbf{[TABLE \ref{tab:results_testscores} HERE.]}

To investigate the impact of under-selection, we draw random subsamples of size $n_s\in \{200,400,800,1600\}$ with replacement from the original dataset. In each sample, we estimate $\alpha^\ast$ using post double Lasso with $\lambda_{\text{BCCH}}$, $0.5\lambda_{\text{BCCH}}$, and $1.5\lambda_{\text{BCCH}}$ and compute the bias as the difference between the average estimate based on the subsamples and the estimate based on the original data with the same type of regularization choice. The results are based on 1,000 simulation repetitions.

Figure \ref{fig:app_test} displays the bias and the ratio of bias to standard deviation. While the magnitude of the bias is decreasing in $n_s$, it can be substantial and larger than the standard deviation when $n_s$ is small. Moreover, the performance of post double Lasso is very sensitive to the choice of the regularization parameters. With $0.5\lambda_{\text{BCCH}}$, post double Lasso is approximately unbiased for all $n_s$, whereas, with $1.5\lambda_{\text{BCCH}}$, the bias is comparable to the standard deviation even when $n_s=1600$. \textbf{[FIGURE \ref{fig:app_test} HERE.]}

\section{Implications for inference and comparison to high-dimensional OLS-based methods}

The OVBs have important consequences for making inferences based on post double Lasso. Figure \ref{fig:dml_cov} displays the coverage rates of 90\% confidence intervals based on the DGPs in Section \ref{sec:mc_simulations} and shows that the OVB of post double Lasso can cause substantial under-coverage even when $n=1000$ and $k=5$. The most important determinant of the under-coverage is the sparsity parameter $k$. Our results show that the requirement on $k$ for guaranteeing a good finite sample coverage accuracy for all $R^2$ can be quite stringent. \textbf{[FIGURE \ref{fig:dml_cov} HERE.]}

These results prompt the question of how to make inference in a reliable manner when one is concerned about OVBs. In many economic applications, $p$ is comparable to but still smaller than $n$. In such settings, OLS-based inference procedures provide a natural alternative to Lasso-based methods. 
Under classical conditions, OLS is the best linear unbiased estimator and admits exact finite sample inference as long as $p+1\leq n$ (recalling that the number of regression coefficients is $p+1$ in \eqref{eq:main-y}). Unlike the Lasso-based inference methods, OLS does not rely on any sparsity assumptions. This is important because sparsity assumptions may not be satisfied in applications and, as we show in this paper, the OVBs of Lasso-based inference procedures can be substantial even when $k$ is small and $n$ is large and larger than $p$. Indeed, OLS-based inference exhibits desirable optimality properties absent sparsity (or other restrictions) on $\beta^\ast$.\footnote{\normalsize\begin{doublespace} For one-sided testing problems, the one-sided $t$-test based on OLS with all controls is the uniformly most powerful test; for two-sided problems, the two-sided $t$-test is the uniformly most powerful unbiased test \citep{vandervaart1998book}. We refer to Section 4 in \citet{armstrong2016}, Section 5.5 in \citet{elliott2015nearly}, and Section 2.1 in \citet{li2021linear} for further discussions.\end{doublespace}}

While OLS is unbiased, constructing standard errors is challenging when $p$ is large. For instance, \citet{cattaneo2018inference} show that conventional Eicker-White robust standard errors are inconsistent under asymptotics where $p$ grows as fast as $n$. 
This result motivates a recent literature to develop high-dimensional OLS-based inference procedures that are valid in settings with many controls \citep[e.g.,][]{cattaneo2018inference,jochmans2020heteroscedasticity,kline2020leave}. 

Figures \ref{fig:comparison_ols} compares the finite sample performance of post double Lasso and OLS with the heteroscedasticity robust HCK standard errors proposed by \citet{cattaneo2018inference}. Panel (a) shows that OLS exhibits close-to-exact empirical coverage rates irrespective of the magnitude of the coefficients and the implied $R^2$. The additional simulation evidence in Appendix \ref{app:additional_simulations} confirms the excellent performance of OLS with HCK standard errors. Panel (b) displays the average length of 90\% confidence intervals and shows that OLS yields somewhat wider confidence intervals than post double Lasso. \textbf{[FIGURE \ref{fig:comparison_ols} HERE.]}

In sum, our simulation results suggest that modern OLS-based inference methods that accommodate many controls may constitute a viable alternative to Lasso-based inference methods. These methods are unbiased and demonstrate an excellent size accuracy, irrespective of the magnitude of the coefficients corresponding to the relevant controls. However, there is a trade-off because OLS yields somewhat wider confidence intervals than post double Lasso. 

Finally, it is worth noting that we consider settings where one can easily invert $X^T X$ and the OLS and HCK variance estimators are numerically stable. In the case of singular or nearly singular $X^T X$, regularization is often unavoidable; see Section \ref{sec:recommendation} for a discussion of alternatives to OLS and Lasso-based inference methods.

\section{Recommendations for empirical practice}
\label{sec:recommendation}

Here we summarize the practical implications of our results and provide guidance for empirical researchers.

First, the simulation evidence in Section \ref{sec:simulation_evidence} and Appendix \ref{app:additional_simulations} along with the theoretical results (see Section \ref{sec:takeaways_theory}) suggest the following heuristic: if the estimates of $\alpha^{*}$ are robust to increasing the theoretically recommended regularization parameters in the two Lasso steps, post double Lasso could be a reliable and efficient method. Therefore, we recommend to always check whether empirical results are robust to increasing the regularization parameters. Based on our simulations, a simple rule of thumb is to increase by $50\%$ the regularization parameters proposed in \citet{belloni2014inference}. Robustness checks are standard in other contexts (e.g., bandwidth choices in regression discontinuity designs), and our results highlight the importance of such checks in the context of Lasso-based inference methods.

Second, following \citet{belloni2014inference}, we recommend to always augment the union of selected controls with an ``amelioration'' set of controls motivated by economic theory and prior knowledge to mitigate the OVBs. 

Third, our simulations show that in moderately high-dimensional settings where $p$ is comparable to but smaller than $n$, recently developed OLS-based inference methods that are robust to the inclusion of many controls exhibit better size properties. These simulation results suggest that high-dimensional OLS-based procedures constitute a possible alternative to Lasso-based inference methods.

Forth, OLS-based methods are not applicable when $p>n$, and the OLS and variance estimators can be numerically unstable under severe multi-collinearity even if $p<n$. In such cases, regularization is often needed. Ridge regressions, which impose restrictions on the Euclidean norm of $\beta^\ast$, avoid variable selection and may be a useful alternative to the Lasso; see also \citet{armstrong2020bias} for related restrictions on $\beta^\ast$. 

Finally, in many economic applications, researchers start with a small number of raw controls and want to use a flexible non-parametric model to capture the dependence of outcomes on controls while maintaining a simple parametric form for modeling the variables of interest. Such a specification leads to the classical partially linear models. In fact, \citet{belloni2014inference} motivate post double Lasso with these models. If one is concerned about OVBs, inference methods that do not rely on variable selection are natural alternatives to post double Lasso. Under suitable smoothness restrictions on the  non-parametric component, inference on the parameter of interest in partially linear models is a well-studied problem \citep[e.g.,][]{robinson1988root,newey1994asymptotic}. The frameworks proposed in these papers can be built upon procedures such as sieves \citep[e.g.,][]{chen2007chapter16}, local non-parametric methods \citep[e.g.,][]{fan1996local}, and kernel ridge regressions \citep[e.g.,][]{scholkopf2002learning}.

\section{Conclusion}

Given the rapidly increasing popularity of Lasso and Lasso-based inference methods in empirical economic research, it is crucial to better understand the merits and limitations of these new tools, and how they compare to other alternatives such as the high-dimensional OLS-based procedures. 

This paper presents theoretical results as well as simulation and empirical evidence on the finite sample behavior of post double Lasso and the debiased Lasso (in the appendix). Specifically, we analyze the finite sample OVBs arising from the Lasso not selecting all the relevant control variables. Our results have important practical implications, and we provide guidance for empirical researchers.

We focus on the implications of under-selection for post double Lasso and the debiased Lasso in linear regression models. However, our results on the under-selection of the Lasso also have important implications for other inference methods that rely on Lasso as a first-step estimator. Towards this end, an interesting avenue for future research would be to investigate the impact of under-selection on the performance of the Lasso-based approaches proposed by \citet{belloni2014inference}, \citet{farrell2015robust}, \citet{belloni2017program}, and \citet{chernozhukov2018double} for non-linear models. In moderately high-dimensional settings where $p$ is smaller than but comparable to $n$, it would also be interesting to compare the treatment effects estimators in \citet{belloni2014inference} to the robust finite sample methods proposed by \citet{rothe2017robust}.

Finally, this paper motivates further examinations of the practical usefulness of Lasso-based inference procedures and other modern high-dimensional methods. For example,  \citet{angrist2019machine} present interesting simulation evidence on the finite sample behavior of Lasso-based IV methods \citep[e.g.,][]{belloni2012sparse}. It would be interesting to explore the implications of our theoretical results on the under-selection of the Lasso in problems with weak instruments.

\bibliographystyle{plainnat}
\bibliography{bibliography.bib}

\newpage
\section*{Figures and tables}

\begin{figure}[H]
\caption{Illustration of bias and double under-selection}
\begin{center}
\includegraphics[width=0.6\textwidth, trim = {0 0 0 0}]{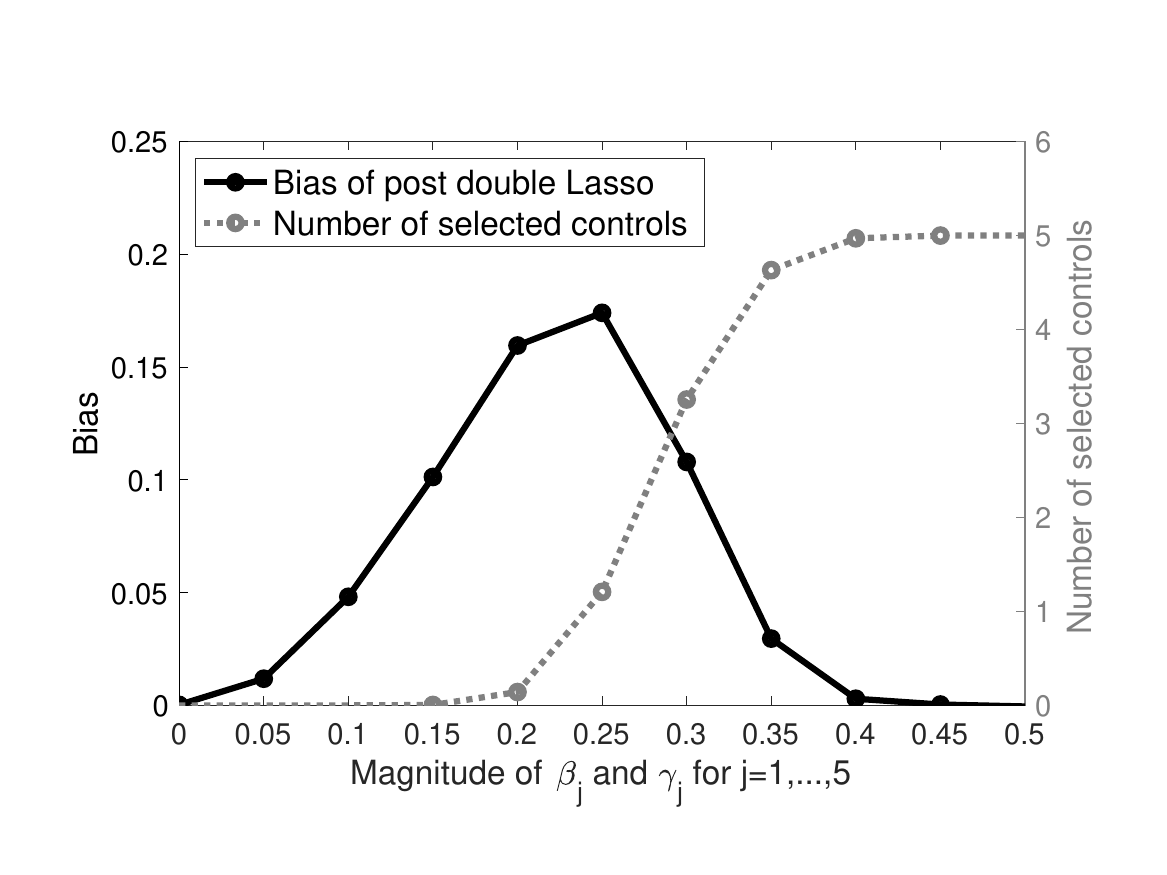}
\end{center}
\begin{doublespace}\textit{Notes:} The figure shows the bias (solid line) of the post double Lasso estimator $\tilde{\alpha}$ and the average number of selected controls (dotted line) as a function of the magnitude of the coefficients associated with the relevant controls. We generate the data based on \eqref{eq:main-y}\textendash \eqref{eq:main-d}, where $X_{i}\overset{iid}\sim\mathcal{N}\left(0,I_{p}\right)$,
$\eta_{i}\overset{iid}\sim \mathcal{N}(0,1)$, and $v_{i}\overset{iid}\sim \mathcal{N}(0,1)$ are independent of each other. We set $(n,p)=(500,200)$, $\alpha^\ast=0$, and $\beta_j^{*}=\gamma_j^{*}$. We vary the magnitude of $\beta_j^{*}$ and $\gamma_j^{*}$ for $j=1,\dots,k$,  where $k=5$, and set $\beta_j^{*}=\gamma_j^{*}=0$ for $j>k$. We use the Lasso regularization choice by \citet{bickel2009simultaneous}.\end{doublespace}
\label{fig:illu_intro}
\end{figure}

\newpage

\begin{figure}[H]
\caption{Finite sample distribution}
\begin{center}
\includegraphics[width=0.9\textwidth, trim = {0 1cm 0 0.5cm}]{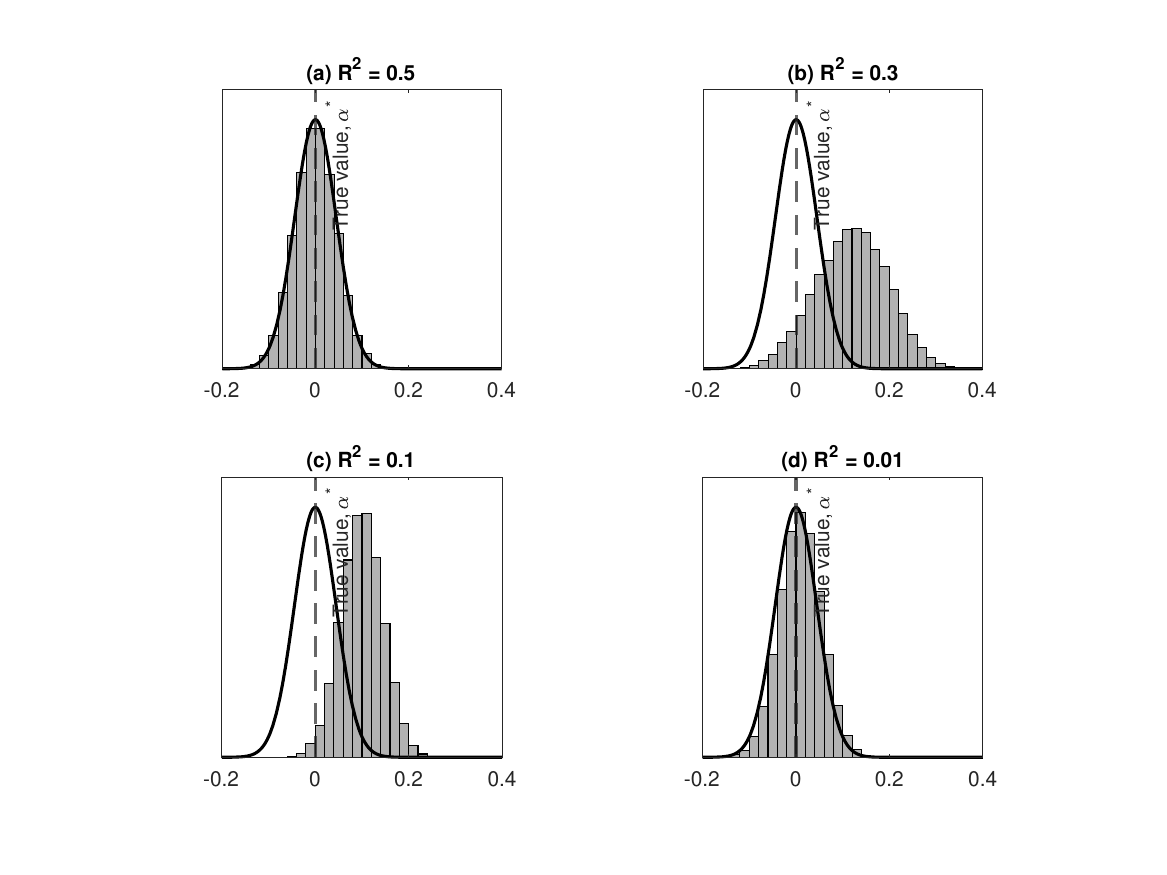}
\end{center}
\doublespacing{\textit{Notes:} The grey histograms show the finite sample distributions and the black curves show the densities of the oracle estimators.}
\label{fig:fsd_dml}
\end{figure}

\newpage

\begin{figure}[H]
\caption{Number of selected relevant controls}
\begin{center}
\includegraphics[width=0.9\textwidth, trim = {0 1.5cm 0 0.5cm}]{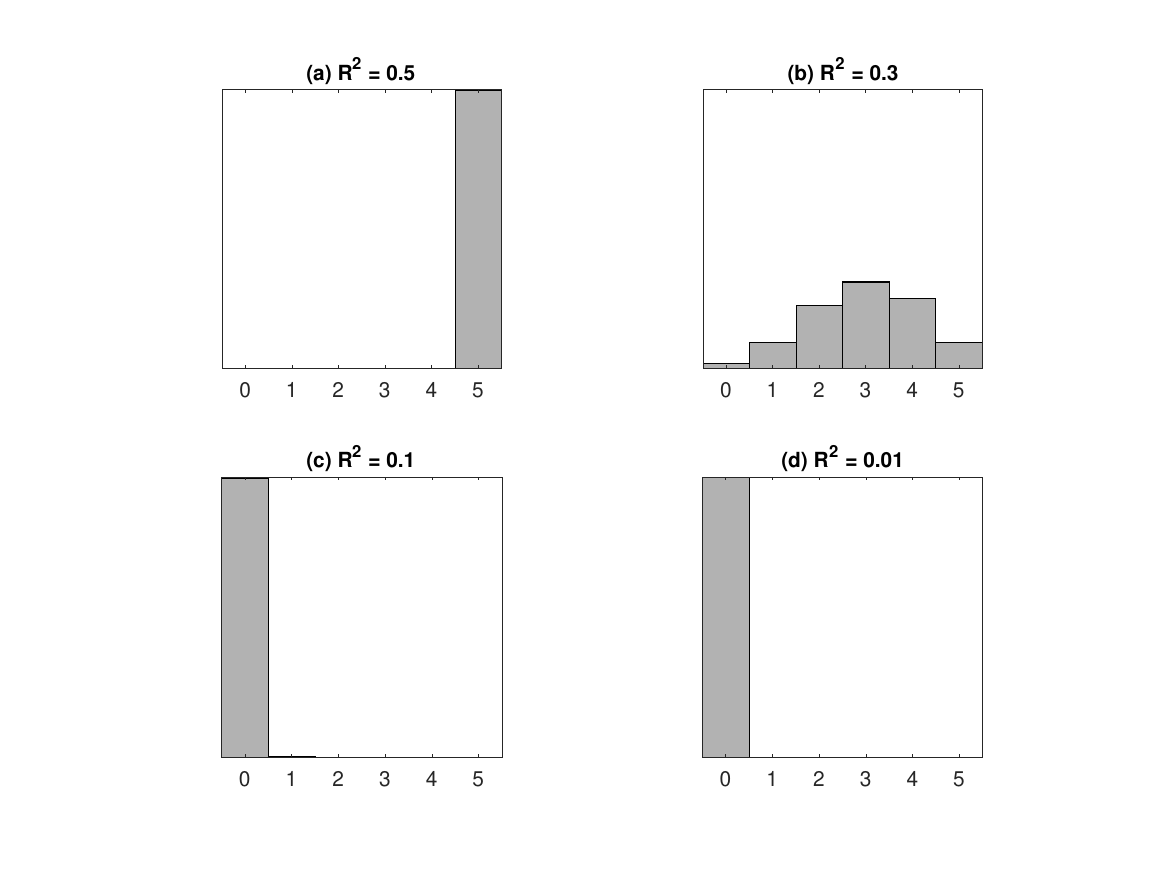}
\end{center}
\label{fig:hist_sel_dml}
\end{figure}

\newpage

\begin{figure}[H]
\caption{Average number of selected controls}

    \begin{subfigure}[b]{\textwidth}
    \caption{All controls}
    \centering
        \includegraphics[width=0.325\textwidth, trim = {0 0cm 0 0cm}]{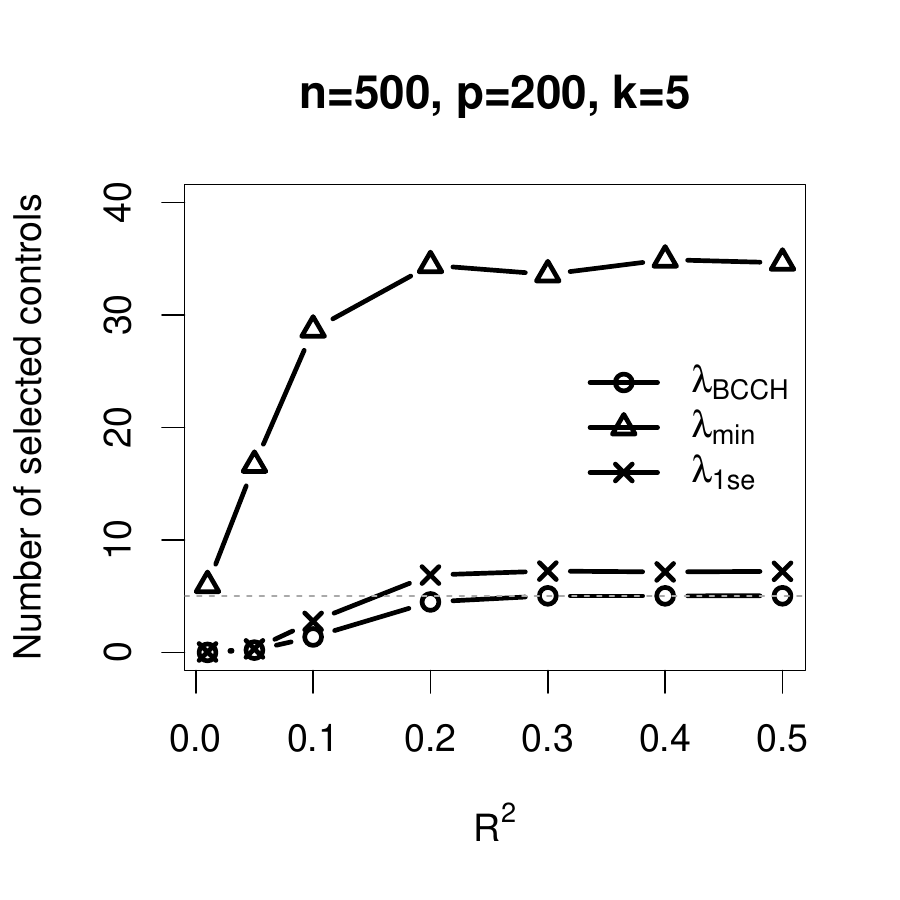}
        \includegraphics[width=0.325\textwidth, trim = {0 0cm 0 0cm}]{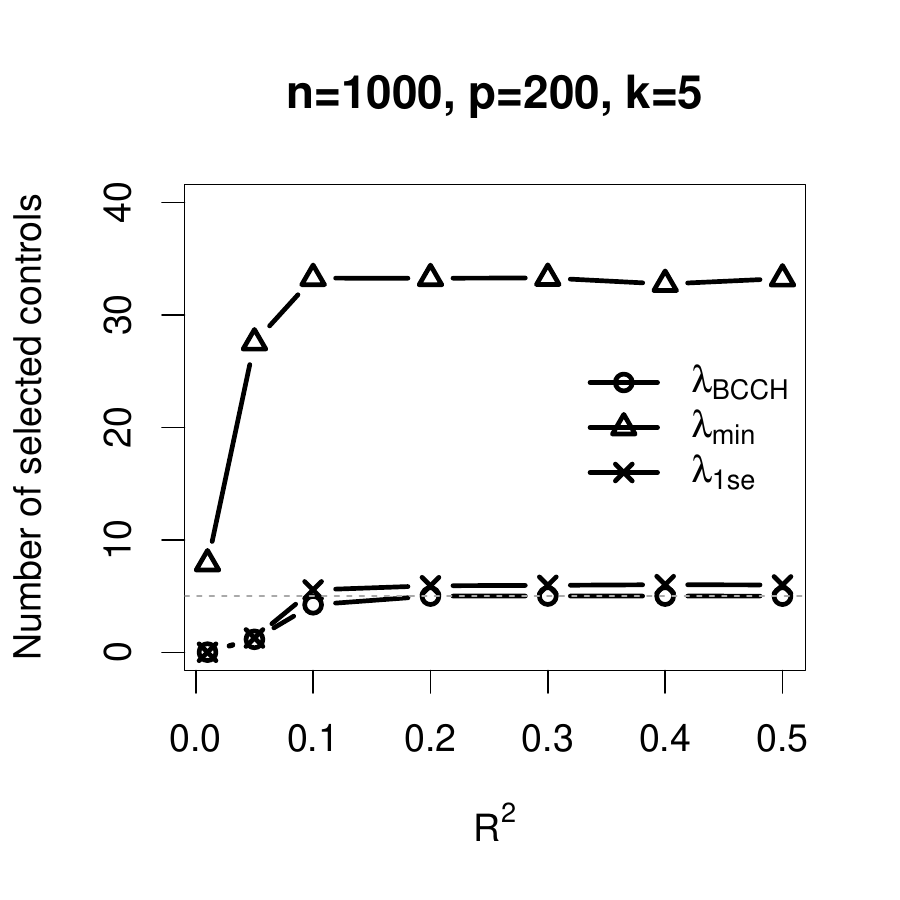}
        \includegraphics[width=0.325\textwidth, trim = {0 0cm 0 0cm}]{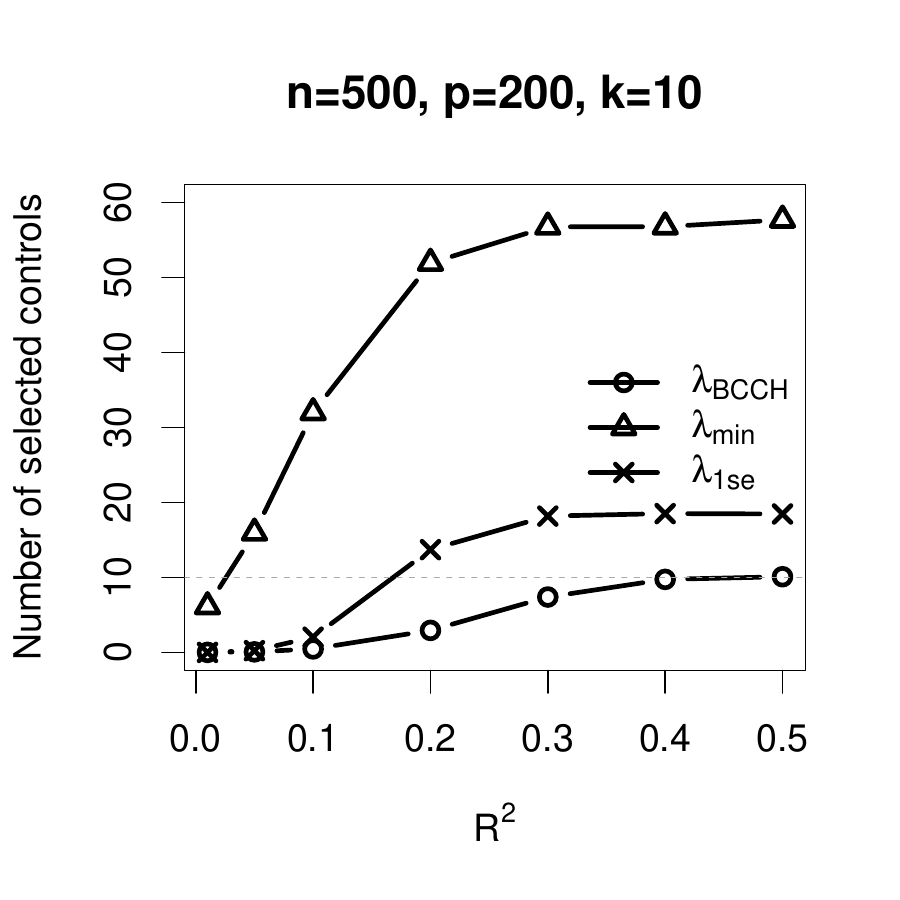}
    \end{subfigure}  
    
    \begin{subfigure}[b]{\textwidth}
    \caption{Relevant controls}
    \centering
        \includegraphics[width=0.325\textwidth, trim = {0 1cm 0 0cm}]{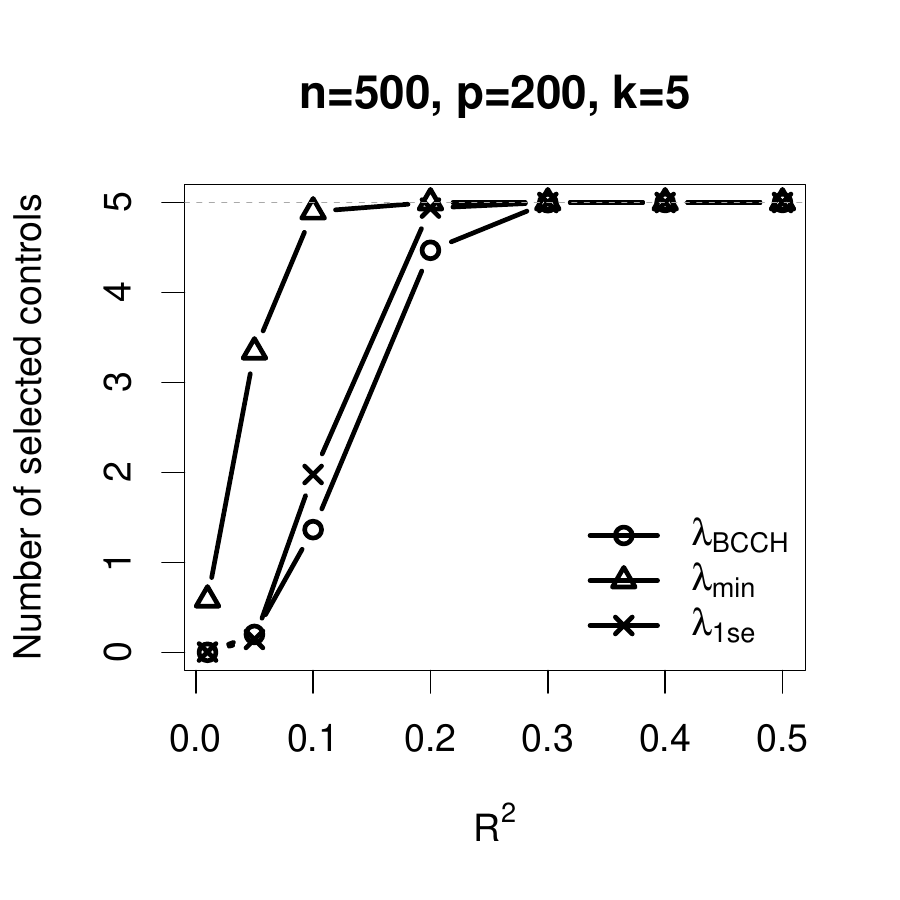}
        \includegraphics[width=0.325\textwidth, trim = {0 1cm 0 0cm}]{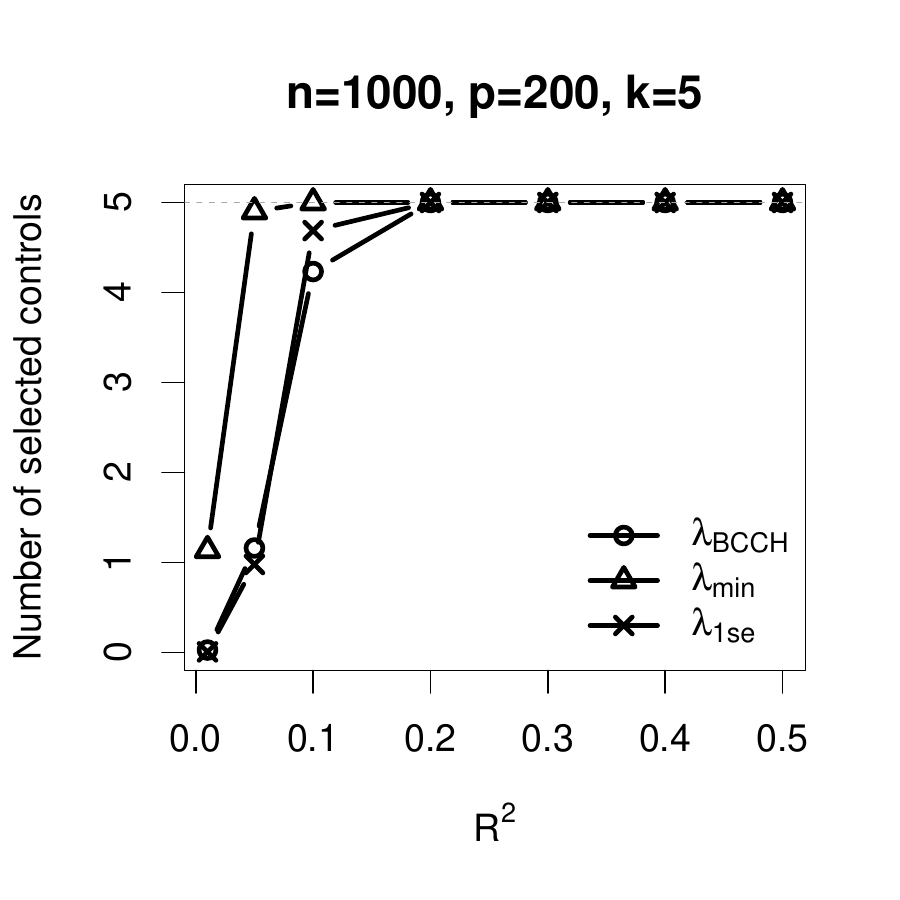}
        \includegraphics[width=0.325\textwidth, trim = {0 1cm 0 0cm}]{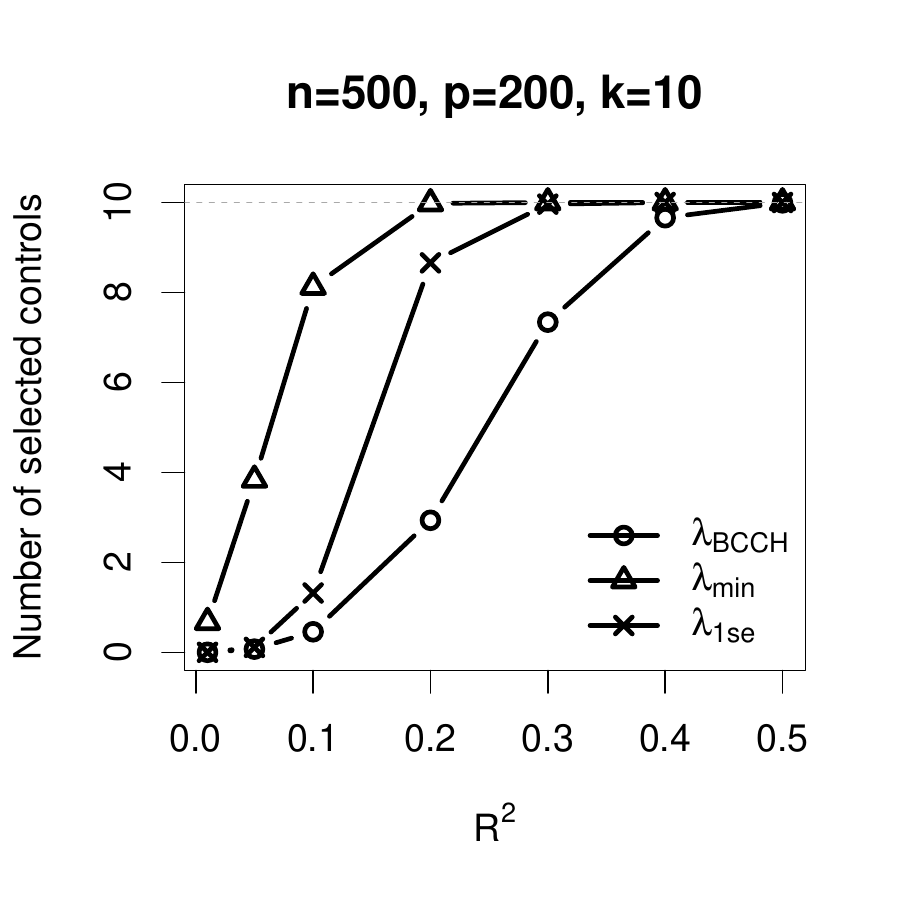}
    \end{subfigure}  
\label{fig:sel}

\end{figure}

\newpage
\begin{figure}[H]
\caption{Ratio of bias to standard deviation}
\begin{center}
\includegraphics[width=0.325\textwidth, trim = {0 1.5cm 0 0.6cm}]{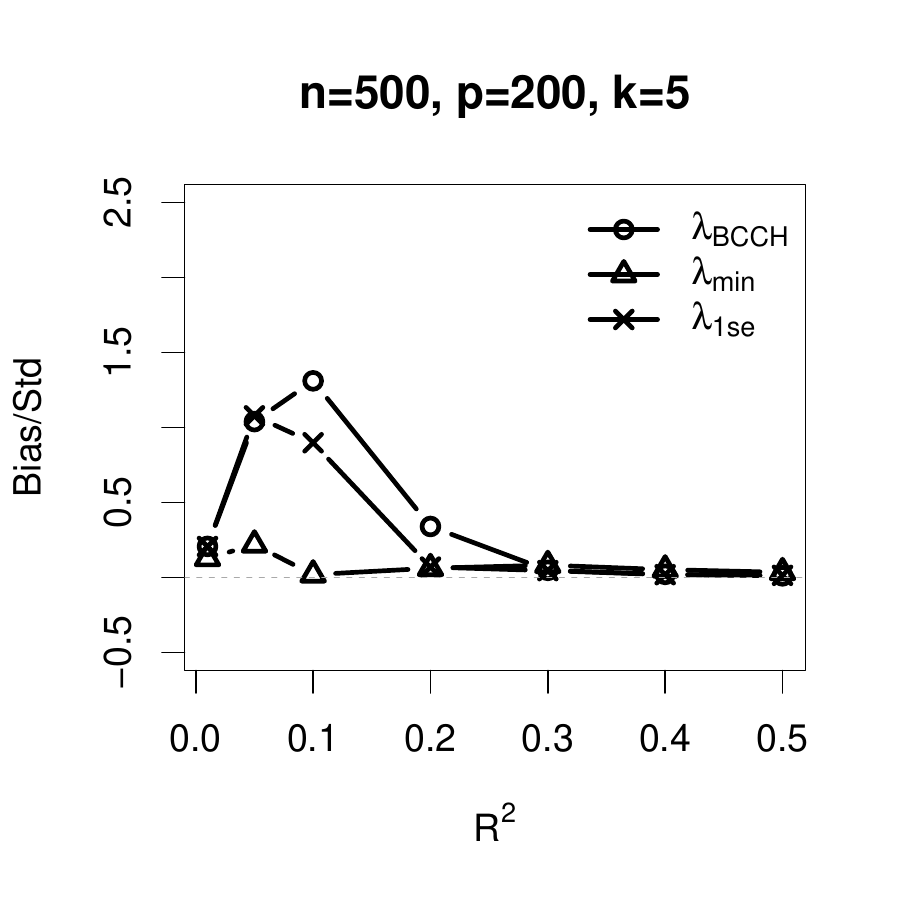}
\includegraphics[width=0.325\textwidth, trim = {0 1.5cm 0 0.6cm}]{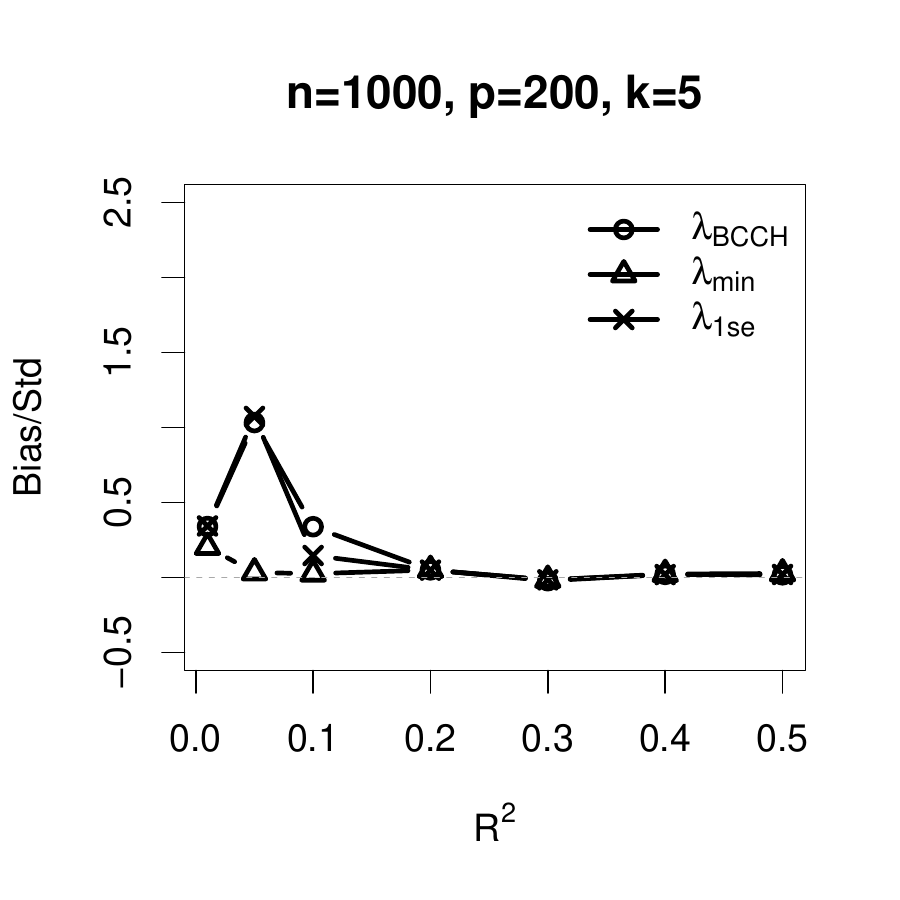}
\includegraphics[width=0.325\textwidth, trim = {0 1.5cm 0 0.6cm}]{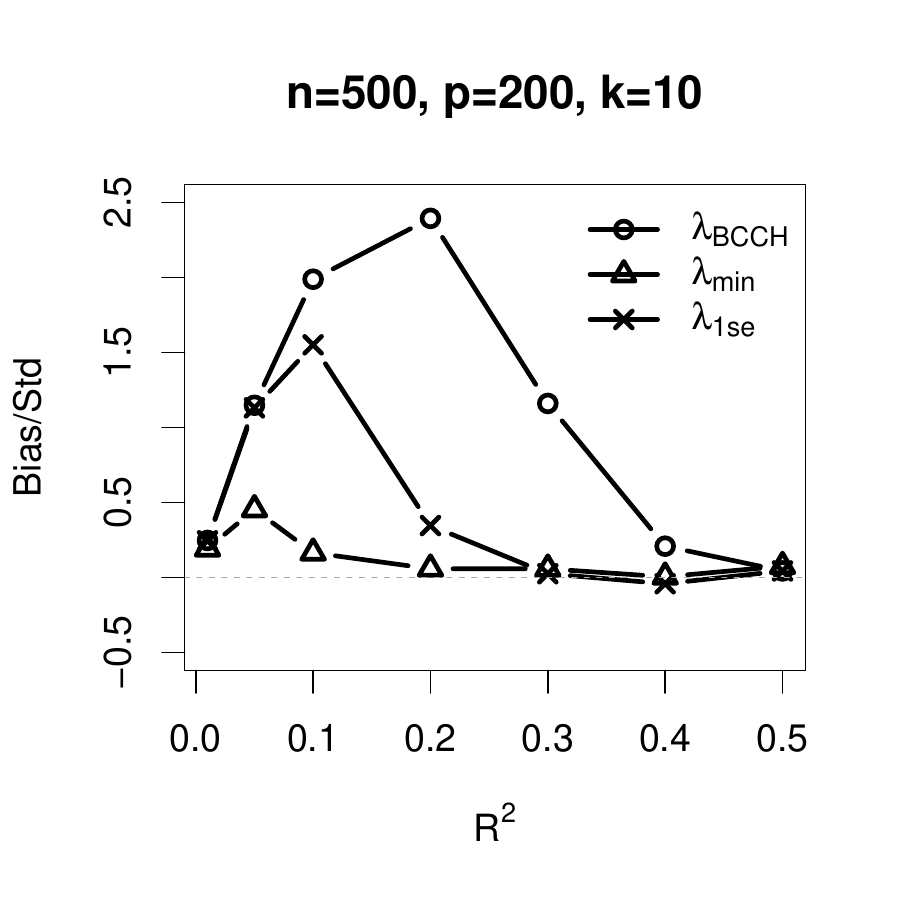}
\end{center}
\label{fig:dml_bias_std}
\end{figure}

\newpage

\begin{figure}[H]
\caption{Average number of selected controls: sensitivity to penalty level}

    \begin{subfigure}[b]{\textwidth}
    \caption{All controls}
    \centering
        \includegraphics[width=0.325\textwidth, trim = {0 0cm 0 0cm}]{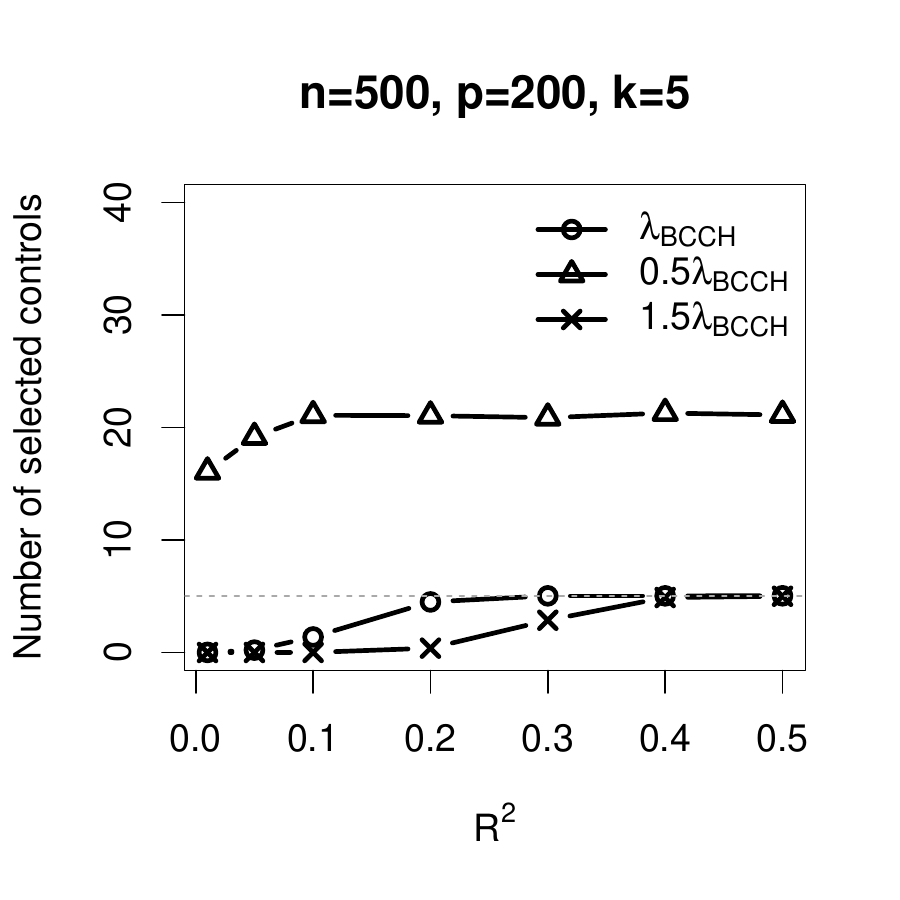}
        \includegraphics[width=0.325\textwidth, trim = {0 0cm 0 0cm}]{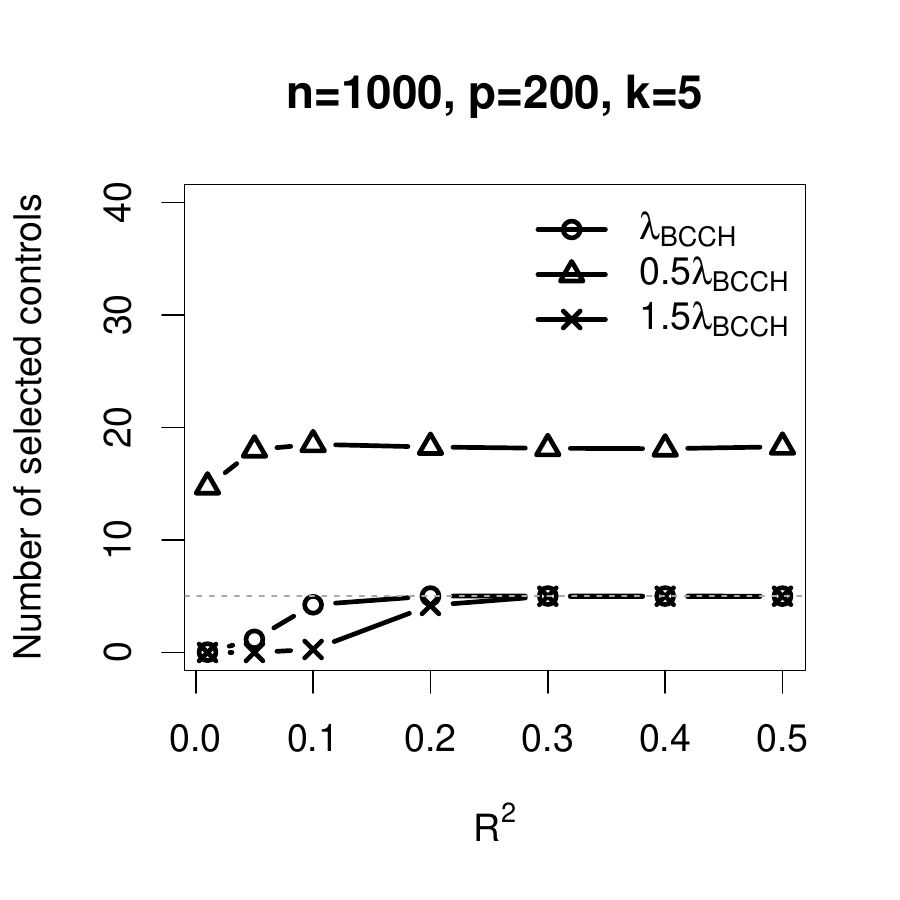}
        \includegraphics[width=0.325\textwidth, trim = {0 0cm 0 0cm}]{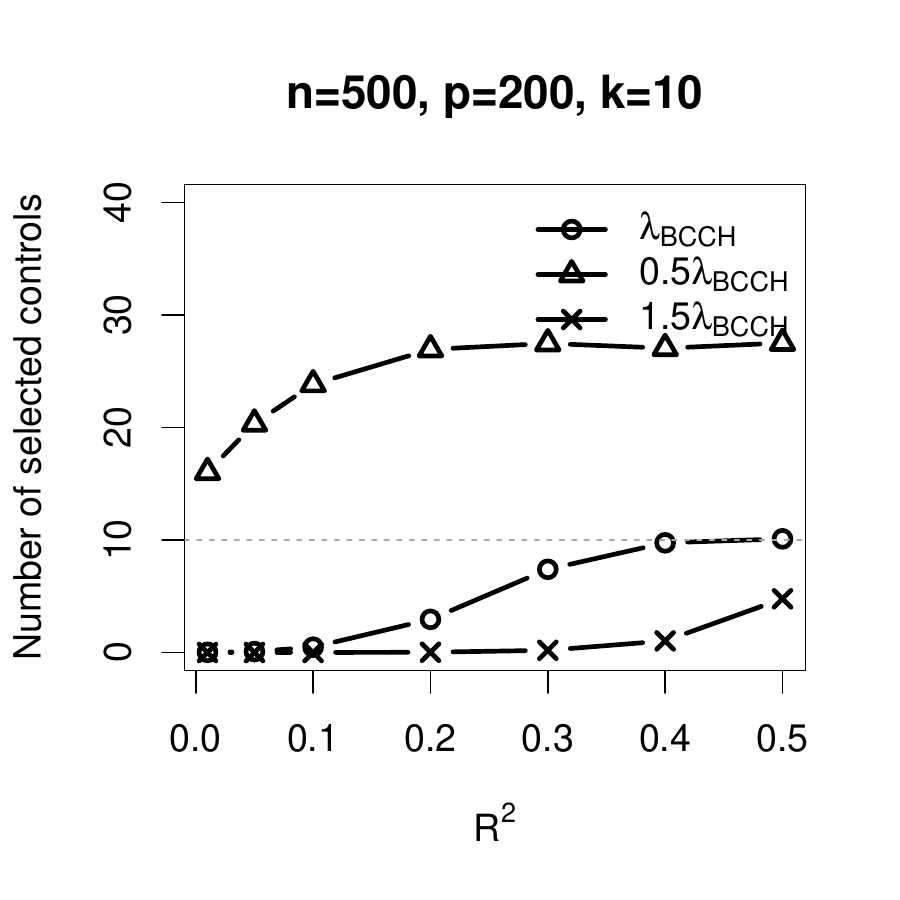}
    \end{subfigure}  
    
    \begin{subfigure}[b]{\textwidth}
    \caption{Relevant controls}
    \centering
        \includegraphics[width=0.325\textwidth, trim = {0 1cm 0 0cm}]{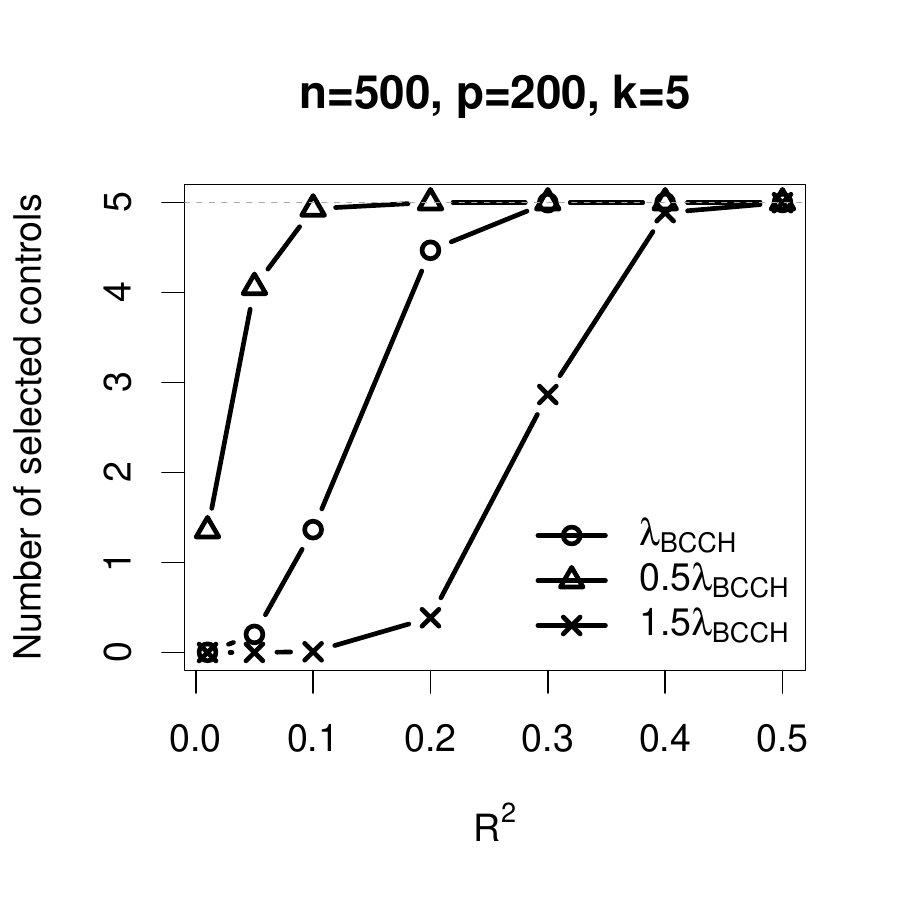}
        \includegraphics[width=0.325\textwidth, trim = {0 1cm 0 0cm}]{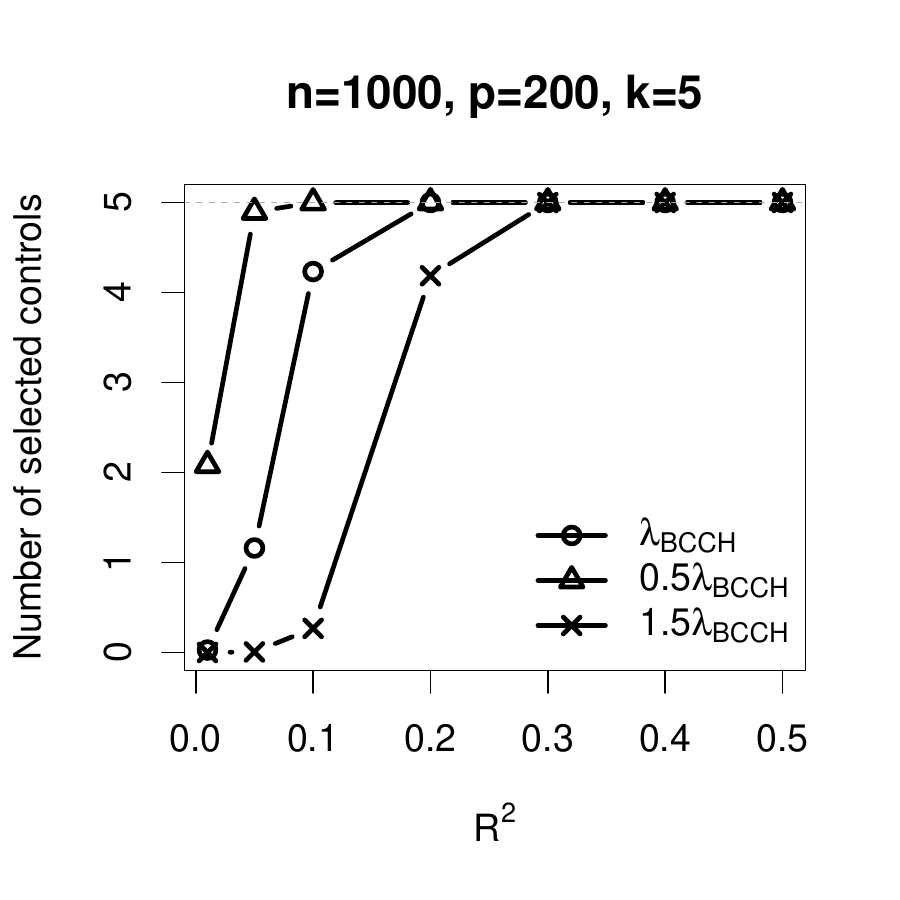}
        \includegraphics[width=0.325\textwidth, trim = {0 1cm 0 0cm}]{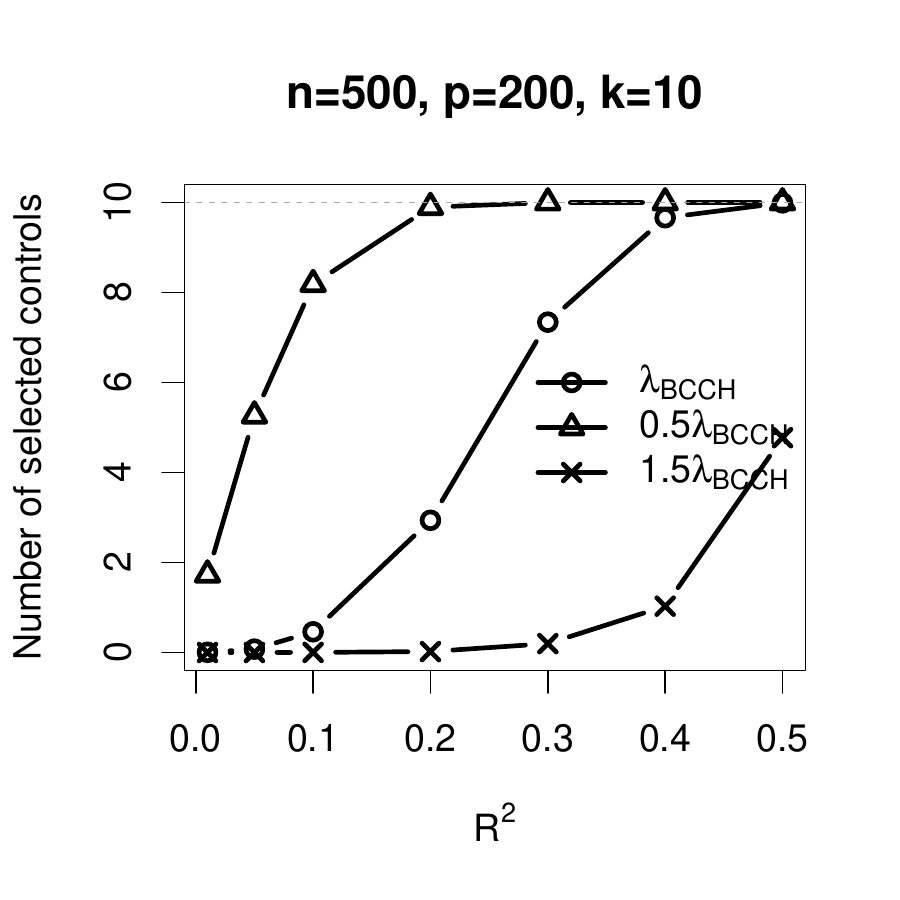}
    \end{subfigure}  
\label{fig:sel_sensitivity}

\end{figure}

\newpage

\begin{figure}[H]
\caption{Ratio of bias to standard deviation: sensitivity to penalty level}
\begin{center}
\includegraphics[width=0.325\textwidth, trim = {0 1.5cm 0 0.6cm}]{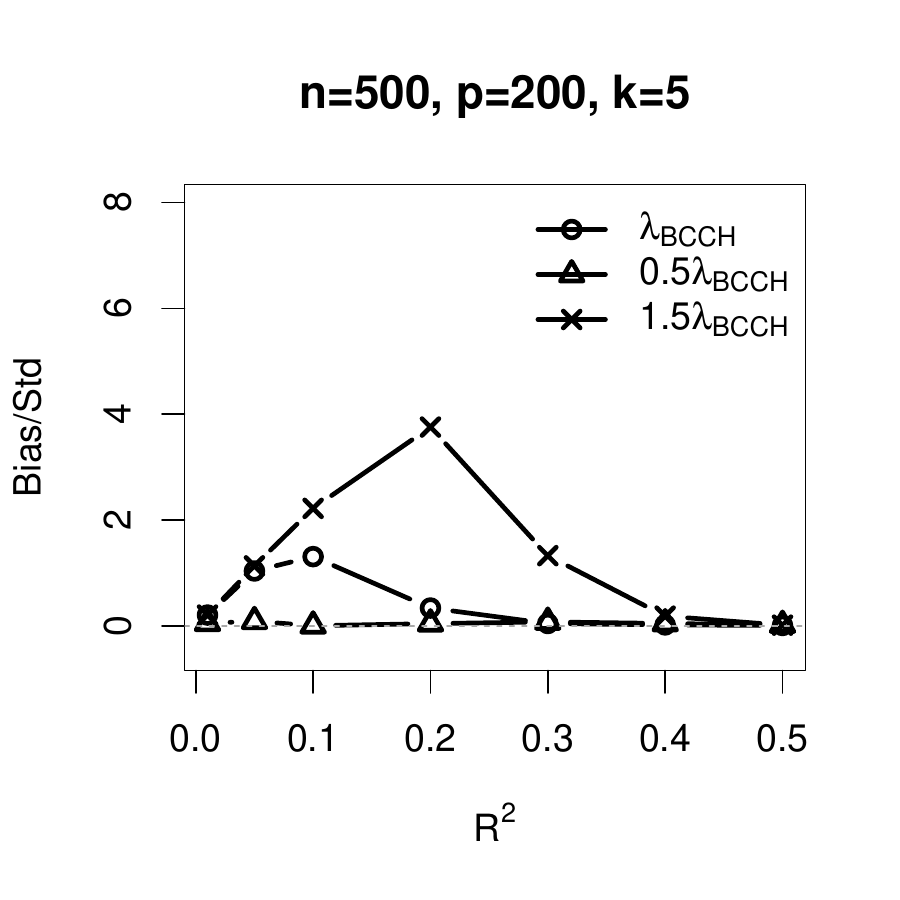}
\includegraphics[width=0.325\textwidth, trim = {0 1.5cm 0 0.6cm}]{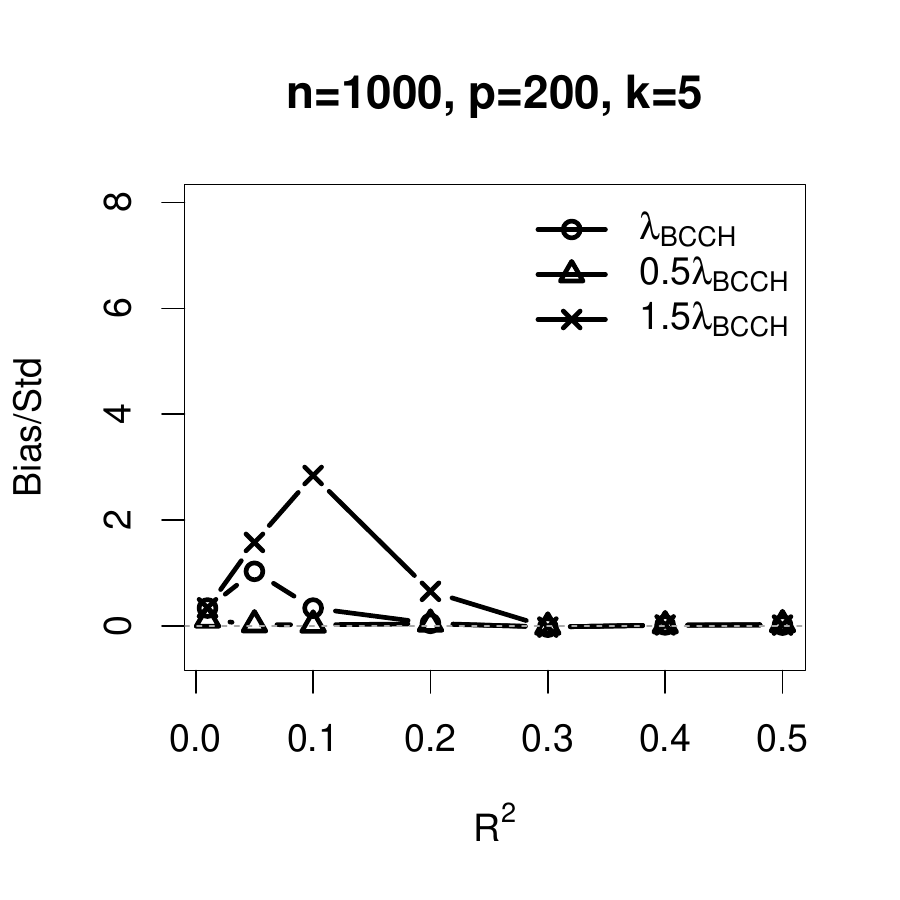}
\includegraphics[width=0.325\textwidth, trim = {0 1.5cm 0 0.6cm}]{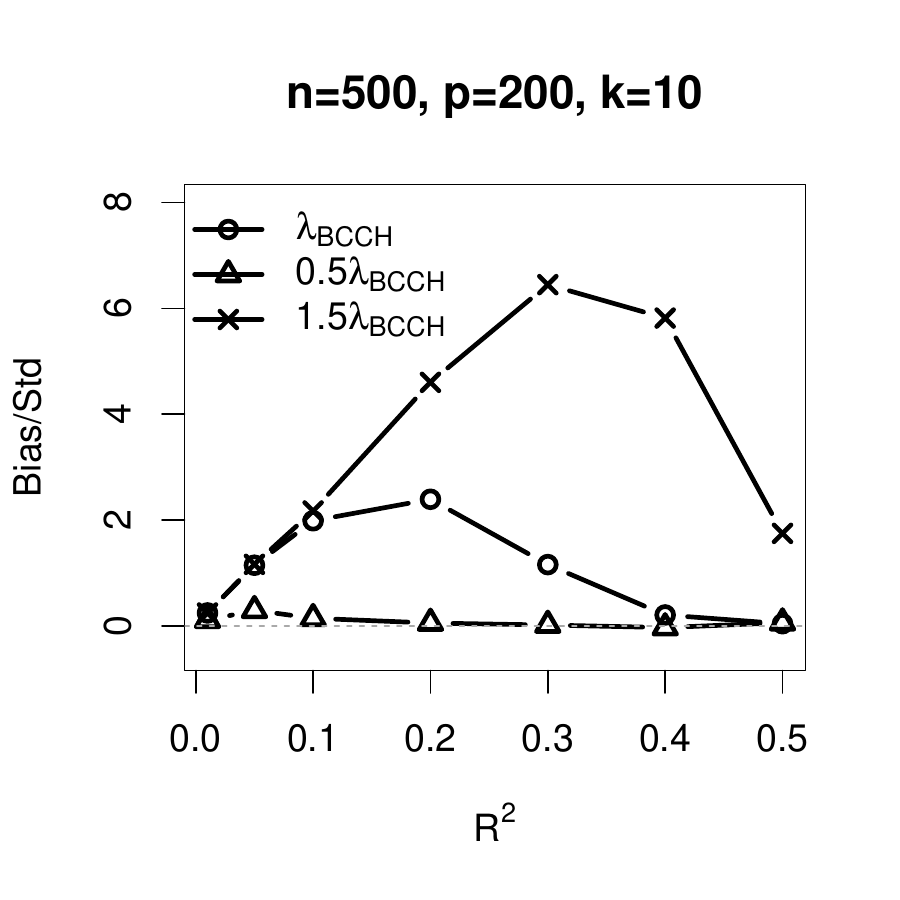}
\end{center}
\label{fig:dml_bias_std_sens}
\end{figure}

\newpage

\begin{figure}[H]
\caption{Bias and ratio of bias to standard deviation}

\vspace{-0.5cm}

\begin{center}
    \begin{subfigure}[b]{\textwidth}
    \caption{TWI specification}
    \centering
    \includegraphics[width=0.4\textwidth, trim = {0 1cm 0 1cm}]{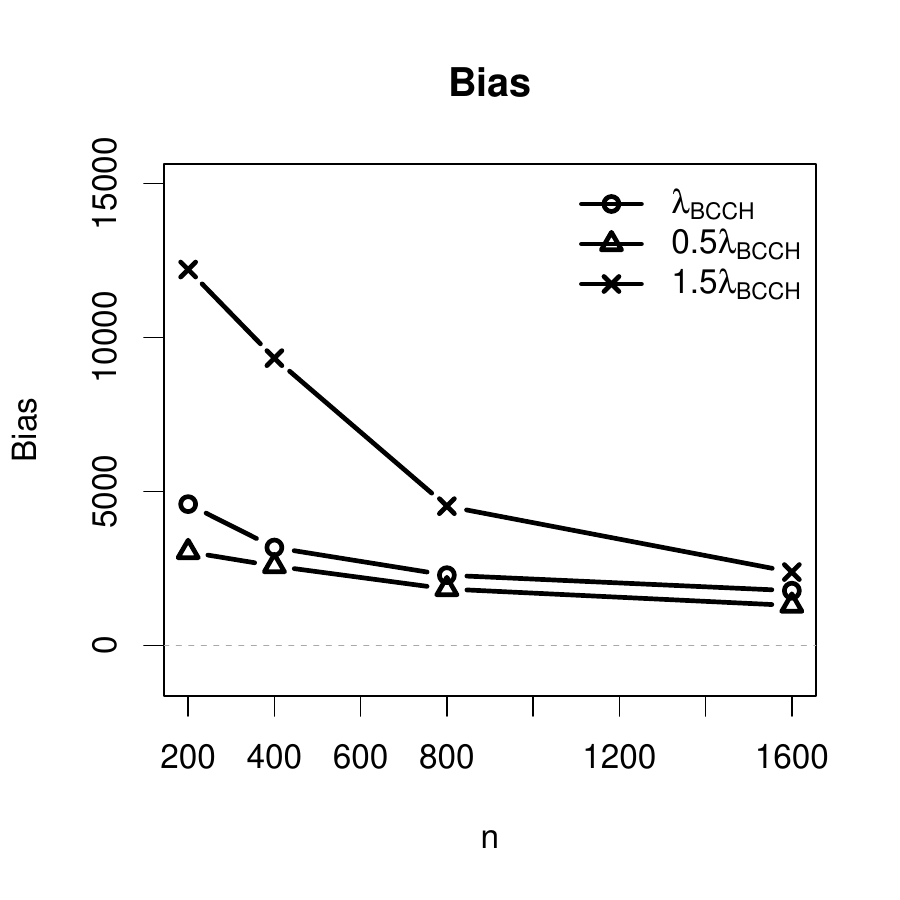}
    \includegraphics[width=0.4\textwidth, trim = {0 1cm 0 1cm}]{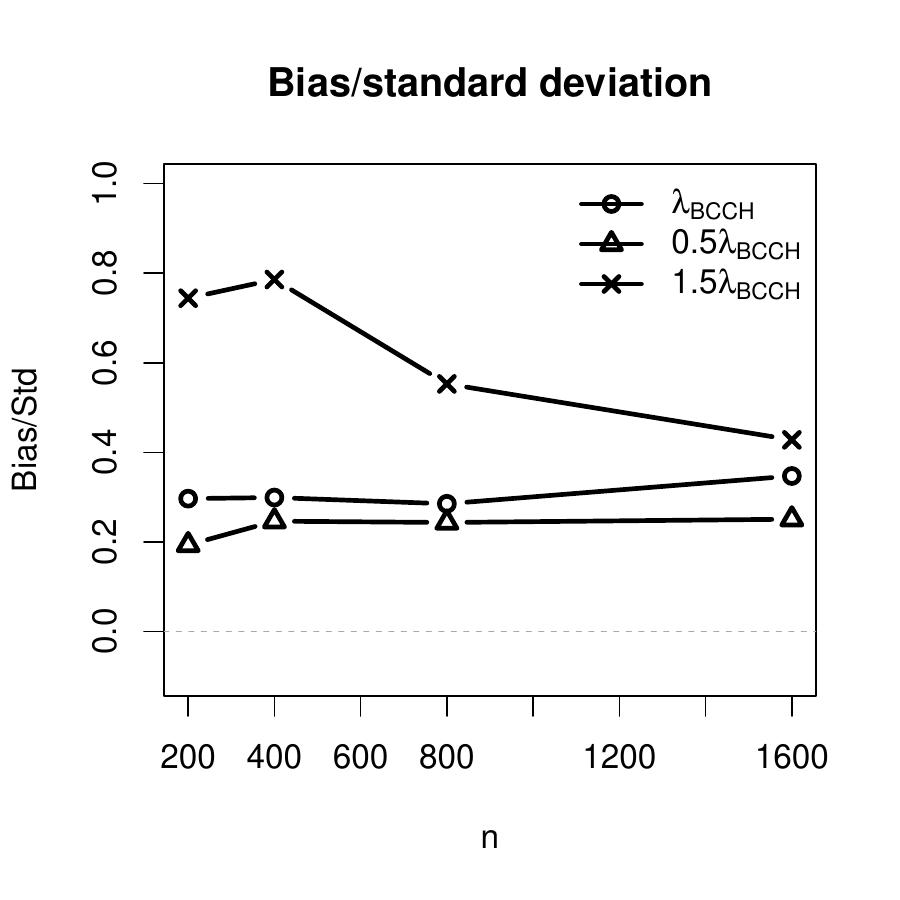}
    \end{subfigure}  
    
    \begin{subfigure}[b]{\textwidth}
    \caption{QSI specification}
    \centering
    \includegraphics[width=0.4\textwidth, trim = {0 1.5cm 0 1cm}]{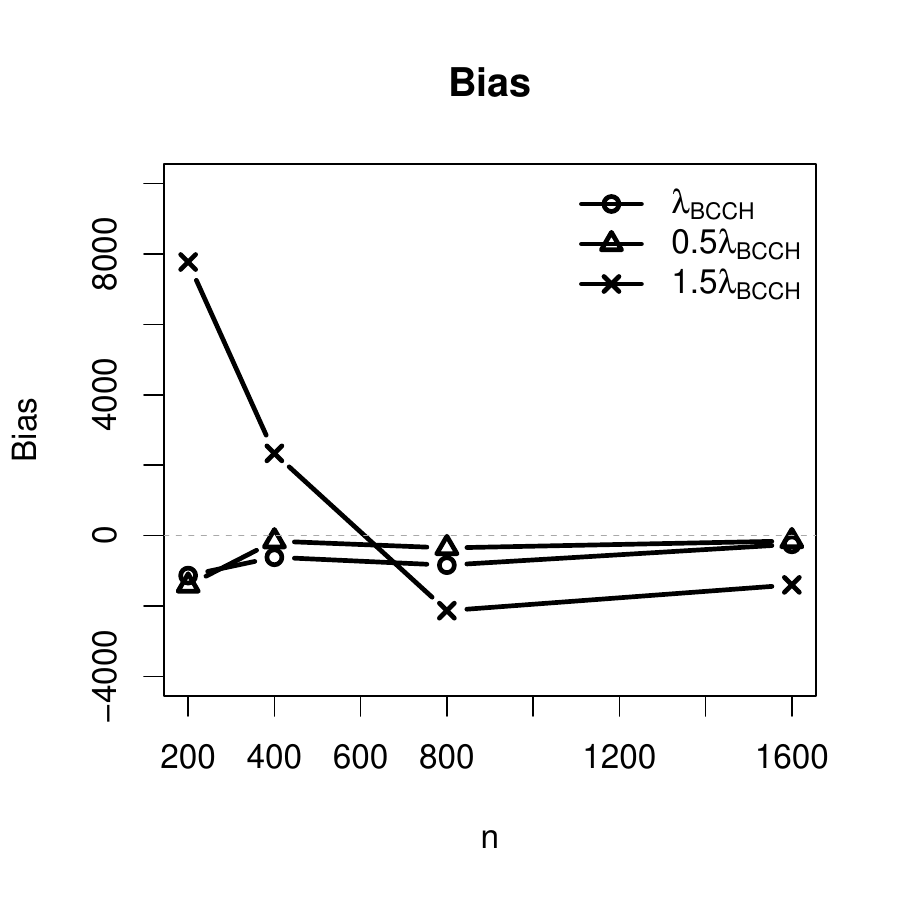}
    \includegraphics[width=0.4\textwidth, trim = {0 1.5cm 0 1cm}]{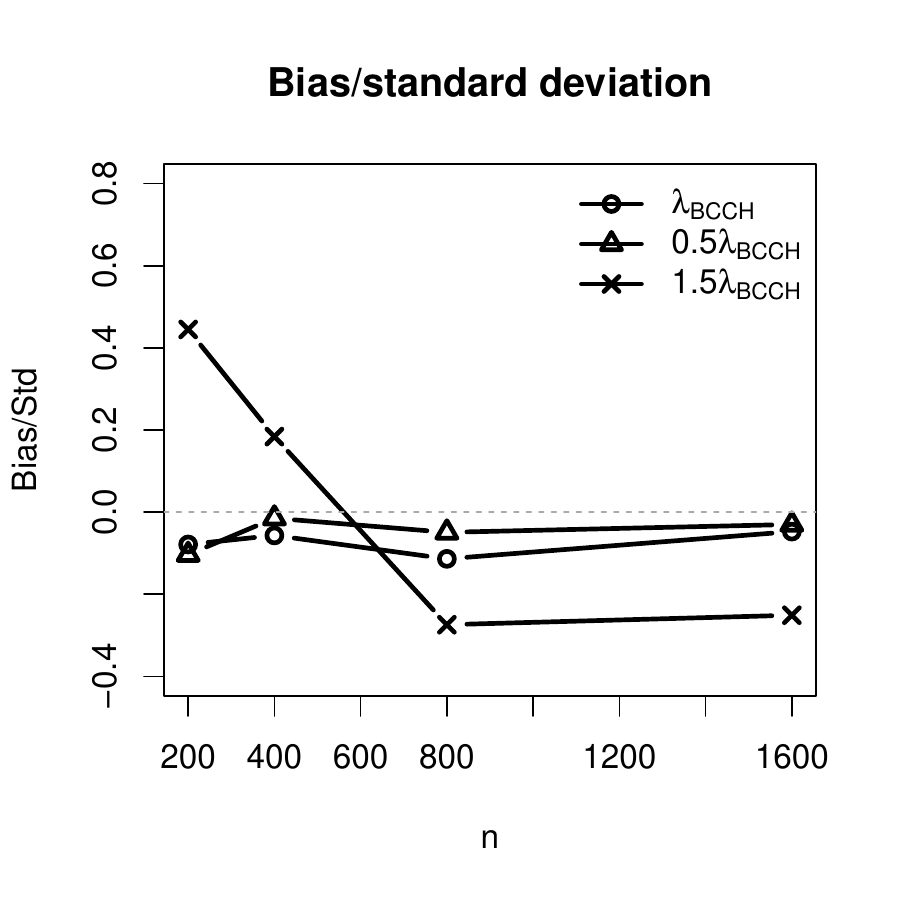}
    \end{subfigure}  
\end{center}
\label{fig:app_401k}
\end{figure}

\newpage

\begin{figure}[H]
\caption{Bias and ratio of bias to standard deviation}
\begin{center}
\includegraphics[width=0.4\textwidth, trim = {0 1.5cm 0 1.5cm}]{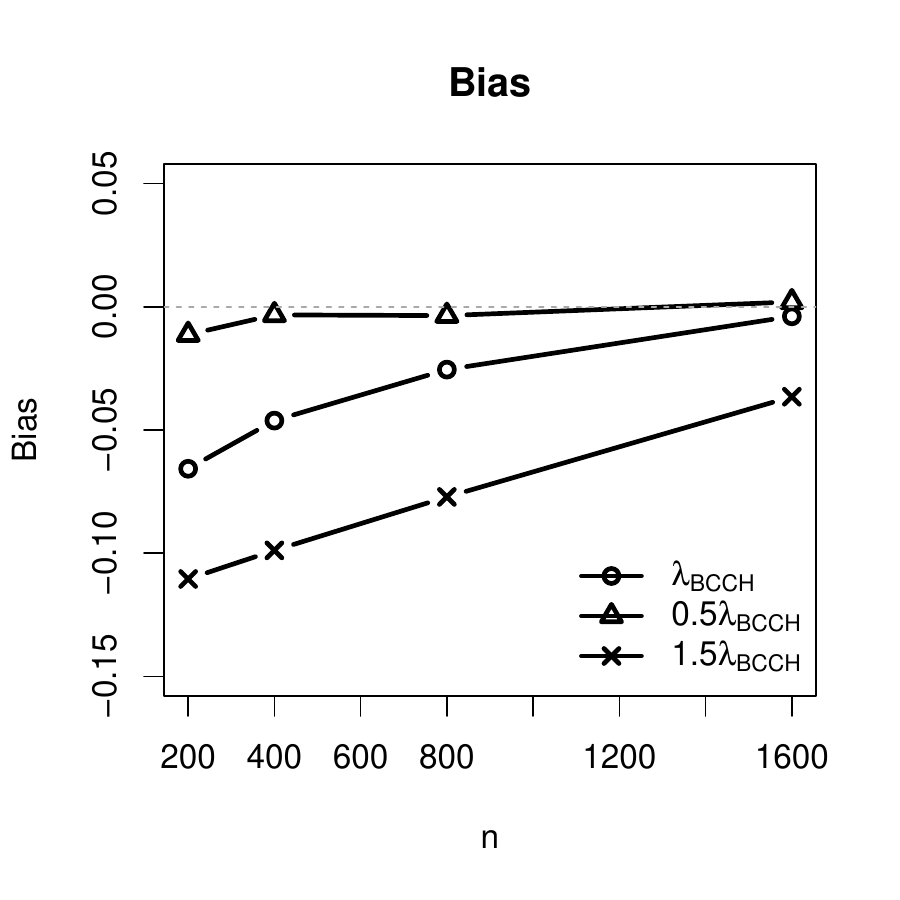}
\includegraphics[width=0.4\textwidth, trim = {0 1.5cm 0 1.5cm}]{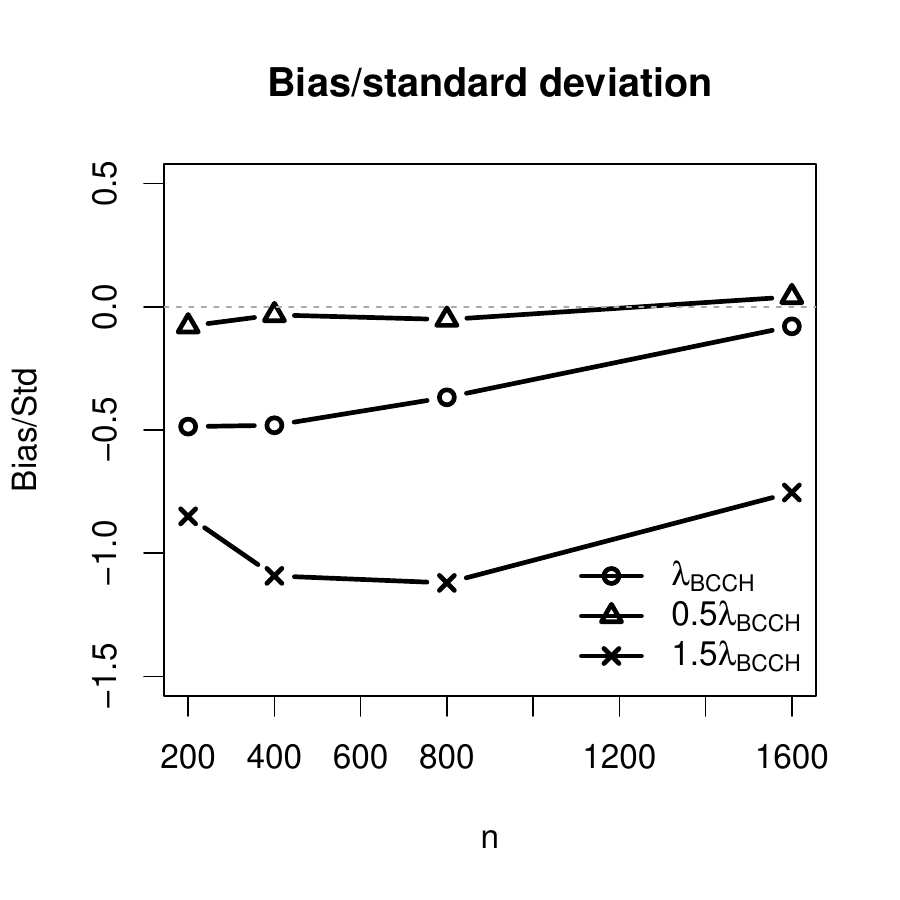}
\end{center}
\label{fig:app_test}
\end{figure}

\newpage

\begin{figure}[H]
\caption{Coverage 90\% confidence intervals}

\vspace{-0.5cm}

\begin{center}
\includegraphics[width=0.325\textwidth, trim = {0 1.5cm 0 0.6cm}]{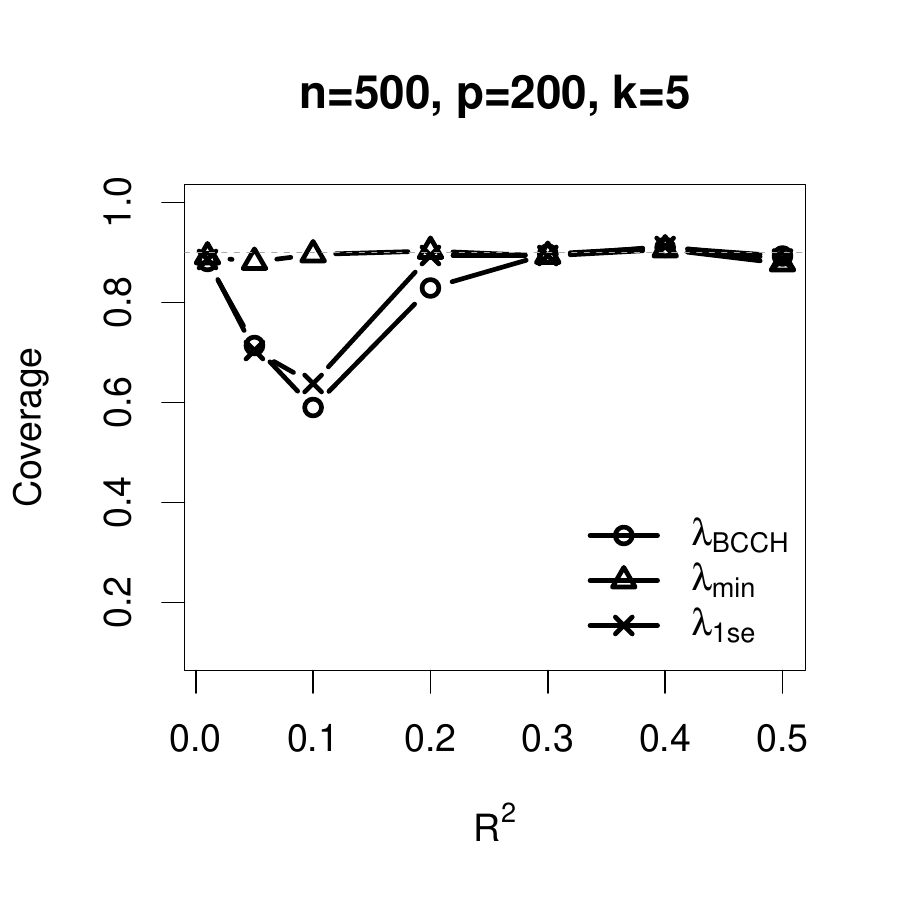}
\includegraphics[width=0.325\textwidth, trim = {0 1.5cm 0 0.6cm}]{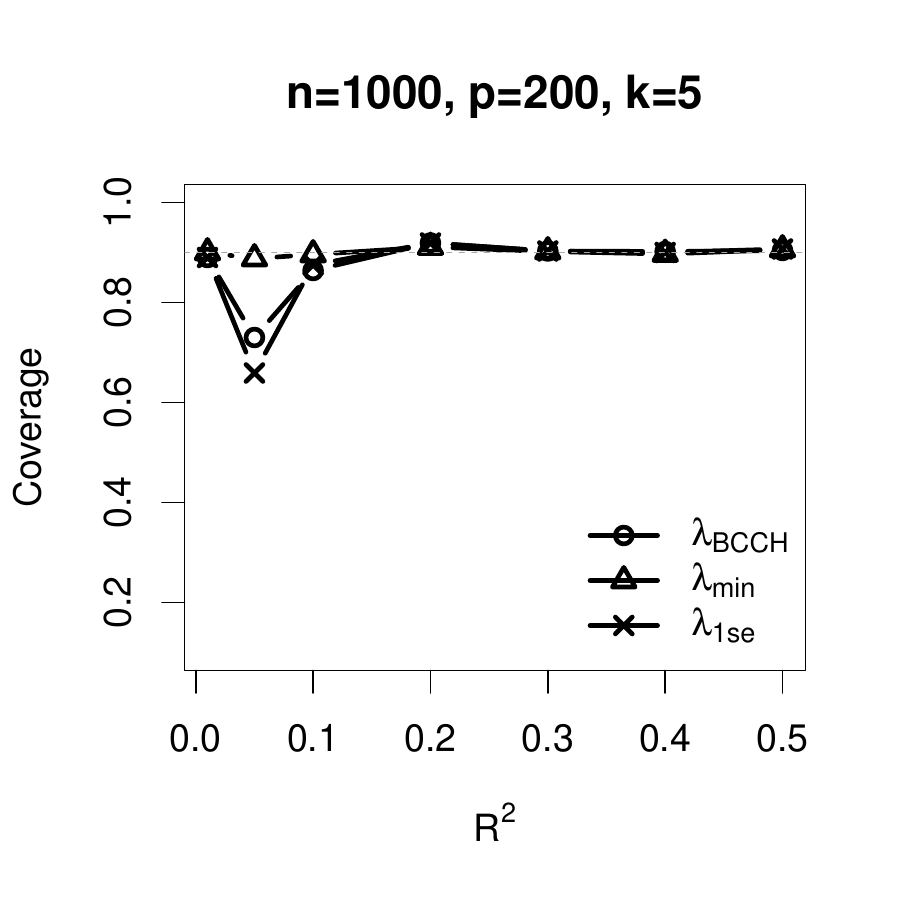}
\includegraphics[width=0.325\textwidth, trim = {0 1.5cm 0 0.6cm}]{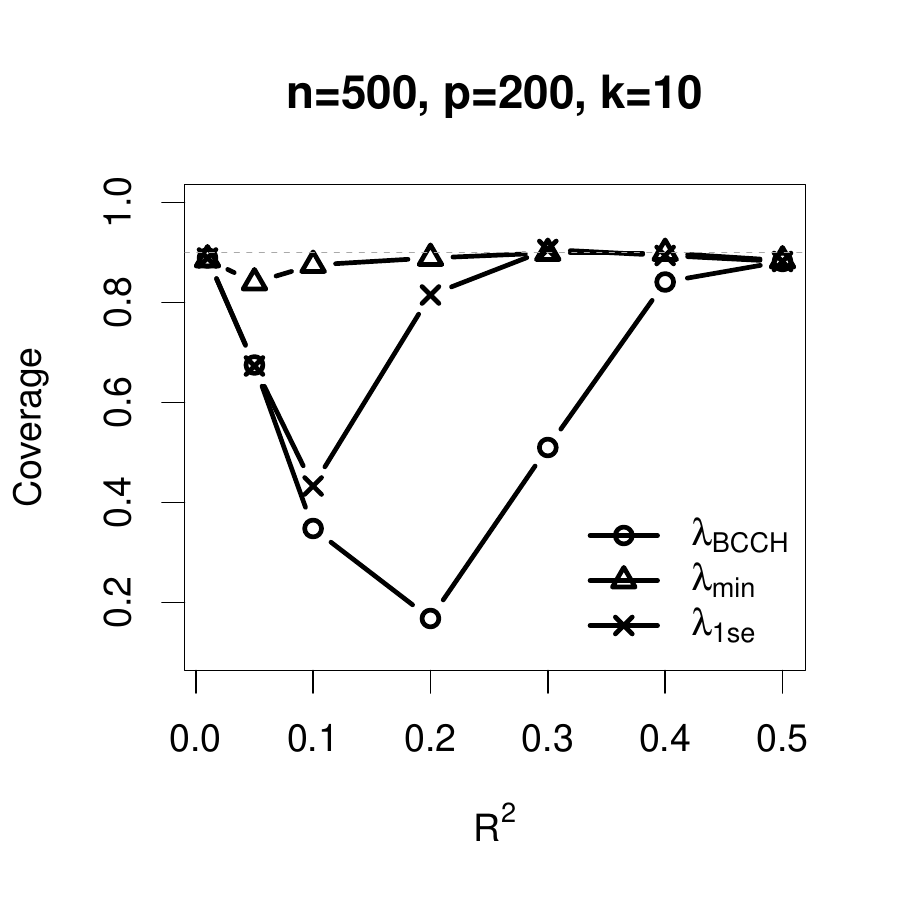}
\end{center}
\label{fig:dml_cov}
\end{figure}

\newpage

\begin{figure}[H]
\caption{Comparison to OLS with HCK standard errors}

    \begin{subfigure}[b]{\textwidth}
    \caption{Coverage 90\% confidence intervals}
    \centering
    \includegraphics[width=0.325\textwidth, trim = {0 0 0 0cm}]{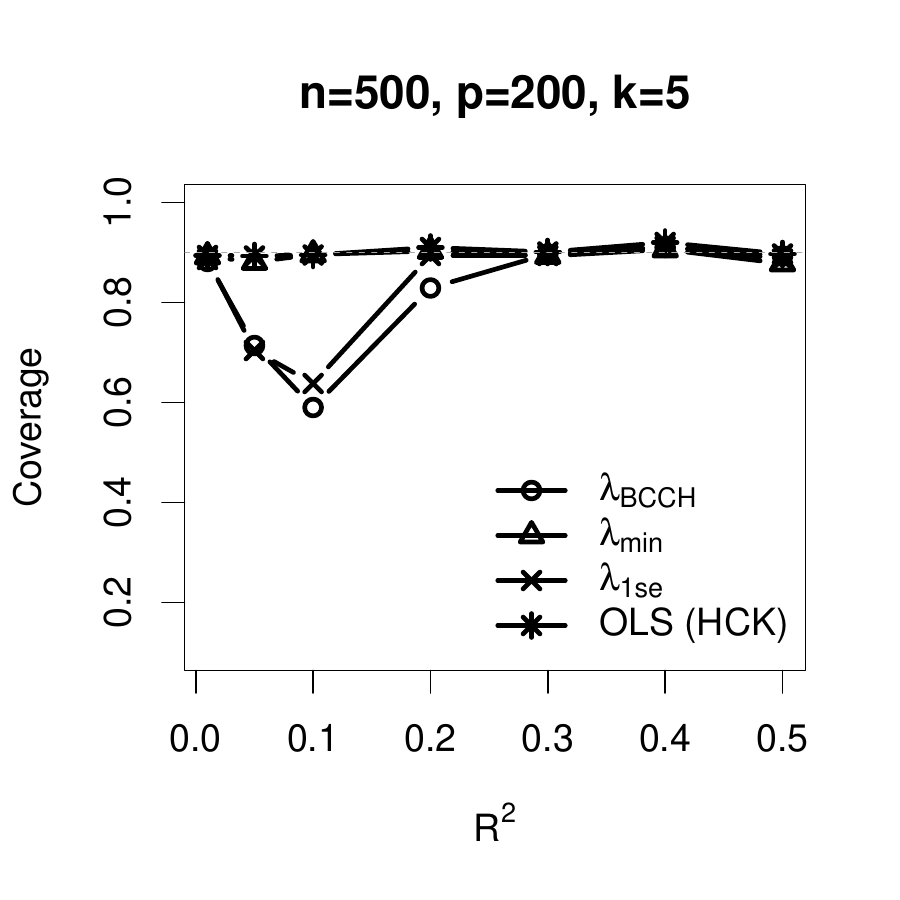}
    \includegraphics[width=0.325\textwidth, trim = {0 0 0 0cm}]{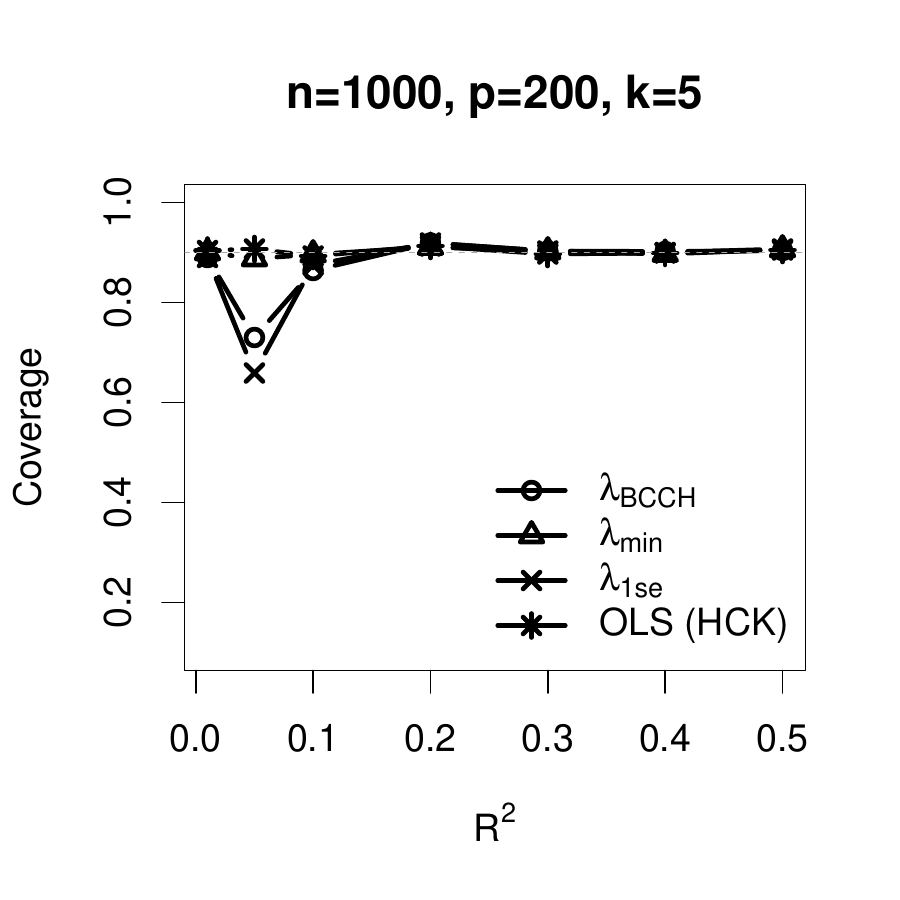}
    \includegraphics[width=0.325\textwidth, trim = {0 0 0 0cm}]{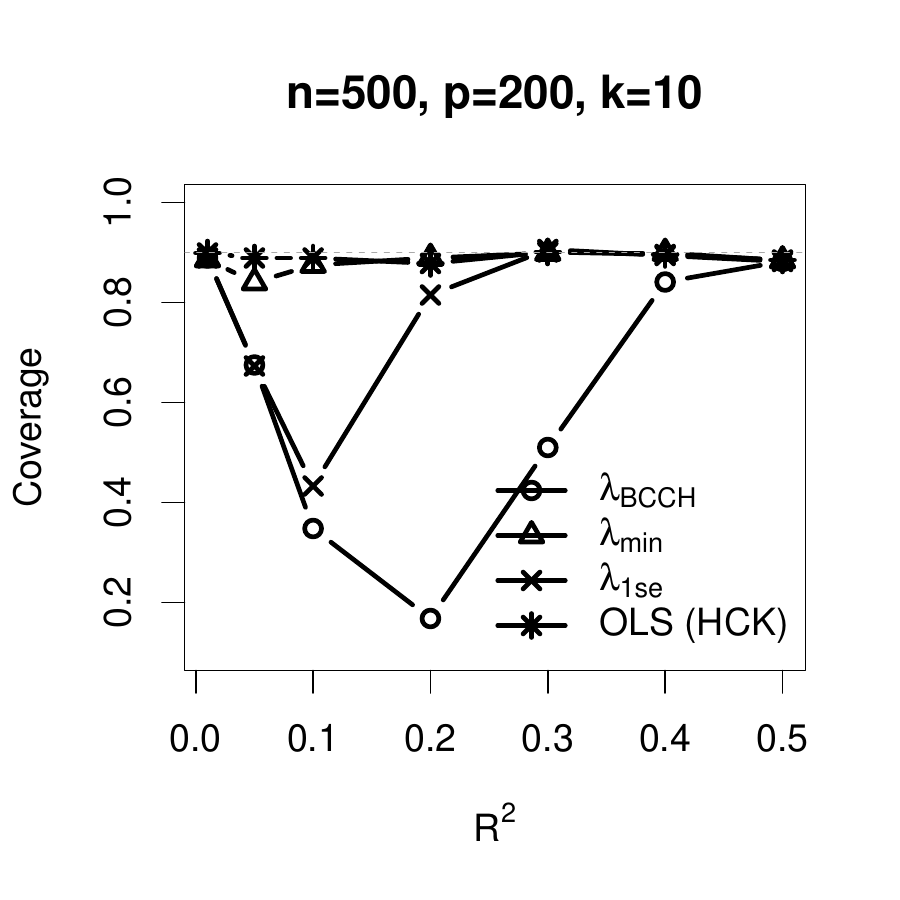}
    \end{subfigure}  
    
    \begin{subfigure}[b]{\textwidth}
    \caption{Average length 90\% confidence intervals}
    \centering
    \includegraphics[width=0.325\textwidth, trim = {0 1cm 0 0cm}]{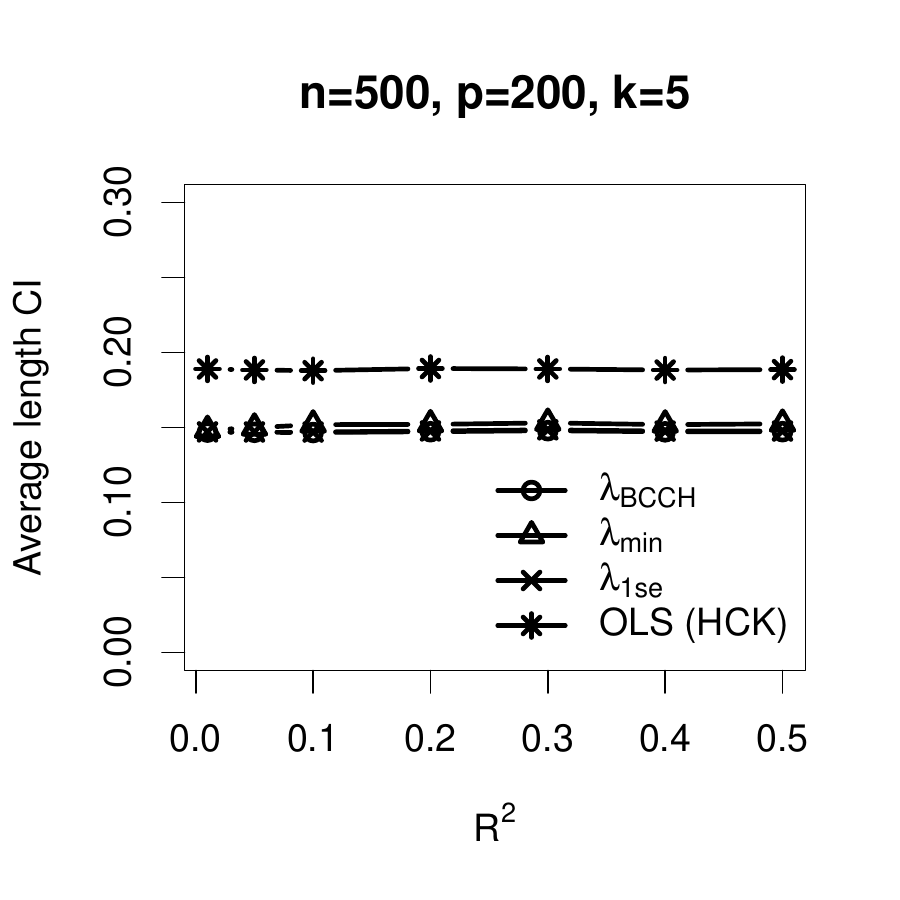}
    \includegraphics[width=0.325\textwidth, trim = {0 1cm 0 0cm}]{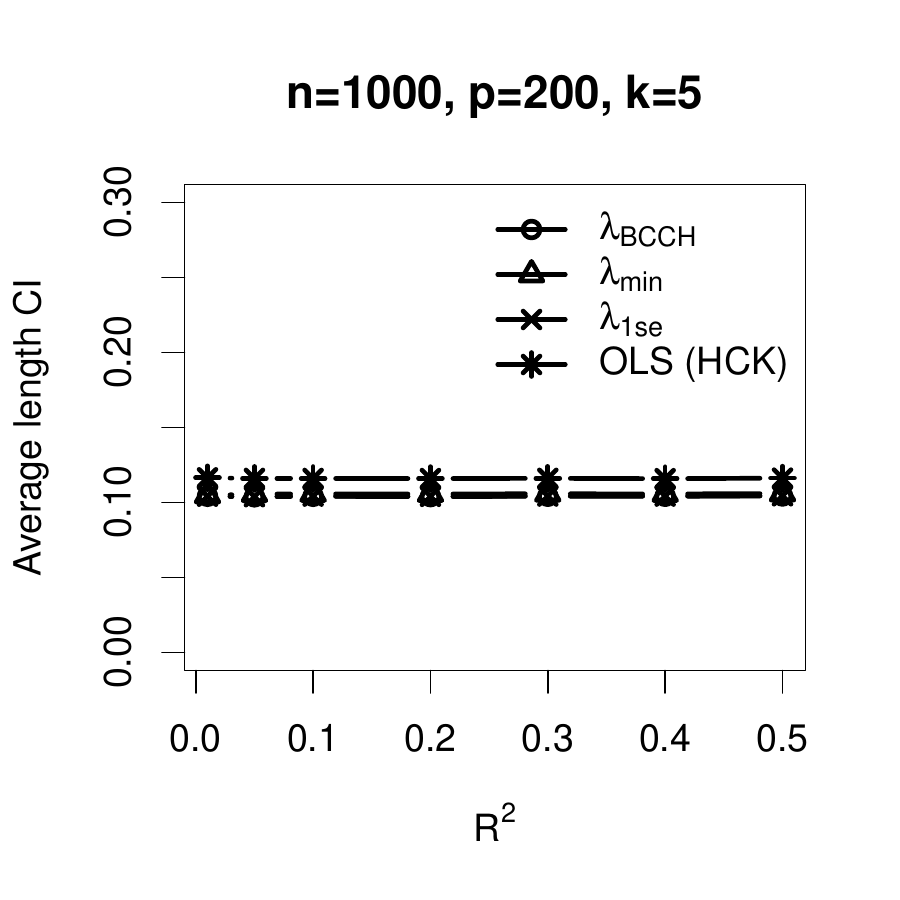}
    \includegraphics[width=0.325\textwidth, trim = {0 1cm 0 0cm}]{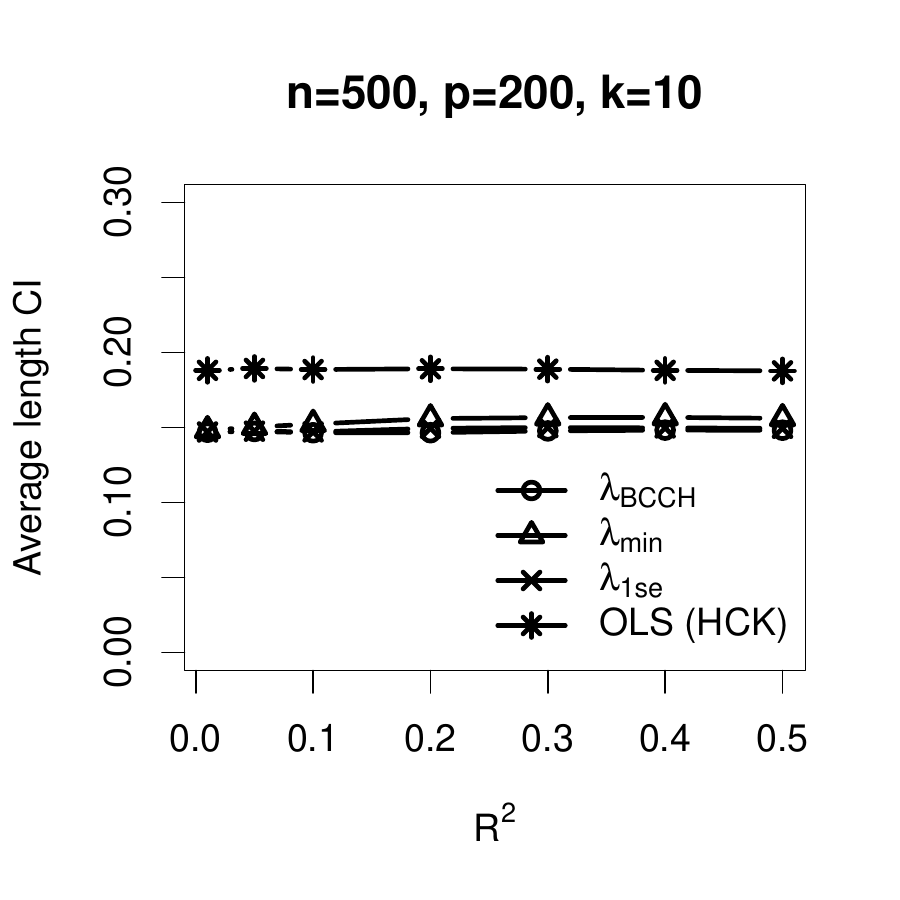}
    \end{subfigure}  
\label{fig:comparison_ols}

\end{figure}

\newpage

 \begin{table}[H]
\centering \caption{Results based on original data}
\begin{footnotesize}
\begin{tabular}{lcc}\label{tab:results_401k}
 &  &  \tabularnewline
 \toprule
\midrule
\multicolumn{3}{c}{TWI specification ($p=167$)}\tabularnewline
\midrule 
Method & Point estimate  & Robust std.\ error  \tabularnewline
\midrule 
Post double Lasso ($\lambda_{\text{BCCH}}$)& 6624.47 & 2069.62 \\ 
Post double Lasso ($0.5\lambda_{\text{BCCH}}$) & 6432.36 & 2073.25 \\ 
Post double Lasso (1.5$\lambda_{\text{BCCH}}$) & 7474.51 & 2052.89 \\ 
OLS with all controls & 6751.91 & 2067.86 \\ 
OLS without controls & 35669.52 & 2412.02 \\
\midrule 
\multicolumn{3}{c}{QSI specification ($p=272$)}\tabularnewline
\midrule 
Method & Point estimate  & Robust std.\ error  \tabularnewline
\midrule 
Post double Lasso ($\lambda_{\text{BCCH}}$)  & 4558.97 & 2015.90 \\ 
Post double Lasso ($0.5\lambda_{\text{BCCH}}$) & 5638.04 & 1992.68 \\ 
Post double Lasso (1.5$\lambda_{\text{BCCH}}$) & 4406.55 & 2028.20 \\ 
OLS with all controls & 5988.41 & 2033.02 \\ 
OLS without controls & 35669.52 & 2412.02 \\ 
\midrule
\bottomrule
\end{tabular}
\end{footnotesize}
\end{table}

\newpage

 \begin{table}[H]
\centering \caption{Results based on original data} \begin{footnotesize} 
\begin{tabular}{lcc}\label{tab:results_testscores}
 &  &  \tabularnewline
 \toprule
\midrule 
Method & Point estimate  & Robust std.\ error  \tabularnewline
\midrule 
Post double Lasso ($\lambda_{\text{BCCH}}$) & -0.6770 & 0.0114 \\ 
Post double Lasso ($0.5\lambda_{\text{BCCH}}$) & -0.6762 & 0.0114 \\ 
Post double Lasso (1.5$\lambda_{\text{BCCH}}$) & -0.6778 & 0.0114 \\ 
OLS with all controls & -0.6694 & 0.0115 \\ 
OLS without controls  & -0.8538 & 0.0105 \\ 
\midrule 
\bottomrule
\end{tabular}
\end{footnotesize}
\end{table}

\newpage

\appendix 
\pagenumbering{arabic}

\begin{center}
{\LARGE Online Appendix to ``Omitted variable bias of Lasso-based inference methods: A finite sample analysis''}
\end{center}
\startcontents[sections]
\printcontents[sections]{l}{1}{\setcounter{tocdepth}{1}}

\section*{Notation}

Here we collect additional notation that is not provided in the main
text. The $\ell_{q}-$norm of a vector $v\in\mathbb{R}^{m}$ is denoted
by $\left|v\right|_{q}$, $1\leq q<\infty$, where $\left|v\right|_{q}:=\left(\sum_{i=1}^{m}\left|v_{i}\right|^{q}\right)^{1/q}$. 
For a vector $v\in\mathbb{R}^{m}$, $|v|$ denotes the $m-$dimensional
vector $(|v_{1}|,\dots,|v_{m}|)^{T}$. For a matrix $A\in\mathbb{R}^{n\times m}$,
the $\ell_{2}-$operator norm of $A$ is defined as $\left\Vert A\right\Vert _{2}:=\sup_{v\in S^{m-1}}\left|Av\right|_{2}$,
where $S^{m-1}=\left\{ v\in\mathbb{R}^{m}:\,\left|v\right|_{2}=1\right\} $.
For a square matrix $A\in\mathbb{R}^{m\times m}$, let $\lambda_{\min}(A)$
and $\lambda_{\max}(A)$ denote its minimum eigenvalue and maximum
eigenvalue, respectively. 



\section{Proofs for the main results}

\subsection{Lemma \ref{prop:fixed_design}}

\label{appendix a1}

\subsubsection*{Preliminary}

We will exploit the following Gaussian tail bound: 
\[
\mathbb{P}\left(\mathcal{Z}\geq t\right)\leq\frac{1}{2}\exp\left(\frac{-t^{2}}{2\sigma^{2}}\right)
\]
for all $t\geq0$, where $\mathcal{Z}\sim\mathcal{N}\left(0,\,\sigma^{2}\right)$.
Note that the constant ``$\frac{1}{2}$'' cannot be improved uniformly.

Given $\lambda\geq\frac{2\sigma}{\phi}\sqrt{\frac{2\left(1+\tau\right)\log p}{n}}$
where $\tau>0$ and the tail bound 
\[
\mathbb{P}\left(\left|\frac{X^{T}\varepsilon}{n}\right|_{\infty}\geq t\right)\leq\exp\left(\frac{-nt^{2}}{2\sigma^{2}}+\log p\right)\leq\frac{1}{p^{\tau}}
\]
for $t=\frac{\sigma}{\phi}\sqrt{\frac{2\left(1+\tau\right)\log p}{n}}$,
we have 
\begin{equation}
\lambda\geq2\left|\frac{X^{T}\varepsilon}{n}\right|_{\infty}\label{eq:lambda}
\end{equation}
with probability at least $1-\frac{1}{p^{\tau}}$. Let the event 
\begin{equation}
\mathcal{E}=\left\{ \left|\frac{X^{T}\varepsilon}{n}\right|_{\infty}\leq\frac{\sigma}{\phi}\sqrt{\frac{2\left(1+\tau\right)\log p}{n}}\right\} .\label{eq:event}
\end{equation}
Note that $\mathbb{P}\left(\mathcal{E}\right)\geq1-\frac{1}{p^{\tau}}$.

Lemma \ref{prop:fixed_design} relies on the following intermediate
results.

\textit{(i) On the event $\mathcal{E}$, (\ref{eq:las}) has a unique
optimal solution $\hat{\theta}$ such that $\hat{\theta}_{j}=0$ for
$j\notin K$.}

\textit{(ii) If $\mathbb{P}\left(\left\{ \hat{\theta}_{j}\neq0,\,j\in K\right\} \cap\mathcal{E}\right)>0$,
conditioning on $\left\{ \hat{\theta}_{j}\neq0,\,j\in K\right\} \cap\mathcal{E}$,
we must have 
\begin{equation}
\left|\hat{\theta}_{j}-\theta_{j}^{*}\right|\geq\frac{\lambda}{2}.\label{eq:37-1}
\end{equation}
}

Claim (i) above follows from the argument in \citet{wainwright_2019}.
To show claim (ii), we develop our own proof.

The proof for claim (i) above is based on a construction called Primal-Dual
Witness (PDW) method developed by \citet{wainwright2009sharp}. The
procedure is described as follows. 
\begin{enumerate}
\item Set $\hat{\theta}_{K^{c}}=0_{p-k}$. 
\item Obtain $(\hat{\theta}_{K},\,\hat{\delta}_{K})$ by solving 
\begin{equation}
\hat{\theta}_{K}\in\arg\min_{\theta_{K}\in\mathbb{R}^{k}}\left\{ \underset{:=g(\theta_{K})}{\underbrace{\frac{1}{2n}\left|Y-X_{K}\theta_{K}\right|_{2}^{2}}}+\lambda\left|\theta_{K}\right|_{1}\right\} ,\label{eq:sub}
\end{equation}
and choosing $\hat{\delta}_{K}\in\partial\left|\theta_{K}\right|_{1}$
such that $\nabla g(\theta_{K})\vert_{\theta_{K}=\hat{\theta}_{K}}+\lambda\hat{\delta}_{K}=0$.\footnote{For a convex function $f:\,\mathbb{R}^{p}\mapsto\mathbb{R}$, $\delta\in\mathbb{R}^{p}$
is a subgradient at $\theta$, namely $\delta\in\partial f(\theta)$,
if $f(\theta+\triangle)\geq f(\theta)+\left\langle \delta,\,\triangle\right\rangle $
for all $\triangle\in\mathbb{R}^{p}$. 
 } 
\item Obtain $\hat{\delta}_{K^{c}}$ by solving 
\begin{equation}
\frac{1}{n}X^{T}(X\hat{\theta}-Y)+\lambda\hat{\delta}=0,\label{eq:42}
\end{equation}
and check whether or not $\left|\hat{\delta}_{K^{c}}\right|_{\infty}<1$
(the \textit{strict dual feasibility} condition) holds. 
\end{enumerate}
Lemma 7.23 from Chapter 7 of \citet{wainwright_2019} shows that,
if the PDW construction succeeds, then $\hat{\theta}=(\hat{\theta}_{K},\,0_{p-k})$
is the unique optimal solution of program (\ref{eq:las}). To show
that the PDW construction succeeds on the event $\mathcal{E}$, it
suffices to show that $\left|\hat{\delta}_{K^{c}}\right|_{\infty}<1$.
The details can be found in Chapter 7.5 of \citet{wainwright_2019}.
In particular, under the choice of $\lambda$ stated in Lemma \ref{prop:fixed_design},
we obtain that $\left|\hat{\delta}_{K^{c}}\right|_{\infty}<1$ and
hence the PDW construction succeeds conditioning on $\mathcal{E}$
where $\mathbb{P}\left(\mathcal{E}\right)\geq1-\frac{1}{p^{\tau}}$.

In summary, conditioning on $\mathcal{E}$, under the choice of $\lambda$
stated in Lemma \ref{prop:fixed_design}, program (\ref{eq:las})
has a unique optimal solution $\hat{\theta}$ such that $\hat{\theta}_{j}=0$
for $j\notin K$.

We now show (\ref{eq:37-1}). By construction, $\hat{\theta}=(\hat{\theta}_{K},\,0_{p-k})$,
$\hat{\delta}_{K}$, and $\hat{\delta}_{K^{c}}$ satisfy (\ref{eq:42})
and therefore we obtain 
\begin{eqnarray}
\frac{1}{n}X_{K}^{T}X_{K}\left(\hat{\theta}_{K}-\theta_{K}^{*}\right)-\frac{1}{n}X_{K}^{T}\varepsilon+\lambda\hat{\delta}_{K} & = & 0_{k},\label{eq:7-1}\\
\frac{1}{n}X_{K^{c}}^{T}X_{K}\left(\hat{\theta}_{K}-\theta_{K}^{*}\right)-\frac{1}{n}X_{K^{c}}^{T}\varepsilon+\lambda\hat{\delta}_{K^{c}} & = & 0_{p-k}.\label{eq:8}
\end{eqnarray}
Solving the equations above yields 
\begin{equation}
\hat{\theta}_{K}-\theta_{K}^{*}=\left(\frac{X_{K}^{T}X_{K}}{n}\right)^{-1}\frac{X_{K}^{T}\varepsilon}{n}-\lambda\left(\frac{X_{K}^{T}X_{K}}{n}\right)^{-1}\hat{\delta}_{K}.\label{eq:49}
\end{equation}

In what follows, we will condition on $\left\{ \hat{\theta}_{j}\neq0,\,j\in K\right\} \cap\mathcal{E}$
and make use of (\ref{eq:lambda})-(\ref{eq:event}). Let $\Delta=\frac{X_{K}^{T}\varepsilon}{n}-\lambda\hat{\delta}_{K}$.
Note that 
\begin{equation}
\left|\hat{\theta}_{K}-\theta_{K}^{*}\right|\geq\left|\left(\frac{X_{K}^{T}X_{K}}{n}\right)^{-1}\right|\left|\left|\lambda\hat{\delta}_{K}\right|-\left|\frac{X_{K}^{T}\varepsilon}{n}\right|\right|,\label{eq:7}
\end{equation}
where the inequality uses the fact that $\left(\frac{X_{K}^{T}X_{K}}{n}\right)^{-1}$
is diagonal. In Step 2 of the PDW procedure, $\hat{\delta}_{K}$ is
chosen such that $\left|\hat{\delta}_{j}\right|=1$ for any $j\in K$
with $\hat{\theta}_{j}\neq0$; we therefore obtain 
\[
\left|\hat{\theta}_{j}-\theta_{j}^{*}\right|\geq\left|\left|\lambda\right|-\left|\frac{X_{j}^{T}\varepsilon}{n}\right|\right|\geq\frac{\lambda}{2}
\]
where the second inequality follows from (\ref{eq:lambda}).

\subsubsection*{Main proof }

In what follows, we let 
\begin{eqnarray*}
E_{1} & = & \left\{ \textrm{sgn}\left(\hat{\theta}_{j}\right)=-\textrm{sgn}\left(\theta_{j}^{*}\right),\,\textrm{for some }j\in K\right\} ,\\
E_{2} & = & \left\{ \textrm{sgn}\left(\hat{\theta}_{j}\right)=\textrm{sgn}\left(\theta_{j}^{*}\right),\,\textrm{for some }j\in K\textrm{ such that (\ref{eq:min}) holds}\right\} ,\\
E_{3} & = & \left\{ \textrm{sgn}\left(\hat{\theta}_{j}\right)=\textrm{sgn}\left(\theta_{j}^{*}\right),\,\textrm{for some }j\in K\right\} .
\end{eqnarray*}
To show (\ref{eq:nec}) in (iv), recall we have established that conditioning
on $\mathcal{E}$, (\ref{eq:las}) has a unique optimal solution $\hat{\theta}$
such that $\hat{\theta}_{j}=0$ for $j\notin K$. Therefore, conditioning
on $\mathcal{E}$, the KKT condition for (\ref{eq:las}) implies 
\begin{equation}
\theta_{j}^{*}-\hat{\theta}_{j}=\lambda\textrm{sgn}\left(\hat{\theta}_{j}\right)-\frac{X_{j}^{T}\varepsilon}{n}\label{eq:kkt}
\end{equation}
for $j\in K$ such that $\hat{\theta}_{j}\neq0$.

We first show that $\mathbb{P}\left(E_{1}\cap\mathcal{E}\right)=0$.
Suppose $\mathbb{P}\left(E_{1}\cap\mathcal{E}\right)>0$. We may then
condition on the event $E_{1}\cap\mathcal{E}$. Case (i): $\theta_{j}^{*}>0$
and $\hat{\theta}_{j}<0$. Then, the LHS of (\ref{eq:kkt}), $\theta_{j}^{*}-\hat{\theta}_{j}>0$;
consequently, the RHS, $\lambda\textrm{sgn}\left(\hat{\theta}_{j}\right)-\frac{X_{j}^{T}\varepsilon}{n}=-\lambda-\frac{X_{j}^{T}\varepsilon}{n}>0$.
However, given the choice of $\lambda$, conditioning on $\mathcal{E}$,
$\lambda\geq2\left|\frac{X^{T}\varepsilon}{n}\right|_{\infty}$ and
consequently, $-\lambda-\frac{X_{j}^{T}\varepsilon}{n}\leq-\frac{\lambda}{2}<0$.
This leads to a contradiction. Case (ii): $\theta_{j}^{*}<0$ and
$\hat{\theta}_{j}>0$. Then, the LHS of (\ref{eq:kkt}), $\theta_{j}^{*}-\hat{\theta}_{j}<0$;
consequently, the RHS, $\lambda\textrm{sgn}\left(\hat{\theta}_{j}\right)-\frac{X_{j}^{T}\varepsilon}{n}=\lambda-\frac{X_{j}^{T}\varepsilon}{n}<0$.
However, given the choice of $\lambda$, conditioning on $\mathcal{E}$,
$\lambda\geq2\left|\frac{X^{T}\varepsilon}{n}\right|_{\infty}$ and
consequently, $\lambda-\frac{X_{j}^{T}\varepsilon}{n}\geq\frac{\lambda}{2}>0$.
This leads to a contradiction.

It remains to show that $\mathbb{P}\left(E_{2}\cap\mathcal{E}\right)=0$.
We first establish a useful fact under the assumption that $\mathbb{P}\left(E_{3}\cap\mathcal{E}\right)>0$.
Let us condition on the event $E_{3}\cap\mathcal{E}$. If $\theta_{j}^{*}>0$,
we have $\theta_{j}^{*}-\hat{\theta}_{j}=\lambda-\frac{X_{j}^{T}\varepsilon}{n}\geq\frac{\lambda}{2}>0$
(i.e., $\theta_{j}^{*}\geq\hat{\theta}_{j}$); similarly, if $\theta_{j}^{*}<0$,
then we have $\theta_{j}^{*}-\hat{\theta}_{j}=-\lambda-\frac{X_{j}^{T}\varepsilon}{n}\leq-\frac{\lambda}{2}<0$
(i.e., $\theta_{j}^{*}\leq\hat{\theta}_{j}$). Putting the pieces
together implies that, for $j\in K$ such that $\textrm{sgn}\left(\hat{\theta}_{j}\right)=\textrm{sgn}\left(\theta_{j}^{*}\right)$,
\begin{equation}
\left|\theta_{j}^{*}-\hat{\theta}_{j}\right|=\left|\theta_{j}^{*}\right|-\left|\hat{\theta}_{j}\right|.\label{eq:equal}
\end{equation}

We now show that $\mathbb{P}\left(E_{2}\cap\mathcal{E}\right)=0$.
Suppose $\mathbb{P}\left(E_{2}\cap\mathcal{E}\right)>0$. We may then
condition on the event that $E_{2}\cap\mathcal{E}$. Because of (\ref{eq:min})
and (\ref{eq:equal}), we have $\left|\theta_{j}^{*}-\hat{\theta}_{j}\right|<\frac{\lambda}{2}$.
On the other hand, (\ref{eq:37-1}) implies that $\left|\theta_{j}^{*}-\hat{\theta}_{j}\right|\geq\frac{\lambda}{2}$.
We have arrived at a contradiction. Consequently, we must have $\mathbb{P}\left(E_{2}\cap\mathcal{E}\right)=0$.

In summary, we have shown that $\mathbb{P}\left(E_{1}\cap\mathcal{E}\right)=0$
and $\mathbb{P}\left(E_{2}\cap\mathcal{E}\right)=0$. Claim (i) in
``Preliminary'' implies that $\mathbb{P}\left(E_{4}\vert\mathcal{E}\right)=0$
where $E_{4}$ denotes the event that $\hat{\theta}_{j}\neq0$ for
some $j\notin K$. Therefore, on $\mathcal{E}$, none of the events
$E_{1}$, $E_{2}$ and $E_{4}$ can happen. This fact implies that,
if (\ref{eq:min}) is satisfied for all $l\in K$, we must have 
\[
\mathbb{P}\left(\hat{\theta}=0_{p}\right)\geq1-\mathbb{P}\left(\mathcal{E}^{c}\right)\geq1-\frac{1}{p^{\tau}}.
\]

\subsection{Proposition \ref{prop:bias_post_double_formula}}

\label{sec:appendix_post_double_lasso}

We first show the case where $ab>0$. Let the events 
\begin{eqnarray}
\mathcal{E}_{t_{1}} & = & \left\{ \left|\frac{X_{K}^{T}v}{n}\right|_{\infty}\leq t_{1},\,t_{1}>0\right\} ,\label{eq:50}\\
\mathcal{E}_{t_{2}}^{'} & = & \left\{ \frac{1}{n}\sum_{i=1}^{n}v_{i}^{2}\leq\sigma_{v}^{2}+t_{2},\:t_{2}\in(0,\,\sigma_{v}^{2}]\right\} .\nonumber 
\end{eqnarray}
By tail bounds for Gaussian and Chi-Square variables, we have 
\begin{eqnarray}
\mathbb{P}\left(\mathcal{E}_{t_{1}}\right) & \geq & 1-k\exp\left(\frac{-nt_{1}^{2}}{2\sigma_{v}^{2}}\right),\label{eq:32}\\
\mathbb{P}\left(\mathcal{E}_{t_{2}}^{'}\right) & \geq & 1-\exp\left(\frac{-nt_{2}^{2}}{8\sigma_{v}^{4}}\right).\nonumber 
\end{eqnarray}

In the following proof, we exploit the bound 
\begin{eqnarray}
\mathbb{P}\left(\mathcal{E}_{t_{2}}^{'}\vert\hat{I}_{2}=\emptyset,\,\mathcal{E}_{t_{1}}\right) & \geq & \mathbb{P}\left(\mathcal{E}_{t_{2}}^{'}\cap\mathcal{E}_{t_{1}}\cap\left\{ \hat{I}_{2}=\emptyset\right\} \right)\nonumber \\
 & \geq & \mathbb{P}\left(\mathcal{E}_{t_{1}}\right)+\mathbb{P}\left(\mathcal{E}_{t_{2}}^{'}\right)+\mathbb{P}\left(\hat{I}_{2}=\emptyset\right)-2\nonumber \\
 & \geq & 1-\frac{1}{p^{\tau}}-k\exp\left(\frac{-nt_{1}^{2}}{2\sigma_{v}^{2}}\right)-\exp\left(\frac{-nt_{2}^{2}}{8\sigma_{v}^{4}}\right)\label{eq:58}
\end{eqnarray}
where the third inequality follows from Lemma \ref{prop:fixed_design},
which implies $\hat{I}_{2}=\emptyset$ with probability at least $1-\frac{1}{p^{\tau}}$.
Note that $\mathbb{P}\left(\mathcal{E}_{t_{1}}\cap\left\{ \hat{I}_{2}=\emptyset\right\} \right)\geq\mathbb{P}\left(\mathcal{E}_{t_{1}}\right)+\mathbb{P}\left(\hat{I}_{2}=\emptyset\right)-1\geq1-\frac{1}{p^{\tau}}-k\exp\left(\frac{-nt_{1}^{2}}{2\sigma_{v}^{2}}\right)$,
which is a ``high probability'' guarantee for sufficiently large
$p$ and $t_{1}$. Thus, working with $\mathbb{P}\left(\mathcal{E}_{t_{2}}^{'}\vert\hat{I}_{2}=\emptyset,\,\mathcal{E}_{t_{1}}\right)$
is sensible under an appropriate choice of $t_{1}$ (as we will see
below).

We first bound $\frac{\frac{1}{n}D^{T}X_{K}}{\frac{1}{n}D^{T}D}\beta_{K}^{*}$.
Note that 
\begin{eqnarray*}
\frac{\frac{1}{n}D^{T}X_{K}}{\frac{1}{n}D^{T}D}\beta_{K}^{*} & = & \left(\frac{D^{T}D}{n}\right)^{-1}\left[\frac{1}{n}\left(X_{K}\gamma_{K}^{*}+v\right)^{T}X_{K}\beta_{K}^{*}\right]\\
 & = & \left(\frac{D^{T}D}{n}\right)^{-1}\left[\frac{1}{n}\gamma_{K}^{*T}X_{K}^{T}X_{K}\beta_{K}^{*}+\frac{1}{n}v^{T}X_{K}\beta_{K}^{*}\right]\\
 & = & \frac{\gamma_{K}^{*T}\beta_{K}^{*}+\frac{1}{n}v^{T}X_{K}\beta_{K}^{*}}{\frac{1}{n}\left(X_{K}\gamma_{K}^{*}+v\right)^{T}\left(X_{K}\gamma_{K}^{*}+v\right)},
\end{eqnarray*}
and that $\gamma_{K}^{*T}\beta_{K}^{*}=2\left(1+\tau\right)ab\phi^{-2}\sigma_{\eta}\sigma_{v}\frac{k\log p}{n}$.
Moreover, applying (\ref{eq:58}) with $t_{1}=\left|b\right|\frac{\lambda_{2}}{4}$
and $t_{2}=r\sigma_{v}^{2}$ (where $r\in(0,\,1]$) yields

\begin{eqnarray}
\gamma_{K}^{*T}\beta_{K}^{*}+\frac{1}{n}v^{T}X_{K}\beta_{K}^{*} & \geq & \gamma_{K}^{*T}\beta_{K}^{*}-\left|\frac{1}{n}v^{T}X_{K}\right|_{\infty}\left|\beta_{K}^{*}\right|_{1}\nonumber \\
 & \geq & \left(1+\tau\right)ab\phi^{-2}\sigma_{\eta}\sigma_{v}\frac{k\log p}{n}\label{eq:pos}
\end{eqnarray}
as well as 
\begin{eqnarray}
\frac{1}{n}\left(X_{K}\gamma_{K}^{*}+v\right)^{T}\left(X_{K}\gamma_{K}^{*}+v\right) & \leq & \gamma_{K}^{*T}\gamma_{K}^{*}+\left|\frac{2}{n}v^{T}X_{K}\right|_{\infty}\left|\gamma_{K}^{*}\right|_{1}+\frac{1}{n}v^{T}v\nonumber \\
 & \leq & 4\left(1+\tau\right)\phi^{-2}b^{2}\sigma_{v}^{2}\frac{k\log p}{n}+\sigma_{v}^{2}+r\sigma_{v}^{2}\label{eq:den}
\end{eqnarray}
with probability at least 
\[
1-k\exp\left(\frac{-b^{2}\left(1+\tau\right)\log p}{4\phi^{2}}\right)-\frac{1}{p^{\tau}}-\exp\left(\frac{-nr^{2}}{8}\right):=T_{2}\left(r\right).
\]

Conditioning on $\mathcal{E}_{t_{1}}\cap\left\{ \hat{I}_{2}=\emptyset\right\} $
with $t_{1}=t^{*}=\left|b\right|\frac{\lambda_{2}}{4}$, putting the
pieces together yields 
\begin{equation}
\frac{D^{T}X_{K}}{D^{T}D}\beta_{K}^{*}\geq\frac{\left(1+\tau\right)ab\phi^{-2}\sigma_{\eta}\frac{k\log p}{n}}{4\left(1+\tau\right)\phi^{-2}b^{2}\sigma_{v}\frac{k\log p}{n}+\sigma_{v}+r\sigma_{v}}:=T_{1}\left(r\right),\label{eq:47}
\end{equation}
with probability at least $T_{2}\left(r\right)$. That is, 
\[
\mathbb{P}\left(\frac{D^{T}X_{K}}{D^{T}D}\beta_{K}^{*}\geq T_{1}\left(r\right)\vert\hat{I}_{2}=\emptyset,\,\mathcal{E}_{t^{*}}\right)\geq T_{2}\left(r\right).
\]

When $\alpha^{*}=0$ in (\ref{eq:20}), the reduced form coefficients
$\pi^{*}$ in (\ref{eq:reduce}) coincide with $\beta^{*}$ and $u$
coincides with $\eta$. Given the conditions on $X,$ $\eta$, $v$,
$\beta_{K}^{*}$ and $\gamma_{K}^{*}$, we can then apply (\ref{eq:nec})
in Lemma \ref{prop:fixed_design} and the fact $\mathbb{P}\left(\hat{I}_{1}=\hat{I}_{2}=\emptyset\right)\geq\mathbb{P}\left(\hat{I}_{1}=\emptyset\right)+\mathbb{P}\left(\hat{I}_{2}=\emptyset\right)-1$
to show that $E=\left\{ \hat{I}_{1}=\hat{I}_{2}=\emptyset\right\} $
occurs with probability at least $1-\frac{2}{p^{\tau}}$. Note that
with the choice $t_{1}=t^{*}=\left|b\right|\frac{\lambda_{2}}{4}$,
$\mathbb{P}\left(E\cap\mathcal{E}_{t^{*}}\right)\geq\mathbb{P}\left(E\right)+\mathbb{P}\left(\mathcal{E}_{t^{*}}\right)-1\geq1-k\exp\left(\frac{-b^{2}\left(1+\tau\right)\log p}{4\phi^{2}}\right)-\frac{2}{p^{\tau}}$,
which is a ``high probability'' guarantee given sufficiently large
$p$.\footnote{Because $v$ and $\eta$ are independent of each other, the bound
$\mathbb{P}\left(E\cap\mathcal{E}_{t^{*}}\right)\geq1-k\exp\left(\frac{-b^{2}\left(1+\tau\right)\log p}{4\phi^{2}}\right)-\frac{2}{p^{\tau}}$
can be further sharpened to $\mathbb{P}\left(E\cap\mathcal{E}_{t^{*}}\right)\geq\left(1-\frac{1}{p^{\tau}}\right)^{2}-k\exp\left(\frac{-b^{2}\left(1+\tau\right)\log p}{4\phi^{2}}\right)$.} Therefore, it is sensible to work with $\mathbb{E}\left(\tilde{\alpha}-\alpha^{*}\vert\mathcal{M}\right)$
where 
\begin{equation}
\mathcal{M}=E\cap\mathcal{E}_{t^{*}}.\label{eq:M_event}
\end{equation}

Given $E$, (\ref{eq:double}) becomes 
\begin{equation}
\tilde{\alpha}\in\textrm{arg}\min_{\alpha\in\mathbb{R}}\frac{1}{2n}\left|Y-D\alpha\right|_{2}^{2},\qquad\textrm{while }\tilde{\beta}=0_{p}.\label{eq:double-1}
\end{equation}
As a result, we obtain $\mathbb{E}\left(\tilde{\alpha}-\alpha^{*}\vert\mathcal{M}\right)=\mathbb{E}\left(\frac{\frac{1}{n}D^{T}X_{K}}{\frac{1}{n}D^{T}D}\beta_{K}^{*}\vert\mathcal{M}\right)+\mathbb{E}\left(\frac{\frac{1}{n}D^{T}\eta}{\frac{1}{n}D^{T}D}\vert\mathcal{M}\right)$
and 
\begin{eqnarray}
\mathbb{E}\left(\frac{\frac{1}{n}D^{T}\eta}{\frac{1}{n}D^{T}D}\vert\mathcal{M}\right) & = & \frac{1}{\mathbb{P}\left(\mathcal{M}\right)}\mathbb{E}\left[\frac{\frac{1}{n}D^{T}\eta}{\frac{1}{n}D^{T}D}1_{\mathcal{M}}\left(D,\eta\right)\right]\nonumber \\
 & = & \frac{1}{\mathbb{P}\left(\mathcal{M}\right)}\mathbb{E}_{D}\left\{ \mathbb{E}_{\eta}\left[\frac{\frac{1}{n}D^{T}\eta}{\frac{1}{n}D^{T}D}1_{\mathcal{M}}\left(D,\eta\right)\vert D\right]\right\} \nonumber \\
 & = & \frac{1}{\mathbb{P}\left(\mathcal{M}\right)}\mathbb{E}_{D}\left\{ \frac{\frac{1}{n}\sum_{i=1}^{n}D_{i}\mathbb{E}_{\eta}\left[\eta_{i}1_{\mathcal{M}}\left(D,\eta\right)\vert D\right]}{\frac{1}{n}D^{T}D}\right\} \nonumber \\
 & = & 0\label{eq:60}
\end{eqnarray}
where $1_{\mathcal{M}}\left(D,\eta\right)=1\left\{ \left(v,\eta\right):\,\hat{I}_{1}=\hat{I}_{2}=\emptyset,\,\left|\frac{X_{K}^{T}v}{n}\right|_{\infty}\leq t^{*}\right\} $
(recall $X$ is a fixed design); the last line follows from $\frac{1}{n}\sum_{i=1}^{n}D_{i}=0$,
the distributional identicalness of $\left(\eta_{i}\right)_{i=1}^{n}$
and that $\mathbb{E}_{\eta}\left[\eta_{i}1_{\mathcal{M}}\left(D,\eta\right)\vert D\right]$
is a constant over $i$s.

It remains to bound $\mathbb{E}\left(\frac{\frac{1}{n}D^{T}X_{K}}{\frac{1}{n}D^{T}D}\beta_{K}^{*}\vert\mathcal{M}\right)=\mathbb{E}\left(\frac{\frac{1}{n}D^{T}X_{K}}{\frac{1}{n}D^{T}D}\beta_{K}^{*}\vert\hat{I}_{2}=\emptyset,\,\mathcal{E}_{t^{*}}\right)$.
Note that conditioning on $\mathcal{E}_{t^{*}}$, $\frac{\frac{1}{n}D^{T}X_{K}}{\frac{1}{n}D^{T}D}\beta_{K}^{*}$
is positive by (\ref{eq:pos}). Applying a Markov inequality yields
\[
\mathbb{E}\left(\frac{\frac{1}{n}D^{T}X_{K}}{\frac{1}{n}D^{T}D}\beta_{K}^{*}\vert\hat{I}_{2}=\emptyset,\,\mathcal{E}_{t^{*}}\right)\geq T_{1}\left(r\right)\mathbb{P}\left(\frac{D^{T}X_{K}}{D^{T}D}\beta_{K}^{*}\geq T_{1}\left(r\right)\vert\hat{I}_{2}=\emptyset,\,\mathcal{E}_{t^{*}}\right)\geq T_{1}\left(r\right)T_{2}\left(r\right).
\]
Combining the result above with (\ref{eq:60}) and maximizing over
$r\in(0,\,1]$ gives the claim.

We now show the case where $ab<0$. The argument is almost similar.
In particular, we use 
\begin{eqnarray*}
\gamma_{K}^{*T}\beta_{K}^{*}+\frac{1}{n}v^{T}X_{K}\beta_{K}^{*} & \leq & \gamma_{K}^{*T}\beta_{K}^{*}+\left|\frac{1}{n}v^{T}X_{K}\right|_{\infty}\left|\beta_{K}^{*}\right|_{1}\\
 & \leq & -\left(1+\tau\right)\left|ab\right|\phi^{-2}\sigma_{\eta}\sigma_{v}\frac{k\log p}{n}<0
\end{eqnarray*}
and replace (\ref{eq:pos}) with 
\[
-\gamma_{K}^{*T}\beta_{K}^{*}-\frac{1}{n}v^{T}X_{K}\beta_{K}^{*}\geq\left(1+\tau\right)\left|ab\right|\phi^{-2}\sigma_{\eta}\sigma_{v}\frac{k\log p}{n}.
\]
Note that 
\[
\mathbb{E}\left(\alpha^{*}-\tilde{\alpha}\vert\mathcal{M}\right)=\mathbb{E}\left(\frac{-\frac{1}{n}D^{T}X_{K}}{\frac{1}{n}D^{T}D}\beta_{K}^{*}\vert\mathcal{M}\right)-\mathbb{E}\left(\frac{\frac{1}{n}D^{T}\eta}{\frac{1}{n}D^{T}D}\vert\mathcal{M}\right).
\]
So the rest of the proof follows from the argument for the case where
$ab>0$.

\subsection{Lower bounds on the OVBs of post double Lasso when $\alpha^{*}\protect\neq0$\label{subsec:Lower-bounds-alpha not zero}}

For functions $f(n)$ and $g(n)$, we write $f(n)\succsim g(n)$ to
mean that $f(n)\geq cg(n)$ for a universal constant $c\in(0,\,\infty)$
and similarly, $f(n)\precsim g(n)$ to mean that $f(n)\leq c^{'}g(n)$
for a universal constant $c^{'}\in(0,\,\infty)$; $f(n)\asymp g(n)$
when $f(n)\succsim g(n)$ and $f(n)\precsim g(n)$ hold simultaneously.
As a general rule, $c$ constants denote positive universal constants
that are independent of $n$, $p$, $k$, $\sigma_{\eta}$, $\sigma_{v}$,
$\alpha^{*}$ and may change from place to place. 

\begin{proposition}[Scaling of OVB lower bound, Case I]\label{prop:bias_post_double_selection}
Let Assumption \ref{ass: incoherence}(ii)-(iii) and Assumption \ref{ass:inference_normality}
hold. Suppose $\phi^{-1}\precsim1$; the regularization parameters
in (\ref{eq:las-1}) and (\ref{eq:las-2}) are chosen in a similar
fashion as in Lemma \ref{prop:fixed_design} such that $\lambda_{1}\asymp\phi^{-1}\sigma_{\eta}\sqrt{\frac{\log p}{n}}$\,\footnote{In general, $\lambda_{1}\asymp\phi^{-1}\left(\sigma_{\eta}+\left|\alpha^{*}\right|\sigma_{v}\right)\sqrt{\frac{\log p}{n}}$
and the iterative algorithm for choosing $\lambda_{1}$ in \citet{belloni2014inference} achieves this scaling.
Under the conditions on $\left(\beta_{K}^{*},\,\gamma_{K}^{*},\,\alpha^{*}\right)$
in Proposition \ref{prop:bias_post_double_selection}, this scaling
is equivalent to $\phi^{-1}\sigma_{\eta}\sqrt{\frac{\log p}{n}}$.} and $\lambda_{2}\asymp\phi^{-1}\sigma_{v}\sqrt{\frac{\log p}{n}}$;
for all $j\in K$, $\left|\beta_{j}^{*}\right|\leq\frac{\lambda_{1}}{2}$
and $\left|\gamma_{j}^{*}\right|\leq\frac{\lambda_{2}}{2}$, but $\left|\beta_{j}^{*}\right|\asymp\sigma_{\eta}\sqrt{\frac{\log p}{n}}$
and $\left|\gamma_{j}^{*}\right|\asymp\sigma_{v}\sqrt{\frac{\log p}{n}}$.
Let us consider $\tilde{\alpha}$ obtained from (\ref{eq:double}).
If $\alpha^{*}\gamma_{j}^{*}\in(0,\,-\beta_{j}^{*}]$, $\beta_{j}^{*}<0$
for $j\in K$ (or, $\alpha^{*}\gamma_{j}^{*}\in[-\beta_{j}^{*},\,0)$,
$\beta_{j}^{*}>0$ for $j\in K$), then for some positive universal
constants $c^{\dagger},c_{1},c_{2},c_{3},c^{*},c_{0}^{*}$, 
\begin{equation}
\left|\mathbb{E}\left(\tilde{\alpha}-\alpha^{*}\vert\mathcal{M}\right)\right|\geq c^{\dagger}\frac{\sigma_{\eta}}{\sigma_{v}}\left(\frac{k\log p}{n}\wedge1\right)\left[1-c_{1}k\exp\left(-c_{2}\log p\right)-\exp\left(-c_{3}n\right)\right],\label{eq:22}
\end{equation}
where $\mathcal{M}$ is an event with $\mathbb{P}\left(\mathcal{M}\right)\geq1-c^{*}k\exp\left(-c_{0}^{*}\log p\right)$.

\end{proposition}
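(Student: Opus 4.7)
The plan is to deduce both parts from (mild extensions of) Proposition~\ref{prop:bias_post_double_formula}, treating $\tau$ as a fixed universal constant, absorbing the hypothesis $\phi^{-1}\precsim 1$ into the implicit constants, and noting that the scaling assumptions on $|\beta_j^*|$ and $|\gamma_j^*|$ correspond to $|a|,|b|\asymp 1$ in the notation of Proposition~\ref{prop:bias_post_double_formula}. For Part~(i), since $\alpha^*=0$, the hypotheses on $|\beta_j^*|$ and $|\gamma_j^*|$ coincide with (\ref{eq:16}), so Proposition~\ref{prop:bias_post_double_formula} applies verbatim. The denominator of $T_1(r)$ scales as $\sigma_v(1+\frac{k\log p}{n})$ and its numerator as $\sigma_\eta\frac{k\log p}{n}$, giving $T_1(r)\asymp \frac{\sigma_\eta}{\sigma_v}\cdot\frac{k\log p/n}{1+k\log p/n}\asymp \frac{\sigma_\eta}{\sigma_v}\bigl(\frac{k\log p}{n}\wedge 1\bigr)$. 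Evaluating at $r=1$ and absorbing $p^{-\tau}=\exp(-\tau\log p)$ into the first exponential term, $T_2(1)$ has the form $1-c_1 k\exp(-c_2\log p)-\exp(-c_3 n)$. Multiplying yields (\ref{eq:22}), and the probability lower bound for $\mathcal{M}$ is inherited from Proposition~\ref{prop:bias_post_double_formula}.

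For Part~(ii), the sign condition $\alpha^*\gamma_j^*\in(0,-\beta_j^*]$ with $\beta_j^*<0$ (the symmetric case is analogous) forces
\[
|\pi_j^*|=|\beta_j^*+\alpha^*\gamma_j^*|=|\beta_j^*|-\alpha^*\gamma_j^*\le |\beta_j^*|\le \tfrac{\lambda_1 n}{2s},
\]
so the minimum-coefficient hypothesis of Lemma~\ref{prop:fixed_design} is preserved for the reduced form (\ref{eq:reduce}) whose noise is $u=\eta+\alpha^* v$ with variance $\sigma_u^2=\sigma_\eta^2+(\alpha^*)^2\sigma_v^2$. The same sign condition together with $|\beta_j^*|\asymp \sigma_\eta\sqrt{n/s}\sqrt{\log p/n}$ and $|\gamma_j^*|\asymp \sigma_v\sqrt{n/s}\sqrt{\log p/n}$ forces $|\alpha^*|\sigma_v\precsim\sigma_\eta$, hence $\sigma_u\asymp\sigma_\eta$, and the chosen $\lambda_1$ is still of the correct order to invoke Lemma~\ref{prop:fixed_design}. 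An identical argument applies to (\ref{eq:las-2}). Therefore both Lasso steps select nothing with probability at least $1-2p^{-\tau}$, and on this event (\ref{eq:double}) reduces to the univariate regression $\tilde{\alpha}=(D^T D)^{-1}D^T Y$, giving
\[
\tilde{\alpha}-\alpha^*=(D^T D)^{-1}D^T X\beta^*+(D^T D)^{-1}D^T\eta.
\]

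I then repeat the conditional-expectation computation in the proof of Proposition~\ref{prop:bias_post_double_formula} with $\alpha^*\ne 0$. Assumption~\ref{ass:inference_sparsity_incoherence}(iii) yields $\gamma^{*T}X^T X\beta^*=s\gamma_K^{*T}\beta_K^*$, and the sign condition ensures that every summand $\gamma_j^*\beta_j^*$ has the same sign, so $|\gamma_K^{*T}\beta_K^*|\asymp k|\gamma_j^*||\beta_j^*|\asymp k\sigma_\eta\sigma_v\log p/s$ with no cancellation across $j\in K$. A standard Gaussian concentration bound on $D^T D=s|\gamma_K^*|_2^2+2v^T X_K\gamma_K^*+|v|_2^2$ gives $D^T D\asymp n\sigma_v^2(1+\frac{k\log p}{n})$ with probability at least $1-\exp(-c_3 n)$, so the deterministic main term $s\gamma_K^{*T}\beta_K^*/D^T D$ contributes exactly of order $\frac{\sigma_\eta}{\sigma_v}\bigl(\frac{k\log p}{n}\wedge 1\bigr)$. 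The main obstacle is the $(D^T D)^{-1}D^T\eta$ term, whose conditional expectation on $\mathcal{M}$ need not vanish because $\eta$ is coupled to $v$ through the first Lasso's selection event (via $u=\eta+\alpha^* v$). I plan to resolve this by conditioning on $v$ first so that $\eta\perp v$ factors cleanly, quantifying the induced conditional mean of $\eta$ through Gaussian conditioning inequalities applied to the selection event of Lemma~\ref{prop:fixed_design}, and showing that its contribution is of strictly smaller order than the deterministic bias term $s\gamma_K^{*T}\beta_K^*/\mathbb{E}[D^T D]$, whose lower bound on $\mathcal{M}$ is guaranteed by the no-cancellation property that the sign condition precisely enforces.
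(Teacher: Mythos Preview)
Your Part~(i) is correct and matches the paper: it follows directly from Proposition~\ref{prop:bias_post_double_formula} by reading off the scaling of $T_1(r)T_2(r)$ when $|a|,|b|\asymp 1$ and $\phi^{-1}\precsim 1$.

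Your setup for Part~(ii) also tracks the paper closely: you correctly derive $|\pi_j^*|\le|\beta_j^*|\le\lambda_1 n/(2s)$ from the sign condition, bound $|\alpha^*|\le\max_{j\in K}|\beta_j^*|/|\gamma_j^*|\asymp\sigma_\eta/\sigma_v$ so that $\sigma_u\asymp\sigma_\eta$ and the stated $\lambda_1$ remains of the right order, and then invoke Lemma~\ref{prop:fixed_design} in both Lasso steps. The paper makes one small bookkeeping adjustment you do not mention: since $\{\hat I_1=\emptyset\}$ now depends on $D$ (through $u=\eta+\alpha^* v$), the bound (\ref{eq:58}) must be replaced by one conditioning on the full event $E=\{\hat I_1=\hat I_2=\emptyset\}$ rather than on $\{\hat I_2=\emptyset\}$ alone; the paper handles this by the crude inequality $\mathbb{P}(\mathcal E'_{t_2}\mid E,\mathcal E_{t_1})\ge \mathbb{P}(\mathcal E_{t_1})+\mathbb{P}(\mathcal E'_{t_2})+\mathbb{P}(E)-2$.

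The substantive divergence is in your treatment of $(D^TD)^{-1}D^T\eta$. You correctly note that when $\alpha^*\ne 0$ the selection event couples $\eta$ and $v$, and you propose to control the induced conditional mean of $\eta$ via unspecified ``Gaussian conditioning inequalities'' and then argue it is of strictly smaller order than the bias term. The paper does \emph{not} do this. It simply reuses the argument (\ref{eq:60}) from the proof of Proposition~\ref{prop:bias_post_double_formula}: writing
\[
\mathbb{E}\!\left(\frac{\frac1n D^T\eta}{\frac1n D^TD}\,\Big|\,\mathcal M\right)
=\frac{1}{\mathbb{P}(\mathcal M)}\,\mathbb{E}_D\!\left\{\frac{\frac1n\sum_i D_i\,\mathbb{E}_\eta[\eta_i 1_{\mathcal M}(D,\eta)\mid D]}{\frac1n D^TD}\right\},
\]
the paper invokes the distributional identicalness of $(\eta_i)_{i=1}^n$ to assert that $\mathbb{E}_\eta[\eta_i 1_{\mathcal M}\mid D]$ is constant in $i$, and then uses the demeaning convention $\bar D=\frac1n\sum_i D_i=0$ to conclude the whole expression is \emph{exactly} zero. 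So in the paper there is nothing to bound; the noise term vanishes on $\mathcal M$ and the entire conditional bias equals $\mathbb{E}\big[(D^TD)^{-1}D^TX_K\beta_K^*\mid\mathcal M\big]$, which is then lower-bounded as in Proposition~\ref{prop:bias_post_double_formula}. Your proposed route is more laborious, and you leave the key step---quantifying the conditional mean of $\eta$ on the Lasso selection event and proving it is $o$ of the bias---completely unspecified; it is not clear this would go through without essentially rediscovering the exchangeability/demeaning device the paper uses.
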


\begin{proposition}[Scaling of OVB lower bound, Case II] \label{prop:bias_post_double_selection-1}
Let Assumption \ref{ass: incoherence}(ii)-(iii) and Assumption \ref{ass:inference_normality}
hold. Suppose $\phi^{-1}\precsim1$; the regularization parameters
in (\ref{eq:las-1}) and (\ref{eq:las-2}) are chosen in a similar
fashion as in Lemma \ref{prop:fixed_design} such that $\lambda_{1}\asymp\frac{\left(\sigma_{\eta}+\left|\alpha^{*}\right|\sigma_{v}\right)}{\phi}\sqrt{\frac{\log p}{n}}$
and $\lambda_{2}\asymp\phi^{-1}\sigma_{v}\sqrt{\frac{\log p}{n}}$;
for all $j\in K$, $\left|\gamma_{j}^{*}\right|\leq\frac{\lambda_{2}}{2}$
but $\left|\gamma_{j}^{*}\right|\asymp\sigma_{v}\sqrt{\frac{\log p}{n}}$.
Let us consider $\tilde{\alpha}$ obtained from (\ref{eq:double}).

(i) If $\pi_{j}^{*}=0$ for all $j\in K$, then there exist positive
universal constants $c^{\dagger},c_{1},c_{2},c_{3},c^{*},c_{0}^{*}$
such that 
\begin{equation}
\left|\mathbb{E}\left(\tilde{\alpha}-\alpha^{*}\vert\mathcal{M}\right)\right|\geq c^{\dagger}\left|\alpha^{*}\right|\left(\frac{k\log p}{n}\wedge1\right)\left[1-c_{1}k\exp\left(-c_{2}\log p\right)-\exp\left(-c_{3}n\right)\right],\label{eq:23}
\end{equation}
where $\mathbb{P}\left(\mathcal{M}\right)\geq1-c^{*}k\exp\left(-c_{0}^{*}\log p\right)$.

(ii) For all $j\in K$, suppose $\left|\pi_{j}^{*}\right|\asymp\left(\sigma_{\eta}+\left|\alpha^{*}\right|\sigma_{v}\right)\sqrt{\frac{\log p}{n}}$,
and we have either (1) $\alpha^{*}<0$, $\beta_{j}^{*}>0$, $\gamma_{j}^{*}>0$,
$0<\pi_{j}^{*}\leq\frac{\lambda_{1}}{2}$, or (2) $\alpha^{*}>0$,
$\beta_{j}^{*}<0$, $\gamma_{j}^{*}>0$, $-\frac{\lambda_{1}}{2}<\pi_{j}^{*}<0$.
Then there exist positive universal constants $c^{\dagger},c_{1},c_{2},c_{3},c^{*},c_{0}^{*}$
such that (\ref{eq:23}) holds with $\mathbb{P}\left(\mathcal{M}\right)\geq1-c^{*}k\exp\left(-c_{0}^{*}\log p\right)$.

\end{proposition}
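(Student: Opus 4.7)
The plan is to show that both Lasso steps select empty supports with high probability, which reduces the final OLS step \eqref{eq:double} to a univariate regression of $Y$ on $D$, and then to compute the resulting omitted-variable bias.

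First, I establish double under-selection. For \eqref{eq:las-2}, the hypothesis $|\gamma_{j}^{*}|\le\lambda_{2}n/(2s)$ for every $j\in K$ permits a direct application of Lemma \ref{prop:fixed_design}, giving $\mathbb{P}(\hat{\gamma}=0_{p})\ge1-p^{-\tau}$. For \eqref{eq:las-1}, I work with the reduced form $Y=X\pi^{*}+u$, where $u=\eta+\alpha^{*}v\sim\mathcal{N}(0,\sigma_{\eta}^{2}+\alpha^{*2}\sigma_{v}^{2})$ has noise scale of order $\sigma_{\eta}+|\alpha^{*}|\sigma_{v}$, which is matched by the chosen $\lambda_{1}$. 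In Part (i), $\pi^{*}$ is supported on $K$ and $\pi_{j}^{*}=0$ for every $j\in K$, so $\pi^{*}=0_{p}$; the Lasso KKT conditions give $\hat{\pi}=0_{p}$ whenever $\lambda_{1}\ge 2\|X^{T}u/n\|_{\infty}$, which holds with probability at least $1-p^{-\tau}$ by a Gaussian maximal inequality (cf.\ the proof of Lemma \ref{prop:fixed_design}). In Part (ii), $|\pi_{j}^{*}|\le\lambda_{1}n/(2s)$ for every $j\in K$ allows another direct application of Lemma \ref{prop:fixed_design}. A union bound then yields $\mathbb{P}(\hat{I}_{1}=\hat{I}_{2}=\emptyset)\ge1-2p^{-\tau}$.

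Second, on $\{\hat{I}_{1}=\hat{I}_{2}=\emptyset\}$ the third step collapses to $\tilde{\alpha}=(D^{T}D)^{-1}D^{T}Y$. Substituting \eqref{eq:20}--\eqref{eq:21-1} and using $X_{K}^{T}X_{K}=sI_{k}$ from Assumption \ref{ass:inference_sparsity_incoherence}(iii) gives
\[
\tilde{\alpha}-\alpha^{*}=\frac{s\gamma_{K}^{*T}\beta_{K}^{*}+v^{T}X\beta^{*}+(X\gamma^{*})^{T}\eta+v^{T}\eta}{s|\gamma_{K}^{*}|_{2}^{2}+2\gamma^{*T}X^{T}v+v^{T}v},
\]
with deterministic numerator contribution $s\gamma_{K}^{*T}\beta_{K}^{*}$ and denominator concentrating near $s|\gamma_{K}^{*}|_{2}^{2}+n\sigma_{v}^{2}$. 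I then lower-bound $|\gamma_{K}^{*T}\beta_{K}^{*}|$ in both parts. In Part (i), $\pi^{*}=0$ gives $\beta^{*}=-\alpha^{*}\gamma^{*}$ on $K$, hence $\gamma_{K}^{*T}\beta_{K}^{*}=-\alpha^{*}|\gamma_{K}^{*}|_{2}^{2}$. In Part (ii), writing $\beta_{j}^{*}=\pi_{j}^{*}-\alpha^{*}\gamma_{j}^{*}$, the sign conditions in case (1) make every summand $\beta_{j}^{*}\gamma_{j}^{*}=\pi_{j}^{*}\gamma_{j}^{*}+|\alpha^{*}|(\gamma_{j}^{*})^{2}$ strictly positive, while those in case (2) make each summand strictly negative; no cancellation occurs, yielding $|\gamma_{K}^{*T}\beta_{K}^{*}|\ge|\alpha^{*}|\,|\gamma_{K}^{*}|_{2}^{2}$. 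Combined with $|\gamma_{K}^{*}|_{2}^{2}\asymp k\sigma_{v}^{2}\log p/s$ implied by $|\gamma_{j}^{*}|\asymp\sigma_{v}\sqrt{n/s}\sqrt{\log p/n}$, the deterministic bias scales as $|\alpha^{*}|(k\log p/n)/(1+k\log p/n)\asymp|\alpha^{*}|(\tfrac{k\log p}{n}\wedge 1)$, matching \eqref{eq:23}.

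Third, I make the conditional-expectation calculation rigorous by defining $\mathcal{M}$ as the intersection of $\{\hat{I}_{1}=\hat{I}_{2}=\emptyset\}$ with auxiliary events depending on $v$ (and $X$) alone, such as $v^{T}v\in[\tfrac{n\sigma_{v}^{2}}{2},\tfrac{3n\sigma_{v}^{2}}{2}]$ and a bound on $|\gamma^{*T}X^{T}v|$; standard Gaussian and chi-squared tail bounds give $\mathbb{P}(\mathcal{M})\ge 1-c^{*}k\exp(-c_{0}^{*}\log p)$. Conditioning on such a $v$-measurable $\mathcal{M}$, the $\eta$-linear numerator terms $(X\gamma^{*})^{T}\eta$ and $v^{T}\eta$ vanish in expectation by iterated expectation; the $v$-linear term $v^{T}X\beta^{*}$ is killed by making $\mathcal{M}$ symmetric under $v\mapsto -v$; and the denominator stays within a constant factor of $s|\gamma_{K}^{*}|_{2}^{2}+n\sigma_{v}^{2}$ by the concentration bounds. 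This preserves the leading scaling from the previous paragraph and yields \eqref{eq:23}. The main obstacle is precisely the construction of $\mathcal{M}$: it must simultaneously guarantee double under-selection, confine the $\eta$-dependence to the under-selection event, enforce symmetry in $v$, and concentrate the denominator, while keeping the universal constants $c^{\dagger},c_{1},c_{2},c_{3},c^{*},c_{0}^{*}$ independent of $(n,p,k,s,\sigma_{\eta},\sigma_{v},\alpha^{*})$; this mirrors the delicate construction used in the proof of Proposition \ref{prop:bias_post_double_formula}.
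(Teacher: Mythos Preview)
Your overall architecture matches the paper's: establish $\hat{I}_{1}=\hat{I}_{2}=\emptyset$ with high probability via Lemma~\ref{prop:fixed_design} (and, in part~(i), via the PDW exclusion result since $\pi^{*}=0_{p}$), reduce \eqref{eq:double} to $\tilde{\alpha}=(D^{T}D)^{-1}D^{T}Y$, and then extract the bias $\frac{D^{T}X_{K}}{D^{T}D}\beta_{K}^{*}$. Your sign analysis in part~(ii) ($\beta_{j}^{*}\gamma_{j}^{*}\ge |\alpha^{*}|(\gamma_{j}^{*})^{2}$ in case~(1), $\le -|\alpha^{*}|(\gamma_{j}^{*})^{2}$ in case~(2)) and the resulting scaling $|\alpha^{*}|\bigl(\tfrac{k\log p}{n}\wedge 1\bigr)$ are exactly what the paper uses.

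The gap is in your third step. Two of your cancellation devices do not work as stated. First, $\mathcal{M}$ cannot be made symmetric under $v\mapsto -v$: both $\{\hat{I}_{1}=\emptyset\}$ (which depends on $u=\eta+\alpha^{*}v$) and $\{\hat{I}_{2}=\emptyset\}$ (which depends on $X^{T}D=X^{T}X\gamma^{*}+X^{T}v$) fail this symmetry, and even if you replace them by symmetric super\-events, the denominator $D^{T}D=s|\gamma_{K}^{*}|_{2}^{2}+2\gamma_{K}^{*T}X_{K}^{T}v+v^{T}v$ is not even in $v$, so the conditional expectation of the ratio with an odd numerator need not vanish. Second, $\mathcal{M}$ is not $v$-measurable (it contains $\{\hat{I}_{1}=\emptyset\}$, which depends on $\eta$), so ``iterated expectation'' alone does not kill the $\eta$-linear terms; the paper instead uses the demeaning $\sum_{i}D_{i}=0$ together with exchangeability of the $\eta_{i}$'s to show $\mathbb{E}\bigl[\tfrac{D^{T}\eta}{D^{T}D}\,\big|\,\mathcal{M}\bigr]=0$ as in \eqref{eq:60}.

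The paper never tries to \emph{kill} $v^{T}X_{K}\beta_{K}^{*}$. It bounds it: include in $\mathcal{M}$ the event $\mathcal{E}_{t^{*}}=\{|X_{K}^{T}v/n|_{\infty}\le t^{*}\}$ with $t^{*}$ a fixed fraction of $\lambda_{2}$, so that on $\mathcal{M}$ one has $\bigl|\tfrac{1}{n}v^{T}X_{K}\beta_{K}^{*}\bigr|\le t^{*}|\beta_{K}^{*}|_{1}\le \tfrac{1}{2}\cdot\tfrac{s}{n}|\gamma_{K}^{*T}\beta_{K}^{*}|$. This forces $\tfrac{D^{T}X_{K}}{D^{T}D}\beta_{K}^{*}$ to have a \emph{definite sign} on $\mathcal{M}$, after which the Markov-type lower bound $\mathbb{E}[Z\mid\mathcal{M}]\ge T_{1}\,\mathbb{P}(Z\ge T_{1}\mid\mathcal{M})$ (for nonnegative $Z$) delivers the stated inequality; the high-probability control of the denominator and of $\mathbb{P}(\cdot\mid\mathcal{M})$ then follows exactly as in the proofs of Propositions~\ref{prop:bias_post_double_formula} and~\ref{prop:bias_post_double_selection}. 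Replacing your symmetry device by this sign-plus-Markov argument closes the gap.
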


\begin{remark}As the error in the reduced form equation \eqref{eq:reduce}
involves $\alpha^{*}$, the choice of $\lambda_{1}$ in \eqref{eq:las-1}
depends on the unknown $\alpha^{*}$. Therefore, the scaling of the
OVB lower bound can involve $\left|\alpha^{*}\right|$ when the relevant
controls are not selected, as suggested by Proposition \eqref{prop:bias_post_double_selection-1}.
The dependence on $\left|\alpha^{*}\right|$ in Proposition \eqref{prop:bias_post_double_selection}
has been suppressed because $\left|\alpha^{*}\right|\leq\max_{j\in K}\frac{\left|\beta_{j}^{\ast}\right|}{\left|\gamma_{j}^{\ast}\right|}\asymp\frac{\sigma_{\eta}}{\sigma_{v}}$.
\end{remark}

\subsection{Proof for Proposition \ref{prop:bias_post_double_selection}}

\label{appendix:bias_post_double_lower_scaling}

Part (i) of Proposition \ref{prop:bias_post_double_selection} follows
immediately from the proof for Proposition \ref{prop:bias_post_double_formula}.
It remains to establish part (ii) where $\alpha^{*}\neq0$, $\alpha^{*}\gamma_{j}^{*}\in(0,\,-\beta_{j}^{*}]$,
$\beta_{j}^{*}<0$ for all $j\in K$ (or, $\alpha^{*}\gamma_{j}^{*}\in[-\beta_{j}^{*},\,0)$,
$\beta_{j}^{*}>0$ for all $j\in K$). Because of these conditions,
we have 
\[
\left|\pi_{j}^{*}\right|=\left|\beta_{j}^{*}+\alpha^{*}\gamma_{j}^{*}\right|<\left|\beta_{j}^{*}\right|\quad\forall j\in K.
\]
Note that $\left|\alpha^{*}\right|\leq\max_{j\in K}\frac{\left|\beta_{j}^{\ast}\right|}{\left|\gamma_{j}^{\ast}\right|}\asymp\frac{\sigma_{\eta}}{\sigma_{v}}$
and 
\begin{eqnarray*}
\left|\frac{X^{T}u}{n}\right|_{\infty} & = & \left|\frac{X^{T}\left(\eta+\alpha^{*}v\right)}{n}\right|_{\infty}\\
 & \leq & \left|\frac{X^{T}\eta}{n}\right|_{\infty}+\left|\frac{\alpha^{*}X^{T}v}{n}\right|_{\infty}\\
 & \precsim & \sigma_{\eta}\sqrt{\frac{\log p}{n}}+\frac{\sigma_{\eta}}{\sigma_{v}}\sigma_{v}\sqrt{\frac{\log p}{n}}\\
 & \precsim & \phi^{-1}\sigma_{\eta}\sqrt{\frac{\log p}{n}}
\end{eqnarray*}
with probability at least $1-c_{1}^{'}\exp\left(-c_{2}^{'}\log p\right)$.
The fact above justifies the choice of $\lambda_{1}$ stated in \ref{prop:bias_post_double_selection}.
We can then apply (\ref{eq:nec}) in Lemma \ref{prop:fixed_design}
to show that $\hat{I}_{1}=\emptyset$ with probability at least $1-c_{5}\exp\left(-c_{6}\log p\right)$.
Furthermore, under the conditions on $X$ and $\gamma_{K}^{*}$, (\ref{eq:nec})
in Lemma \ref{prop:fixed_design} implies that $\hat{I}_{2}=\emptyset$
with probability at least $1-c_{0}\exp\left(-c_{0}^{'}\log p\right)$.
Therefore, we have 
\[
\mathbb{P}\left(\hat{I}_{1}=\hat{I}_{2}=\emptyset\right)\geq\mathbb{P}\left(\hat{I}_{1}=\emptyset\right)+\mathbb{P}\left(\hat{I}_{2}=\emptyset\right)-1\geq1-c_{1}^{''}\exp\left(-c_{2}^{''}\log p\right).
\]

Given $u=\eta+\alpha^{*}v$, when $\alpha^{*}\neq0$, the event $\left\{ \hat{I}_{1}=\emptyset\right\} $
is not independent of $D$, so $\mathbb{E}\left(\frac{\frac{1}{n}D^{T}X_{K}}{\frac{1}{n}D^{T}D}\beta_{K}^{*}\vert E,\,\mathcal{E}_{t^{*}}\right)\neq\mathbb{E}\left(\frac{\frac{1}{n}D^{T}X_{K}}{\frac{1}{n}D^{T}D}\beta_{K}^{*}\vert\hat{I}_{2}=\emptyset,\,\mathcal{E}_{t^{*}}\right)$
(recalling $E=\left\{ \hat{I}_{1}=\hat{I}_{2}=\emptyset\right\} $).
Instead of (\ref{eq:58}), we apply

\begin{eqnarray*}
\mathbb{P}\left(\mathcal{E}_{t_{2}}^{'}\vert E,\,\mathcal{E}_{t_{1}}\right) & \geq & \mathbb{P}\left(\mathcal{E}_{t_{2}}^{'}\cap\mathcal{E}_{t_{1}}\cap E\right)\\
 & \geq & \mathbb{P}\left(\mathcal{E}_{t_{1}}\right)+\mathbb{P}\left(\mathcal{E}_{t_{2}}^{'}\right)+\mathbb{P}\left(E\right)-2\\
 & \geq & 1-c_{1}^{''}\exp\left(-c_{2}^{''}\log p\right)-k\exp\left(\frac{-nt_{1}^{2}}{2\sigma_{v}^{2}}\right)\\
 &  & -\exp\left(\frac{-nt_{2}^{2}}{8\sigma_{v}^{4}}\right),\qquad\textrm{for any }t_{2}\in(0,\,\sigma_{v}^{2}].
\end{eqnarray*}
The rest of the proof follows from the argument for Proposition \ref{prop:bias_post_double_formula}
and the bounds above.

\subsection{Proof for Proposition \ref{prop:bias_post_double_selection-1}}

Note that we have 
\begin{eqnarray*}
\left|\frac{X^{T}u}{n}\right|_{\infty} & = & \left|\frac{X^{T}\left(\eta+\alpha^{*}v\right)}{n}\right|_{\infty}\\
 & \leq & \left|\frac{X^{T}\eta}{n}\right|_{\infty}+\left|\frac{\alpha^{*}X^{T}v}{n}\right|_{\infty}\\
 & \precsim & \sigma_{\eta}\sqrt{\frac{\log p}{n}}+\left|\alpha^{*}\right|\sigma_{v}\sqrt{\frac{\log p}{n}},
\end{eqnarray*}
which justifies the choice of $\lambda_{1}$ stated in Proposition
\ref{prop:bias_post_double_selection-1}.

For part (i), recall that $\pi_{K}^{*}=0_{k}$. By (i) of the intermediate
results in ``Preliminary'' of Section \ref{appendix a1}, $\mathbb{P}\left(\hat{\pi}=0_{p}\right)=\mathbb{P}\left(\hat{I}_{1}=\emptyset\right)\geq1-c\exp\left(-c^{'}\log p\right)$.
Substituting $\beta_{K}^{*}=-\alpha^{*}\gamma_{K}^{*}$ in $\frac{\frac{1}{n}D^{T}X_{K}}{\frac{1}{n}D^{T}D}\beta_{K}^{*}$
and following the rest of proof for Proposition \ref{prop:bias_post_double_selection}
yields the claim.

For part (ii), for all $j\in K$, note that $\beta_{j}^{*}>-\alpha^{*}\gamma_{j}^{*}>0$
in case (1) and $\beta_{j}^{*}<-\alpha^{*}\gamma_{j}^{*}<0$ in case
(2). Under the conditions on $X$ and $\pi_{K}^{*}$, (\ref{eq:nec})
in Lemma \ref{prop:fixed_design} implies that $\mathbb{P}\left(\hat{\pi}=0_{p}\right)=\mathbb{P}\left(\hat{I}_{1}=\emptyset\right)\geq1-c\exp\left(-c^{'}\log p\right)$.
Substituting $\beta_{K}^{*}>-\alpha^{*}\gamma_{K}^{*}>0$ for case
(1) and $\beta_{j}^{*}<-\alpha^{*}\gamma_{j}^{*}<0$ for case (2)
in the derivation of the bounds for $\frac{1}{n}D^{T}X_{K}\beta_{K}^{*}$
and following the rest of proof for Proposition \ref{prop:bias_post_double_selection}
yields the claim.

\subsection{Upper bounds on the OVBs of post double Lasso\label{subsec:Upper-bounds-OVB}}

\begin{proposition}[Scaling of OVB upper bound, Case I] \label{prop:bias_post_double_selection-upper}Let
Assumption \ref{ass: incoherence}(ii)-(iii) and Assumption \ref{ass:inference_normality}
hold. Suppose $\phi^{-1}\precsim1$; the regularization parameters
in (\ref{eq:las-1}) and (\ref{eq:las-2}) are chosen in a similar
fashion as in Proposition \ref{prop:bias_post_double_selection} such
that $\lambda_{1}\asymp\phi^{-1}\sigma_{\eta}\sqrt{\frac{\log p}{n}}$
and $\lambda_{2}\asymp\phi^{-1}\sigma_{v}\sqrt{\frac{\log p}{n}}$;
for all $j\in K$, $\gamma_{j}^{*}=\gamma^{*}$, $\left|\beta_{j}^{*}\right|\leq\frac{\lambda_{1}}{2}$
and $\left|\gamma^{*}\right|\leq\frac{\lambda_{2}}{2}$, but $\left|\beta_{j}^{*}\right|\asymp\sigma_{\eta}\sqrt{\frac{\log p}{n}}$
and $\left|\gamma^{*}\right|\asymp\sigma_{v}\sqrt{\frac{\log p}{n}}$.
Let us consider $\tilde{\alpha}$ obtained from (\ref{eq:double}).
Then for either $\alpha^{*}=0$, or $\alpha^{*}\neq0$ subject to
the conditions in part (ii) of Proposition \ref{prop:bias_post_double_selection},
there exist positive universal constants $c_{1},c_{2},c_{3},c_{4},c^{*},c_{0}^{*}$
such that 
\[
\mathbb{P}\left(\left|\tilde{\alpha}-\alpha^{*}\right|\leq\overline{OVB}\vert\hat{I}_{1}=\hat{I}_{2}=\emptyset\right)\geq1-c_{1}k\exp\left(-c_{2}\log p\right)-c_{3}\exp\left(-c_{4}nr^{2}\right)
\]
where $\mathbb{P}\left(\hat{I}_{1}=\hat{I}_{2}=\emptyset\right)\geq1-c^{*}\exp\left(-c_{0}^{*}\log p\right)$
and 
\[
\overline{OVB}\asymp\max\left\{ \frac{\sigma_{\eta}}{\sigma_{v}}\left(\frac{k\log p}{n}\wedge1\right),\,\frac{\sigma_{v}\sigma_{\eta}\left(r\vee\frac{k\log p}{n}\right)}{\left(\frac{k\log p}{n}\vee1\right)\sigma_{v}^{2}}\right\} 
\]
for any $r\in(0,\,1]$.

\end{proposition}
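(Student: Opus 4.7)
The plan is to condition on the event $\mathcal{S}:=\{\hat{I}_{1}=\hat{I}_{2}=\emptyset\}$, on which the final OLS step \eqref{eq:double} collapses to a bivariate regression of $Y$ on $D$ alone, giving
\[
\tilde{\alpha}-\alpha^{*}=\frac{D^{T}(X\beta^{*}+\eta)}{D^{T}D}
\]
after substituting $Y=D\alpha^{*}+X\beta^{*}+\eta$. The claim $\mathbb{P}(\mathcal{S})\geq 1-c^{*}\exp(-c_{0}^{*}\log p)$ follows by applying Lemma~\ref{prop:fixed_design} separately to \eqref{eq:las-1} and \eqref{eq:las-2}: when $\alpha^{*}=0$ we have $\pi^{*}=\beta^{*}$ and $u=\eta$, so the assumed size bounds $|\beta_{j}^{*}|\leq\lambda_{1}n/(2s)$ and $|\gamma^{*}|\leq\lambda_{2}n/(2s)$ directly satisfy the under-selection condition \eqref{eq:min}; when $\alpha^{*}\neq 0$, the sign and magnitude restrictions imported from Proposition~\ref{prop:bias_post_double_selection}(ii) force $|\pi_{j}^{*}|\leq|\beta_{j}^{*}|\leq\lambda_{1}n/(2s)$ and keep $\sigma_{\eta}^{2}+(\alpha^{*})^{2}\sigma_{v}^{2}\asymp\sigma_{\eta}^{2}$, so $\lambda_{1}$ still exceeds the threshold required by Lemma~\ref{prop:fixed_design}. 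It then suffices to control $D^{T}D$ from below and $D^{T}(X\beta^{*}+\eta)$ from above on a further high-probability event.

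For the denominator, I would decompose
\[
D^{T}D=\gamma^{*T}X^{T}X\gamma^{*}+2\gamma^{*T}X^{T}v+v^{T}v.
\]
Assumption~\ref{ass:inference_sparsity_incoherence}(iii) together with $\gamma_{j}^{*}\equiv\gamma^{*}$ on $K$ gives the deterministic piece $\gamma^{*T}X^{T}X\gamma^{*}=sk(\gamma^{*})^{2}\asymp k\sigma_{v}^{2}\log p$. A standard $\chi^{2}$ tail bound yields $v^{T}v\geq (1-r)n\sigma_{v}^{2}$ with probability at least $1-\exp(-cnr^{2})$, and $\gamma^{*T}X^{T}v\sim\mathcal{N}(0,\sigma_{v}^{2}sk(\gamma^{*})^{2})$ implies $|\gamma^{*T}X^{T}v|\precsim \sigma_{v}^{2}\sqrt{k}\log p$ with probability at least $1-2\exp(-c\log p)$, which is dominated by the deterministic piece for $k\geq 1$. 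Hence $D^{T}D\succsim n\sigma_{v}^{2}+sk(\gamma^{*})^{2}\asymp n\sigma_{v}^{2}\bigl(1\vee\tfrac{k\log p}{n}\bigr)$.

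For the numerator, I would expand
\[
D^{T}(X\beta^{*}+\eta)=\gamma^{*T}X^{T}X\beta^{*}+v^{T}X\beta^{*}+\gamma^{*T}X^{T}\eta+v^{T}\eta.
\]
The deterministic term satisfies $|\gamma^{*T}X^{T}X\beta^{*}|\leq sk|\gamma^{*}|\max_{j\in K}|\beta_{j}^{*}|\asymp k\sigma_{v}\sigma_{\eta}\log p$. The two Gaussian cross terms are centered with variance $\asymp \sigma_{v}^{2}\sigma_{\eta}^{2}k\log p$, so $|v^{T}X\beta^{*}|,\,|\gamma^{*T}X^{T}\eta|\precsim \sigma_{v}\sigma_{\eta}\sqrt{k}\log p$ each with probability at least $1-2\exp(-c\log p)$. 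The remaining inner product $v^{T}\eta$ is sub-exponential (condition on $v$ and apply a Gaussian tail in $\eta$, then concentrate $\|v\|_{2}^{2}$), giving $\mathbb{P}(|v^{T}\eta|\geq n\sigma_{v}\sigma_{\eta}r)\leq 2\exp(-cnr^{2})$ for $r\in(0,1]$. Summing, $|D^{T}(X\beta^{*}+\eta)|\precsim \sigma_{v}\sigma_{\eta}(k\log p+nr)$ on the intersection of these events.

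Combining on the intersection of $\mathcal{S}$ with the concentration events,
\[
|\tilde{\alpha}-\alpha^{*}|\precsim \frac{\sigma_{v}\sigma_{\eta}(k\log p+nr)}{n\sigma_{v}^{2}\bigl(1\vee\frac{k\log p}{n}\bigr)}\asymp \frac{\sigma_{\eta}}{\sigma_{v}}\cdot\frac{r\vee(k\log p/n)}{1\vee(k\log p/n)},
\]
which, after splitting into a deterministic-bias piece matching $\frac{\sigma_{\eta}}{\sigma_{v}}(\frac{k\log p}{n}\wedge 1)$ and a stochastic-noise piece matching $\frac{\sigma_{\eta}(r\vee k\log p/n)}{\sigma_{v}(k\log p/n\vee 1)}$, is (up to universal constants) the claimed $\overline{OVB}$. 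The main obstacle I expect is converting this unconditional high-probability bound into the stated \emph{conditional} bound given $\mathcal{S}$: the cleanest route is to bound $\mathbb{P}(\{|\tilde{\alpha}-\alpha^{*}|>\overline{OVB}\}\cap\mathcal{S})$ by the sum of the concentration failure probabilities and then divide by $\mathbb{P}(\mathcal{S})\geq 1-c^{*}\exp(-c_{0}^{*}\log p)$, which for large $p$ preserves the stated scaling. A related care point is that $\mathcal{S}$ depends on $(\eta,v)$ through $u=\eta+\alpha^{*}v$ and $v$, so the Gaussian and Hanson-Wright bounds used above must be invoked unconditionally on the joint $(\eta,v)$ distribution rather than conditionally on $\mathcal{S}$; fortunately all the inequalities employed are established unconditionally.
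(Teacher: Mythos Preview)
Your proposal is correct and follows essentially the same approach as the paper: establish $\mathbb{P}(\hat I_1=\hat I_2=\emptyset)$ via Lemma~\ref{prop:fixed_design}, reduce $\tilde\alpha-\alpha^*$ on this event to the simple regression ratio, lower bound $D^TD$ and upper bound the numerator via Gaussian and $\chi^2$ concentration, then pass from unconditional to conditional probabilities by the $\mathbb{P}(A\mid B)\ge\mathbb{P}(A)+\mathbb{P}(B)-1$ trick. The only cosmetic difference is that the paper first splits $\tilde\alpha-\alpha^*$ into the ``bias'' piece $\frac{D^TX_K\beta_K^*}{D^TD}$ and the ``noise'' piece $\frac{D^T\eta}{D^TD}$ and controls the cross terms through the sup-norm events $|X_K^Tv/n|_\infty\le t^*$ and $|X^T\eta/n|_\infty\precsim\sigma_\eta\sqrt{(s/n)(\log p/n)}$, whereas you expand all four scalar pieces directly and apply one-dimensional Gaussian tails to each; both routes produce the same $\overline{OVB}$ scaling and the same failure probabilities (up to the harmless $k$ prefactor).
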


\begin{proposition}[Scaling of OVB upper bound, Case II] \label{prop:bias_post_double_selection-upper-1}Let
Assumption \ref{ass: incoherence}(ii)-(iii) and Assumption \ref{ass:inference_normality}
hold. Suppose $\phi^{-1}\precsim1$; the regularization parameters
in (\ref{eq:las-1}) and (\ref{eq:las-2}) are chosen in a similar
fashion as in Proposition \ref{prop:bias_post_double_selection-1}
such that $\lambda_{1}\asymp\frac{\sigma_{\eta}+\left|\alpha^{*}\right|\sigma_{v}}{\phi}\sqrt{\frac{\log p}{n}}$
and $\lambda_{2}\asymp\phi^{-1}\sigma_{v}\sqrt{\frac{\log p}{n}}$;
for all $j\in K$, $\gamma_{j}^{*}=\gamma^{*}$, $\left|\gamma^{*}\right|\leq\frac{\lambda_{2}}{2}$,
but $\left|\gamma^{*}\right|\asymp\sigma_{v}\sqrt{\frac{\log p}{n}}$.
Let us consider $\tilde{\alpha}$ obtained from (\ref{eq:double}).

(i) If $\pi_{j}^{*}=0$ for all $j\in K$, then there exist positive
universal constants $c_{1},c_{2},c_{3},c_{4},c^{*},c_{0}^{*}$ such
that 
\begin{equation}
\mathbb{P}\left(\left|\tilde{\alpha}-\alpha^{*}\right|\leq\overline{OVB}\vert\hat{I}_{1}=\hat{I}_{2}=\emptyset\right)\geq1-c_{1}k\exp\left(-c_{2}\log p\right)-c_{3}\exp\left(-c_{4}nr^{2}\right)\label{eq:OVB_upper}
\end{equation}
where $\mathbb{P}\left(\hat{I}_{1}=\hat{I}_{2}=\emptyset\right)\geq1-c^{*}\exp\left(-c_{0}^{*}\log p\right)$
and 
\[
\overline{OVB}\asymp\max\left\{ \left|\alpha^{*}\right|\left(\frac{k\log p}{n}\wedge1\right),\,\frac{\sigma_{v}\sigma_{\eta}\left(r\vee\frac{k\log p}{n}\right)}{\left(\frac{k\log p}{n}\vee1\right)\sigma_{v}^{2}}\right\} 
\]
for any $r\in(0,\,1]$.

(ii) If $0<\left|\pi_{j}^{*}\right|\leq\frac{\lambda_{1}}{2}$ but
$\left|\pi_{j}^{*}\right|\asymp\left(\sigma_{\eta}+\left|\alpha^{*}\right|\sigma_{v}\right)\sqrt{\frac{\log p}{n}}$
for all $j\in K$, then there exist positive universal constants $c_{1},c_{2},c_{3},c_{4},c^{*},c_{0}^{*}$
such that (\ref{eq:OVB_upper}) holds with $\mathbb{P}\left(\hat{I}_{1}=\hat{I}_{2}=\emptyset\right)\geq1-c^{*}\exp\left(-c_{0}^{*}\log p\right)$
and 
\[
\overline{OVB}\asymp\max\left\{ \left(\left|\alpha^{*}\right|\vee\frac{\sigma_{\eta}}{\sigma_{v}}\right)\left(\frac{k\log p}{n}\wedge1\right),\,\frac{\sigma_{v}\sigma_{\eta}\left(r\vee\frac{k\log p}{n}\right)}{\left(\frac{k\log p}{n}\vee1\right)\sigma_{v}^{2}}\right\} 
\]
for any $r\in(0,\,1]$. \end{proposition}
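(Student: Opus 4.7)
The plan is to work on the high-probability event $\mathcal{U} := \{\hat{I}_1 = \hat{I}_2 = \emptyset\}$, on which the final OLS regression (\ref{eq:double}) collapses to a univariate OLS of $Y$ on $D$, yielding
\[
\tilde{\alpha} - \alpha^* \;=\; \frac{D^T X\beta^* + D^T \eta}{D^T D}.
\]
To bound $\mathbb{P}(\mathcal{U})$, I apply Lemma \ref{prop:fixed_design} to each Lasso step: step (\ref{eq:las-1}) has error $u = \eta + \alpha^* v$ of variance $\asymp (\sigma_\eta + |\alpha^*|\sigma_v)^2$, and the hypothesis $|\pi_j^*| \leq \lambda_1 n/(2s)$ (vacuous in part (i) where $\pi_j^* = 0$, directly assumed in part (ii)) gives $\mathbb{P}(\hat{I}_1 = \emptyset) \geq 1 - p^{-\tau}$; step (\ref{eq:las-2}) likewise gives $\mathbb{P}(\hat{I}_2 = \emptyset) \geq 1 - p^{-\tau}$ from $|\gamma^*| \leq \lambda_2 n/(2s)$, and a union bound delivers $\mathbb{P}(\mathcal{U}) \geq 1 - c^* \exp(-c_0^* \log p)$.

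Next I set $\xi := X_K 1_k$, so that $X\gamma^* = \gamma^* \xi$ (because $\gamma_j^* = \gamma^*$ for $j \in K$) and $\xi^T \xi = k s$ by Assumption \ref{ass:inference_sparsity_incoherence}(iii). Expanding
\[
D^T D \;=\; k(\gamma^*)^2 s \;+\; 2\gamma^* \xi^T v \;+\; v^T v,
\]
I would combine (a) the Gaussian tail bound for $\xi^T v \sim \mathcal{N}(0, \sigma_v^2 k s)$, (b) the $\chi_n^2$ concentration of $v^T v/\sigma_v^2$ around $n$, and (c) the scaling $(\gamma^*)^2 k s \asymp k \sigma_v^2 \log p$ implied by $|\gamma^*| \asymp \sigma_v \sqrt{\log p / s}$, to conclude $D^T D \asymp \sigma_v^2 (n \vee k \log p)$ with probability at least $1 - c_1 \exp(-c_2 \log p)$.

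For part (i), $\pi_j^* = 0$ forces $\beta_j^* = -\alpha^* \gamma^*$ on $K$, hence $X\beta^* = -\alpha^* \gamma^* \xi$ and the numerator decomposes as
\[
-\alpha^* (\gamma^*)^2 k s \;-\; \alpha^* \gamma^* \xi^T v \;+\; \gamma^* \xi^T \eta \;+\; v^T \eta.
\]
Dividing the deterministic first piece by $D^T D$ produces a bias of order $|\alpha^*|(k\log p/n \wedge 1)$, matching the first entry of $\overline{OVB}$. The Gaussian linear forms $\xi^T v$ and $\xi^T \eta$ are controlled at order $\sigma_v \sqrt{k s \log p}$ and $\sigma_\eta \sqrt{k s \log p}$ with probability $\geq 1 - c_1' k \exp(-c_2' \log p)$; the bilinear Gaussian form $v^T \eta$ is handled by Hanson--Wright (equivalently Bernstein applied to the sub-exponential sum $\sum_i v_i \eta_i$), giving $|v^T\eta| \leq n r \sigma_v \sigma_\eta$ with probability at least $1 - c_3 \exp(-c_4 n r^2)$, which is where $r \in (0,1]$ enters. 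Dividing each stochastic contribution by $D^T D \asymp \sigma_v^2(n \vee k \log p)$ and combining via the triangle inequality, these pieces are uniformly dominated by $\sigma_\eta (r \vee k\log p/n) / [\sigma_v (k\log p/n \vee 1)]$, the second entry of $\overline{OVB}$.

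For part (ii), writing $\beta^* = \pi^* - \alpha^* \gamma^* 1_K$ on $K$ adds $D^T X \pi^*$ to the numerator. Its deterministic component equals $\gamma^* s \sum_{j \in K} \pi_j^*$ (via $\xi^T X_K = s \, 1_k^T$), and plugging in $|\pi_j^*| \asymp (\sigma_\eta + |\alpha^*|\sigma_v)\sqrt{n/s}\sqrt{\log p/n}$ gives, after dividing by $D^T D$, a contribution of order $(|\alpha^*| \vee \sigma_\eta/\sigma_v)(k\log p/n \wedge 1)$, which absorbs the part-(i) bias piece and yields the first entry of $\overline{OVB}$ in part (ii); the additional zero-mean cross term $v^T X\pi^*$ is of the same order as the ones already handled and feeds into the second entry. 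The main obstacle I anticipate is the careful bookkeeping across the four regimes determined by the comparisons $r \lessgtr k\log p/n$ and $k\log p \lessgtr n$, needed to verify that the several stochastic pieces, once normalized by $D^T D$, collapse into the compact $\max$-form of $\overline{OVB}$ without spawning extra additive contributions, and to check that conditioning on $\mathcal{U}$ (a selection event measurable with respect to $(\eta, v)$) does not degrade the Gaussian and sub-exponential tail bounds used in the numerator.
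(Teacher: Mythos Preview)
Your proposal is correct and follows essentially the same route as the paper: reduce to Proposition~\ref{prop:bias_post_double_selection-upper} by (i) substituting $\beta_K^*=-\alpha^*\gamma_K^*$ and (ii) bounding $|\beta_j^*|\le|\pi_j^*|+|\alpha^*\gamma_j^*|\precsim(\sigma_\eta+|\alpha^*|\sigma_v)\sqrt{n/s}\sqrt{\log p/n}$, then controlling the numerator and denominator of $(D^TX_K\beta_K^*+D^T\eta)/D^TD$ via Gaussian and $\chi^2$ tail bounds plus a sub-exponential bound on $v^T\eta$. Your use of $\xi=X_K1_k$ is just a notational repackaging of the paper's coordinatewise bounds via $|X_K^Tv/n|_\infty$ and $|X^T\eta/n|_\infty$; and your decomposition $\beta^*=\pi^*-\alpha^*\gamma^*1_K$ in part~(ii) is equivalent to the paper's direct bound on $|\beta_j^*|$. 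The conditioning obstacle you flag is dispatched in the paper by the elementary inequality $\mathbb{P}(A\mid \mathcal{U})\ge\mathbb{P}(A\cap\mathcal{U})\ge\mathbb{P}(A)+\mathbb{P}(\mathcal{U})-1$, so no degradation of the tail bounds occurs.
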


\begin{remark}Suppose $\sigma_{v}\asymp1$ and $\frac{c^{'}k}{p^{c^{''}}}$
is small for some positive universal constants $c^{'}$ and $c^{''}$.
As $\frac{k\log p}{n}\rightarrow\infty$ (where both Lasso steps are
inconsistent in the sense that $\sqrt{\frac{1}{n}\sum_{i=1}^{n}\left(X_{i}\hat{\pi}-X_{i}\pi^{*}\right)^{2}}\rightarrow\infty$
and $\sqrt{\frac{1}{n}\sum_{i=1}^{n}\left(X_{i}\hat{\gamma}-X_{i}\gamma^{*}\right)^{2}}\rightarrow\infty$
with high probability), Proposition \ref{prop:bias_post_double_selection-upper}
implies that $\overline{OVB}\asymp\frac{\sigma_{\eta}}{\sigma_{v}}$
and Proposition \ref{prop:bias_post_double_selection-upper-1} implies
that $\overline{OVB}\asymp\left(\left|\alpha^{*}\right|\vee\frac{\sigma_{\eta}}{\sigma_{v}}\right)$,
and $\mathbb{P}\left(\left|\tilde{\alpha}-\alpha^{*}\right|\leq\overline{OVB}\vert\hat{I}_{1}=\hat{I}_{2}=\emptyset\right)$
is large.

\end{remark}

\subsection{Proof for Proposition \ref{prop:bias_post_double_selection-upper}
\label{appendix:Proposition-OVB-upper-proof}}

Given $\left\{ \hat{I}_{1}=\hat{I}_{2}=\emptyset\right\} $, note
that $\left|\tilde{\alpha}-\alpha^{*}\right|\leq\left|\frac{\frac{1}{n}D^{T}X_{K}}{\frac{1}{n}D^{T}D}\beta_{K}^{*}\right|+\left|\frac{\frac{1}{n}D^{T}\eta}{\frac{1}{n}D^{T}D}\right|$.
We make use of the following bound on Chi-Square variables: 
\begin{equation}
\mathbb{P}\left[\frac{1}{n}\sum_{i=1}^{n}v_{i}^{2}-\mathbb{E}\left(\frac{1}{n}\sum_{i=1}^{n}v_{i}^{2}\right)\leq-\sigma_{v}^{2}r^{'}\right]\leq\exp\left(\frac{-nr^{'2}}{16}\right)\label{eq:41}
\end{equation}
for all $r^{'}\geq0$. On the event $\left\{ \hat{I}_{1}=\hat{I}_{2}=\emptyset\right\} $,
choosing $t_{1}=t^{*}=\frac{\left|\gamma^{*}\right|}{4}$ in (\ref{eq:50})
and $r^{'}=\frac{1}{2}$ in (\ref{eq:41}) yields 
\begin{eqnarray*}
\left|\frac{\frac{1}{n}D^{T}X_{K}}{\frac{1}{n}D^{T}D}\beta_{K}^{*}\right| & \leq & \frac{\left|\gamma_{K}^{*T}\beta_{K}^{*}\right|+\left|\beta_{K}^{*}\right|_{1}t^{*}}{\gamma_{K}^{*T}\gamma_{K}^{*}-2\left|\gamma_{K}^{*}\right|_{1}t^{*}+\frac{1}{2}\sigma_{v}^{2}}\\
 & \leq & \frac{c_{3}\sigma_{\eta}\sigma_{v}\frac{k\log p}{n}}{c_{1}\frac{k\log p}{n}\sigma_{v}^{2}+c_{2}\sigma_{v}^{2}}\\
 & \leq & c_{4}\frac{\sigma_{\eta}}{\sigma_{v}}\left(\frac{k\log p}{n}\wedge1\right)
\end{eqnarray*}
with probability at least $1-c_{5}k\exp\left(-c_{6}\log p\right)-\exp\left(\frac{-n}{64}\right)$.

We can also show that 
\begin{eqnarray*}
 &  & \mathbb{P}\left(\left|\frac{1}{n}D^{T}\eta\right|\leq t\vert\hat{I}_{1}=\hat{I}_{2}=\emptyset\right)\\
 & \geq & \mathbb{P}\left(\left\{ \left(\eta,v\right):\,\left|\frac{1}{n}D^{T}\eta\right|\leq t\right\} \cap\left\{ \hat{I}_{1}=\hat{I}_{2}=\emptyset\right\} \right)\\
 & \geq & \mathbb{P}\left(\left|\frac{1}{n}D^{T}\eta\right|\leq t\right)+\mathbb{P}\left(\hat{I}_{1}=\hat{I}_{2}=\emptyset\right)-1\\
 & \geq & 1-c_{5}\exp\left(-c_{6}\log p\right)-\mathbb{P}\left(\left|\frac{1}{n}D^{T}\eta\right|>t\right).
\end{eqnarray*}
Note that $\left|\frac{1}{n}D^{T}\eta\right|\leq k\left|\gamma^{*}\right|\left|\frac{1}{n}X^{T}\eta\right|_{\infty}+\left|\frac{1}{n}v^{T}\eta\right|$
where 
\begin{eqnarray*}
\mathbb{P}\left(\left|\frac{1}{n}v^{T}\eta\right|\precsim\sigma_{v}\sigma_{\eta}r\right) & \geq & 1-2\exp\left(-c_{7}nr^{2}\right)\quad\textrm{for any }r\in(0,\,1],\\
\mathbb{P}\left(\left|\frac{1}{n}X^{T}\eta\right|_{\infty}\precsim\sigma_{\eta}\sqrt{\frac{\log p}{n}}\right) & \geq & 1-2\exp\left(-c_{8}\log p\right).
\end{eqnarray*}
The inequalities above yield 
\begin{align*}
 & \mathbb{P}\left\{ \left|\frac{1}{n}D^{T}\eta\right|\precsim\left[\left(\sigma_{v}\sigma_{\eta}r\right)\vee\underset{\asymp\sigma_{v}\sigma_{\eta}\frac{k\log p}{n}}{\underbrace{\left(k\left|\gamma^{*}\right|\sigma_{\eta}\sqrt{\frac{\log p}{n}}\right)}}\right]\vert\hat{I}_{1}=\hat{I}_{2}=\emptyset\right\} \\
\geq & 1-c_{5}\exp\left(-c_{6}\log p\right)-2\exp\left(-c_{7}nr^{2}\right)-2\exp\left(-c_{8}\log p\right).
\end{align*}
We have already shown that, conditioning on $\left\{ \hat{I}_{1}=\hat{I}_{2}=\emptyset\right\} $,
$\frac{1}{n}D^{T}D\succsim\left(\frac{k\log p}{n}\vee1\right)\sigma_{v}^{2}$
with probability at least $1-c_{5}k\exp\left(-c_{6}\log p\right)-\exp\left(\frac{-n}{64}\right)$.
As a consequence, 
\begin{eqnarray*}
 &  & \mathbb{P}\left\{ \left|\frac{\frac{1}{n}D^{T}\eta}{\frac{1}{n}D^{T}D}\right|\precsim\frac{\sigma_{v}\sigma_{\eta}\left(r\vee\frac{k\log p}{n}\right)}{\left(\frac{k\log p}{n}\vee1\right)\sigma_{v}^{2}}\vert\hat{I}_{1}=\hat{I}_{2}=\emptyset\right\} \\
 & \geq & \mathbb{P}\left\{ \left|\frac{1}{n}D^{T}\eta\right|\precsim\sigma_{v}\sigma_{\eta}\left(r\vee\frac{k\log p}{n}\right)\,\,and\,\,\frac{1}{n}D^{T}D\succsim\left(\frac{k\log p}{n}\vee1\right)\sigma_{v}^{2}\vert\hat{I}_{1}=\hat{I}_{2}=\emptyset\right\} \\
 & \geq & \mathbb{P}\left\{ \left|\frac{1}{n}D^{T}\eta\right|\precsim\sigma_{v}\sigma_{\eta}\left(r\vee\frac{k\log p}{n}\right)\vert\hat{I}_{1}=\hat{I}_{2}=\emptyset\right\} \\
 &  & +\mathbb{P}\left\{ \frac{1}{n}D^{T}D\succsim\left(\frac{k\log p}{n}\vee1\right)\sigma_{v}^{2}\vert\hat{I}_{1}=\hat{I}_{2}=\emptyset\right\} -1\\
 & \geq & 1-c_{9}k\exp\left(-c_{10}\log p\right)-c_{12}\exp\left(-c_{11}nr^{2}\right).
\end{eqnarray*}

Putting the pieces above together yields 
\[
\mathbb{P}\left(\left|\tilde{\alpha}-\alpha^{*}\right|\leq\overline{OVB}\vert\hat{I}_{1}=\hat{I}_{2}=\emptyset\right)\geq1-c_{1}^{'}k\exp\left(-c_{2}^{'}\log p\right)-c_{4}^{'}\exp\left(-c_{3}^{'}nr^{2}\right)
\]
where $\overline{OVB}\asymp\max\left\{ \frac{\sigma_{\eta}}{\sigma_{v}}\left(\frac{k\log p}{n}\wedge1\right),\,\frac{\sigma_{v}\sigma_{\eta}\left(r\vee\frac{k\log p}{n}\right)}{\left(\frac{k\log p}{n}\vee1\right)\sigma_{v}^{2}}\right\} $.

\subsection{Proof for Proposition \ref{prop:bias_post_double_selection-upper-1} }

\label{appendix:Proposition-OVB-upper-proof-1}

For part (i), substituting $\beta_{K}^{*}=-\alpha^{*}\gamma_{K}^{*}$
in $\frac{\frac{1}{n}D^{T}X_{K}}{\frac{1}{n}D^{T}D}\beta_{K}^{*}$
and following the rest of proof for Proposition \ref{prop:bias_post_double_selection-upper}
yields the claim. For part (ii), note that $\left|\beta_{j}^{*}\right|\leq\left|\pi_{j}^{*}\right|+\left|\alpha^{*}\gamma_{j}^{*}\right|\precsim\left(\sigma_{\eta}+\left|\alpha^{*}\right|\sigma_{v}\right)\sqrt{\frac{\log p}{n}}$
for all $j\in K$. Substituting $\left|\beta_{j}^{*}\right|\precsim\left(\sigma_{\eta}+\left|\alpha^{*}\right|\sigma_{v}\right)\sqrt{\frac{\log p}{n}}$
in the derivation of the upper bound for $\left|\frac{\frac{1}{n}D^{T}X_{K}}{\frac{1}{n}D^{T}D}\beta_{K}^{*}\right|$
and following the rest of proof for Proposition \ref{prop:bias_post_double_selection-upper}
yields the claim.

\section{Debiased Lasso}
\label{app: debiased lasso}
In this section, we present theoretical and simulation results on
the OVB of the debiased Lasso proposed by \citet{vandergeer2014asymptotically}.

\subsection{Theoretical results}

\label{sec:bias_debiased_lasso}

The idea of debiased Lasso is to start with an initial Lasso estimate
$\hat{\theta}=\left(\hat{\alpha},\,\hat{\beta}\right)$ of $\theta^{*}=\left(\alpha^{*},\,\beta^{*}\right)$
in equation (\ref{eq:main-y}), where 
\begin{equation}
\left(\hat{\alpha},\,\hat{\beta}\right)\in\textrm{arg}\min_{\alpha\in\mathbb{R},\beta\in\mathbb{R}^{p}}\frac{1}{2n}\left|Y-D\alpha-X\beta\right|_{2}^{2}+\lambda_{1}\left(\left|\alpha\right|+\left|\beta\right|_{1}\right).\label{eq:21-2}
\end{equation}
Given the initial Lasso estimator $\hat{\alpha}$, the debiased Lasso
adds a correction term to $\hat{\alpha}$ to reduce the bias introduced
by regularization. In particular, the debiased Lasso takes the form
\begin{equation}
\tilde{\alpha}=\hat{\alpha}+\frac{\hat{\Omega}_{1}}{n}\sum_{i=1}^{n}Z_{i}^{T}\left(Y_{i}-Z_{i}\hat{\theta}\right),\label{eq:5-3}
\end{equation}
where $Z_{i}=\left(D_{i},\,X_{i}\right)$ and $\hat{\Omega}_{1}$
is the first row of $\hat{\Omega}$, which is an approximate inverse
of $\frac{1}{n}Z^{T}Z$, $Z=\left\{ Z_{i}\right\} _{i=1}^{n}$. Several
different strategies have been proposed for constructing the approximate
inverse $\hat{\Omega}$; see, for example, \citet{javanmard2014confidence},
\citet{vandergeer2014asymptotically}, and \citet{Zhang_Zhang}. We
will focus on the widely used method proposed by \citet{vandergeer2014asymptotically},
which sets 
\begin{eqnarray*}
\hat{\Omega}_{1} & := & \hat{\tau}_{1}^{-2}\left(\begin{array}{cccc}
1 & -\hat{\gamma}_{1} & \cdots & -\hat{\gamma}_{p}\end{array}\right),\\
\hat{\tau}_{1}^{2} & := & \frac{1}{n}\left|D-X\hat{\gamma}\right|_{2}^{2}+\lambda_{2}\left|\hat{\gamma}\right|_{1},
\end{eqnarray*}
where $\hat{\gamma}$ is defined in (\ref{eq:las-2}).

\begin{proposition}[Scaling of OVB lower bound for debiased Lasso]\label{prop:bias_debiased_lasso}
Let Assumption \ref{ass: incoherence}(ii) and Assumption \ref{ass:inference_normality}
hold. Suppose: with probability at least $1-\kappa$, $\left\Vert \left(Z_{-K}^{T}X_{K}\right)\left(X_{K}^{T}X_{K}\right)^{-1}\right\Vert _{\infty}\leq1-\frac{\phi}{2}$
for some $\phi\in(0,\,1]$ such that $\phi^{-1}\precsim1$, where
$Z_{-K}$ denotes the columns in $Z=\left(D,\,X\right)$ excluding
$X_{K}$; the regularization parameters in (\ref{eq:las-2}) and (\ref{eq:21-2})
are chosen in a similar fashion as in Lemma \ref{prop:fixed_design}
such that $\lambda_{1}\asymp\phi^{-1}\left(1\vee\sigma_{v}\right)\sigma_{\eta}\sqrt{\frac{\log p}{n}}$
and $\lambda_{2}\asymp\phi^{-1}\sigma_{v}\sqrt{\frac{\log p}{n}}$;
for all $j\in K$, $\left|\beta_{j}^{*}\right|\leq\frac{\lambda_{1}}{2}$
and $\left|\gamma_{j}^{*}\right|\leq\frac{\lambda_{2}}{2}$, but $\left|\beta_{j}^{*}\right|\asymp\left[1\vee\sigma_{v}\right]\sigma_{\eta}\sqrt{\frac{\log p}{n}}$
and $\left|\gamma_{j}^{*}\right|\asymp\sigma_{v}\sqrt{\frac{\log p}{n}}$.
Let us consider $\tilde{\alpha}$ obtained from (\ref{eq:5-3}). If
$\alpha^{*}=0$, then there exist positive universal constants $c^{\dagger},c_{1},c_{2},c_{3},c^{*},c_{0}^{*}$
such that 
\[
\left|\mathbb{E}\left(\tilde{\alpha}-\alpha^{*}\vert\mathcal{M}^{'}\right)\right|\geq c^{\dagger}\frac{\sigma_{\eta}}{\sigma_{v}}\left(\frac{k\log p}{n}\wedge1\right)\left[1-2\kappa-c_{1}k\exp\left(-c_{2}\log p\right)-\exp\left(-c_{3}n\right)\right],
\]
where $\mathcal{M}^{'}$ is an event with $\mathbb{P}\left(\mathcal{M}^{'}\right)\geq1-2\kappa-c^{*}k\exp\left(-c_{0}^{*}\log p\right)$.\end{proposition}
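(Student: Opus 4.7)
The plan is to show that on a high-probability event $\mathcal{M}'$ the debiased Lasso collapses to $(D^T Y/n)/(D^T D/n)$, after which the lower bound follows by replaying the argument of Proposition \ref{prop:bias_post_double_formula} (equivalently, Proposition \ref{prop:bias_post_double_selection}(i)) under double under-selection. Indeed, when $\hat\theta=0_{p+1}$ the residual correction term in \eqref{eq:5-3} becomes $\hat\Omega_1 Z^T Y/n$, and when $\hat\gamma=0_p$ the van de Geer row reduces to $\hat\Omega_1=\hat\tau_1^{-2}(1,0,\dots,0)$ with $\hat\tau_1^2=D^T D/n$. Using $\alpha^*=0$ so that $Y=X\beta^*+\eta$, this gives
\[
\tilde\alpha-\alpha^*=\frac{D^T X_K\beta_K^*/n}{D^T D/n}+\frac{D^T\eta/n}{D^T D/n},
\]
which is exactly the decomposition analyzed in Section \ref{sec:appendix_post_double_lasso} on the event $\hat I_1=\hat I_2=\emptyset$.

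The first step is to establish the zero-selection event $\mathcal{M}'=\{\hat\alpha=0,\,\hat\beta=0_p,\,\hat\gamma=0_p\}\cap\mathcal{E}_{t^{*}}$ (using the notation of Section \ref{sec:appendix_post_double_lasso}). The nodewise Lasso \eqref{eq:las-2} is a direct application of Lemma \ref{prop:fixed_design}: Assumption \ref{ass:inference_sparsity_incoherence}(ii)--(iii) together with $|\gamma_j^*|\leq\lambda_2 n/(2s)$ yields $\hat\gamma=0_p$ with probability at least $1-p^{-\tau}$. For the initial Lasso \eqref{eq:21-2} I would adapt the PDW proof of Lemma \ref{prop:fixed_design} to the augmented design $Z=(D,X)$; the relevant block is still $X_K$ and the irrelevant columns are $Z_{-K}=(D,X_{K^c})$. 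Three adjustments are needed: (a) the probabilistic incoherence $\|(Z_{-K}^T X_K)(X_K^T X_K)^{-1}\|_\infty\leq 1-\phi/2$ replaces Assumption \ref{ass:inference_sparsity_incoherence}(ii), accounting for the extra $\kappa$ in the probability statement; (b) since $D^T D/n\asymp\sigma_v^{2}$ under the given scaling of $\gamma^*$, the maximum column norm of $Z$ satisfies $\max_j\sqrt{Z_j^T Z_j/n}\asymp\sqrt{s/n}\vee\sigma_v$, so a Gaussian maximal inequality on $\max_j|Z_j^T\eta/n|$ forces the inflated regularization $\lambda_1\asymp\phi^{-1}(\sqrt{s/n}\vee\sigma_v)\sigma_\eta\sqrt{\log p/n}$; (c) the under-selection condition $|\beta_j^*|\leq\lambda_1 n/(2s)$ then translates exactly into the stated hypothesis $|\beta_j^*|\precsim(\sqrt{n/s}\vee n\sigma_v/s)\sigma_\eta\sqrt{\log p/n}$. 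With these ingredients the PDW construction produces a unique minimizer of \eqref{eq:21-2} with $\hat\alpha=\hat\beta=0$ and strict dual feasibility at every column of $Z_{-K}$.

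Conditional on $\mathcal{M}'$, the second term in the decomposition has zero mean by the exchangeability argument behind \eqref{eq:60}: it uses only $\bar D=0$, the i.i.d.\ structure of $\eta$, and the permutation symmetry of $\mathcal{M}'$ in the observations, all of which carry over to the debiased Lasso setting. The first term is lower-bounded by repeating \eqref{eq:pos}--\eqref{eq:47}: the numerator satisfies $\big|(s/n)\gamma_K^{*T}\beta_K^*+v^T X_K\beta_K^*/n\big|\succsim k\sigma_\eta\sigma_v\log p/n$ after absorbing the Gaussian noise in $v^T X_K/n$ via $\mathcal{E}_{t^{*}}$ (the sign-aligned case is direct; the opposite case is handled as at the end of Section \ref{sec:appendix_post_double_lasso}), while the denominator obeys $D^T D/n\asymp\sigma_v^{2}(1\vee k\log p/n)$. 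The Markov-style step at the end of the proof of Proposition \ref{prop:bias_post_double_formula} then delivers the claimed $(\sigma_\eta/\sigma_v)(k\log p/n\wedge 1)$ scaling. The main obstacle in executing this plan is the careful adaptation of Lemma \ref{prop:fixed_design} in steps (a)--(c): the random, heterogeneously scaled column $D$ forces both the probabilistic incoherence bookkeeping and the inflated $\lambda_1$, which is in turn the reason the hypothesis on $|\beta_j^*|$ takes the form $(\sqrt{n/s}\vee n\sigma_v/s)\sigma_\eta\sqrt{\log p/n}$ rather than the simpler $\sqrt{n/s}\cdot\sigma_\eta\sqrt{\log p/n}$ seen in Proposition \ref{prop:bias_post_double_selection}.
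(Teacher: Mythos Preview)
Your proposal is correct and follows essentially the same route as the paper: establish $\hat\gamma=0_p$ via Lemma~\ref{prop:fixed_design}, adapt the PDW argument to the augmented design $Z=(D,X)$ with the probabilistic incoherence on $Z_{-K}$ to obtain $\hat\theta=0_{p+1}$, reduce $\tilde\alpha$ to the short-regression formula, kill the $D^T\eta$ term via the exchangeability argument behind \eqref{eq:60}, and then replay the lower bound of Proposition~\ref{prop:bias_post_double_formula}. One small slip: you invoke Assumption~\ref{ass:inference_sparsity_incoherence}(ii) for the nodewise Lasso, but the proposition only assumes parts~(i) and~(iii); the incoherence you need for $X_{K^c}$ versus $X_K$ is instead inherited deterministically from the $Z_{-K}$ hypothesis, since $X_{K^c}$ is a fixed sub-block of $Z_{-K}$ and the row-max bound on a deterministic row must hold with probability one whenever $\kappa<1$.
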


\begin{proposition}[Scaling of OVB upper bound for debiased Lasso]\label{prop:bias_debiased_lasso-upper}
Let Assumption \ref{ass: incoherence}(ii) and Assumption \ref{ass:inference_normality}
hold. Suppose: with probability at least $1-\kappa$, $\left\Vert \left(Z_{-K}^{T}X_{K}\right)\left(X_{K}^{T}X_{K}\right)^{-1}\right\Vert _{\infty}\leq1-\frac{\phi}{2}$
for some $\phi\in(0,\,1]$ such that $\phi^{-1}\precsim1$, where
$Z_{-K}$ denotes the columns in $Z=\left(D,\,X\right)$ excluding
$X_{K}$; the regularization parameters in (\ref{eq:las-2}) and (\ref{eq:21-2})
are chosen in a similar fashion as in Proposition \ref{prop:bias_debiased_lasso}
such that $\lambda_{1}\asymp\phi^{-1}\left(1\vee\sigma_{v}\right)\sigma_{\eta}\sqrt{\frac{\log p}{n}}$
and $\lambda_{2}\asymp\phi^{-1}\sigma_{v}\sqrt{\frac{\log p}{n}}$;
for all $j\in K$, $\gamma_{j}^{*}=\gamma^{*}$, $\left|\beta_{j}^{*}\right|\leq\frac{\lambda_{1}}{2}$
and $\left|\gamma^{*}\right|\leq\frac{\lambda_{2}}{2}$, but $\left|\beta_{j}^{*}\right|\asymp\left[1\vee\sigma_{v}\right]\sigma_{\eta}\sqrt{\frac{\log p}{n}}$
and $\left|\gamma^{*}\right|\asymp\sigma_{v}\sqrt{\frac{\log p}{n}}$.
Let us consider $\tilde{\alpha}$ obtained from (\ref{eq:5-3}). If
$\alpha^{*}=0$, then there exist positive universal constants $c_{1},c_{2},c_{3},c_{4},c^{*},c_{0}^{*}$
such that 
\[
\mathbb{P}\left(\left|\tilde{\alpha}-\alpha^{*}\right|\leq\overline{OVB}\vert\hat{\theta}=0_{p+1},\,\hat{\gamma}=0_{p}\right)\geq1-c_{1}k\exp\left(-c_{2}\log p\right)-c_{3}\exp\left(-c_{4}nr^{2}\right)
\]
where $\mathbb{P}\left(\hat{\theta}=0_{p+1},\,\hat{\gamma}=0_{p}\right)\geq1-2\kappa-c^{*}k\exp\left(-c_{0}^{*}\log p\right)$
and 
\[
\overline{OVB}\asymp\max\left\{ \frac{\sigma_{\eta}}{\sigma_{v}}\left(\frac{k\log p}{n}\wedge1\right),\,\frac{\sigma_{v}\sigma_{\eta}\left(r\vee\frac{k\log p}{n}\right)}{\left(\frac{k\log p}{n}\vee1\right)\sigma_{v}^{2}}\right\} 
\]
for any $r\in(0,\,1]$.

\end{proposition}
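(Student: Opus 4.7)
The plan is to reduce the debiased Lasso analysis, on the ``everything selected is zero'' event, to the univariate regression setup already handled in Appendix \ref{appendix:Proposition-OVB-upper-proof}. The key observation is that when both $\hat{\theta}=0_{p+1}$ (from the Lasso of $Y$ on $(D,X)$) and $\hat{\gamma}=0_p$ (from the Lasso of $D$ on $X$), the definition of $\hat{\tau}_1^2$ collapses to $\hat{\tau}_1^2=D^TD/n$, so $\hat{\Omega}_1$ reduces to $(n/D^TD)(1,0,\dots,0)$, and the residual vector in \eqref{eq:5-3} equals $Z^TY$. Thus $\tilde{\alpha}$ becomes the univariate OLS slope $D^TY/(D^TD)$, which, after substituting $Y=X_K\beta_K^*+\eta$ (using $\alpha^*=0$), yields
\[
\tilde{\alpha}-\alpha^{*}=\frac{D^{T}X_{K}\beta_{K}^{*}}{D^{T}D}+\frac{D^{T}\eta}{D^{T}D}.
\]
This is exactly the decomposition driving the post double Lasso analysis on $\{\hat{I}_1=\hat{I}_2=\emptyset\}$, so the OVB upper bound will follow from the same concentration arguments.

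The first step I would carry out is to establish the probability of the conditioning event. Applying Lemma \ref{prop:fixed_design} directly to \eqref{eq:las-2} gives $\mathbb{P}(\hat{\gamma}=0_p)\geq 1-c\exp(-c'\log p)$ under the stated conditions on $\gamma^{*}$ and $\lambda_2$. Applying it to \eqref{eq:21-2} is subtler because the effective design is now $Z=(D,X)$: the incoherence condition must be checked for $Z_{-K}$ rather than $X_{K^c}$, which is why $\|(Z_{-K}^{T}X_{K})(X_{K}^{T}X_{K})^{-1}\|_\infty\leq 1-\phi/2$ is only assumed with probability $1-\kappa$. On this event, the choice $\lambda_1\asymp\phi^{-1}(\sqrt{s/n}\vee\sigma_v)\sigma_\eta\sqrt{\log p/n}$ is just large enough to dominate both $\|X^{T}\eta/n\|_\infty\lesssim\sigma_\eta\sqrt{s/n}\sqrt{\log p/n}$ and the $D$-coordinate $|D^{T}\eta/n|\lesssim \sigma_v\sigma_\eta\sqrt{\log p/n}$ (using $\|D\|_2/\sqrt{n}\asymp\sigma_v$ when $k\log p/n\ll 1$). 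The magnitude condition $|\beta_j^{*}|\leq\lambda_1 n/(2s)$ for $j\in K$, combined with $\alpha^{*}=0$ so the $D$-coordinate of the truth automatically satisfies the analogue of \eqref{eq:min}, then delivers $\hat{\theta}=0_{p+1}$ via \eqref{eq:nec}. A union bound produces the claimed $1-2\kappa-c^{*}k\exp(-c_{0}^{*}\log p)$.

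Once the collapse $\tilde{\alpha}=D^{T}Y/(D^{T}D)$ is established, the final step is to port the concentration analysis from Appendix \ref{appendix:Proposition-OVB-upper-proof} almost verbatim. The bound $|D^{T}X_{K}\beta_{K}^{*}/(D^{T}D)|\lesssim (\sigma_\eta/\sigma_v)(k\log p/n\wedge 1)$ follows from choosing $t^{*}=|\gamma^{*}|s/(4n)$ in the event $\mathcal{E}_{t_1}$ of \eqref{eq:50}, combined with the Chi-Square lower bound on $D^{T}D/n$ via \eqref{eq:41}. The bound $|D^{T}\eta/(D^{T}D)|\lesssim \sigma_v\sigma_\eta(r\vee k\log p/n)/((k\log p/n\vee 1)\sigma_v^{2})$ follows from splitting $D^{T}\eta=\gamma^{*T}X^{T}\eta+v^{T}\eta$ and bounding each Gaussian piece separately. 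Since $\eta$ is independent of $D$ and of both Lasso outputs, the conditional probability calculation mirrors the one producing the bound in Appendix \ref{appendix:Proposition-OVB-upper-proof}.

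The hard part will be the first step, specifically verifying that the primal-dual witness construction used to prove Lemma \ref{prop:fixed_design} goes through for the Lasso \eqref{eq:21-2} even though the incoherence condition is weakened to a random event of probability $1-\kappa$. This requires carefully tracking that the extra $(\sqrt{s/n}\vee\sigma_v)$ factor in $\lambda_1$ is enough to simultaneously kill $\|Z^{T}\eta/n\|_\infty$ across the disparate column scales of $X$ and $D$, and that the ``minimum signal'' condition on $\beta_K^{*}$, together with the absent constraint on the $D$-coordinate (because $\alpha^{*}=0$), still triggers the failure-to-select mechanism of \eqref{eq:nec}. Everything downstream is a direct transcription of the post double Lasso upper bound argument.
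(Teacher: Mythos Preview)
Your proposal is correct and matches the paper's approach essentially step for step: the paper also shows $\hat{\gamma}=0_p$ via Lemma~\ref{prop:fixed_design}, adapts the PDW argument on the random event $\mathcal{E}_2=\{\|(Z_{-K}^TX_K)(X_K^TX_K)^{-1}\|_\infty\le 1-\phi/2\}$ (intersected with $\mathcal{E}_1=\{|Z^T\eta/n|_\infty\precsim\phi^{-1}(\sqrt{s/n}\vee\sigma_v)\sigma_\eta\sqrt{\log p/n}\}$) to get $\hat{\theta}=0_{p+1}$, observes that on this event $\hat{\Omega}_1=(D^TD/n)^{-1}e_1$ so $\tilde{\alpha}-\alpha^*=D^T\eta/(D^TD)+D^TX_K\beta_K^*/(D^TD)$, and then invokes the proof of Proposition~\ref{prop:bias_post_double_selection-upper} verbatim. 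One minor refinement: rather than arguing $\|D\|_2/\sqrt{n}\asymp\sigma_v$ only ``when $k\log p/n\ll 1$'', the paper handles the $D$-column uniformly by bounding $|Z^T\eta/n|_\infty$ directly via the sub-Gaussian product tail (\ref{eq:A37-1}), which is what produces the $\sqrt{s/n}\vee\sigma_v$ factor in $\lambda_1$ without a regime split.
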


\begin{remark}One can show that a population version of the mutual
incoherence condition, $\left\Vert \left[\mathbb{E}\left(Z_{-K}^{T}\right)X_{K}\right]\left(X_{K}^{T}X_{K}\right)^{-1}\right\Vert _{\infty}=1-\phi$,
implies $\left\Vert \left(Z_{-K}^{T}X_{K}\right)\left(X_{K}^{T}X_{K}\right)^{-1}\right\Vert _{\infty}\leq1-\frac{\phi}{2}$
with high probability (that is, $\kappa$ is small and vanishes polynomially
in $p$). For example, we can apply (\ref{eq:77}) in Lemma \ref{lem:A3}
with slight notational changes.\end{remark}

\begin{remark} The event $\mathcal{M}^{'}$ in Proposition \ref{prop:bias_debiased_lasso}
is the intersection of $\left\{ \hat{\theta}=0_{p+1},\,\hat{\gamma}=0_{p}\right\} $
and an additional event, both of which occur with high probabilities.
The additional event is needed in our analyses for technical reasons.
See Appendix \ref{sec:appendix_debiased_lasso} for details. \end{remark}

\subsection{Proof for Propositions \ref{prop:bias_debiased_lasso} and \ref{prop:bias_debiased_lasso-upper}}

\label{sec:appendix_debiased_lasso}

Under the conditions in Proposition \ref{prop:bias_debiased_lasso},
(\ref{eq:nec}) in Lemma \ref{prop:fixed_design} implies that $\hat{\gamma}=0_{p}$
with probability at least $1-c_{0}\exp\left(-c_{0}^{'}\log p\right)$.
Conditioning on this event, $\hat{\Omega}_{1}=\left(\frac{1}{n}D^{T}D\right)^{-1}e_{1}$
where $e_{1}=\left(\begin{array}{cccc}
1 & 0 & \cdots & 0\end{array}\right)$. If $\alpha^{*}=0$, under the conditions in Proposition \ref{prop:bias_debiased_lasso},
we show that $\hat{\theta}=0_{p+1}$ with probability at least $1-2\kappa-c_{5}\exp\left(-c_{6}\log p\right)$.
To achieve this goal, we slightly modify the argument for (\ref{eq:nec})
in Lemma \ref{prop:fixed_design} by replacing (\ref{eq:event}) with
$\mathcal{E}=\mathcal{E}_{1}\cap\mathcal{E}_{2}$, where

\begin{eqnarray*}
\mathcal{E}_{1} & = & \left\{ \left|\frac{Z^{T}\eta}{n}\right|_{\infty}\precsim\phi^{-1}\left(1\vee\sigma_{v}\right)\sigma_{\eta}\sqrt{\frac{\log p}{n}}\right\} ,\\
\mathcal{E}_{2} & = & \left\{ \left\Vert \left(Z_{-K}^{T}X_{K}\right)\left(X_{K}^{T}X_{K}\right)^{-1}\right\Vert _{\infty}\leq1-\frac{\phi}{2}\right\} ,
\end{eqnarray*}
and $Z_{-K}$ denotes the columns in $Z$ excluding $X_{K}$. Note
that by (\ref{eq:A37-1}), $\mathbb{P}\left(\mathcal{E}_{1}\right)\geq1-c_{1}^{'}\exp\left(-c_{2}^{'}\log p\right)$
and therefore, $\mathbb{P}\left(\mathcal{E}\right)\geq1-\kappa-c_{1}^{'}\exp\left(-c_{2}^{'}\log p\right)$.
We then follow the argument used in the proof for Lemma \ref{prop:fixed_design}
to show $\mathbb{P}\left(E_{1}\cap\mathcal{E}\right)=0$ and $\mathbb{P}\left(E_{2}\cap\mathcal{E}\right)=0$,
where 
\begin{eqnarray*}
E_{1} & = & \left\{ \textrm{sgn}\left(\hat{\beta}_{j}\right)=-\textrm{sgn}\left(\beta_{j}^{*}\right),\,\textrm{for some }j\in K\right\} ,\\
E_{2} & = & \left\{ \textrm{sgn}\left(\hat{\beta}_{j}\right)=\textrm{sgn}\left(\beta_{j}^{*}\right),\,\textrm{for some }j\in K\right\} .
\end{eqnarray*}
Moreover, conditioning on $\mathcal{E}$, $\hat{\alpha}=0$ and $\hat{\beta}_{K^{c}}=0_{p-k}$.
Putting these facts together yield the claim that $\hat{\theta}=0_{p+1}$
with probability at least $1-2\kappa-c_{5}\exp\left(-c_{6}\log p\right)$.

Letting $E=\left\{ \hat{\theta}=0_{p+1},\,\hat{\gamma}=0_{p}\right\} $
with $\mathbb{P}\left(E\right)\geq1-2\kappa-c_{1}\exp\left(-c_{2}\log p\right)$
and recalling the event $\mathcal{E}_{t^{*}}$ in the proof for Proposition
\ref{prop:bias_post_double_formula}, we can then show 
\begin{eqnarray*}
\mathbb{E}\left(\tilde{\alpha}-\alpha^{*}\vert\mathcal{M}^{'}\right) & = & \frac{1}{n}\mathbb{E}\left(\hat{\Omega}_{1}Z^{T}\eta\vert\mathcal{M}^{'}\right)+\mathbb{E}\left[\frac{D^{T}X_{K}}{D^{T}D}\left(\beta_{K}^{*}-\hat{\beta}_{K}\right)\vert\mathcal{M}^{'}\right]\\
 & = & \mathbb{E}\left(\frac{\frac{1}{n}D^{T}\eta}{\frac{1}{n}D^{T}D}\vert\mathcal{M}^{'}\right)+\mathbb{E}\left(\frac{D^{T}X_{K}}{D^{T}D}\beta_{K}^{*}\vert\mathcal{M}^{'}\right)\\
 & = & \mathbb{E}\left(\frac{D^{T}X_{K}}{D^{T}D}\beta_{K}^{*}\vert\mathcal{M}^{'}\right)
\end{eqnarray*}
where $\mathcal{M}^{'}=E\cap\mathcal{E}_{t^{*}}$ such that $\mathbb{P}\left(\mathcal{M}^{'}\right)\geq1-2\kappa-c_{3}^{*}k\exp\left(-c_{4}^{*}\log p\right)$
and the last line follows from the argument used to show (\ref{eq:60}).

The rest of argument is similar to what is used in showing Proposition
\ref{prop:bias_post_double_formula}. However, because (\ref{eq:21-2})
involves $D$, $\mathbb{E}\left(\frac{\frac{1}{n}D^{T}X_{K}}{\frac{1}{n}D^{T}D}\beta_{K}^{*}\vert E,\,\mathcal{E}_{t^{*}}\right)\neq\mathbb{E}\left(\frac{\frac{1}{n}D^{T}X_{K}}{\frac{1}{n}D^{T}D}\beta_{K}^{*}\vert\hat{\gamma}=0_{p},\,\mathcal{E}_{t^{*}}\right)$.
Instead of (\ref{eq:50}) and (\ref{eq:58}), we apply 
\begin{eqnarray*}
\mathbb{P}\left(\mathcal{E}_{t_{2}}^{'}\vert E,\,\mathcal{E}_{t_{1}}\right) & \geq & \mathbb{P}\left(\mathcal{E}_{t_{2}}^{'}\cap\mathcal{E}_{t_{1}}\cap E\right)\\
 & \geq & \mathbb{P}\left(\mathcal{E}_{t_{1}}\right)+\mathbb{P}\left(\mathcal{E}_{t_{2}}^{'}\right)+\mathbb{P}\left(E\right)-2\\
 & \geq & 1-2\kappa-c_{1}\exp\left(-c_{2}\log p\right)-k\exp\left(\frac{-nt_{1}^{2}}{2\sigma_{v}^{2}}\right)\\
 &  & -\exp\left(\frac{-nt_{2}^{2}}{8\sigma_{v}^{4}}\right),\qquad\textrm{for any }t_{2}\in(0,\,\sigma_{v}^{2}].
\end{eqnarray*}
Consequently, we have the claim in Proposition \ref{prop:bias_debiased_lasso}.

Following the argument used to show Proposition \ref{prop:bias_post_double_selection-upper},
we also have the claim in Proposition \ref{prop:bias_debiased_lasso-upper}.

\subsection{Simulation evidence}

Here we evaluate the performance of the debiased Lasso proposed by \citet{vandergeer2014asymptotically} based on the simulation setting of the main text. We use cross-validation  to choose the regularization parameters as this is the most commonly-used method in this literature. Figure \ref{fig:db_performance} presents the results. Debiased Lasso exhibits substantial biases (relative to the standard deviation) and under-coverage for all except very small values of $R^2$, and its performance is very sensitive to the regularization choice. A comparison to the results in the main text shows that post double Lasso performs better than debiased Lasso.\footnote{We found that one of the reasons for the relatively poor performance of debiased Lasso is that $D$ is highly correlated with the relevant controls. Debiased Lasso exhibits a better performance when $(D,X)$ exhibit a Toeplitz dependence structure as in the simulations reported by \citet{vandergeer2014asymptotically}.}

\begin{figure}[ht]
\caption{Finite sample behavior of the debiased Lasso}

    \begin{subfigure}[b]{\textwidth}
    \caption{Ratio of bias to standard deviation}
    \centering
    \includegraphics[width=0.325\textwidth, trim = {0 0 0 0cm}]{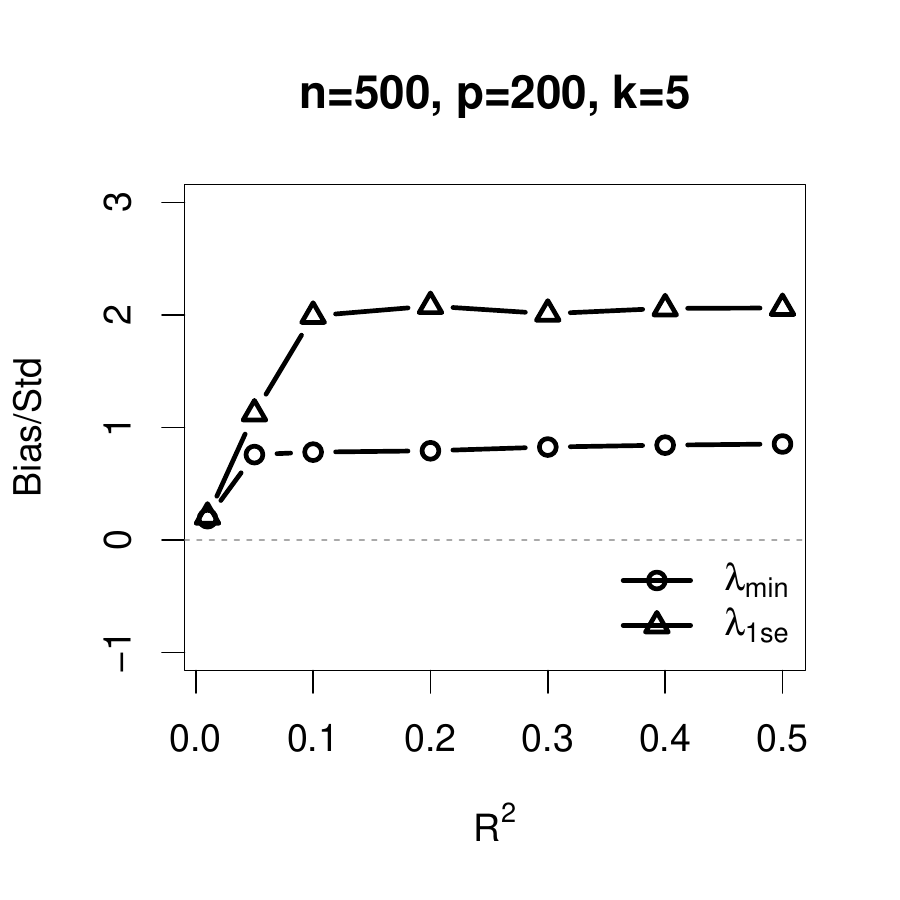}
    \includegraphics[width=0.325\textwidth, trim = {0 0 0 0cm}]{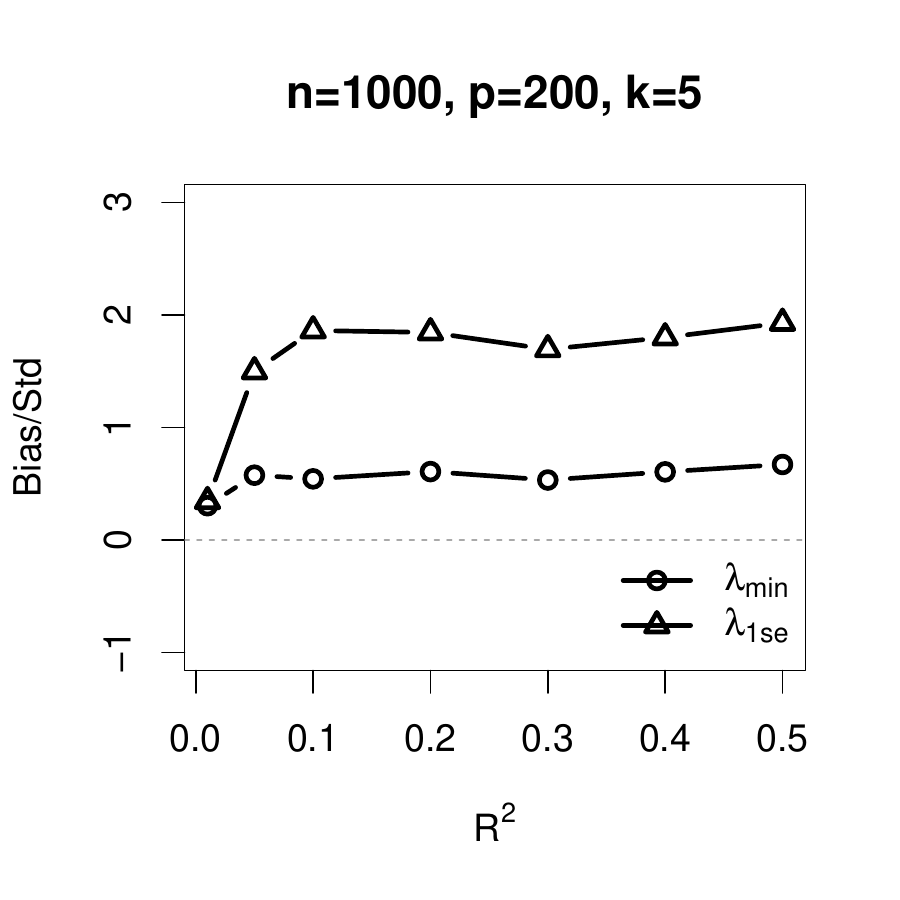}
    \includegraphics[width=0.325\textwidth, trim = {0 0 0 0cm}]{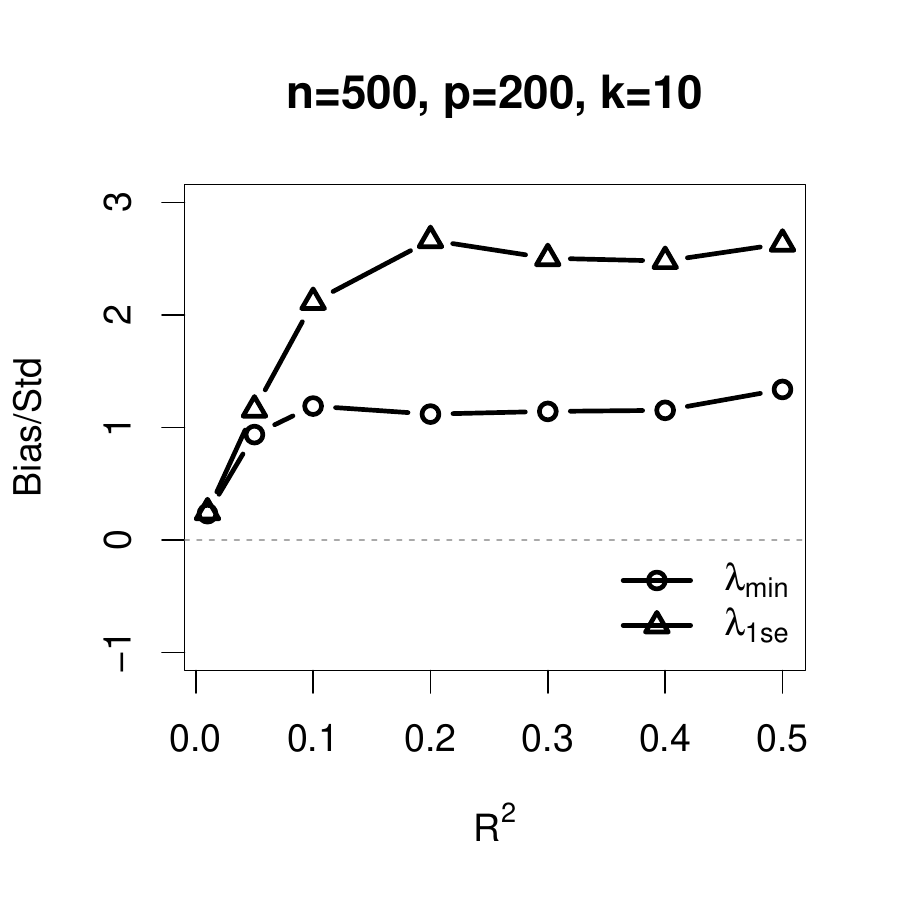}
    \end{subfigure}  
    
    \begin{subfigure}[b]{\textwidth}
    \caption{Coverage 90\% confidence intervals}
    \centering
    \includegraphics[width=0.325\textwidth, trim = {0 0 0 0cm}]{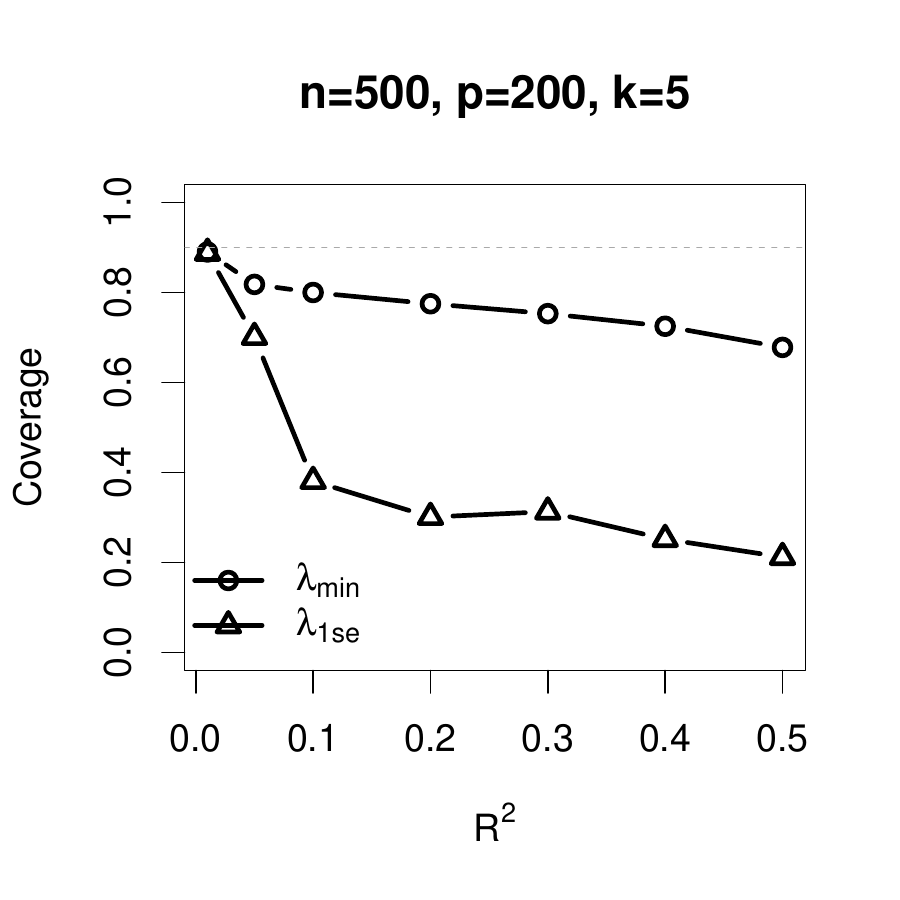}
    \includegraphics[width=0.325\textwidth, trim = {0 0 0 0cm}]{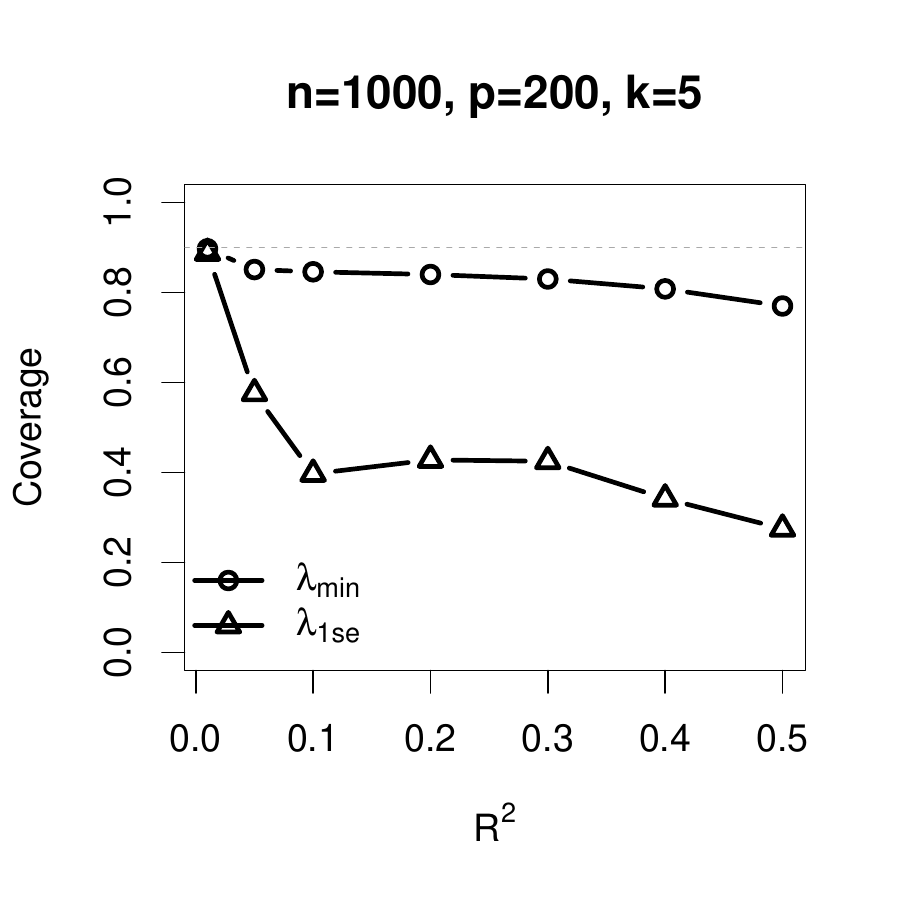}
    \includegraphics[width=0.325\textwidth, trim = {0 0 0 0cm}]{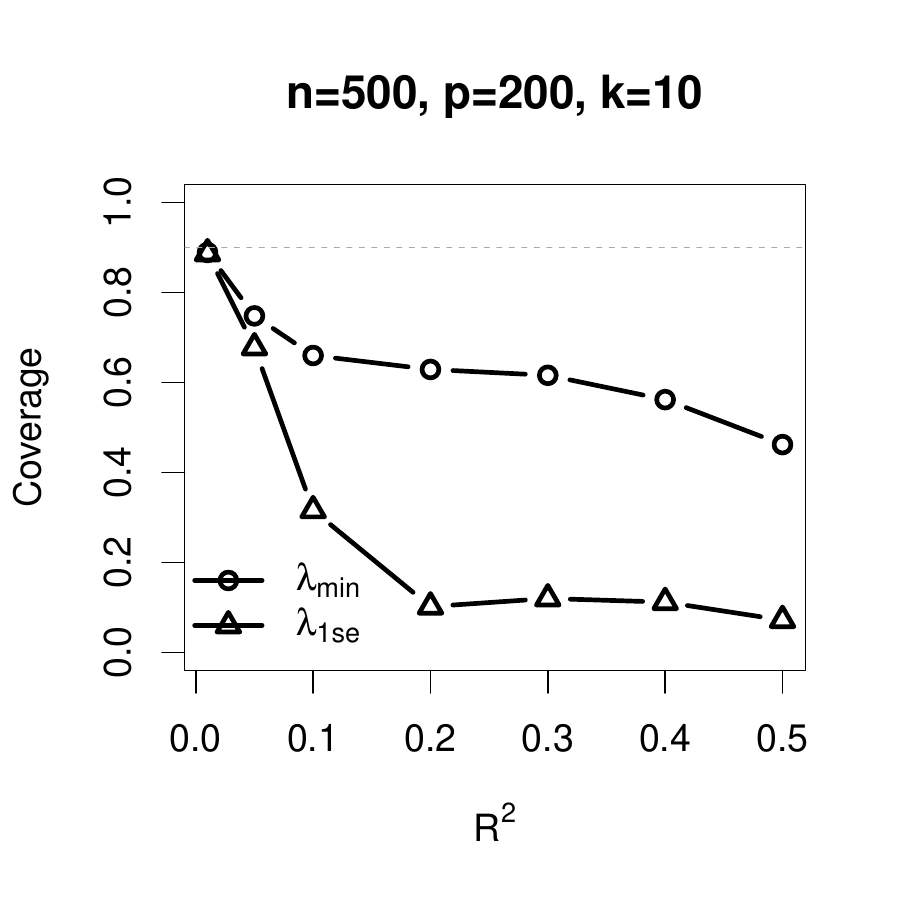}
    \end{subfigure}  
\label{fig:db_performance}

\end{figure}

\section{Additional simulations for post double Lasso}
\label{app:additional_simulations}

\subsection{Large sample simulations based on the numerical example}
\label{app: large sample}
The issue documented in the numerical example is not a small-sample phenomenon. Figure \ref{fig:num_example_large} shows that, even when $(n,p)=(14238,384)$ as in \citet{angrist2019machine} and $k=10$, the finite sample distribution of post double Lasso may not be centered at the true value, and the bias can be large relative to the standard deviation. Compared to the results for $(n,p)=(500,200)$, under-selection and large biases occur at lower values of $R^2$, and all the relevant controls get selected when $R^2=0.2$.

\begin{figure}[ht]
\caption{Performance with $(n,p,k)=(14238,384,10)$}

    \begin{subfigure}[b]{\textwidth}
    \caption{Finite sample distribution}
    \centering
    \includegraphics[width=0.75\textwidth, trim = {0 0cm 0 0cm}]{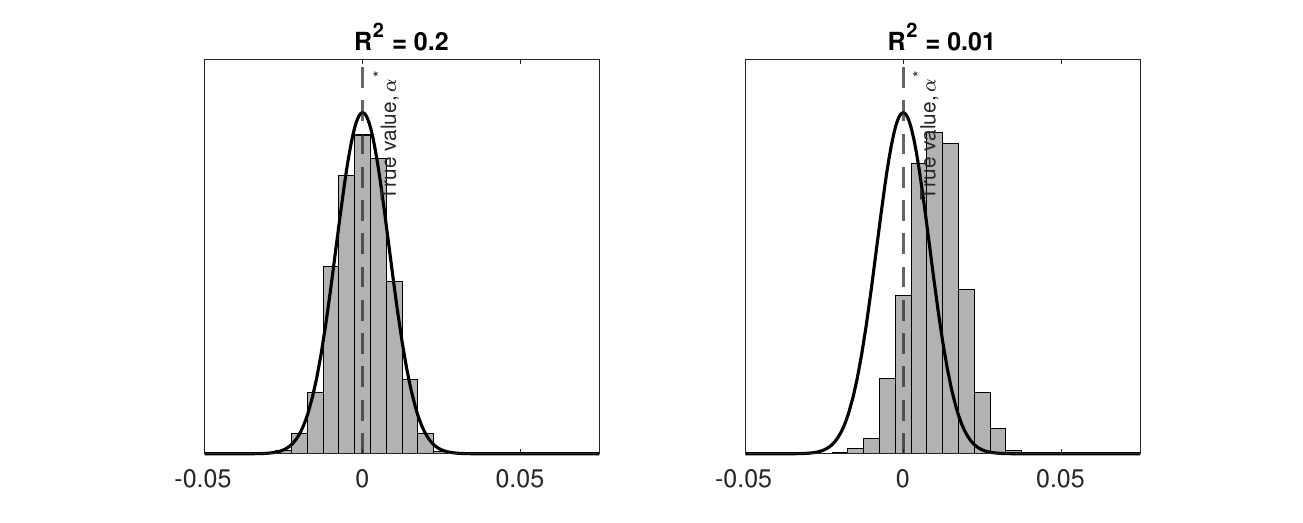}
    \end{subfigure}  
    
    \begin{subfigure}[b]{\textwidth}
    \caption{Number of selected relevant controls}
    \centering
    \includegraphics[width=0.75\textwidth, trim = {0 0cm 0 0cm}]{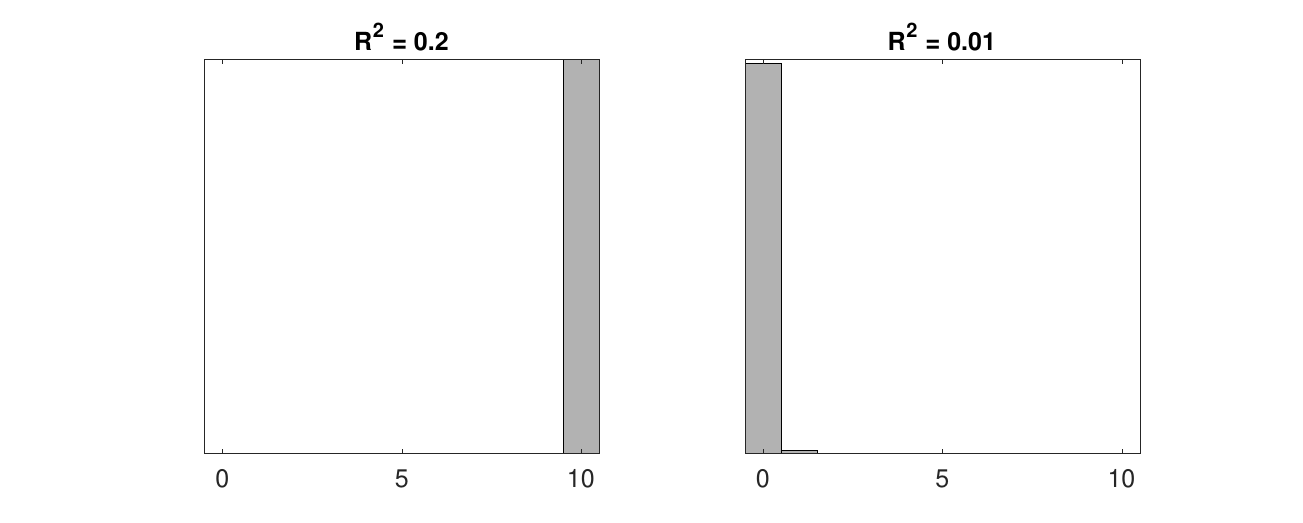}
    \end{subfigure}  
\label{fig:num_example_large}
\footnotesize{\textit{Notes:} The grey histograms in Panel (a) show the finite sample distributions and the black curves show the densities of the oracle estimators. }
\end{figure}

\subsection{Additional simulations}
In the main text, we consider a setting with normally distributed control variables, normally distributed homoscedastic errors terms, and $\alpha^\ast=0$. Here we provide additional simulation evidence based on a more general model used in the simulations of \citet{belloni2014inference}: 
\begin{eqnarray}
Y_{i} & = & D_{i}\alpha^{*}+X_{i}\beta^{*}+\sigma_{y}(D_{i},X_{i})\eta_{i},\label{eq:dgp_illustration_inference1_general}\\
D_{i} & = & X_{i}\gamma^{*}+\sigma_{d}(X_{i})v_{i},\label{eq:dgp_illustration_inference2_general}
\end{eqnarray}
where $\eta_{i}$ and $v_{i}$ are independent of each other and $\left\{ X_{i},\eta_{i},v_{i}\right\} _{i=1}^{n}$ consists of iid entries. The object of interest is $\alpha^{*}$. We set $p=200$ and consider a sparse setting where $\beta_j^{*}=\gamma_j^{*}=c \cdot 1\left\{j\le k\right\}$ for $j=1,\dots,p$ and $k=5$. We consider six DGPs that differ with respect to $n$, the distributions of $X_i$, $\eta_{i}$, and $v_{i}$, the specifications of $\sigma_{y}(D_{i},X_{i})$ and $\sigma_{d}(X_{i})$, as well as $\alpha^\ast$. For DGP A4, we do not show results for OLS since $n=p$. We show results for $R^2\in \{0.01,0.05,0.1,0.2,0.3,0.4,0.5\}$ based on 1,000 simulation repetitions.

\begin{table}[ht]
\caption{DGPs A1--A6}
\begin{footnotesize}
\begin{center}

\begin{tabular}{| c | c | c | c | c | c | c | c | c | c |}
\hline
DGP&$X_i$ & $n$& $\sigma_y(D_i,X_i)$ & $\sigma_d(X_i)$ & $\eta_i$ & $v_i$& $\alpha^\ast$ \\
\hline
A1 &Indep.\ $\text{Bern}\left(0.5\right)$ &500& 1 & 1 & $ \mathcal{N}(0,1)$ & $ \mathcal{N}(0,1)$&0 \\
A2 & $\mathcal{N}(0_p,I_p)$ &500& 1 & 1 &   $\frac{t(5)}{\sqrt{(5/3)}}$ & $ \frac{t(5)}{\sqrt{(5/3)}}$&0 \\
A3 & $\mathcal{N}(0_p,I_p)$ &500& $\sqrt{\frac{(1+D_{i}\alpha^{*}+X_{i}\beta^{*})^{2}}{\frac{1}{n}\sum_{i}(1+D_{i}\alpha^{*}+X_{i}\beta^{*})^{2}}}$ & $\sqrt{\frac{(1+X_{i}\gamma^{*})^{2}}{\frac{1}{n}\sum_{i}(1+X_{i}\gamma^{*})^{2}}}$ & $ \mathcal{N}(0,1)$ & $ \mathcal{N}(0,1)$&0 \\
A4 &$\mathcal{N}(0_p,I_p)$ &200& 1 & 1 & $ \mathcal{N}(0,1)$ & $ \mathcal{N}(0,1)$&0 \\
A5 & $\mathcal{N}(0_p,I_p)$ &500& 1 & 1 &  $ \mathcal{N}(0,1)$ & $ \mathcal{N}(0,1)$&1 \\
A6 & $\mathcal{N}(0_p,I_p)$&500& 1 & 1 &  $ \mathcal{N}(0,1)$ & $ \mathcal{N}(0,1)$&-1 \\
\hline
\end{tabular}
\end{center}

\textit{Notes:}  The specification of multiplicative heteroscedasticity is the same as in the simulations in Section 4.2 of \citet{belloni2014inference}.
\end{footnotesize}
\end{table}

Figures \ref{fig:dml_bias_std_app}--\ref{fig:dml_ci_app} present the results. The two most important determinants of the performance of post double Lasso are $n$ and $\alpha^{*}$. To see why $\alpha^\ast$ is important, recall that the reduced form parameter and the error term in the first step of post double Lasso (i.e., program (\ref{eq:las-1})) are $\pi^{*}=\alpha^{*}\gamma^{*}+\beta^{*}$ and $u_{i}=\eta_{i}+\alpha^{*}v_{i}$. This implies that the magnitude of $\pi^{*}$ as well as the variance of $u_i$ depend on $\alpha^{*}$. Consequently, the selection performance of Lasso in the first step is directly affected by $\alpha^{*}$. In the extreme case where $\alpha^{*}$ is such that $\pi^{*}=0_{p}$, Lasso does not select any controls with high probability if the regularization parameter is chosen according to the standard recommendations. 
The simulation results further show that there is no practical recommendation for choosing the regularization parameters. While $\lambda_{\text{min}}$ leads to the best performance when $\alpha^\ast=0$, this choice can yield poor performances when $\alpha^\ast\ne 0$. Finally, across all DGPs, OLS outperforms post double Lasso in terms of bias and coverage accuracy, but yields somewhat wider confidence intervals. 
\begin{figure}[ht]
\caption{Ratio of bias to standard deviation}
\begin{center}
\includegraphics[width=0.325\textwidth,trim = {0 1cm 0 1cm}]{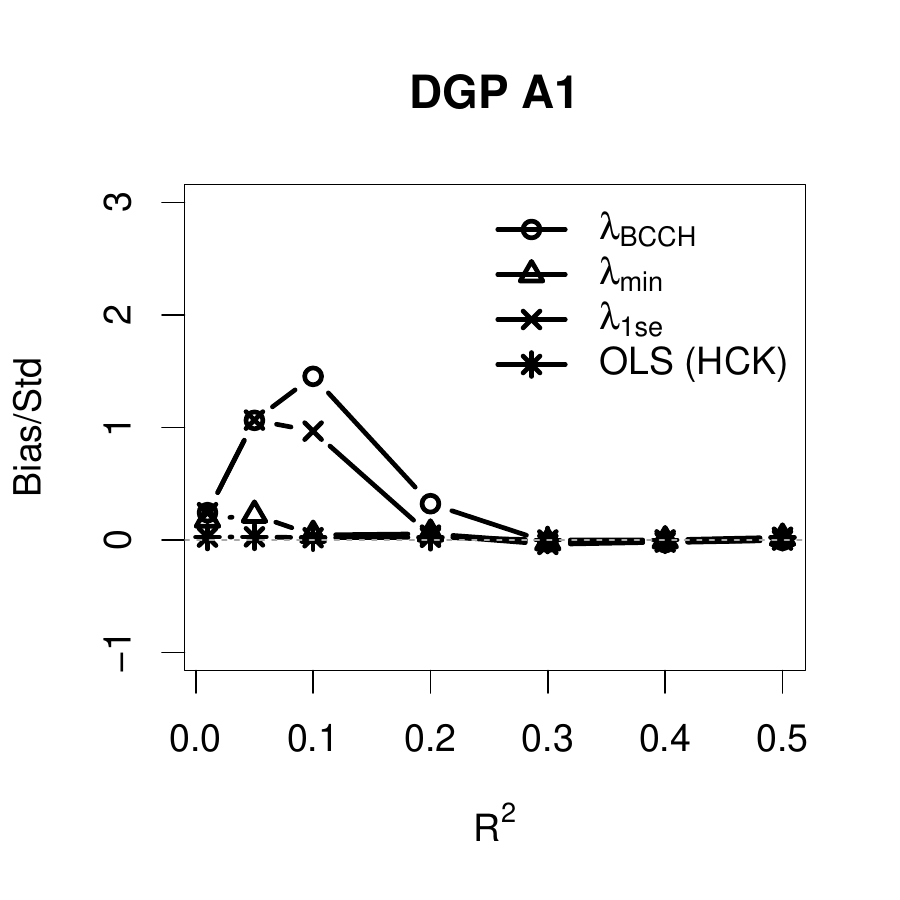}
\includegraphics[width=0.325\textwidth,trim = {0 1cm 0 1cm}]{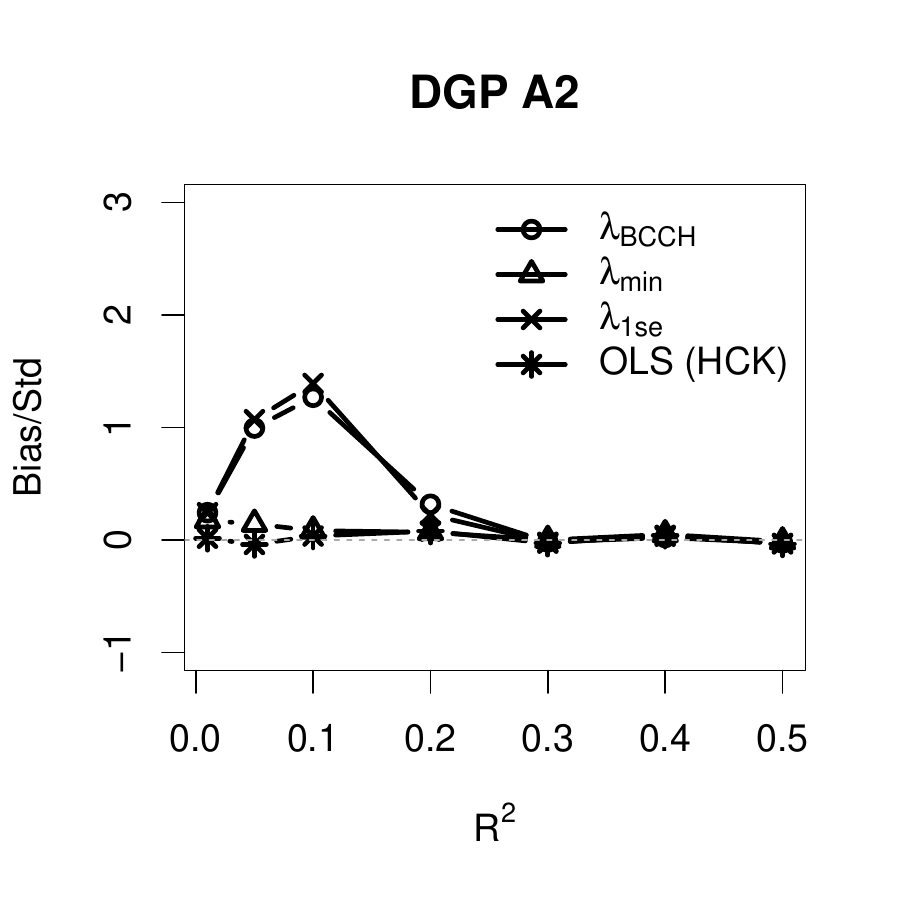}
\includegraphics[width=0.325\textwidth,trim = {0 1cm 0 1cm}]{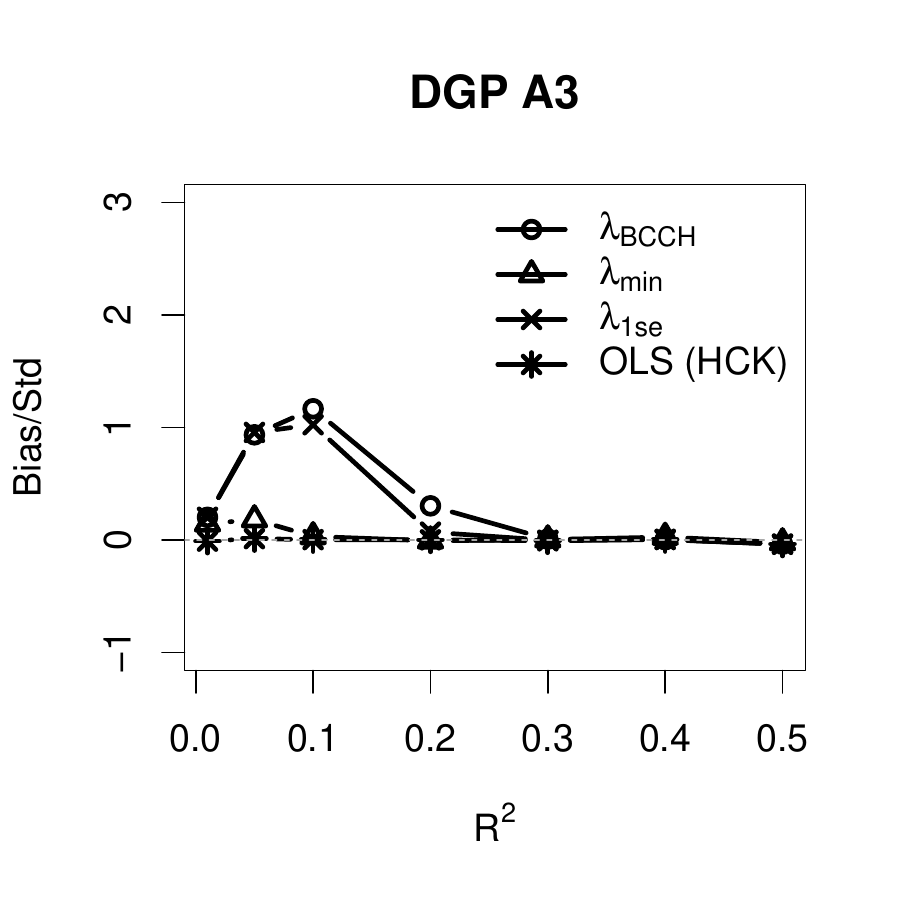}
\includegraphics[width=0.325\textwidth,trim = {0 1cm 0 .5cm}]{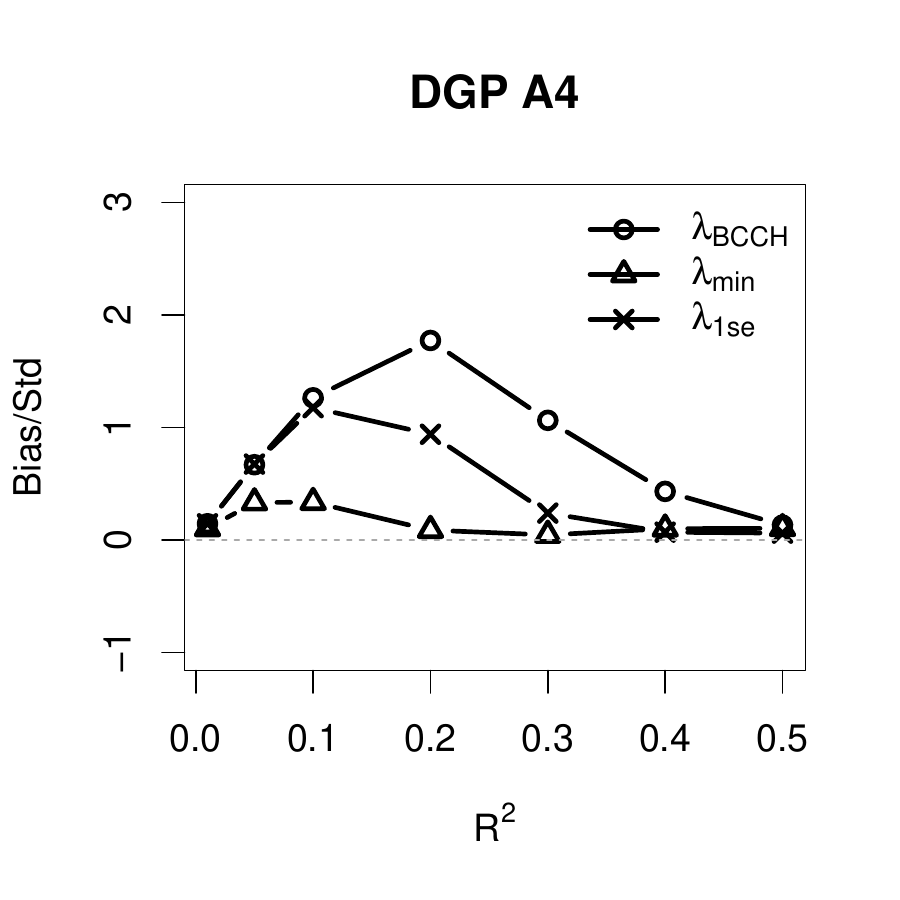}
\includegraphics[width=0.325\textwidth,trim = {0 1cm 0 .5cm}]{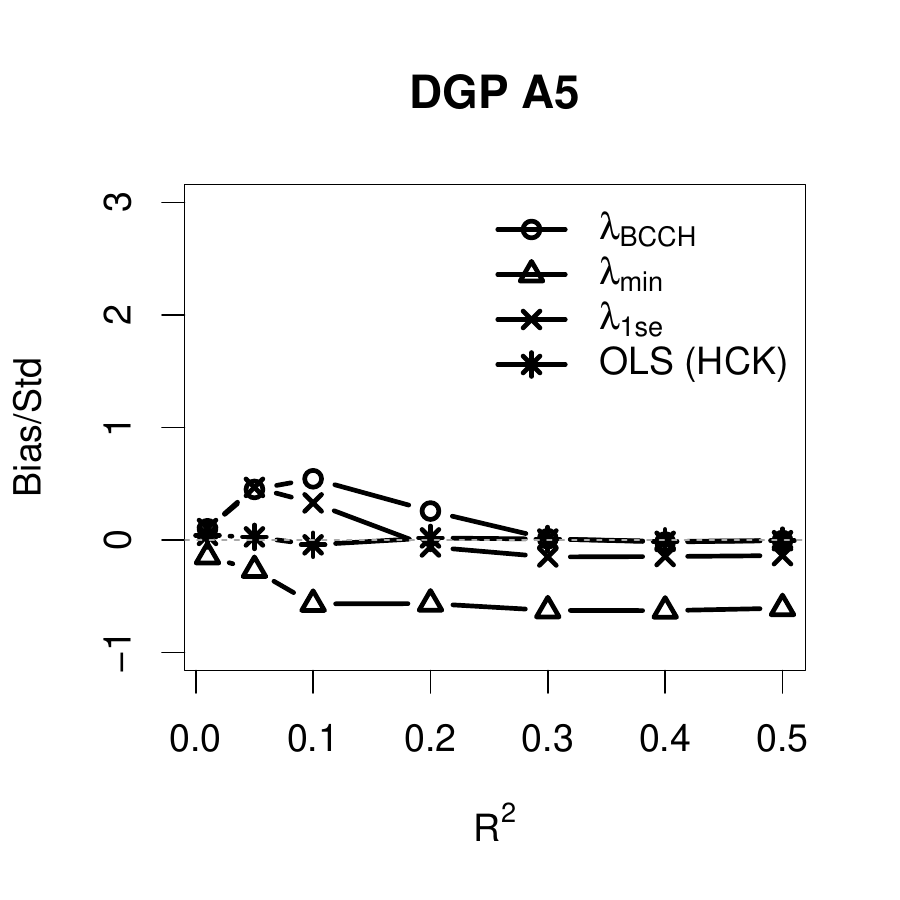}
\includegraphics[width=0.325\textwidth,trim = {0 1cm 0 .5cm}]{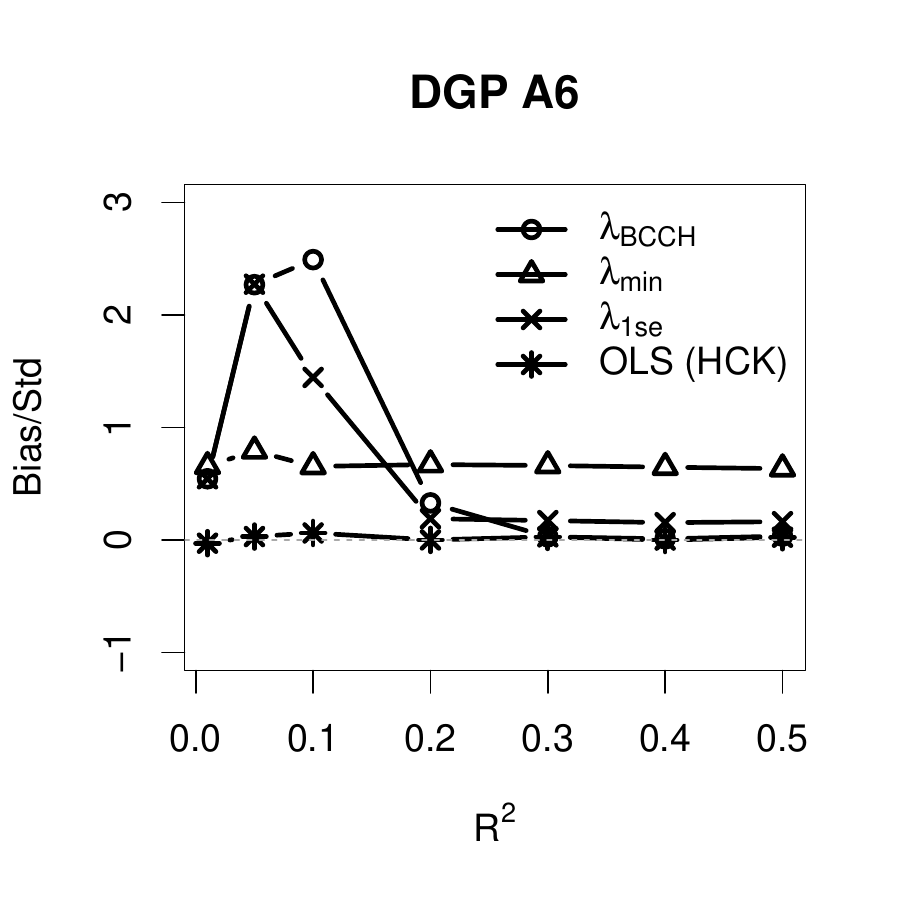}
\end{center}
\label{fig:dml_bias_std_app}
\end{figure}

\begin{figure}[ht]
\caption{Coverage 90\% confidence intervals}
\begin{center}
\includegraphics[width=0.325\textwidth,trim = {0 1cm 0 1cm}]{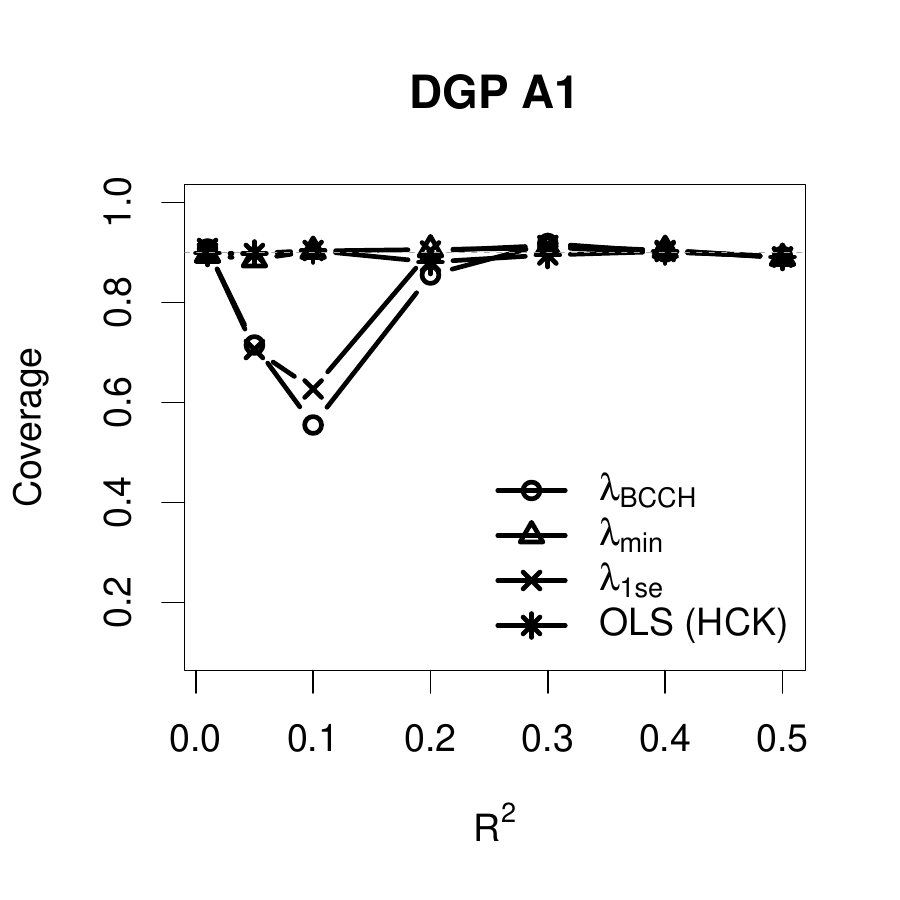}
\includegraphics[width=0.325\textwidth,trim = {0 1cm 0 1cm}]{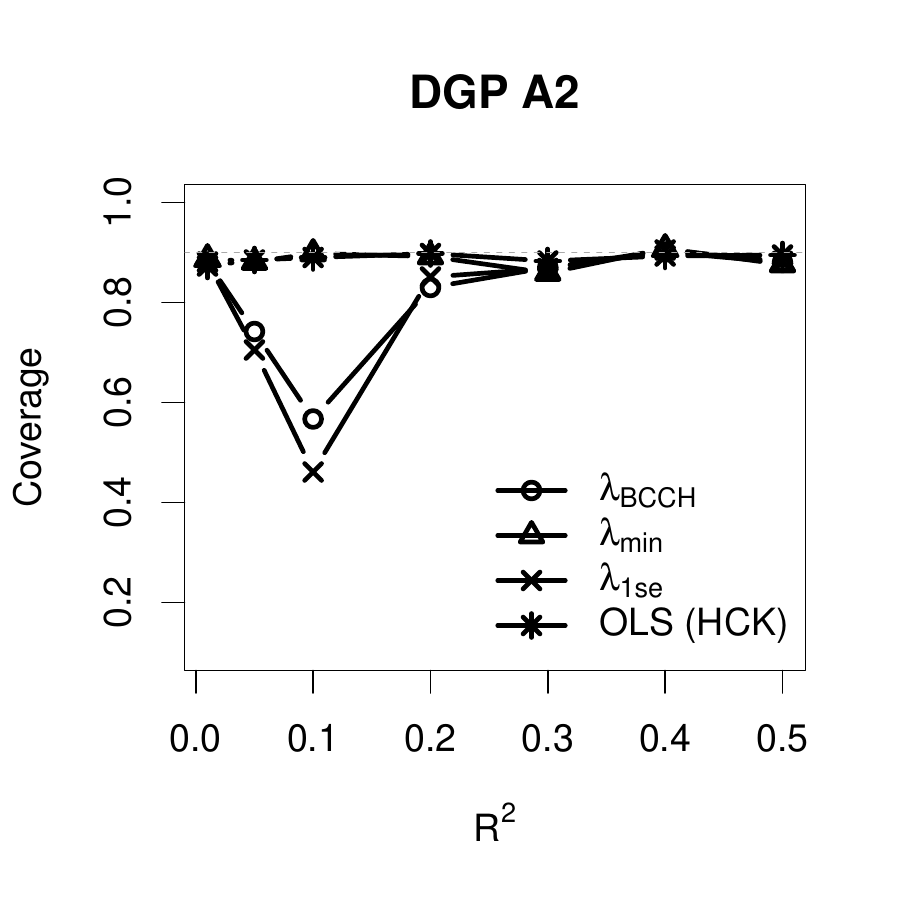}
\includegraphics[width=0.325\textwidth,trim = {0 1cm 0 1cm}]{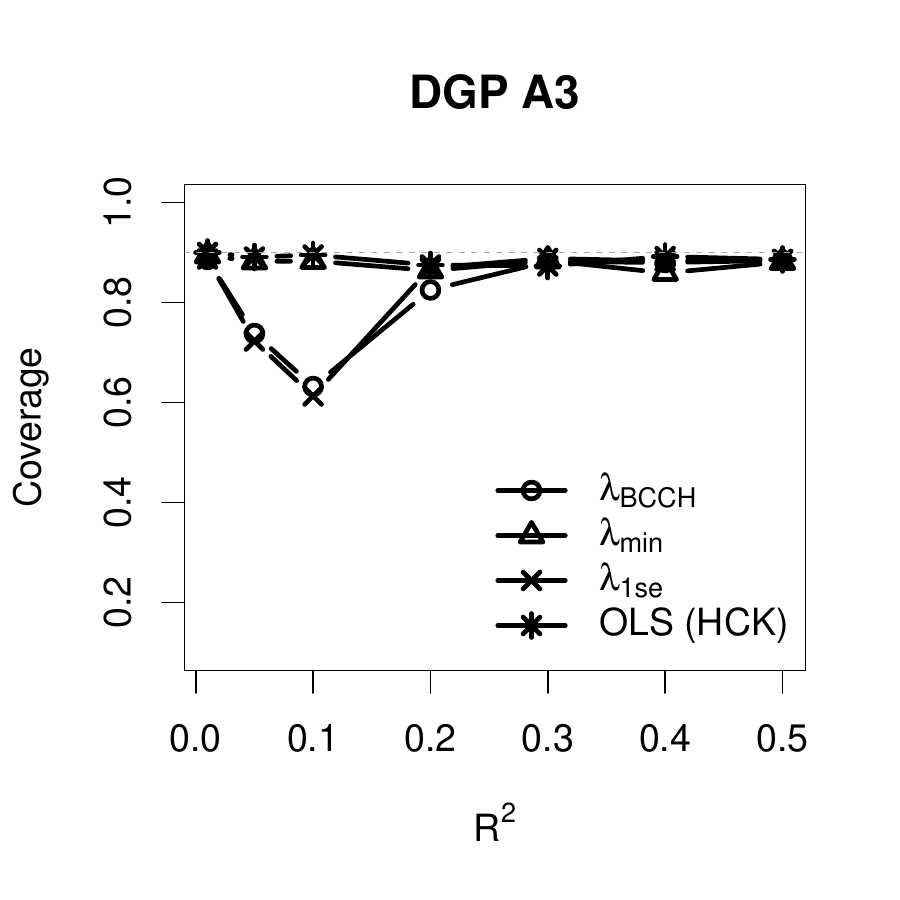}
\includegraphics[width=0.325\textwidth,trim = {0 1cm 0 .5cm}]{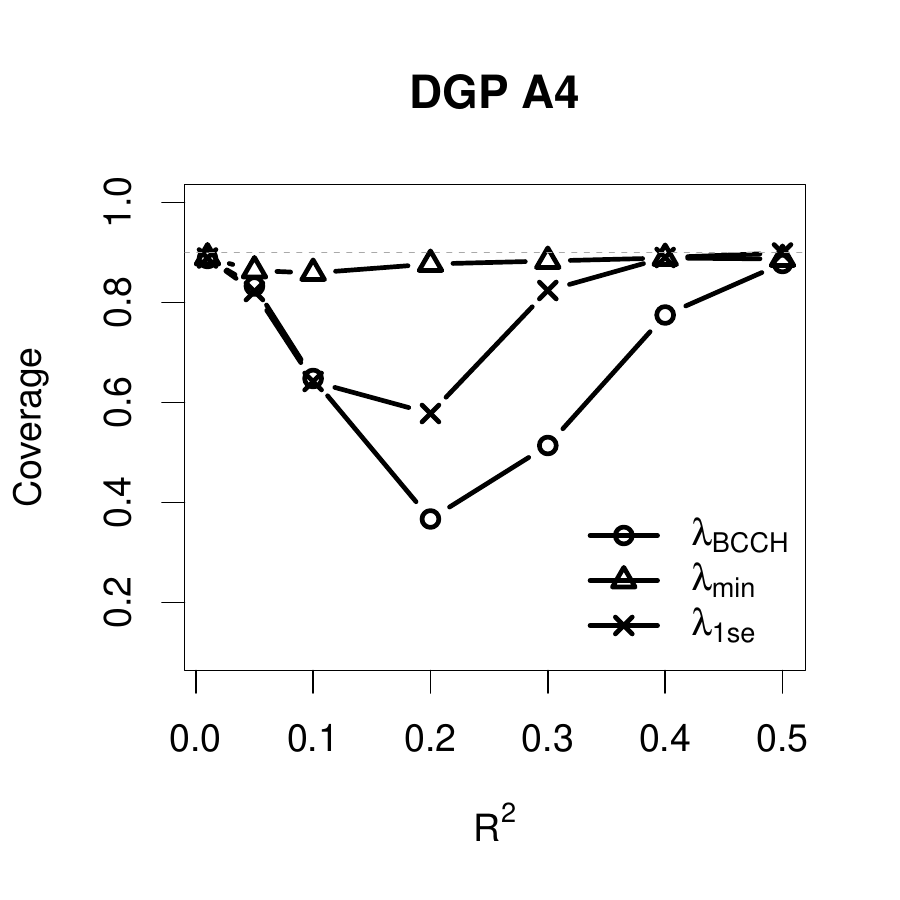}
\includegraphics[width=0.325\textwidth,trim = {0 1cm 0 .5cm}]{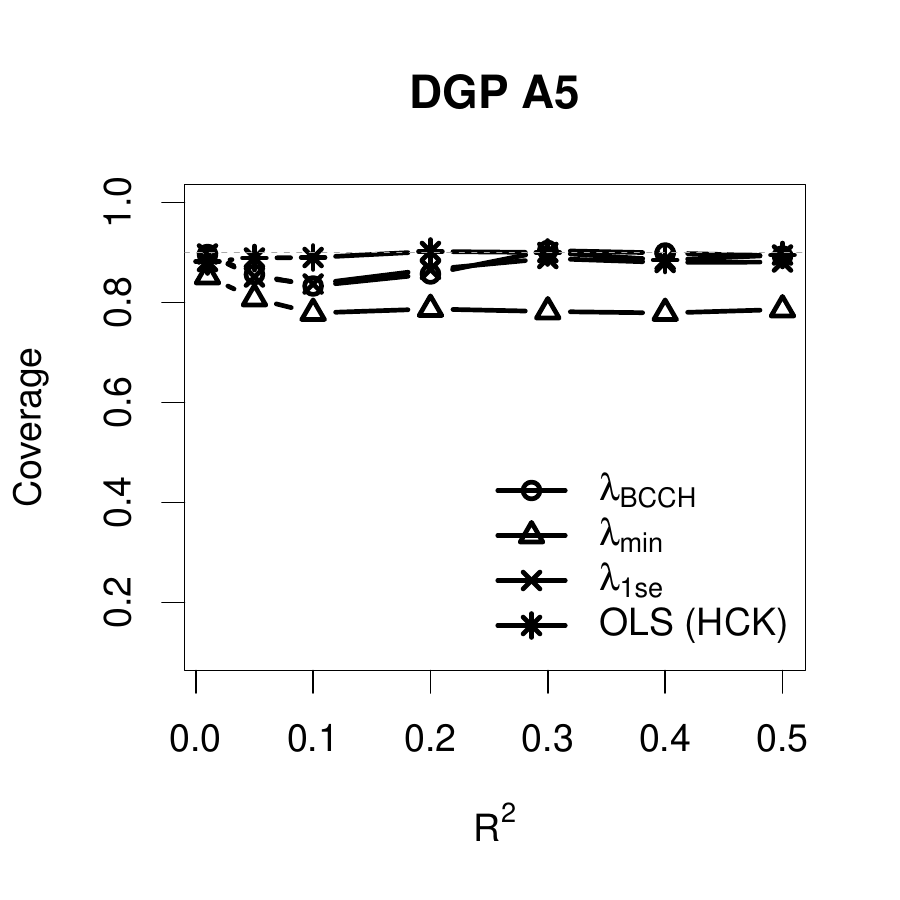}
\includegraphics[width=0.325\textwidth,trim = {0 1cm 0 .5cm}]{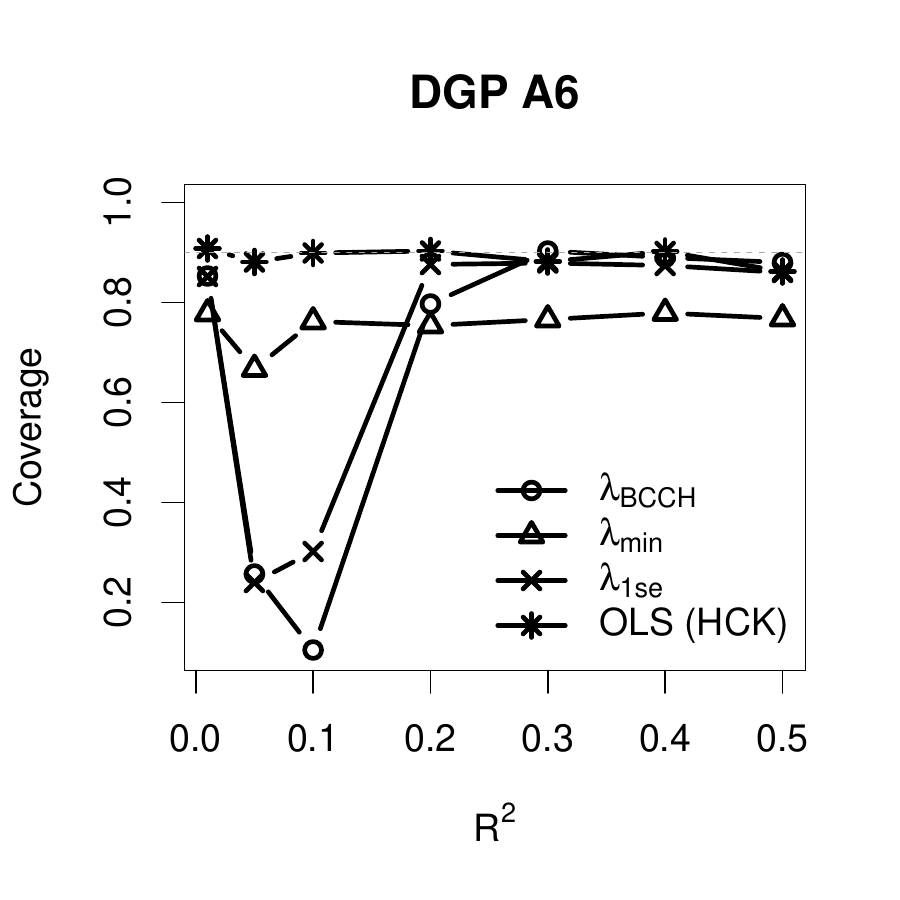}
\end{center}
\label{fig:dml_cov_app}
\end{figure}

\begin{figure}[ht]
\caption{Average length 90\% confidence intervals}
\begin{center}
\includegraphics[width=0.325\textwidth,trim = {0 1cm 0 1cm}]{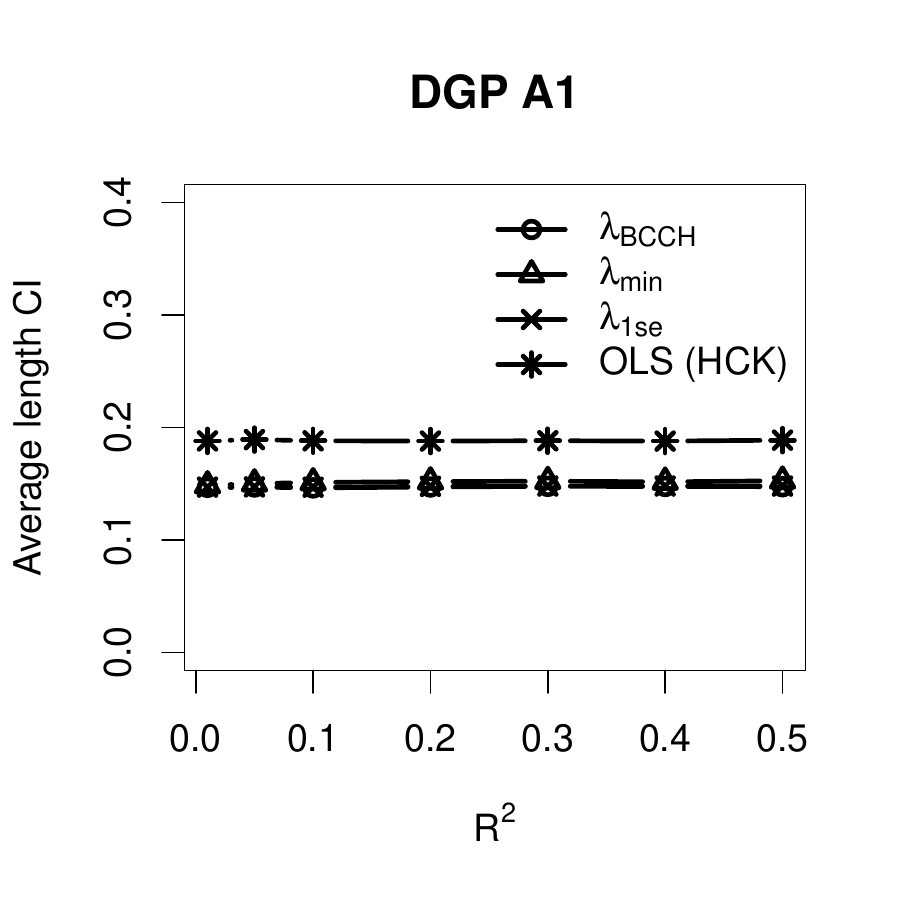}
\includegraphics[width=0.325\textwidth,trim = {0 1cm 0 1cm}]{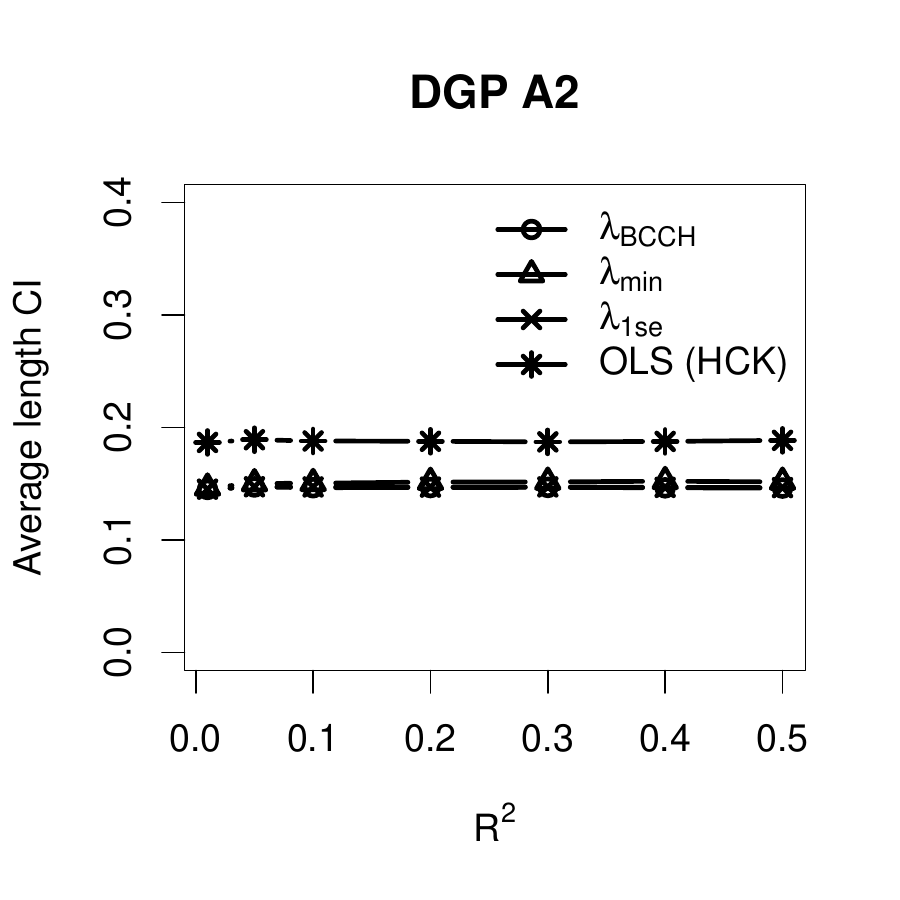}
\includegraphics[width=0.325\textwidth,trim = {0 1cm 0 1cm}]{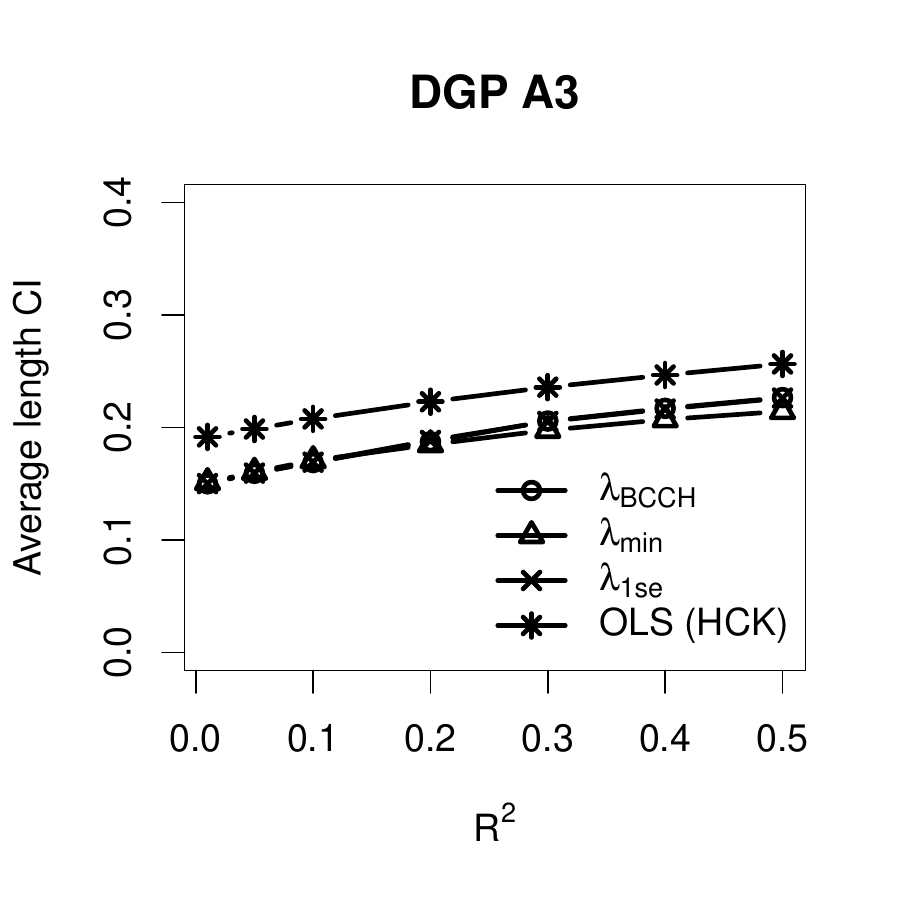}
\includegraphics[width=0.325\textwidth,trim = {0 1cm 0 .5cm}]{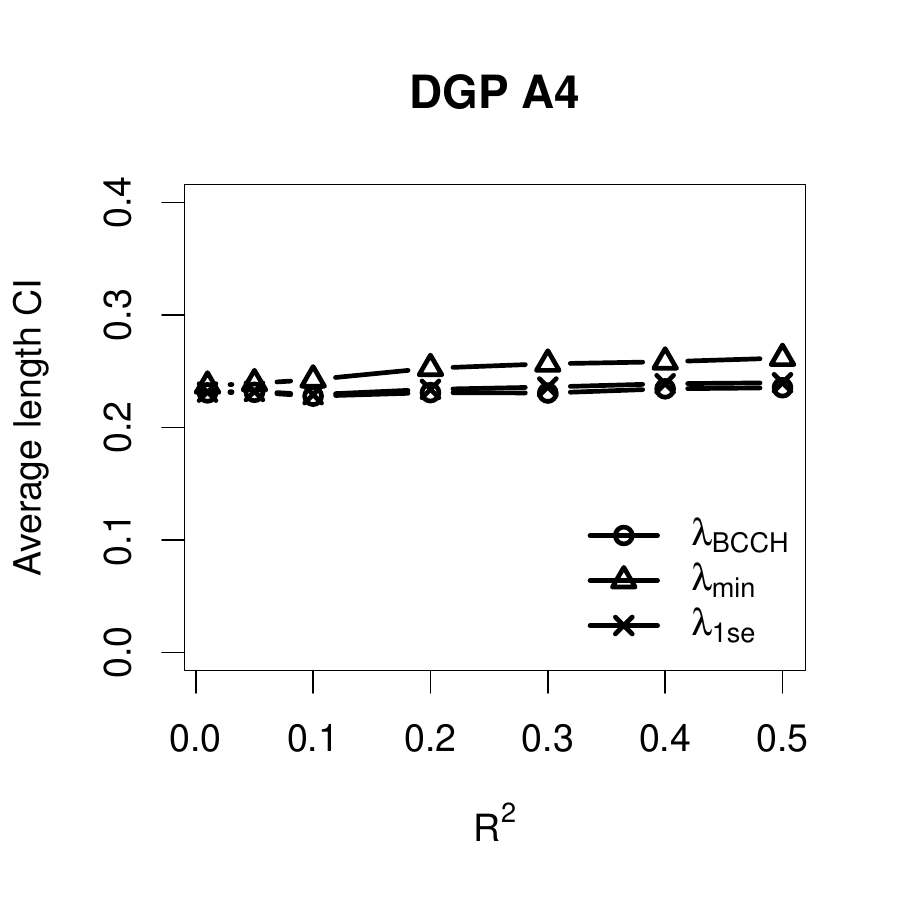}
\includegraphics[width=0.325\textwidth,trim = {0 1cm 0 .5cm}]{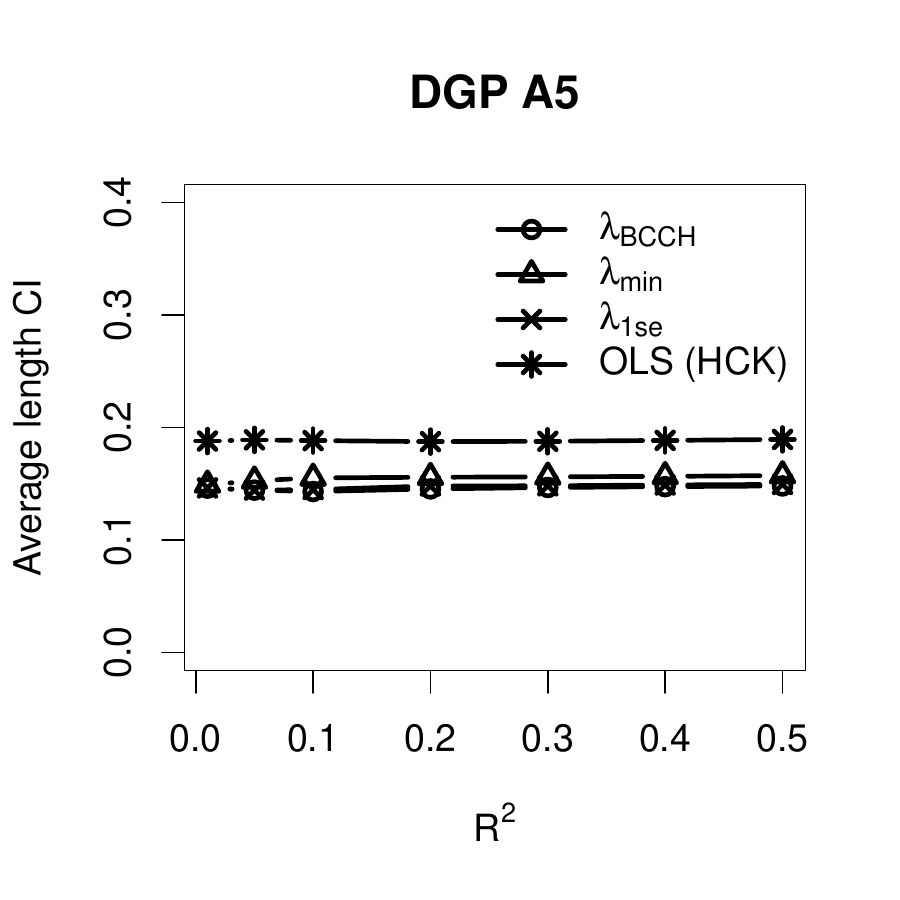}
\includegraphics[width=0.325\textwidth,trim = {0 1cm 0 .5cm}]{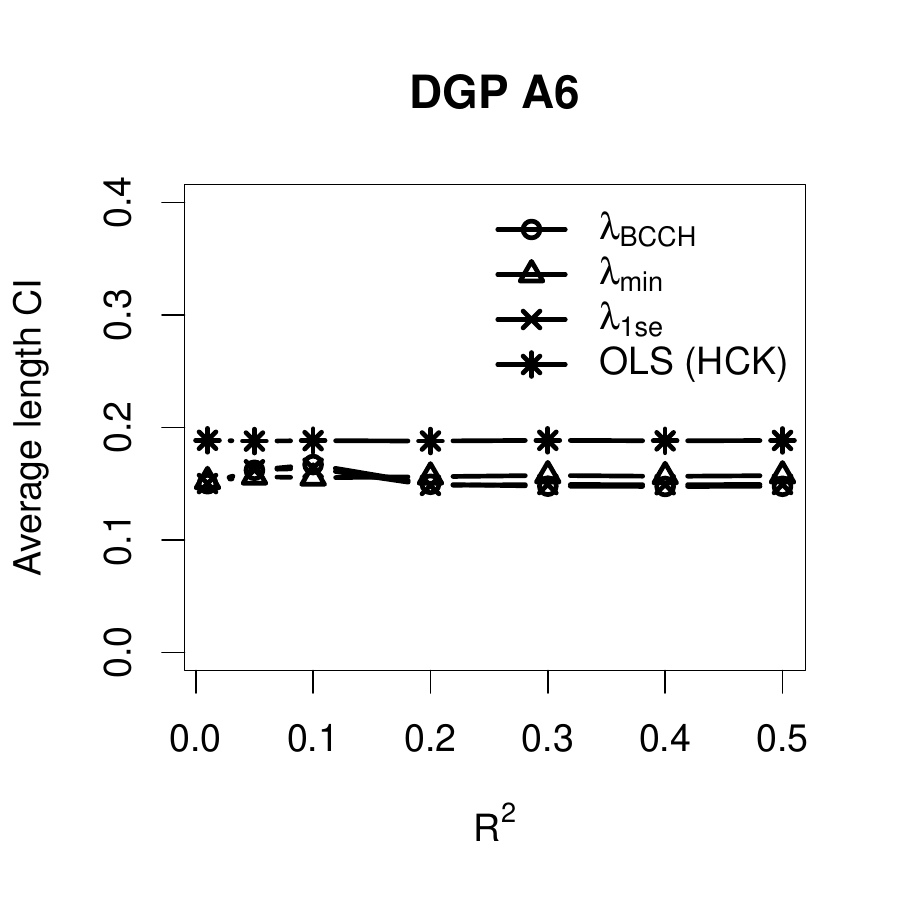}
\end{center}
\label{fig:dml_ci_app}
\end{figure}

\section{Theoretical results for random designs \label{sec:Random-design}}

\subsection{Results}

In this section, we provide some results in Lemma \ref{prop:random_design}
for the Lasso with a random design $X$. The necessary result on the
Lasso's inclusion established in Lemma \ref{prop:random_design} can
be adopted in a similar fashion as in Propositions \ref{prop:bias_post_double_selection}-\ref{prop:bias_post_double_selection-upper-1}
and \ref{prop:bias_debiased_lasso}-\ref{prop:bias_debiased_lasso-upper}
to establish the OVBs.

We make the following assumption about (\ref{eq:1}). \begin{assumption}
\label{ass:random_design} Each row of $X$ is sampled independently;
for all $i=1,\dots,n$ and $j=1,\dots,p$, $\sup_{r\geq1}r^{-\frac{1}{2}}\left(\mathbb{E}\left|X_{ij}\right|^{r}\right)^{\frac{1}{r}}\leq\alpha<\infty$;
for any unit vector $a\in\mathbb{R}^{k}$ and $i=1,\dots,n$, $\sup_{r\geq1}r^{-\frac{1}{2}}\left(\mathbb{E}\left|a^{T}X_{i,K}^{T}\right|^{r}\right)^{\frac{1}{r}}\leq\tilde{\alpha}<\infty$,
where $X_{i,K}$ is the $i$th row of $X_{K}$ and $K=\left\{ j:\,\theta_{j}^{*}\neq0\right\} $.
Moreover, the error terms $\varepsilon_{1},\dots,\varepsilon_{n}$
are independent such that $\sup_{r\geq1}r^{-\frac{1}{2}}\left(\mathbb{E}\left|\varepsilon_{i}\right|^{r}\right)^{\frac{1}{r}}\leq\sigma<\infty$
and $\mathbb{E}\left(X_{i}\varepsilon_{i}\right)=0_{p}$ for all $i=1,\dots,n$.
\end{assumption}

Assumption \ref{ass:random_design} is known as the sub-Gaussian tail
condition defined in \citet{vershynin2012}. Examples of sub-Gaussian
variables include Gaussian mixtures and distributions with bounded
support. The first and last part of Assumption \ref{ass:random_design}
imply that $X_{ij},j=1,\dots,p,$ and $\varepsilon_{i}$ are sub-Gaussian
variables and is used in deriving the lower bounds on the regularization
parameters. The second part of Assumption \ref{ass:random_design}
is only used to establish some eigenvalue condition on $\frac{X_{K}^{T}X_{K}}{n}$.
\begin{assumption}\label{ass:sparsity_incoherence_random_design}
The following conditions are satisfied: (i) $\theta^{*}$ is exactly
sparse with at most $k$ non-zero coefficients and $K\neq\emptyset$;
(ii) 
\begin{equation}
\left\Vert \left[\mathbb{E}\left(X_{K^{c}}^{T}X_{K}\right)\right]\left[\mathbb{E}\left(X_{K}^{T}X_{K}\right)\right]^{-1}\right\Vert _{\infty}=1-\phi\label{eq:14-1}
\end{equation}
for some $\phi\in(0,\,1]$ such that $\phi^{-1}\precsim1$; (iii)
$\mathbb{E}\left(X_{ij}\right)=0$ for all $j\in K$ and $\mathbb{E}\left(X_{j}^{T}X_{j}\right)\leq s$
for all $j=1,\dots,p$; (iv) 
\begin{eqnarray}
\max\left\{ \frac{\phi}{12(1-\phi)k^{\frac{3}{2}}},\,\frac{\phi}{6k^{\frac{3}{2}}},\,\frac{\phi}{k}\right\} \sqrt{\frac{\log p}{n}} & \leq & \alpha^{2}\quad\textrm{if }\phi\in\left(0,\,1\right),\label{eq:7-2}\\
\max\left\{ \frac{1}{6k^{\frac{3}{2}}},\,\frac{1}{k}\right\} \sqrt{\frac{\log p}{n}} & \leq & \alpha^{2}\quad\textrm{if }\phi=1,\label{eq:7-4}\\
\max\left\{ 2\tilde{\alpha}^{2},\,12\alpha^{2},\,1\right\} \sqrt{\frac{\log p}{n}} & \leq & \lambda_{\min}\left(\mathbb{E}\left[\frac{1}{n}X_{K}^{T}X_{K}\right]\right).\label{eq:7-3}
\end{eqnarray}
\end{assumption} Part (iv) of Assumption \ref{ass:sparsity_incoherence_random_design}
is imposed to ensure that 
\begin{eqnarray*}
\left\Vert \left(\frac{1}{n}X_{K}^{T}X_{K}\right)^{-1}-\left[\mathbb{E}\left(\frac{1}{n}X_{K}^{T}X_{K}\right)\right]^{-1}\right\Vert _{\infty} & \precsim & \frac{1}{\lambda_{\min}\left(\mathbb{E}\left[\frac{1}{n}X_{K}^{T}X_{K}\right]\right)},\\
\left\Vert \frac{1}{n}X_{K^{c}}^{T}X_{K}\left(\frac{1}{n}X_{K}^{T}X_{K}\right)^{-1}\right\Vert _{\infty} & \leq & 1-\frac{\phi}{2},
\end{eqnarray*}
with high probability. To gain some intuition for \eqref{eq:7-2}--\eqref{eq:7-3},
let us further assume $k\asymp1$, $X_{i}$ is normally distributed
for all $i=1,\dots,n$, and $\mathbb{E}\left(X_{K}^{T}X_{K}\right)$
is a diagonal matrix with the diagonal entries $\mathbb{E}\left(X_{j}^{T}X_{j}\right)=s\neq0$.
As a result, $\tilde{\alpha}=\alpha\asymp\sqrt{\frac{s}{n}}$ by the
definition of a sub-Gaussian variable (e.g., \citet{vershynin2012})
and \eqref{eq:7-2}--\eqref{eq:7-3} essentially require $\sqrt{\frac{\log p}{n}}\precsim\frac{s}{n}$.

Given 
\begin{equation}
\mathbb{P}\left(\left|\frac{X^{T}\varepsilon}{n}\right|_{\infty}\geq t\right)\leq2\exp\left(\frac{-nt^{2}}{c_{0}\sigma^{2}\alpha^{2}}+\log p\right).\label{eq:38}
\end{equation}
and $\lambda\geq\frac{c\alpha\sigma\left(2-\frac{\phi}{2}\right)}{\phi}\sqrt{\frac{\log p}{n}}$
for some sufficiently large universal constant $c>0$, we have 
\begin{equation}
\lambda\geq2\left|\frac{X^{T}\varepsilon}{n}\right|_{\infty}\label{eq:lambda-1}
\end{equation}
with probability at least $1-c^{'}\exp\left(-c^{''}\log p\right)$.

Define the following events 
\begin{eqnarray*}
\mathcal{E}_{1} & = & \left\{ \left|\frac{X^{T}\varepsilon}{n}\right|_{\infty}\precsim\frac{\alpha\sigma\left(2-\frac{\phi}{2}\right)}{\phi}\sqrt{\frac{\log p}{n}}\right\} ,\\
\mathcal{E}_{2} & = & \left\{ \lambda_{\max}(\hat{\Sigma}_{KK})\leq\frac{3}{2}\lambda_{\max}(\Sigma_{KK})\right\} ,\\
\mathcal{E}_{3} & = & \left\{ \left\Vert \left(\frac{1}{n}X_{K}^{T}X_{K}\right)^{-1}-\left[\mathbb{E}\left(\frac{1}{n}X_{K}^{T}X_{K}\right)\right]^{-1}\right\Vert _{\infty}\precsim\frac{1}{\lambda_{\min}\left(\mathbb{E}\left[\frac{1}{n}X_{K}^{T}X_{K}\right]\right)}\right\} ,\\
\mathcal{E}_{4} & = & \left\{ \left\Vert \frac{1}{n}X_{K^{c}}^{T}X_{K}\left(\frac{1}{n}X_{K}^{T}X_{K}\right)^{-1}\right\Vert _{\infty}\leq1-\frac{\phi}{2}\right\} .
\end{eqnarray*}
By (\ref{eq:38}), $\mathbb{P}\left(\mathcal{E}_{1}\right)\geq1-c^{'}\exp\left(-c^{''}\log p\right)$;
by (\ref{eq:S7}), $\mathbb{P}\left(\mathcal{E}_{2}\right)\geq1-c_{1}^{'}\exp\left(-c_{1}^{''}\log p\right)$;
by (\ref{eq:S4}), $\mathbb{P}\left(\mathcal{E}_{3}\right)\geq1-c_{2}^{'}\exp\left(-c_{2}^{''}\left(\frac{\log p}{k^{3}}\right)\right)$;
by (\ref{eq:77}), $\mathbb{P}\left(\mathcal{E}_{4}\right)\geq1-c_{3}^{''}\exp\left(-b\left(\frac{\log p}{k^{3}}\right)\right)$,
where $b$ is some positive constant that only depends on $\phi$
and $\alpha$.

\begin{lemma}\label{prop:random_design} Let Assumptions \ref{ass:random_design}
and \ref{ass:sparsity_incoherence_random_design} hold. We solve the
Lasso (\ref{eq:las}) with $\lambda\geq\frac{c\alpha\sigma\left(2-\frac{\phi}{2}\right)}{\phi}\sqrt{\frac{\log p}{n}}$
for some sufficiently large universal constant $c>0$. Suppose $\mathbb{E}\left[X_{K}^{T}X_{K}\right]$
is a positive definite matrix.

(i) Then, conditioning on $\mathcal{E}_{1}\cap\mathcal{E}_{4}$ (which
holds with probability at least $1-c_{1}\exp\left(-b\frac{\log p}{k^{3}}\right)$),
(\ref{eq:las}) has a unique optimal solution $\hat{\theta}$ such
that $\hat{\theta}_{j}=0$ for $j\notin K$.

(ii) With probability at least $1-c_{1}\exp\left(-b\frac{\log p}{k^{3}}\right)$,
\begin{equation}
\left|\hat{\theta}_{K}-\theta_{K}^{*}\right|_{2}\leq\frac{3\lambda\sqrt{k}}{\lambda_{\min}\left(\mathbb{E}\left[\frac{1}{n}X_{K}^{T}X_{K}\right]\right)}\label{eq:5gen-1}
\end{equation}
where $\theta_{K}=\left\{ \theta_{j}\right\} _{j\in K}$ and $b$
is some positive constant that only depends on $\phi$ and $\alpha$;
if $\mathbb{P}\left(\left\{ \textrm{supp}(\hat{\theta})=K\right\} \cap\mathcal{E}_{1}\cap\mathcal{E}_{2}\right)>0$,
conditioning on $\left\{ \textrm{supp}(\hat{\theta})=K\right\} \cap\mathcal{E}_{1}\cap\mathcal{E}_{2}$,
we must have 
\begin{equation}
\left|\hat{\theta}_{K}-\theta_{K}^{*}\right|_{2}\geq\frac{\lambda\sqrt{k}}{3\lambda_{\max}\left(\mathbb{E}\left[\frac{1}{n}X_{K}^{T}X_{K}\right]\right)}\geq\frac{\lambda\sqrt{k}}{3\sum_{j\in K}\left(\mathbb{E}\left[\frac{1}{n}X_{j}^{T}X_{j}\right]\right)}.\label{eq:4gen-1}
\end{equation}

(iii) If $\mathbb{E}\left(X_{K}^{T}X_{K}\right)$ is a diagonal matrix
with the diagonal entries $\mathbb{E}\left(X_{j}^{T}X_{j}\right)=s\neq0$,
then 
\begin{equation}
\left|\hat{\theta}_{j}-\theta_{j}^{*}\right|\leq\frac{7\lambda n}{4s}\qquad\forall j\in K\label{eq:5-1}
\end{equation}
with probability at least $1-c_{1}\exp\left(-b\frac{\log p}{k^{3}}\right)$;
if $\mathbb{P}\left(\left\{ \hat{\theta}_{j}\neq0,\,j\in K\right\} \cap\mathcal{E}_{1}\cap\mathcal{E}_{3}\cap\mathcal{E}_{4}\right)>0$,
conditioning on $\left\{ \hat{\theta}_{j}\neq0,\,j\in K\right\} \cap\mathcal{E}_{1}\cap\mathcal{E}_{3}\cap\mathcal{E}_{4}$,
we must have 
\begin{equation}
\left|\hat{\theta}_{j}-\theta_{j}^{*}\right|\geq\frac{\lambda n}{4s}\geq c_{0}\frac{\sigma}{\phi}\sqrt{\frac{n}{s}}\sqrt{\frac{\log p}{n}}.\label{eq:4-1}
\end{equation}

(iv) Suppose $K=\left\{ 1\right\} $ and $\mathbb{E}\left(X_{1}^{T}X_{1}\right)=s\neq0$.
If 
\begin{equation}
\left|\theta_{1}^{*}\right|\leq\frac{\lambda n}{4s},\label{eq:min-1}
\end{equation}
then we must have 
\begin{equation}
\mathbb{P}\left(\hat{\theta}=0_{p}\right)\geq1-c\exp\left(-b\log p\right).\label{eq:nec1}
\end{equation}

\end{lemma}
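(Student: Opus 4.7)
My plan is to lift the fixed-design proof of Lemma \ref{prop:fixed_design} to the random-design setting by replacing the deterministic conditions on $X$ with the concentration events $\mathcal{E}_1,\mathcal{E}_2,\mathcal{E}_3,\mathcal{E}_4$ stated just before the lemma. On the intersection of the relevant events the random-design problem behaves essentially like a fixed-design problem satisfying Assumption \ref{ass:inference_sparsity_incoherence}(ii)\textendash (iii), and the four individual probability bounds combine via a union bound to yield the $1-c_1\exp(-b\log p/k^3)$ guarantee, noting that $\mathcal{E}_4$ is the bottleneck on the scaling $\log p/k^3$.

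\textbf{Parts (i) and (ii).} For (i) I would run the Primal-Dual Witness construction exactly as in the preliminary portion of Section \ref{appendix a1}: set $\hat\theta_{K^c}=0_{p-k}$, solve the $K$-restricted Lasso to obtain $(\hat\theta_K,\hat\delta_K)$, and then verify strict dual feasibility $|\hat\delta_{K^c}|_\infty<1$ using the identity
\[
\hat\delta_{K^c}=\tfrac{1}{\lambda}\bigl[\tfrac{X_{K^c}^T\varepsilon}{n}-\tfrac{X_{K^c}^TX_K}{n}(\tfrac{X_K^TX_K}{n})^{-1}\tfrac{X_K^T\varepsilon}{n}\bigr]+\tfrac{X_{K^c}^TX_K}{n}(\tfrac{X_K^TX_K}{n})^{-1}\hat\delta_K.
\]
Under $\mathcal{E}_4$ the second term is bounded by $1-\phi/2$, and under $\mathcal{E}_1$ combined with $\lambda\ge2|X^T\varepsilon/n|_\infty$ the first term is bounded by $\phi/4$, giving strict dual feasibility and hence uniqueness and correct support via Lemma 7.23 of \citet{wainwright_2019}. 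For the bounds in (ii), the KKT restricted to $K$ yields $\hat\theta_K-\theta_K^*=(X_K^TX_K/n)^{-1}[X_K^T\varepsilon/n-\lambda\hat\delta_K]$. Using $|\hat\delta_K|_2\le\sqrt{k}$, $|X_K^T\varepsilon/n|_2\le\sqrt{k}|X^T\varepsilon/n|_\infty\le\sqrt{k}\lambda/2$ on $\mathcal{E}_1$, and controlling the extreme eigenvalues of $X_K^TX_K/n$ through $\mathcal{E}_2$ plus a symmetric lower-eigenvalue concentration bound, the triangle and reverse-triangle inequalities deliver the upper bound (\ref{eq:5gen-1}); on $\{\mathrm{supp}(\hat\theta)=K\}$ one has $|\hat\delta_j|=1$ for all $j\in K$ so $|\hat\delta_K|_2=\sqrt{k}$, and the reverse-triangle inequality forces $|X_K^T\varepsilon/n-\lambda\hat\delta_K|_2\ge\lambda\sqrt{k}/2$, giving (\ref{eq:4gen-1}) after dividing by $\lambda_{\max}(X_K^TX_K/n)\le(3/2)\lambda_{\max}(\mathbb{E}[X_K^TX_K/n])$.

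\textbf{Parts (iii) and (iv).} When $\mathbb{E}[X_K^TX_K]=sI_k$, the population inverse is $(n/s)I_k$, and $\mathcal{E}_3$ lets me write $(X_K^TX_K/n)^{-1}=(n/s)I_k+\Delta$ with $\|\Delta\|_\infty\lesssim n/s$ and arbitrarily small multiplicative constant once \eqref{eq:7-3} is imposed. Substituting into the KKT expression, the coordinate-wise decomposition $\hat\theta_j-\theta_j^*=(n/s)[X_j^T\varepsilon/n-\lambda\hat\delta_j]+R_j$ with $|R_j|\le\|\Delta\|_\infty\cdot|X_K^T\varepsilon/n-\lambda\hat\delta_K|_\infty$ yields the upper bound $7\lambda n/(4s)$ in (\ref{eq:5-1}) after choosing the constants in Assumption \ref{ass:sparsity_incoherence_random_design}(iv) so that $|R_j|\le\lambda n/(4s)$; on $\{\hat\theta_j\ne0\}$, the fact $|\hat\delta_j|=1$ forces $|(n/s)[X_j^T\varepsilon/n-\lambda\hat\delta_j]|\ge\lambda n/(2s)$ via $\mathcal{E}_1$, and subtracting $|R_j|$ gives the lower bound $\lambda n/(4s)$ in (\ref{eq:4-1}). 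Part (iv) then follows by combining (i) and (iii) with a sign argument mimicking Section \ref{appendix a1}: opposite signs $\mathrm{sgn}(\hat\theta_1)=-\mathrm{sgn}(\theta_1^*)$ are ruled out by the KKT equation together with $\lambda\ge2|X_1^T\varepsilon/n|$, while $\mathrm{sgn}(\hat\theta_1)=\mathrm{sgn}(\theta_1^*)\ne0$ forces $|\theta_1^*-\hat\theta_1|=|\theta_1^*|-|\hat\theta_1|$ which together with (\ref{eq:4-1}) contradicts the hypothesis (\ref{eq:min-1}); hence $\hat\theta_1=0$.

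\textbf{Main obstacle.} The principal difficulty is quantitative: calibrating the universal constants inside Assumption \ref{ass:sparsity_incoherence_random_design}(iv) so that the remainder $R_j$ in part (iii) is bounded by exactly $\lambda n/(4s)$, which is what makes the constants $7/4$ and $1/4$ in (\ref{eq:5-1})\textendash (\ref{eq:4-1}) come out as stated and, in turn, makes the threshold $\lambda n/(4s)$ in (iv) consistent with the lower bound in (iii). A secondary subtlety is that the probability in part (i) is driven by $\mathcal{E}_4$ while (iii) additionally requires $\mathcal{E}_3$, so the final union bound must carefully track which events are needed for which conclusion to obtain the uniform $1-c_1\exp(-b\log p/k^3)$ guarantee.
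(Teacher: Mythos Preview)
Your proposal is correct and follows essentially the same route as the paper's proof: PDW construction on $\mathcal{E}_1\cap\mathcal{E}_4$ for part (i), the KKT identity $\hat\theta_K-\theta_K^*=\hat\Sigma_{KK}^{-1}(X_K^T\varepsilon/n-\lambda\hat\delta_K)$ together with eigenvalue concentration for part (ii), the decomposition $\hat\Sigma_{KK}^{-1}=\Sigma_{KK}^{-1}+M$ with $\|M\|_\infty\le n/(6s)$ from $\mathcal{E}_3$ for part (iii), and the sign/KKT contradiction argument on $\mathcal{E}_1\cap\mathcal{E}_4$ for part (iv). The only cosmetic difference is that for the upper bound in (iii) the paper bounds $\|\hat\Sigma_{KK}^{-1}\|_\infty\le 7n/(6s)$ directly rather than splitting into main term plus remainder, but the arithmetic and the constants $7/4$, $1/4$ coincide.
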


\begin{remark}The part $\frac{\lambda n}{4s}\geq c_{0}\frac{\sigma}{\phi}\sqrt{\frac{n}{s}}\sqrt{\frac{\log p}{n}}$
in bound (\ref{eq:4-1}) follows from the fact that $\alpha\succsim\sqrt{\frac{s}{n}}=\sqrt{\mathbb{E}\left(\frac{1}{n}\sum X_{ij}^{2}\right)}$
where $j\in K$. \end{remark}


\subsection{Main proof for Lemma \ref{prop:random_design}}

In what follows, we let $\Sigma_{KK}:=\mathbb{E}\left[\frac{1}{n}X_{K}^{T}X_{K}\right]$,
$\hat{\Sigma}_{KK}:=\frac{1}{n}X_{K}^{T}X_{K}$, and $\lambda_{\min}\left(\Sigma\right)$
denote the minimum eigenvalue of the matrix $\Sigma$. The proof for
Proposition \ref{prop:random_design}(i) follows similar argument
as before but requires a few extra steps. In applying Lemma 7.23 from
Chapter 7.5 of \citet{wainwright_2019} to establish the uniqueness
of $\hat{\theta}$ upon the success of PDW construction, it suffices
to show that $\lambda_{\min}(\hat{\Sigma}_{KK})\geq\frac{1}{2}\lambda_{\min}(\Sigma_{KK})$
and this fact is verified in (\ref{eq:S7-1}) in the appendix. As
a consequence, the subproblem (\ref{eq:sub}) is strictly convex and
has a unique minimizer. The details that show the PDW construction
succeeds conditioning on $\mathcal{E}_{1}\cap\mathcal{E}_{4}$ (which
holds with probability at least $1-c_{1}\exp\left(-b\frac{\log p}{k^{3}}\right)$)
can be found in Lemma \ref{lem:A4} (where $b$ is some positive constant
that only depends on $\phi$ and $\alpha$).

To show (\ref{eq:5gen-1}), note that our choice of $\lambda$ and
$\left|\hat{\delta}_{K}\right|\leq1$ yield 
\[
\left|\Delta\right|\leq\left|\lambda\hat{\delta}_{K}\right|+\left|\frac{X_{K}^{T}\varepsilon}{n}\right|\leq\frac{3\lambda}{2}1_{k},
\]
which implies that $\left|\Delta\right|_{2}\leq\frac{3\lambda}{2}\sqrt{k}$.
Moreover, we can show 
\begin{eqnarray}
\left|\hat{\theta}_{K}-\theta_{K}^{*}\right|_{2} & = & \frac{\left|\left(\frac{X_{K}^{T}X_{K}}{n}\right)^{-1}\Delta\right|_{2}}{\left|\Delta\right|_{2}}\left|\Delta\right|_{2}\label{eq:30-1}\\
 & \leq & \frac{1}{\lambda_{\min}\left(\frac{1}{n}X_{K}^{T}X_{K}\right)}\frac{3\lambda}{2}\sqrt{k}.\nonumber 
\end{eqnarray}
Applying (\ref{eq:S7-1}) and the bound $\left|\Delta\right|_{2}\leq\frac{3\lambda}{2}\sqrt{k}$
yields the claim.

In showing (\ref{eq:4gen-1}) in (ii) and (\ref{eq:4-1}) in (iii),
we will condition on $\left\{ \textrm{supp}(\hat{\theta})=K\right\} \cap\mathcal{E}_{1}\cap\mathcal{E}_{2}$
and $\left\{ \hat{\theta}_{j}\neq0,\,j\in K\right\} \cap\mathcal{E}_{1}\cap\mathcal{E}_{3}\cap\mathcal{E}_{4}$,
respectively.

To show (\ref{eq:4gen-1}), note that in Step 2 of the PDW procedure,
$\hat{\delta}_{K}$ is chosen such that $\left|\hat{\delta}_{j}\right|=1$
for any $j\in K$ whenever $\textrm{supp}(\hat{\theta})=K$. Given
the choice of $\lambda$, we are ensured to have 
\[
\left|\Delta\right|\geq\left|\left|\lambda\hat{\delta}_{K}\right|-\left|\frac{X_{K}^{T}\varepsilon}{n}\right|\right|\geq\frac{\lambda}{2}1_{k},
\]
which implies that $\left|\Delta\right|_{2}\geq\frac{\lambda}{2}\sqrt{k}$.
Moreover, we can show 
\begin{equation}
\left|\hat{\theta}_{K}-\theta_{K}^{*}\right|_{2}=\frac{\left|\left(\frac{X_{K}^{T}X_{K}}{n}\right)^{-1}\Delta\right|_{2}}{\left|\Delta\right|_{2}}\left|\Delta\right|_{2}\geq\frac{1}{\lambda_{\max}\left(\frac{1}{n}X_{K}^{T}X_{K}\right)}\frac{\lambda}{2}\sqrt{k}.\label{eq:30}
\end{equation}
It remains to bound $\lambda_{\max}\left(\hat{\Sigma}_{KK}\right)$.
We first write 
\begin{eqnarray*}
\lambda_{\max}(\Sigma_{KK}) & = & \max_{||h^{'}||_{2}=1}\mu^{'T}\Sigma_{KK}\mu^{'}\\
 & = & \max_{||h^{'}||_{2}=1}\left[\mu^{'T}\hat{\Sigma}_{KK}\mu^{'}+\mu^{'T}(\Sigma_{KK}-\hat{\Sigma}_{KK})\mu^{'}\right]\\
 & \geq & \mu^{T}\hat{\Sigma}_{KK}\mu+\mu^{T}(\Sigma_{KK}-\hat{\Sigma}_{KK})\mu
\end{eqnarray*}
where $\mu\in\mathbb{R}^{k}$ is a unit-norm maximal eigenvector of
$\hat{\Sigma}_{KK}$. Applying Lemma \ref{lem:A1}(b) with $t=\tilde{\alpha}^{2}\sqrt{\frac{\log p}{n}}$
yields 
\[
\mu^{T}\left(\Sigma_{KK}-\hat{\Sigma}_{KK}\right)\mu\geq-\tilde{\alpha}^{2}\sqrt{\frac{\log p}{n}}
\]
with probability at least $1-c_{1}\exp\left(-c_{2}\log p\right)$,
provided that $\sqrt{\frac{\log p}{n}}\leq1$; therefore, $\lambda_{\max}(\Sigma_{KK})\geq\lambda_{\max}(\hat{\Sigma}_{KK})-\tilde{\alpha}^{2}\sqrt{\frac{\log p}{n}}$.
Because $\tilde{\alpha}^{2}\sqrt{\frac{\log p}{n}}\leq\frac{\lambda_{\max}(\Sigma_{KK})}{2}$
(implied by (\ref{eq:7-3})), we have 
\begin{equation}
\lambda_{\max}(\hat{\Sigma}_{KK})\leq\frac{3}{2}\lambda_{\max}(\Sigma_{KK})\label{eq:S7}
\end{equation}
with probability at least $1-c_{1}\exp\left(-c_{2}\log p\right)$.

As a consequence,

\[
\left|\hat{\theta}_{K}-\theta_{K}^{*}\right|_{2}\geq\frac{1}{\lambda_{\max}\left(\frac{1}{n}X_{K}^{T}X_{K}\right)}\frac{\lambda}{2}\sqrt{k}\geq\frac{1}{\lambda_{\max}(\Sigma_{KK})}\frac{\lambda}{3}\sqrt{k}.
\]
The second inequality in (\ref{eq:4gen-1}) simply follows from the
fact $\lambda_{\max}\left(\mathbb{E}\left[\frac{1}{n}X_{K}^{T}X_{K}\right]\right)\leq\sum_{j\in K}\left(\mathbb{E}\left[\frac{1}{n}X_{j}^{T}X_{j}\right]\right)$.

To show (\ref{eq:5-1}), note that 
\begin{eqnarray}
\left|\hat{\theta}_{K}-\theta_{K}^{*}\right|_{\infty} & \leq & \left|\hat{\Sigma}_{KK}^{-1}\frac{X_{K}^{T}\varepsilon}{n}\right|_{\infty}+\lambda\left\Vert \hat{\Sigma}_{KK}^{-1}\right\Vert _{\infty}\nonumber \\
 & \leq & \left\Vert \hat{\Sigma}_{KK}^{-1}\right\Vert _{\infty}\left|\frac{X_{K}^{T}\varepsilon}{n}\right|_{\infty}+\lambda\left\Vert \hat{\Sigma}_{KK}^{-1}\right\Vert _{\infty}\nonumber \\
 & \leq & \frac{3\lambda}{2}\left\Vert \hat{\Sigma}_{KK}^{-1}\right\Vert _{\infty}.\label{eq:10-1}
\end{eqnarray}
We then apply (\ref{eq:S4}) of Lemma \ref{lem:A2} in the appendix,
and the fact $\left\Vert \hat{\Sigma}_{KK}^{-1}\right\Vert _{\infty}-\left\Vert \Sigma_{KK}^{-1}\right\Vert _{\infty}\leq\left\Vert \hat{\Sigma}_{KK}^{-1}-\Sigma_{KK}^{-1}\right\Vert _{\infty}$
(so that $\left\Vert \hat{\Sigma}_{KK}^{-1}\right\Vert _{\infty}\leq\frac{7n}{6s}$);
putting everything yields the claim.

To show (\ref{eq:4-1}), we again carry over the argument in the proof
for Lemma \ref{prop:fixed_design}. Letting $M=\hat{\Sigma}_{KK}^{-1}-\Sigma_{KK}^{-1}$,
we have 
\begin{eqnarray*}
\left|\hat{\theta}_{K}-\theta_{K}^{*}\right| & = & \left|\left(\Sigma_{KK}^{-1}+M\right)\left[\left(\frac{X_{K}^{T}\varepsilon}{n}\right)-\lambda\hat{\delta}_{K}\right]\right|\\
 & \geq & \left|\Sigma_{KK}^{-1}\left[\left(\frac{X_{K}^{T}\varepsilon}{n}\right)+\lambda\hat{\delta}_{K}\right]\right|-\left|M\left[\left(\frac{X_{K}^{T}\varepsilon}{n}\right)-\lambda\hat{\delta}_{K}\right]\right|\\
 & \geq & \left|\Sigma_{KK}^{-1}\right|\left|\left|\lambda\hat{\delta}_{K}\right|-\left|\frac{X_{K}^{T}\varepsilon}{n}\right|\right|-\left\Vert M\right\Vert _{\infty}\left|\left(\frac{X_{K}^{T}\varepsilon}{n}\right)-\lambda\hat{\delta}_{K}\right|_{\infty}1_{k},
\end{eqnarray*}
where the third line uses the fact that $\Sigma_{KK}^{-1}$ is diagonal.

Note that as before, the choice of $\lambda$ stated in Lemma \ref{prop:random_design}
and the fact $\Sigma_{KK}^{-1}=\frac{n}{s}I_{k}$ yield 
\begin{eqnarray*}
\left|\hat{\theta}_{j}-\theta_{j}^{*}\right| & \geq & \frac{\lambda n}{2s}-\left\Vert M\right\Vert _{\infty}\left|\left(\frac{X_{K}^{T}\varepsilon}{n}\right)-\lambda\hat{\delta}_{K}\right|_{\infty}\\
 & \geq & \frac{\lambda n}{2s}-\frac{3}{2}\lambda\left\Vert M\right\Vert _{\infty}.
\end{eqnarray*}
By (\ref{eq:S4}) of Lemma \ref{lem:A2} in the appendix, with probability
at least $1-c_{1}\exp\left(-b\frac{\log p}{k^{3}}\right)$, $\left\Vert M\right\Vert _{\infty}\leq\frac{1}{6}\lambda_{\min}^{-1}(\Sigma_{KK})=\frac{n}{6s}$.

As a result, we have (\ref{eq:4-1}). The part $\frac{\lambda n}{4s}\geq c_{0}\frac{\sigma}{\phi}\sqrt{\frac{n}{s}}\sqrt{\frac{\log p}{n}}$
in bound (\ref{eq:4-1}) follows from the fact that $\alpha\succsim\sqrt{\frac{s}{n}}=\sqrt{\mathbb{E}\left(\frac{1}{n}\sum X_{ij}^{2}\right)}$
where $j\in K$.

To establish (\ref{eq:nec1}), we adopt argument similar to what is
used in showing (\ref{eq:nec}) by applying the KKT condition 
\[
\left(\frac{1}{n}X_{1}^{T}X_{1}\right)\left(\theta_{1}^{*}-\hat{\theta}_{1}\right)=\lambda\textrm{sgn}\left(\hat{\theta}_{1}\right)-\frac{X_{1}^{T}\varepsilon}{n}
\]
and defining $\mathcal{E}=\mathcal{E}_{1}\cap\mathcal{E}_{4}$.

\subsection{Additional technical lemmas and proofs}

In this section, we show that the PDW construction succeeds with high
probability in Lemma \ref{lem:A4}, which is proved using results
from Lemmas \ref{lem:A1}--\ref{lem:A3}. The derivations for Lemmas
\ref{lem:A2} and \ref{lem:A3} modify the argument in \citet{wainwright2009sharp}
and \citet{ravikumar2010} to make it suitable for our purposes. In
what follows, we let $\Sigma_{K^{c}K}:=\mathbb{E}\left[\frac{1}{n}X_{K^{c}}^{T}X_{K}\right]$
and $\hat{\Sigma}_{K^{c}K}:=\frac{1}{n}X_{K^{c}}^{T}X_{K}$. Similarly,
let $\Sigma_{KK}:=\mathbb{E}\left[\frac{1}{n}X_{K}^{T}X_{K}\right]$
and $\hat{\Sigma}_{KK}:=\frac{1}{n}X_{K}^{T}X_{K}$.

\begin{lemma}\label{lem:A1} (a) Let $\left(W_{i}\right)_{i=1}^{n}$
and $\left(W_{i}^{'}\right)_{i=1}^{n}$ consist of independent components,
respectively. Suppose there exist parameters $\alpha$ and $\alpha^{'}$
such that 
\begin{eqnarray*}
\sup_{r\geq1}r^{-\frac{1}{2}}\left(\mathbb{E}\left|W_{i}\right|^{r}\right)^{\frac{1}{r}} & \leq & \alpha,\\
\sup_{r\geq1}r^{-\frac{1}{2}}\left(\mathbb{E}\left|W_{i}^{'}\right|^{r}\right)^{\frac{1}{r}} & \leq & \alpha^{'},
\end{eqnarray*}
for all $i=1,\dots,n$. Then 
\begin{equation}
\mathbb{P}\left[\left|\frac{1}{n}\sum_{i=1}^{n}\left(W_{i}W_{i}^{'}\right)-\mathbb{E}\left[\frac{1}{n}\sum_{i=1}^{n}\left(W_{i}W_{i}^{'}\right)\right]\right|\geq t\right]\leq2\exp\left(-cn\left(\frac{t^{2}}{\alpha^{2}\alpha^{'2}}\wedge\frac{t}{\alpha\alpha^{'}}\right)\right).\label{eq:A37-1}
\end{equation}
(b) For any unit vector $v\in\mathbb{R}^{d}$, suppose there exists
a parameter $\tilde{\alpha}$ such that 
\[
\sup_{r\geq1}r^{-\frac{1}{2}}\left(\mathbb{E}\left|a^{T}Z_{i}^{T}\right|^{r}\right)^{\frac{1}{r}}\leq\tilde{\alpha},
\]
where $Z_{i}$ is the $i$th row of $Z\in\mathbb{R}^{n\times d}$,
then we have 
\[
\mathbb{P}(\left|\left|Zv\right|_{2}^{2}-\mathbb{E}\left(\left|Zv\right|_{2}^{2}\right)\right|\geq nt)\leq2\exp\left(-c^{'}n\left(\frac{t^{2}}{\tilde{\alpha}^{4}}\wedge\frac{t}{\tilde{\alpha}^{2}}\right)\right).
\]
\end{lemma}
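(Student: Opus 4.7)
The plan is to reduce both parts to the standard Bernstein inequality for sums of independent mean-zero sub-exponential random variables, by first establishing that products (in (a)) and squares (in (b)) of sub-Gaussian variables are sub-exponential with the expected scaling. I will work throughout with the moment-growth characterization: the hypothesis $\sup_{r\ge 1} r^{-1/2}(\mathbb{E}|W|^r)^{1/r}\le \alpha$ is equivalent, up to universal constants, to $W$ being sub-Gaussian with parameter $\asymp \alpha$ (see, e.g., Vershynin 2012), and the analogous condition $\sup_{r\ge 1} r^{-1}(\mathbb{E}|W|^r)^{1/r}\le K$ characterizes sub-exponentiality with parameter $\asymp K$.

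For part (a), my first step is to bound the moments of $W_i W_i'$. By the Cauchy--Schwarz inequality applied to moments,
\[
\bigl(\mathbb{E}|W_i W_i'|^r\bigr)^{1/r}\le \bigl(\mathbb{E}|W_i|^{2r}\bigr)^{1/(2r)}\bigl(\mathbb{E}|W_i'|^{2r}\bigr)^{1/(2r)}\le (2r)^{1/2}\alpha\cdot (2r)^{1/2}\alpha'=2r\,\alpha\alpha',
\]
so $\sup_{r\ge 1} r^{-1}(\mathbb{E}|W_i W_i'|^r)^{1/r}\le 2\alpha\alpha'$, i.e.\ $W_i W_i'$ is sub-exponential with parameter $\le 2\alpha\alpha'$. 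The centered variable $X_i := W_i W_i' - \mathbb{E}[W_i W_i']$ is then sub-exponential with parameter at most $4\alpha\alpha'$ by the triangle inequality for this Orlicz-type norm. Since the pairs $(W_i,W_i')$ are independent across $i$, I then apply the sub-exponential Bernstein inequality for $\frac{1}{n}\sum_i X_i$: there exists a universal constant $c_0>0$ such that
\[
\mathbb{P}\!\Bigl(\Bigl|\tfrac{1}{n}\sum_{i=1}^n X_i\Bigr|\ge t\Bigr)\le 2\exp\!\left(-c_0 n\Bigl(\tfrac{t^2}{K^2}\wedge\tfrac{t}{K}\Bigr)\right),\qquad K=4\alpha\alpha'.
\]
Absorbing the factor $16$ (or $4$) into the universal constant $c$ yields the stated bound \eqref{eq:A37-1}.

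For part (b), let $U_i:=v^T Z_i^T$, which by hypothesis satisfies $\sup_{r\ge1} r^{-1/2}(\mathbb{E}|U_i|^r)^{1/r}\le \tilde{\alpha}$, i.e.\ is sub-Gaussian with parameter $\tilde{\alpha}$. Applying the same moment computation as in part (a) with $W_i=W_i'=U_i$ shows that $U_i^2$ is sub-exponential with parameter $\le 2\tilde{\alpha}^2$, so the centered variable $U_i^2-\mathbb{E}U_i^2$ is sub-exponential with parameter $\le 4\tilde{\alpha}^2$. Because the rows $Z_i$ are independent by the ambient Assumption \ref{ass:random_design}, the $U_i^2$ are independent across $i$. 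Writing $|Zv|_2^2=\sum_{i=1}^n U_i^2$ and applying the same Bernstein inequality to $\frac{1}{n}\sum_i(U_i^2-\mathbb{E}U_i^2)$ with $K=4\tilde{\alpha}^2$ gives the claim after multiplying through by $n$ and absorbing constants into $c'$.

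There is no genuine obstacle here: the argument is a textbook application of Bernstein for sub-exponential variables, and the only thing to track carefully is bookkeeping of universal constants through (i) the Cauchy--Schwarz moment step, (ii) the centering step, and (iii) rescaling between $\sum_i$ and $\frac{1}{n}\sum_i$. The one place where I would be explicit is in invoking the equivalence between moment-growth, MGF, and tail characterizations of sub-Gaussian/sub-exponential variables — all three are used implicitly when moving from the hypothesis to Bernstein's inequality — but each of these equivalences holds with absolute constants independent of $n$, $p$, and the parameters $\alpha,\alpha',\tilde{\alpha}$, which is exactly what the statement requires.
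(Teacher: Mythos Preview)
Your proposal is correct and is essentially the same approach as the paper: the paper does not give a self-contained proof but simply cites Lemma~5.14 and Corollary~5.17 of Vershynin (2012), which are precisely the ``product of sub-Gaussians is sub-exponential'' fact plus the Bernstein-type inequality you invoke. Your write-up is just a spelled-out version of that citation, including the Cauchy--Schwarz moment bound and the centering step, so there is nothing to add.
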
 \begin{remark}Lemma \ref{lem:A1} is based on Lemma
5.14 and Corollary 5.17 in \citet{vershynin2012}. \end{remark}

\begin{lemma} \label{lem:A2} Suppose Assumption \ref{ass:random_design}
holds. For any $t>0$ and some constant $c>0$, we have

\begin{equation}
\mathbb{P}\left\{ \left\Vert \hat{\Sigma}_{K^{c}K}-\Sigma_{K^{c}K}\right\Vert _{\infty}\geq t\right\} \leq2(p-k)k\exp\left(-cn\left(\frac{t^{2}}{k^{2}\alpha^{4}}\wedge\frac{t}{k\alpha^{2}}\right)\right),\label{eq:S2}
\end{equation}
\begin{equation}
\mathbb{P}\left\{ \left\Vert \hat{\Sigma}_{KK}-\Sigma_{KK}\right\Vert _{\infty}\geq t\right\} \leq2k^{2}\exp\left(-cn\left(\frac{t^{2}}{k^{2}\alpha^{4}}\wedge\frac{t}{k\alpha^{2}}\right)\right).\label{eq:S3}
\end{equation}
Furthermore, if $k\geq1$, $\frac{\log p}{n}\leq1$, $\tilde{\alpha}^{2}\sqrt{\frac{\log p}{n}}\leq\frac{\lambda_{\min}(\Sigma_{KK})}{2}$,
and $\alpha^{2}\sqrt{\frac{\log p}{n}}\leq\frac{\lambda_{\min}(\Sigma_{KK})}{12}$,
we have

\begin{eqnarray}
\mathbb{P}\left\{ \left\Vert \hat{\Sigma}_{KK}^{-1}\right\Vert _{2}\leq\frac{2}{\lambda_{\min}(\Sigma_{KK})}\right\}  & \geq & 1-c_{1}^{'}\exp\left(-c_{2}^{'}\log p\right),\label{eq:S8}\\
\mathbb{P}\left\{ \left\Vert \hat{\Sigma}_{KK}^{-1}-\Sigma_{KK}^{-1}\right\Vert _{\infty}\leq\frac{1}{6\lambda_{\min}(\Sigma_{KK})}\right\}  & \geq & 1-c_{1}\exp\left(-c_{2}\left(\frac{\log p}{k^{3}}\right)\right).\label{eq:S4}
\end{eqnarray}
\end{lemma}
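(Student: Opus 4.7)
\textbf{Proof proposal for Lemma \ref{lem:A2}.} The plan is to attack the four displays in three stages: (i) an entrywise concentration plus union bound for the tail inequalities \eqref{eq:S2}\textendash \eqref{eq:S3}; (ii) a covering-number argument for the operator-norm bound \eqref{eq:S8}; (iii) a linearization identity for inverses that combines \eqref{eq:S3} and \eqref{eq:S8} to give \eqref{eq:S4}.

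For \eqref{eq:S2}, write the $(i,j)$-th entry of $\hat{\Sigma}_{K^{c}K}-\Sigma_{K^{c}K}$ as $\frac{1}{n}\sum_{l=1}^{n}(X_{li}X_{lj}-\mathbb{E}[X_{li}X_{lj}])$ with $i\in K^{c}$, $j\in K$. By Assumption \ref{ass:random_design} each $X_{li},X_{lj}$ is sub-Gaussian with parameter $\alpha$, so Lemma \ref{lem:A1}(a) yields the single-entry tail bound $2\exp(-cn((t/k)^{2}/\alpha^{4}\wedge(t/k)/\alpha^{2}))$ when threshold $t/k$ is used. Because $\|A\|_{\infty}=\max_{i}\sum_{j}|a_{ij}|$, requiring $|a_{ij}|\leq t/k$ on every one of the at most $(p-k)k$ entries (with at most $k$ entries per row) forces $\|A\|_{\infty}\leq t$. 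A union bound over these entries delivers \eqref{eq:S2}; the same argument with $k^{2}$ entries gives \eqref{eq:S3}.

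For \eqref{eq:S8} I will control $\|\hat{\Sigma}_{KK}-\Sigma_{KK}\|_{2}$ and use Weyl's inequality $\lambda_{\min}(\hat{\Sigma}_{KK})\geq\lambda_{\min}(\Sigma_{KK})-\|\hat{\Sigma}_{KK}-\Sigma_{KK}\|_{2}$. To control the operator norm, I will take a $1/4$-net $\mathcal{N}$ of the unit sphere $S^{k-1}$ of cardinality at most $9^{k}$; standard covering arguments give $\|\hat{\Sigma}_{KK}-\Sigma_{KK}\|_{2}\leq 2\max_{v\in\mathcal{N}}|v^{T}(\hat{\Sigma}_{KK}-\Sigma_{KK})v|$. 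For each fixed $v\in\mathcal{N}$, Lemma \ref{lem:A1}(b) applied with $Z=X_{K}$ bounds $|v^{T}(\hat{\Sigma}_{KK}-\Sigma_{KK})v|$ via the sub-Gaussian parameter $\tilde{\alpha}$. Taking $t=\tilde{\alpha}^{2}\sqrt{\log p/n}$ (which is $\leq\lambda_{\min}(\Sigma_{KK})/2$ by hypothesis), a union bound over $\mathcal{N}$ produces a failure probability bounded by $9^{k}\cdot 2\exp(-c^{\prime}\log p)$; because $k\leq p$ and $\log p/n\leq 1$, for $p$ large enough this is $c_{1}^{\prime}\exp(-c_{2}^{\prime}\log p)$. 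On the resulting event $\|\hat{\Sigma}_{KK}-\Sigma_{KK}\|_{2}\leq\lambda_{\min}(\Sigma_{KK})/2$, whence $\lambda_{\min}(\hat{\Sigma}_{KK})\geq\lambda_{\min}(\Sigma_{KK})/2$ and $\|\hat{\Sigma}_{KK}^{-1}\|_{2}\leq 2/\lambda_{\min}(\Sigma_{KK})$.

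For \eqref{eq:S4} I use the identity $\hat{\Sigma}_{KK}^{-1}-\Sigma_{KK}^{-1}=\hat{\Sigma}_{KK}^{-1}(\Sigma_{KK}-\hat{\Sigma}_{KK})\Sigma_{KK}^{-1}$ together with the submultiplicativity of $\|\cdot\|_{\infty}$ and the inequality $\|A\|_{\infty}\leq\sqrt{k}\,\|A\|_{2}$ for $k\times k$ matrices. This gives $\|\hat{\Sigma}_{KK}^{-1}-\Sigma_{KK}^{-1}\|_{\infty}\leq\sqrt{k}\,\|\hat{\Sigma}_{KK}^{-1}\|_{2}\cdot\|\Sigma_{KK}-\hat{\Sigma}_{KK}\|_{\infty}\cdot\|\Sigma_{KK}^{-1}\|_{\infty}$, and I bound $\|\Sigma_{KK}^{-1}\|_{\infty}\leq\sqrt{k}/\lambda_{\min}(\Sigma_{KK})$. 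Applying \eqref{eq:S3} with $t=c\,\alpha^{2}\sqrt{\log p/n}/k$ (legitimate since the right-hand side is dominated by $k\alpha^{2}$ under $\log p/n\leq 1$), the quadratic branch of the exponent becomes $-c^{\prime}\log p/k^{3}$, yielding $\|\Sigma_{KK}-\hat{\Sigma}_{KK}\|_{\infty}\leq c\alpha^{2}\sqrt{\log p/n}/k$ with probability at least $1-c_{1}\exp(-c_{2}\log p/k^{3})$. Combining with the bound $\|\hat{\Sigma}_{KK}^{-1}\|_{2}\leq 2/\lambda_{\min}(\Sigma_{KK})$ from \eqref{eq:S8}, and using the assumption $\alpha^{2}\sqrt{\log p/n}\leq\lambda_{\min}(\Sigma_{KK})/12$, gives the explicit constant $1/6$ in \eqref{eq:S4}.

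The main obstacle is the bookkeeping in stage (iii): the three matrix norms compose with a $\sqrt{k}\cdot\sqrt{k}=k$ factor, so the threshold $t$ chosen in the entrywise bound must scale like $1/k$, which is precisely what turns the usual $\exp(-c\log p)$ tail into the $\exp(-c\log p/k^{3})$ rate appearing in \eqref{eq:S4}. Tracing the constants carefully so as to hit exactly $1/6$ under the stated hypotheses on $\alpha^{2}\sqrt{\log p/n}$ and $\tilde{\alpha}^{2}\sqrt{\log p/n}$ will require the precise calibration just described.
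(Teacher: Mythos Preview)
Your argument for \eqref{eq:S2}--\eqref{eq:S3} is correct and matches the paper exactly.

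There are, however, two genuine gaps in stages (ii) and (iii).

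\medskip
\textbf{On \eqref{eq:S8}.} Your covering argument produces a failure probability $9^{k}\cdot 2\exp(-c'\log p)$, and you then assert that ``because $k\leq p$ and $\log p/n\leq 1$'' this is $c_{1}'\exp(-c_{2}'\log p)$. That step is false: $k\leq p$ does not make $9^{k}$ polynomial in $p$; you would need $k=O(\log p)$, which is not assumed. The paper avoids the net entirely: it lets $\mu$ be the unit minimal eigenvector of $\hat{\Sigma}_{KK}$, writes $\lambda_{\min}(\Sigma_{KK})\leq \mu^{T}\hat{\Sigma}_{KK}\mu+\mu^{T}(\Sigma_{KK}-\hat{\Sigma}_{KK})\mu$, and applies Lemma~\ref{lem:A1}(b) once at $t=\tilde{\alpha}^{2}\sqrt{\log p/n}$ to bound the second term. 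No $9^{k}$ ever appears.

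\medskip
\textbf{On \eqref{eq:S4}.} Your chain of norm inequalities loses a factor of $\sqrt{k}$. You bound
\[
\|\hat{\Sigma}_{KK}^{-1}-\Sigma_{KK}^{-1}\|_{\infty}\leq \underbrace{\sqrt{k}\,\|\hat{\Sigma}_{KK}^{-1}\|_{2}}_{\text{one }\sqrt{k}}\cdot\|\Sigma_{KK}-\hat{\Sigma}_{KK}\|_{\infty}\cdot\underbrace{\|\Sigma_{KK}^{-1}\|_{\infty}}_{\leq \sqrt{k}/\lambda_{\min}},
\]
so two $\sqrt{k}$ factors enter, giving a prefactor $2k/\lambda_{\min}^{2}$. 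To hit $1/(6\lambda_{\min})$ you must take $t\asymp \alpha^{2}\sqrt{\log p/n}/k$, and then the quadratic branch in \eqref{eq:S3} gives $n t^{2}/(k^{2}\alpha^{4})=\log p/k^{4}$, not $\log p/k^{3}$ as you claim. The paper instead passes to the operator norm immediately,
\[
\|\hat{\Sigma}_{KK}^{-1}-\Sigma_{KK}^{-1}\|_{\infty}\leq \sqrt{k}\,\|\Sigma_{KK}^{-1}\|_{2}\,\|\Sigma_{KK}-\hat{\Sigma}_{KK}\|_{2}\,\|\hat{\Sigma}_{KK}^{-1}\|_{2},
\]
so only one $\sqrt{k}$ appears, both inverse factors are $O(1/\lambda_{\min})$ with no $k$, and $\|\Sigma_{KK}-\hat{\Sigma}_{KK}\|_{2}\leq\|\Sigma_{KK}-\hat{\Sigma}_{KK}\|_{\infty}$ is then controlled by \eqref{eq:S3} at the larger threshold $t=\alpha^{2}\sqrt{\log p/n}/\sqrt{k}$, which delivers exactly $\log p/k^{3}$ in the exponent. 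The fix is simply to switch all three factors to $\|\cdot\|_{2}$ before doing anything else.
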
 
\begin{proof}
Let $u_{j^{'}j}$ denote the element $(j^{'},\,j)$ of the matrix
difference $\hat{\Sigma}_{K^{c}K}-\Sigma_{K^{c}K}$. The definition
of the $l_{\infty}$matrix norm implies that 
\begin{eqnarray*}
\mathbb{P}\left\{ \left\Vert \hat{\Sigma}_{K^{c}K}-\Sigma_{K^{c}K}\right\Vert _{\infty}\geq t\right\}  & = & \mathbb{P}\left\{ \max_{j^{'}\in K^{c}}\sum_{j\in K}|u_{j^{'}j}|\geq t\right\} \\
 & \leq & (p-k)\mathbb{P}\left\{ \sum_{j\in K}|u_{j^{'}j}|\geq t\right\} \\
 & \leq & (p-k)\mathbb{P}\left\{ \exists j\in K\,\vert\,|u_{j^{'}j}|\geq\frac{t}{k}\right\} \\
 & \leq & (p-k)k\mathbb{P}\left\{ |u_{j^{'}j}|\geq\frac{t}{k}\right\} \\
 & \leq & (p-k)k\cdot2\exp\left(-cn\left(\frac{t^{2}}{k^{2}\alpha^{4}}\wedge\frac{t}{k\alpha^{2}}\right)\right),
\end{eqnarray*}
where the last inequality follows Lemma \ref{lem:A1}(a). Bound (\ref{eq:S3})
can be derived in a similar fashion except that the pre-factor $(p-k)$
is replaced by $k$.

To prove (\ref{eq:S4}), note that

\begin{eqnarray}
\left\Vert \hat{\Sigma}_{KK}^{-1}-\Sigma_{KK}^{-1}\right\Vert _{\infty} & = & \left\Vert \Sigma_{KK}^{-1}\left[\Sigma_{KK}-\hat{\Sigma}_{KK}\right]\hat{\Sigma}_{KK}^{-1}\right\Vert _{\infty}\nonumber \\
 & \leq & \sqrt{k}\left\Vert \Sigma_{KK}^{-1}\left[\Sigma_{KK}-\hat{\Sigma}_{KK}\right]\hat{\Sigma}_{KK}^{-1}\right\Vert _{2}\nonumber \\
 & \leq & \sqrt{k}\left\Vert \Sigma_{KK}^{-1}\right\Vert _{2}\left\Vert \Sigma_{KK}-\hat{\Sigma}_{KK}\right\Vert _{2}\left\Vert \hat{\Sigma}_{KK}^{-1}\right\Vert _{2}\nonumber \\
 & \leq & \frac{\sqrt{k}}{\lambda_{\min}(\Sigma_{KK})}\left\Vert \Sigma_{KK}-\hat{\Sigma}_{KK}\right\Vert _{2}\left\Vert \hat{\Sigma}_{KK}^{-1}\right\Vert _{2}.\label{eq:S5}
\end{eqnarray}
To bound $\left\Vert \Sigma_{KK}-\hat{\Sigma}_{KK}\right\Vert _{2}$
in (\ref{eq:S5}), we apply (\ref{eq:S3}) with $t=\frac{\alpha^{2}}{\sqrt{k}}\sqrt{\frac{\log p}{n}}$
and obtain 
\[
\left\Vert \hat{\Sigma}_{KK}-\Sigma_{KK}\right\Vert _{2}\leq\frac{\alpha^{2}}{\sqrt{k}}\sqrt{\frac{\log p}{n}},
\]
with probability at least $1-c_{1}\exp\left(-c_{2}\frac{\log p}{k^{3}}\right)$,
provided that $k^{-3}\frac{\log p}{n}\leq1$. To bound \textbf{$\left\Vert \hat{\Sigma}_{KK}^{-1}\right\Vert _{2}$}
in (\ref{eq:S5}), let us write 
\begin{eqnarray}
\lambda_{\min}(\Sigma_{KK}) & = & \min_{||\mu^{'}||_{2}=1}\mu^{'T}\Sigma_{KK}\mu^{'}\nonumber \\
 & = & \min_{||\mu^{'}||_{2}=1}\left[\mu^{'T}\hat{\Sigma}_{KK}\mu^{'}+\mu^{'T}(\Sigma_{KK}-\hat{\Sigma}_{KK})\mu^{'}\right]\nonumber \\
 & \leq & \mu^{T}\hat{\Sigma}_{KK}\mu+\mu^{T}(\Sigma_{KK}-\hat{\Sigma}_{KK})\mu\label{eq:S6}
\end{eqnarray}
where $\mu\in\mathbb{R}^{k}$ is a unit-norm minimal eigenvector of
$\hat{\Sigma}_{KK}$. We then apply Lemma \ref{lem:A1}(b) with $t=\tilde{\alpha}^{2}\sqrt{\frac{\log p}{n}}$
to show 
\[
\left|\mu^{T}\left(\Sigma_{KK}-\hat{\Sigma}_{KK}\right)\mu\right|\leq\tilde{\alpha}^{2}\sqrt{\frac{\log p}{n}}
\]
with probability at least $1-c_{1}^{'}\exp\left(-c_{2}^{'}\log p\right)$,
provided that $\sqrt{\frac{\log p}{n}}\leq1$. Therefore, $\lambda_{\min}(\Sigma_{KK})\leq\lambda_{\min}(\hat{\Sigma}_{KK})+\tilde{\alpha}^{2}\sqrt{\frac{\log p}{n}}$.
As long as $\tilde{\alpha}^{2}\sqrt{\frac{\log p}{n}}\leq\frac{\lambda_{\min}(\Sigma_{KK})}{2}$,
we have 
\begin{equation}
\lambda_{\min}(\hat{\Sigma}_{KK})\geq\frac{1}{2}\lambda_{\min}(\Sigma_{KK}),\label{eq:S7-1}
\end{equation}
and consequently (\ref{eq:S8}), 
\[
\left\Vert \hat{\Sigma}_{KK}^{-1}\right\Vert _{2}\leq\frac{2}{\lambda_{\min}(\Sigma_{KK})}
\]
with probability at least $1-c_{1}^{'}\exp\left(-c_{2}^{'}\log p\right)$.

Putting the pieces together, as long as $\frac{\alpha^{2}}{\lambda_{\min}(\Sigma_{KK})}\sqrt{\frac{\log p}{n}}\leq\frac{1}{12}$,
\begin{equation}
\left\Vert \hat{\Sigma}_{KK}^{-1}-\Sigma_{KK}^{-1}\right\Vert _{\infty}\leq\frac{\sqrt{k}}{\lambda_{\min}(\Sigma_{KK})}\frac{\alpha^{2}}{\sqrt{k}}\sqrt{\frac{\log p}{n}}\frac{2}{\lambda_{\min}(\Sigma_{KK})}\leq\frac{1}{6\lambda_{\min}(\Sigma_{KK})}\label{eq:19}
\end{equation}
with probability at least $1-c_{1}\exp\left(-c_{2}\frac{\log p}{k^{3}}\right)$. 
\end{proof}
\begin{lemma}\label{lem:A3} Let Assumption \ref{ass:random_design}
hold. Suppose 
\begin{equation}
\left\Vert \mathbb{E}\left[X_{K^{c}}^{T}X_{K}\right]\left[\mathbb{E}(X_{K}^{T}X_{K})\right]^{-1}\right\Vert _{\infty}=1-\phi\label{eq:S1}
\end{equation}
for some $\phi\in(0,\,1]$. If $k\geq1$ and 
\begin{eqnarray}
\max\left\{ \frac{\phi}{12(1-\phi)k^{\frac{3}{2}}},\,\frac{\phi}{6k^{\frac{3}{2}}},\,\frac{\phi}{k}\right\} \sqrt{\frac{\log p}{n}} & \leq & \alpha^{2}\quad\textrm{if }\phi\in\left(0,\,1\right),\label{eq:7-2-1}\\
\max\left\{ \frac{1}{6k^{\frac{3}{2}}},\,\frac{1}{k}\right\} \sqrt{\frac{\log p}{n}} & \leq & \alpha^{2}\quad\textrm{if }\phi=1,\label{eq:7-4-1}\\
\max\left\{ 2\tilde{\alpha}^{2},\,12\alpha^{2},\,1\right\} \sqrt{\frac{\log p}{n}} & \leq & \lambda_{\min}(\Sigma_{KK}),\label{eq:7-3-1}
\end{eqnarray}
then for some positive constant $b$ that only depends on $\phi$
and $\alpha$, we have 
\begin{equation}
\mathbb{P}\left[\left\Vert \frac{1}{n}X_{K^{c}}^{T}X_{K}\left(\frac{1}{n}X_{K}^{T}X_{K}\right)^{-1}\right\Vert _{\infty}\geq1-\frac{\phi}{2}\right]\leq c^{'}\exp\left(-b\left(\frac{\log p}{k^{3}}\right)\right).\label{eq:77}
\end{equation}
\end{lemma}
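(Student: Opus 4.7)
The plan is to view the sample quantity $\hat{\Sigma}_{K^{c}K}\hat{\Sigma}_{KK}^{-1}$ as a perturbation of its population counterpart $\Sigma_{K^{c}K}\Sigma_{KK}^{-1}$, whose $\ell_{\infty}$ matrix norm is exactly $1-\phi$ by assumption (\ref{eq:S1}), and then absorb the perturbation into an extra $\phi/2$ slack using the concentration bounds already established in Lemma \ref{lem:A2}. Concretely, I would write the algebraic identity
\[
\hat{\Sigma}_{K^{c}K}\hat{\Sigma}_{KK}^{-1}-\Sigma_{K^{c}K}\Sigma_{KK}^{-1}=R_{1}+R_{2}+R_{3},
\]
with $R_{1}=(\hat{\Sigma}_{K^{c}K}-\Sigma_{K^{c}K})\Sigma_{KK}^{-1}$, $R_{2}=\Sigma_{K^{c}K}(\hat{\Sigma}_{KK}^{-1}-\Sigma_{KK}^{-1})$, and $R_{3}=(\hat{\Sigma}_{K^{c}K}-\Sigma_{K^{c}K})(\hat{\Sigma}_{KK}^{-1}-\Sigma_{KK}^{-1})$, and then use the triangle inequality and sub-multiplicativity of $\|\cdot\|_{\infty}$ to show $\|\hat{\Sigma}_{K^{c}K}\hat{\Sigma}_{KK}^{-1}\|_{\infty}\leq (1-\phi)+\|R_{1}\|_{\infty}+\|R_{2}\|_{\infty}+\|R_{3}\|_{\infty}$. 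The goal is to show that each $\|R_{j}\|_{\infty}\leq \phi/6$ with the required probability.

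The first two terms are controlled as follows. For $R_{1}$, note $\|R_{1}\|_{\infty}\leq \|\hat{\Sigma}_{K^{c}K}-\Sigma_{K^{c}K}\|_{\infty}\cdot \|\Sigma_{KK}^{-1}\|_{\infty}$. Since $\Sigma_{KK}$ is positive definite, $\|\Sigma_{KK}^{-1}\|_{\infty}\leq \sqrt{k}/\lambda_{\min}(\Sigma_{KK})$, so it suffices to pick $t=\phi\,\lambda_{\min}(\Sigma_{KK})/(6\sqrt{k})$ in the concentration bound (\ref{eq:S2}) of Lemma \ref{lem:A2}. For $R_{2}$, observe that $\|\Sigma_{K^{c}K}\|_{\infty}=\|\Sigma_{K^{c}K}\Sigma_{KK}^{-1}\Sigma_{KK}\|_{\infty}\leq (1-\phi)\|\Sigma_{KK}\|_{\infty}$ by the sub-multiplicativity and (\ref{eq:S1}); then apply (\ref{eq:S4}) to get $\|\hat{\Sigma}_{KK}^{-1}-\Sigma_{KK}^{-1}\|_{\infty}\leq 1/[6\lambda_{\min}(\Sigma_{KK})]$, which combined with $\|\Sigma_{KK}\|_{\infty}\leq \sqrt{k}\,\lambda_{\max}(\Sigma_{KK})$ yields an upper bound of the desired form provided the conditioning bounds hold. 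For the cross term $R_{3}$, use $\|R_{3}\|_{\infty}\leq \|\hat{\Sigma}_{K^{c}K}-\Sigma_{K^{c}K}\|_{\infty}\cdot \|\hat{\Sigma}_{KK}^{-1}-\Sigma_{KK}^{-1}\|_{\infty}$ and absorb it using the same $t$ choice as $R_{1}$ together with (\ref{eq:S4}); this term is of smaller order than the first two.

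Collecting all three bounds by a union bound gives the tail probability $c'\exp(-b\log p/k^{3})$, where $b$ absorbs the constant $c$ in (\ref{eq:S2})--(\ref{eq:S4}) together with factors of $\phi$ and $\alpha$ arising from the choice of $t$. The conditions (\ref{eq:7-2-1})--(\ref{eq:7-3-1}) are precisely what is needed to ensure that (i) the $t$ chosen above satisfies $t/(k\alpha^{2})\leq 1$ so that the quadratic branch of (\ref{eq:S2}) applies, and (ii) the preconditions of (\ref{eq:S4}) and (\ref{eq:S8}) of Lemma \ref{lem:A2} hold (the bounds on $\alpha^{2}\sqrt{\log p/n}$ and $\tilde{\alpha}^{2}\sqrt{\log p/n}$ by $\lambda_{\min}(\Sigma_{KK})$). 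The case $\phi=1$ is slightly special since $\|\Sigma_{K^{c}K}\|_{\infty}$ need not collapse to zero; here the bound on $R_{2}$ is obtained directly from $\|\Sigma_{K^{c}K}\|_{\infty}\leq \sqrt{k}\lambda_{\max}(\Sigma_{KK})$, which is why (\ref{eq:7-4-1}) omits the factor $1/(1-\phi)$.

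The main obstacle is bookkeeping: carefully choosing the three deviation thresholds so that their ``budgets'' add up to $\phi/2$, while simultaneously keeping each within the regime in which Lemma \ref{lem:A2}'s concentration inequalities give a rate of order $\exp(-\text{const}\cdot\log p/k^{3})$ rather than the slower linear branch $\exp(-\text{const}\cdot t/(k\alpha^{2}))$. This is exactly what the three quantitative conditions (\ref{eq:7-2-1})--(\ref{eq:7-3-1}) enforce; once they are in place, the concentration bounds fire cleanly and a union bound delivers (\ref{eq:77}).
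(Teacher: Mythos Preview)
Your overall strategy---the three-term decomposition $R_1+R_2+R_3$ and the goal of bounding each by $\phi/6$---is exactly what the paper does (with your $R_1,R_2$ swapped relative to theirs). Your treatment of $(\hat\Sigma_{K^cK}-\Sigma_{K^cK})\Sigma_{KK}^{-1}$ and of the cross term $R_3$ is fine and matches the paper.

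The gap is in your $R_2=\Sigma_{K^cK}(\hat\Sigma_{KK}^{-1}-\Sigma_{KK}^{-1})$. You bound $\|\Sigma_{K^cK}\|_\infty\le(1-\phi)\|\Sigma_{KK}\|_\infty\le(1-\phi)\sqrt{k}\,\lambda_{\max}(\Sigma_{KK})$ and then combine with (\ref{eq:S4}). This produces a bound of order $(1-\phi)\sqrt{k}\,\lambda_{\max}(\Sigma_{KK})/\lambda_{\min}(\Sigma_{KK})$, which is \emph{not} controlled by the hypotheses (\ref{eq:7-2-1})--(\ref{eq:7-3-1}): nothing in the statement bounds either $\sqrt{k}$ or the condition number of $\Sigma_{KK}$. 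The paper avoids this by first applying the resolvent identity $\hat\Sigma_{KK}^{-1}-\Sigma_{KK}^{-1}=-\Sigma_{KK}^{-1}(\hat\Sigma_{KK}-\Sigma_{KK})\hat\Sigma_{KK}^{-1}$, so that the factor $\Sigma_{K^cK}\Sigma_{KK}^{-1}$ appears as a block and can be bounded directly by $1-\phi$ via (\ref{eq:S1}); one then applies (\ref{eq:S3}) to $\|\hat\Sigma_{KK}-\Sigma_{KK}\|_\infty$ and (\ref{eq:S8}) to $\|\hat\Sigma_{KK}^{-1}\|_2$, with no condition number appearing.

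A related slip: when $\phi=1$, $\|\Sigma_{K^cK}\Sigma_{KK}^{-1}\|_\infty=0$ forces $\Sigma_{K^cK}=0$ (since $\Sigma_{KK}$ is invertible), so your $R_2$ is identically zero---this is why (\ref{eq:7-4-1}) drops the $1/(1-\phi)$ term, not for the reason you gave.
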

\begin{proof}
Using the decomposition in \citet{ravikumar2010}, we have\textbf{
\[
\hat{\Sigma}_{K^{c}K}\hat{\Sigma}_{KK}^{-1}-\Sigma_{K^{c}K}\Sigma_{KK}^{-1}=R_{1}+R_{2}+R_{3},
\]
}where\textbf{ }

\begin{eqnarray*}
R_{1} & = & \Sigma_{K^{c}K}\left[\hat{\Sigma}_{KK}^{-1}-\Sigma_{KK}^{-1}\right],\\
R_{2} & = & \left[\hat{\Sigma}_{K^{c}K}-\Sigma_{K^{c}K}\right]\Sigma_{KK}^{-1},\\
R_{3} & = & \left[\hat{\Sigma}_{K^{c}K}-\Sigma_{K^{c}K}\right]\left[\hat{\Sigma}_{KK}^{-1}-\Sigma_{KK}^{-1}\right].
\end{eqnarray*}
By (\ref{eq:S1}), we have $\left\Vert \Sigma_{K^{c}K}\Sigma_{KK}^{-1}\right\Vert _{\infty}=1-\phi$.
It suffices to show $\left\Vert R_{i}\right\Vert _{\infty}\leq\frac{\phi}{6}$\textbf{
}for $i=1,...,3$.

For $R_{1}$, note that 
\[
R_{1}=-\Sigma_{K^{c}K}\Sigma_{KK}^{-1}[\hat{\Sigma}_{KK}-\Sigma_{KK}]\hat{\Sigma}_{KK}^{-1}.
\]
Applying the facts $\left\Vert AB\right\Vert _{\infty}\leq\left\Vert A\right\Vert _{\infty}\left\Vert B\right\Vert _{\infty}$
and $\left\Vert A\right\Vert _{\infty}\leq\sqrt{a}\left\Vert A\right\Vert _{2}$
for any symmetric matrix $A\in\mathbb{R}^{a\times a}$, we can bound
$R_{1}$ in the following fashion: 
\begin{eqnarray*}
\left\Vert R_{1}\right\Vert _{\infty} & \leq & \left\Vert \Sigma_{K^{c}K}\Sigma_{KK}^{-1}\right\Vert _{\infty}\left\Vert \hat{\Sigma}_{KK}-\Sigma_{KK}\right\Vert _{\infty}\left\Vert \hat{\Sigma}_{KK}^{-1}\right\Vert _{\infty}\\
 & \leq & (1-\phi)\left\Vert \hat{\Sigma}_{KK}-\Sigma_{KK}\right\Vert _{\infty}\sqrt{k}\left\Vert \hat{\Sigma}_{KK}^{-1}\right\Vert _{2},
\end{eqnarray*}
where the last inequality uses (\ref{eq:S1}). If $\phi=1$, then
$\left\Vert R_{1}\right\Vert _{\infty}=0$ so we may assume $\phi<1$
in the following. Bound (\ref{eq:S8}) from the proof for Lemma \ref{lem:A2}
yields 
\[
\left\Vert \hat{\Sigma}_{KK}^{-1}\right\Vert _{2}\leq\frac{2}{\lambda_{\min}(\Sigma_{KK})}
\]
with probability at least $1-c_{1}\exp\left(-c_{2}\log p\right)$.
Now, we apply bound (\ref{eq:S3}) from Lemma \ref{lem:A2} with $t=\frac{\phi}{12(1-\phi)}\sqrt{\frac{\log p}{kn}}$
and obtain 
\[
\mathbb{P}\left[\left\Vert \hat{\Sigma}_{KK}-\Sigma_{KK}\right\Vert _{\infty}\geq\frac{\phi}{12(1-\phi)}\sqrt{\frac{\log p}{kn}}\right]\leq2\exp\left(-c\left(\frac{\phi^{2}\log p}{\alpha^{4}(1-\phi)^{2}k^{3}}\right)\right),
\]
provided $\frac{\phi}{12(1-\phi)\alpha^{2}k}\sqrt{\frac{\log p}{kn}}\leq1$.
Then, if $\sqrt{\frac{\log p}{n}}\leq\lambda_{\min}(\Sigma_{KK})$,
we are guaranteed that 
\[
\mathbb{P}\left[\left\Vert R_{1}\right\Vert _{\infty}\geq\frac{\phi}{6}\right]\leq2\exp\left(-c\left(\frac{\phi^{2}\log p}{\alpha^{4}(1-\phi)^{2}k^{3}}\right)\right)+c_{1}\exp\left(-c_{2}\log p\right).
\]
For $R_{2}$, note that 
\begin{eqnarray*}
\left\Vert R_{2}\right\Vert _{\infty} & \leq & \sqrt{k}\left\Vert \Sigma_{KK}^{-1}\right\Vert _{2}\left\Vert \hat{\Sigma}_{K^{c}K}-\Sigma_{K^{c}K}\right\Vert _{\infty}\\
 & \leq & \frac{\sqrt{k}}{\lambda_{\min}(\Sigma_{KK})}\left\Vert \hat{\Sigma}_{K^{c}K}-\Sigma_{K^{c}K}\right\Vert _{\infty}.
\end{eqnarray*}
If $\frac{\phi}{6\alpha^{2}k}\sqrt{\frac{\log p}{kn}}\leq1$ and $\sqrt{\frac{\log p}{n}}\leq\lambda_{\min}(\Sigma_{KK})$,
applying bound (\ref{eq:S2}) from Lemma \ref{lem:A2} with $t=\frac{\phi}{6}\sqrt{\frac{\log p}{kn}}$
yields 
\[
\mathbb{P}\left[\left\Vert R_{2}\right\Vert _{\infty}\geq\frac{\phi}{6}\right]\leq2\exp\left(-c\left(\frac{\phi^{2}\log p}{\alpha^{4}k^{3}}\right)\right).
\]
For $R_{3}$, applying (\ref{eq:S2}) with $t=\phi\sqrt{\frac{\log p}{n}}$
to bound $\left\Vert \hat{\Sigma}_{K^{c}K}-\Sigma_{K^{c}K}\right\Vert _{\infty}$
and (\ref{eq:S4}) to bound $\left\Vert \hat{\Sigma}_{KK}^{-1}-\Sigma_{KK}^{-1}\right\Vert _{\infty}$
yields

\[
\mathbb{P}\left[\left\Vert R_{3}\right\Vert _{\infty}\geq\frac{\phi}{6}\right]\leq c^{'}\left[\exp\left(-c\left(\frac{\phi^{2}\log p}{\alpha^{4}k^{3}}\right)\right)+\exp\left(-c\left(\frac{\log p}{k^{3}}\right)\right)\right],
\]
provided that $\frac{\phi}{\alpha^{2}k}\sqrt{\frac{\log p}{n}}\leq1$
and $\sqrt{\frac{\log p}{n}}\leq\lambda_{\min}(\Sigma_{KK})$.

Putting everything together, we conclude that 
\[
\mathbb{P}\left[\left\Vert \hat{\Sigma}_{K^{c}K}\hat{\Sigma}_{KK}^{-1}\right\Vert _{\infty}\geq1-\frac{\phi}{2}\right]\leq c^{'}\exp\left(-b\left(\frac{\log p}{k^{3}}\right)\right)
\]
for some positive constant $b$ that only depends on $\phi$ and $\alpha$. 
\end{proof}
\begin{lemma}\label{lem:A4} Let the assumptions in Lemmas \ref{lem:A2}
and \ref{lem:A3} hold. Suppose $\theta^{*}$ is exactly sparse with
at most $k$ non-zero coefficients and $K=\left\{ j:\,\theta_{j}^{*}\neq0\right\} \neq\emptyset$.
If we choose $\lambda\geq\frac{c\alpha\sigma\left(2-\frac{\phi}{2}\right)}{\phi}\sqrt{\frac{\log p}{n}}$
for some sufficiently large universal constant $c>0$, $\left|\hat{\delta}_{K^{c}}\right|_{\infty}\leq1-\frac{\phi}{4}$
with probability at least $1-c_{1}\exp\left(-b\frac{\log p}{k^{3}}\right)$,
where $b$ is some positive constant that only depends on $\phi$
and $\alpha$. \end{lemma}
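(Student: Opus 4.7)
The plan is to establish strict dual feasibility by writing $\hat{\delta}_{K^c}$ explicitly in terms of quantities we can control via the earlier lemmas, then choosing $\lambda$ large enough to dominate the stochastic noise.

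First, I would use the KKT system from the PDW construction. Setting $\hat{\theta}_{K^c}=0$ and combining (\ref{eq:7-1})–(\ref{eq:8}) in the same way as in Appendix \ref{appendix a1}, one solves for $\hat{\theta}_K-\theta_K^*$ from the $K$-block and substitutes into the $K^c$-block to obtain the closed form
\[
\hat{\delta}_{K^c} \;=\; \frac{1}{\lambda}\Bigl[\tfrac{1}{n}X_{K^c}^T\varepsilon \;-\; \hat{\Sigma}_{K^cK}\hat{\Sigma}_{KK}^{-1}\tfrac{1}{n}X_K^T\varepsilon\Bigr] \;+\; \hat{\Sigma}_{K^cK}\hat{\Sigma}_{KK}^{-1}\,\hat{\delta}_K.
\]
Taking $|\cdot|_\infty$ and using $\|AB\|_\infty\le\|A\|_\infty\|B\|_\infty$ together with $|\hat{\delta}_K|_\infty\le 1$ (property of the subgradient) yields
\[
\bigl|\hat{\delta}_{K^c}\bigr|_\infty \;\le\; \tfrac{1}{\lambda}\bigl(1+\|W\|_\infty\bigr)\bigl|\tfrac{1}{n}X^T\varepsilon\bigr|_\infty \;+\; \|W\|_\infty, \qquad W := \hat{\Sigma}_{K^cK}\hat{\Sigma}_{KK}^{-1}.
\]

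Second, I would control the two random objects using the results already established. Lemma \ref{lem:A3} gives $\|W\|_\infty\le 1-\phi/2$ with probability at least $1-c'\exp(-b\log p/k^3)$; this directly handles the deterministic-looking term and also bounds the prefactor $1+\|W\|_\infty\le 2-\phi/2$. For the noise, Lemma \ref{lem:A1}(a) applied coordinate-wise and a union bound over the $p$ columns yields
\[
\bigl|\tfrac{1}{n}X^T\varepsilon\bigr|_\infty \;\le\; c^{\ast}\,\alpha\sigma\sqrt{\tfrac{\log p}{n}}
\]
with probability at least $1-c_1^{'}\exp(-c_2^{'}\log p)$, where $c^\ast$ is a universal constant coming from the sub-Gaussian products $X_{ij}\varepsilon_i$.

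Third, I would plug these bounds in and choose $c$ in the regularization threshold. On the intersection of the two high-probability events,
\[
\bigl|\hat{\delta}_{K^c}\bigr|_\infty \;\le\; \tfrac{(2-\phi/2)\,c^\ast\alpha\sigma}{\lambda}\sqrt{\tfrac{\log p}{n}} \;+\; \Bigl(1-\tfrac{\phi}{2}\Bigr).
\]
Choosing $c\ge 4c^\ast$ in $\lambda\ge\frac{c\alpha\sigma(2-\phi/2)}{\phi}\sqrt{\log p/n}$ makes the first term at most $\phi/4$, so $|\hat{\delta}_{K^c}|_\infty\le 1-\phi/4$. A final union bound over the two events gives the claimed probability $1-c_1\exp(-b\log p/k^3)$, with $b$ inherited from Lemma \ref{lem:A3} (the $\log p$ rate from Lemma \ref{lem:A1} is dominated since $k\ge 1$).

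The only mildly delicate point is that $\hat{\Sigma}_{KK}$ must be invertible for the closed form to make sense; this is not an extra assumption but a byproduct of the argument, since (\ref{eq:S7-1}) in the proof of Lemma \ref{lem:A2} gives $\lambda_{\min}(\hat{\Sigma}_{KK})\ge \tfrac12\lambda_{\min}(\Sigma_{KK})>0$ on the same high-probability event already used in Lemma \ref{lem:A3}. Otherwise the argument is a clean decomposition-plus-tail-bound calculation, with the main technical leverage coming from having two separately-controllable error sources, $\|W\|_\infty$ and $|X^T\varepsilon/n|_\infty$, that can each be shrunk below $\phi/4$.
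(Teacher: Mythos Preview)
Your proposal is correct and follows essentially the same route as the paper: derive the closed form for $\hat{\delta}_{K^c}$ from the PDW zero-subgradient equations, bound $\|\hat{\Sigma}_{K^cK}\hat{\Sigma}_{KK}^{-1}\|_\infty\le 1-\phi/2$ via Lemma~\ref{lem:A3}, control $|X^T\varepsilon/n|_\infty$ via Lemma~\ref{lem:A1}(a) with a union bound, and absorb the noise term into $\phi/4$ by the choice of $\lambda$. Your added remark on the invertibility of $\hat{\Sigma}_{KK}$ via (\ref{eq:S7-1}) is a nice clarification that the paper leaves implicit.
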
 
\begin{proof}
By construction, the subvectors $\hat{\theta}_{K}$, $\hat{\delta}_{K}$,
and $\hat{\delta}_{K^{c}}$ satisfy the zero-subgradient condition
in the PDW construction. With the fact that $\hat{\theta}_{K^{c}}=\theta_{K^{c}}^{*}=0_{p-k}$,
we have 
\begin{eqnarray*}
\hat{\Sigma}_{KK}\left(\hat{\theta}_{K}-\theta_{K}^{*}\right)-\frac{1}{n}X_{K}^{T}\varepsilon+\lambda\hat{\delta}_{K} & = & 0_{k},\\
\hat{\Sigma}_{K^{c}K}\left(\hat{\theta}_{K}-\theta_{K}^{*}\right)-\frac{1}{n}X_{K^{c}}^{T}\varepsilon+\lambda\hat{\delta}_{K^{c}} & = & 0_{p-k}.
\end{eqnarray*}
The equations above yields 
\begin{eqnarray*}
\hat{\delta}_{K^{c}} & = & -\frac{1}{\lambda}\hat{\Sigma}_{K^{c}K}\left(\hat{\theta}_{K}-\theta_{K}^{*}\right)+X_{K^{c}}^{T}\frac{\varepsilon}{n\lambda},\\
\hat{\theta}_{K}-\theta_{K}^{*} & = & \hat{\Sigma}_{KK}^{-1}\frac{X_{K}^{T}\varepsilon}{n}-\lambda\hat{\Sigma}_{KK}^{-1}\hat{\delta}_{K},
\end{eqnarray*}
which yields 
\[
\hat{\delta}_{K^{c}}=\left(\hat{\Sigma}_{K^{c}K}\hat{\Sigma}_{KK}^{-1}\right)\hat{\delta}_{K}+\left(X_{K^{c}}^{T}\frac{\varepsilon}{n\lambda}\right)-\left(\hat{\Sigma}_{K^{c}K}\hat{\Sigma}_{KK}^{-1}\right)X_{K}^{T}\frac{\varepsilon}{n\lambda}.
\]
Using elementary inequalities and the fact that $\left|\hat{\delta}_{K}\right|_{\infty}\leq1$,
we obtain 
\[
\left|\hat{\delta}_{K^{c}}\right|_{\infty}\leq\left\Vert \hat{\Sigma}_{K^{c}K}\hat{\Sigma}_{KK}^{-1}\right\Vert _{\infty}+\left|X_{K^{c}}^{T}\frac{\varepsilon}{n\lambda}\right|_{\infty}+\left\Vert \hat{\Sigma}_{K^{c}K}\hat{\Sigma}_{KK}^{-1}\right\Vert _{\infty}\left|X_{K}^{T}\frac{\varepsilon}{n\lambda}\right|_{\infty}.
\]

By Lemma \ref{lem:A3}, $\left\Vert \hat{\Sigma}_{K^{c}K}\hat{\Sigma}_{KK}^{-1}\right\Vert _{\infty}\leq1-\frac{\phi}{2}$
with probability at least $1-c^{'}\exp\left(-\frac{b\log p}{k^{3}}\right)$;
as a result, 
\begin{eqnarray*}
\left|\hat{\delta}_{K^{c}}\right|_{\infty} & \leq & 1-\frac{\phi}{2}+\left|X_{K^{c}}^{T}\frac{\varepsilon}{n\lambda}\right|_{\infty}+\left\Vert \hat{\Sigma}_{K^{c}K}\hat{\Sigma}_{KK}^{-1}\right\Vert _{\infty}\left|X_{K}^{T}\frac{\varepsilon}{n\lambda}\right|_{\infty}\\
 & \leq & 1-\frac{\phi}{2}+\left(2-\frac{\phi}{2}\right)\left|\hat{X}^{T}\frac{\varepsilon}{n\lambda}\right|_{\infty}.
\end{eqnarray*}
It remains to show that $\left(2-\frac{\phi}{2}\right)\left|X^{T}\frac{\varepsilon}{n\lambda}\right|_{\infty}\leq\frac{\phi}{4}$
with high probability. This result holds if $\lambda\geq\frac{4\left(2-\frac{\phi}{2}\right)}{\phi}\left|X^{T}\frac{\varepsilon}{n}\right|_{\infty}$.
In particular, Lemma \ref{lem:A1}(a) and a union bound imply that
\[
\mathbb{P}\left(\left|\frac{X^{T}\varepsilon}{n}\right|_{\infty}\geq t\right)\leq2\exp\left(\frac{-nt^{2}}{c_{0}\sigma^{2}\alpha^{2}}+\log p\right).
\]
Thus, under the choice of $\lambda$ in Lemma \ref{lem:A4}, we have
$\left|\hat{\delta}_{K^{c}}\right|_{\infty}\leq1-\frac{\phi}{4}$
with probability at least $1-c_{1}\exp\left(-b\frac{\log p}{k^{3}}\right)$. 
\end{proof}

\end{document}